\DeclareMathOperator{\parent}{par}
\DeclareMathOperator{\gparent}{gpar}
\DeclareMathOperator{\DFprec}{\prec_{\tiny{DF}}}
\DeclareMathOperator{\hght}{ht}
\DeclareMathOperator{\Leb}{Leb}
\DeclareMathOperator{\dom}{dom}
\DeclareMathOperator{\idty}{id}
\DeclareMathOperator{\cl}{cl}
\numberwithin{equation}{section}
\theoremstyle{definition}
\newtheorem{exm} {Example}[section]
\newtheorem{dfn}[exm] {Definition}
\newtheorem{rem}[exm] {Remark}
\newtheorem{const}[exm]{Construction}
\theoremstyle{plane}
\newtheorem{lem}[exm]{Lemma}
\newtheorem{prop}[exm]{Proposition}
\newtheorem{thm}[exm]{Theorem}
\newtheorem{cor}[exm]{Corollary}
\newtheorem{assum}[exm]{Assumption}
\newcommand{\erdosrenyi}{Erd\H{o}s-R\'{e}nyi}
\newcommand{\mbR}{\mathbb{R}}
\newcommand{\mbRp}{\mathbb{R}_{+}}
\newcommand{\mbM}{\mathbb{M}}
\newcommand{\cX}{\mathcal{X}}
\newcommand{\cY}{\mathcal{Y}}
\title{Convergence of local times of stochastic processes associated with resistance forms} 
\date{}
\author{Ryoichiro Noda\thanks{Research Institute for Mathematical Sciences, Kyoto University, Kyoto, 606-8502,
JAPAN. E-mail:sgrndr@kurims.kyoto-u.ac.jp}}
\begin{document}

\maketitle
\begin{abstract}
  In this paper,
  it is shown that 
  if a sequence of resistance metric spaces equipped with measures converges with respect to the local Gromov-Hausdorff-vague topology,
  and certain non-explosion and metric-entropy conditions are satisfied,
  then the associated stochastic processes and their local times also converge.
  The metric-entropy condition can be checked by applying volume estimates of balls.
  Whilst similar results have been proved previously, the approach of this article is more widely applicable.
  Indeed, 
  we recover various known conclusions for scaling limits of some deterministic self-similar fractal graphs,
  critical Galton-Watson trees,
  the critical Erd\H{o}s-R\'{e}nyi random graph and the configuration model 
  (in the latter two cases, we prove for the first time the convergence of the models with respect to the resistance metric
  and also, for the configuration model, we overcome an error in the existing proof of local time convergence).
  Moreover,
  we derive new ones for scaling limits of uniform spanning trees and random recursive fractals.
  The metric-entropy condition also implies convergence of associated Gaussian processes.
\end{abstract}

\tableofcontents


\section{Introduction} \label{sec: introduction}
It is natural to attempt to construct a stochastic process on a continuous medium as a scaling limit of random walks on corresponding discrete media,
such as Brownian motion in Euclidean space as a scaling limit of random walks on lattices.
In this paper, we deal with scaling limits of stochastic processes and associated local times,
assuming that the underlying spaces are equipped with resistance metrics (see Section \ref{sec: introduction to resistance metrics}).
This can be interpreted as the setting of ``low-dimensional'' disordered media,
including fractals such as the Sierpi\'{n}ski gasket and tree-like metric spaces.
A resistance metric characterizes the electrical energy,
a corresponding bilinear form and,
combined with a measure on the space (and under certain technical conditions),
determines uniquely a Dirichlet form and a stochastic process on the space.
In \cite{Croydon_Hambly_Kumagai_17_Time-changes},
it was shown that
if a sequence of spaces equipped with resistance metrics and measures
converge with respect to the local Gromov-Hausdorff-vague topology (see Section \ref{subsec: the GHV}),
and a uniform volume doubling (UVD) condition
(see \cite[Definition 1.1]{Croydon_Hambly_Kumagai_17_Time-changes}) is satisfied,
then the associated stochastic processes and local times also converge. One should note, however,
that the UVD condition is too strong for many sequences of random graphs.
In \cite{Croydon_18_Scaling},
the UVD condition was relaxed and the convergence of stochastic processes was established
under a weaker non-explosion condition,
which enabled a wider range of examples to be handled.
In particular, when one considers a sequence of compact spaces,
no volume condition is needed for the convergence of stochastic processes.
However in the more general setting of \cite{Croydon_18_Scaling},
convergence of local times was left open. In the subsequent work \cite{Andriopoulos_23_Convergence},
convergence of local times of stochastic processes on some random graphs was obtained,
but the arguments of that paper strongly relied on specific properties of the specific random graphs considered.
This article aims to establish convergence of local times in much greater generality.

For the setting and notation of the present work,
we closely follow \cite{Croydon_18_Scaling}.
For a metric space $(S,d)$, we set $B_{d}(x,r)\coloneqq \{y \in S : d(x,y) < r\}$ 
and $D_{d}(x,r)\coloneqq \{y \in S : d(x,y) \leq r\}$.
We say that $(S,d)$ is \textit{boundedly compact} 
if every closed bounded set in $(S,d)$ is compact (note this implies $(S,d)$ is complete, separable and locally compact).
A tuple $G=(S, d, \rho, \mu)$ is called a \textit{rooted-and-measured boundedly-compact metric space} 
if and only if $(S, d)$ is a boundedly-compact metric space,
$\rho$ is an element of $S$ called the \textit{root} 
and $\mu$ is a Radon measure on $(S,d)$,
that is, $\mu$ is a Borel measure with $\mu(K) < \infty$ for every compact subset $K \subseteq S$.
For each $r>0$,
we define $G^{(r)} = (S^{(r)}, d^{(r)}, \rho^{(r)}, \mu^{(r)})$
by setting 
\begin{equation}  \label{1. eq: dfn of restriction operator}
  S^{(r)} \coloneqq \cl (B_{d}(S, r)),
  \quad 
  d^{(r)} \coloneqq d|_{S^{(r)} \times S^{(r)}},
  \quad 
  \rho^{(r)} \coloneqq \rho, 
  \quad 
  \mu^{(r)}(\cdot) \coloneqq \mu(\cdot \cap S^{(r)}),
\end{equation}
where $\cl(\cdot)$ denotes the closure of a set.
We write $\mathbb{G}$ for the collection of 
rooted-and-measured isometric equivalence classes of rooted-and-measured boundedly-compact metric spaces,
and equip $\mathbb{G}$ with the local Gromov-Hausdorff-vague topology.
We define $\mathbb{G}_{c}$ to be the collection of $(S, d, \rho, \mu) \in \mathbb{G}$
such that $(S, d)$ is compact.
We then equip $\mathbb{G}_{c}$ and $\mathbb{G}$
with the Gromov-Hausdorff-Prohorov topology
and the local Gromov-Hausdorff-vague topology, respectively.
(See Section \ref{subsec: the GHV} for details).

\begin{dfn} [{The space $\mathbb{F}$ and $\mathbb{F}_{c}$}]
  We define the subspace $\mathbb{F}$ of $\mathbb{G}$
  to be the collection of $(F,R,\rho,\mu) \in \mathbb{G}$
  such that $\mu$ is of full support and $R$ is a resistance metric 
  which is associated with a regular resistance form and satisfies
  \begin{equation} \label{1. dfn eq: recurrence of R in the space F}
    \lim_{r \to \infty} R(\rho, B_{R}(\rho, r)^{c}) = \infty. 
  \end{equation}
  We write $\mathbb{F}_{c}$ for the subspace of $\mathbb{G}_{c}$ 
  consisting of $(F, R, \rho, \mu) \in \mathbb{F}$ 
  such that $(F, R)$ is compact.
  (For the definitions of resistance metric and regular resistance form, 
  see Definition \ref{4. dfn: resistance forms} and \ref{dfn: regular resistance forms}.)
  We equip $\mathbb{F}$ and $\mathbb{F}_{c}$ with the relative topologies 
  induced from $\mathbb{G}$ and $\mathbb{G}_{c}$, respectively.
\end{dfn}

\begin{rem}
  There is a follow-up paper \cite{Noda_pre_Scaling}.
  In \cite[Section 3]{Noda_pre_Scaling},
  resistance forms and associated Dirichlet spaces are studied
  and
  it is shown that if a resistance metric satisfies \eqref{1. dfn eq: recurrence of R in the space F},
  then the associated resistance form is regular
  (see Remark \ref{4. rem: recurrence implies regularity} below).
\end{rem}

Let $G=(F, R, \rho, \mu)$ be an element of $\mathbb{F}$.
Note that $G^{(r)}$ defined in \eqref{1. eq: dfn of restriction operator} 
belongs to $\mathbb{F}_{c}$ 
(see \cite[Lemma 2.6]{Croydon_18_Scaling}).
Let $(\mathcal{E}, \mathcal{F})$ be a resistance form corresponding to $R$.
The regularity of $(\mathcal{E}, \mathcal{F})$ ensures 
the existence of a related regular Dirichlet form $(\mathcal{E},\mathcal{D})$ on $L^{2}(F,\mu)$ 
and also an associated Hunt process $((X_{G}(t))_{t \geq 0}, (P_{x}^{G})_{x \in F})$,
which is recurrent by the condition \eqref{1. dfn eq: recurrence of R in the space F}.
In order to state one of our main assumptions,
we require the following notion from metric geometry.

\begin{dfn} [{Metric entropy}]\label{dfn: epsilon-net}
  Let $(S,d)$ be a compact metric space.
  For $\varepsilon > 0$, a subset $A$ is called an \textit{$\varepsilon$-covering} of $(S,d)$,
  if for each $x \in S$ there exists $a \in A$ such that $d(x,a) \leq \varepsilon$.
  We define $N_{d}(S,\varepsilon)$ by setting
  \begin{equation}
    N_{d}(S,\varepsilon)
    =
    \min \{ |A|: A\ \text{is an}\  \varepsilon \text{-covering of}\ (S,d) \},
  \end{equation}
  where $|A|$ denotes the cardinality of $A$.
  An $\varepsilon$-covering $A$ with $|A|=N_{d}(S,\varepsilon)$ is called a \textit{minimal $\varepsilon$-covering} of $(S,d)$.
  We call the family $(N_{d}(S,\varepsilon) : \varepsilon >0 )$ the \textit{metric entropy} of $(S,d)$.
\end{dfn}

\begin{rem}
  In Definition \ref{dfn: epsilon-net},
  we borrow the definition of metric entropy given in \cite{Marcus_Rosen_06_Markov},
  but one should note that the metric entropy is defined to be the logarithm of $N_{d}(S, \varepsilon)$ elsewhere in the literature.
\end{rem}

In Section \ref{sec: resistance},
we study the joint continuity of local times of a Hunt process associated with a resistance form.
Within this, an important subset of $\mathbb{F}$ is $\check{\mathbb{F}}$ defined below.
In particular, in Corollary \ref{cor: joint continuity of local times with sum condition} it is shown that,
for any $G \in \check{\mathbb{F}}$,
the associated stochastic process $X_{G}$ admits jointly continuous local times 
$L_{G}=(L_{G}(x,t))_{t \geq 0,\, x \in F}$ satisfying the occupation density formula (see \eqref{eq: the occupation density formula}).

\begin{dfn} [{The space $\check{\mathbb{F}}$ and $\check{\mathbb{F}}_{c}$}]
  We define the subspace $\check{\mathbb{F}}$ of $\mathbb{F}$
  to be the collection of $(F, R, \rho, \mu) \in \mathbb{F}$
  such that, for any $r >0$, there exists $\alpha_{r} \in (0,1/2)$ satisfying
  \begin{equation} \label{eq: definition of vF}
    \sum_{k \geq 1} N_{R^{(r)}}(F^{(r)},2^{-k})^{2} \exp(-2^{\alpha_{r} k})  < \infty,
  \end{equation}
  and we also define $\check{\mathbb{F}}_{c}=\mathbb{F}_{c} \cap \check{\mathbb{F}}$.
  Again,
  we equip $\check{\mathbb{F}}$ and $\check{\mathbb{F}}_{c}$
  with the relative topologies induced from $\mathbb{F}$ and $\mathbb{F}_{c}$, respectively.
\end{dfn}

\begin{rem} \label{rem: about vF} \leavevmode
  \begin{enumerate} [label = (\roman*)]
    \item
          If $G$ is such that $F$ is a finite set, then it is obvious that $G$ belongs to $\check{\mathbb{F}}_{c}$.
    \item \label{rem item: a connection between Gaussian processes and vF}
          Condition \eqref{eq: definition of vF} on the metric entropy of the underlying space
          is natural from the point of view of the theory of Gaussian processes,
          being closely related to the condition of Dudley
          that is known to be sufficient for the continuity of a Gaussian process
          (\cite{Dudley_67_The_sizes}, \cite{Dudley_73_Sample}).
          In fact, in Theorem \ref{thm: equivalence of continuity of local times and a Gaussian process},
          we confirm that the joint continuity of the local times and continuity of the corresponding Gaussian process are equivalent
          (the corresponding Gaussian process is a mean zero Gaussian process
          whose covariance function is given by the $1$-potential density of the Hunt process).
          Hence it is to be expected that a similar condition is useful in the study of local times.
          (See Proposition \ref{prop: dudley condition} and
          Corollary \ref{cor: joint continuity of local times with sum condition}
          for further details of the connection between Dudley's condition and local times,
          and also Remark \ref{rem: difference of dudley and mine}
          for a discussion of the difference between Dudley's condition and ours.)
  \end{enumerate}
\end{rem}

For $G=(F,R,\rho,\mu) \in \check{\mathbb{F}}$,
we define 
\begin{equation} \label{eq: def of P_G}
  P_{G}(\cdot)
  \coloneqq
  P_{\rho} 
  \left(
    (X_{G}, L_{G}) \in \cdot
  \right),
\end{equation}
which is a probability measure on $D(\mbRp, F) \times C(F \times \mbRp, \mbR)$,
where $D(\mbRp, F)$ is the space of cadlag functions with values in $F$ equipped with the usual $J_{1}$-Skorohod topology
and $C(F \times \mbRp, \mbR)$ is the space of continuous functions from $F \times \mbRp$ to $\mbR$ 
equipped with the compact-convergence topology.
(Note that we say that functions $f_{n}$ on a topological space $S$
converge to $f$ in the compact-convergence topology 
if and only if the functions $f_{n}$ converge to $f$ on every compact subset of $S$.)
Set 
\begin{equation}
  \cX_{G}\coloneqq (F,R,\rho,\mu, P_{G}).
\end{equation}
In this work, we will consider this object as an element of $\mbM_{L}$,
defined below in Section \ref{sec: the space M_L}.
In particular, $\mbM_{L}$ will be a Polish space 
that provides a framework for discussing convergence of stochastic processes and local times.
The introduction of $\mbM_{L}$, using the recent framework of \cite{Noda_pre_Metrization}, is an important contribution of this work.
Indeed, in the earlier works of \cite{Croydon_Hambly_Kumagai_17_Time-changes, Croydon_18_Scaling},
no framework was provided for studying the convergence of local times on different spaces,
and in \cite{Andriopoulos_23_Convergence}, the focus was restricted to compact spaces, which limited the applications.
Moreover,
convergence of stochastic processes with respect to our topology implies the convergence of those with respect to topology 
used in previous research on scaling limits of stochastic processes on different spaces
(e.g.\ \cite{Croydon_12_Scaling,Athreya_Lohr_Winter_17_Invariance})
(in these papers, 
convergence of distributions of $(F, R, \rho, \mu, X_{G})$ on a space 
consisting of rooted-and-measured metric spaces and cadlag curves was considered).

\begin{figure}[th]
  \centering
  \includegraphics[scale=0.86]{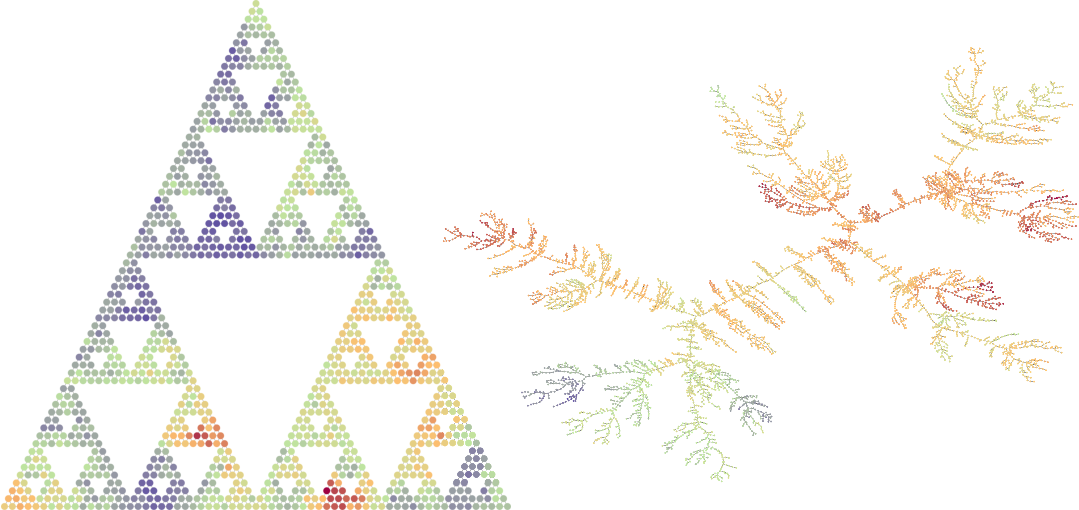}
  \caption{Simulations of local times of constant speed random walks on graphs.
  From least to most visited vertices, colors blend from blue to red.
  The left figure shows the local times of 
  a random walk on a graph approximating the Sierpi\'{n}ski gasket after $5^{6}$ steps.
  The right figure shows those on a Galton-Watson tree with $80^2$ vertices after $1000 \cdot 80^{3}$ steps.}
\end{figure}

In our first main result,
we will assume that we have a sequence $G_{n}=(F_{n}, R_{n},\rho_{n}, \mu_{n})$ in $\check{\mathbb{F}}$
that converges with respect to the local Gromov-Hausdorff-vague topology to an element $G=(F, R, \rho, \mu)$ in $\mathbb{F}$.
Moreover, we will assume the non-explosion condition of \cite{Croydon_18_Scaling}
and a regularity condition on the metric entropies of the spaces in the sequence.
We show that, under such assumptions, it is the case that $\cX_{G_{n}}$ converges to $\cX_{G}$
(see Theorem \ref{1. thm: main result for deterministic spaces}  below).

\begin{assum} \label{1. assum: deterministic spaces}
  \leavevmode
  \begin{enumerate}  [label=(\roman*)]
    \item
          The sequence $G_{n} = (F_{n},R_{n},\rho_{n},\mu_{n})$ in $\check{\mathbb{F}}$ satisfies
          \begin{equation}
            (F_{n},R_{n},\rho_{n},\mu_{n}) \rightarrow (F, R, \rho, \mu)
          \end{equation}
          in the local Gromov-Hausdorff-vague topology for some $G = (F,R,\rho,\mu) \in \mathbb{F}$.
          \label{1. assum item: deterministic, convergence of spaces}
    \item
          It holds that
          \begin{equation}
            \lim_{r \to \infty} \liminf_{n \to \infty} R_{n}(\rho_{n}, B_{R_{n}}(\rho_{n},r)^{c}) =\infty.
          \end{equation}
          \label{1. assum item: deterministic, non-explosion condition}
    \item
          For every $r >0$, there exists $\alpha_{r} \in (0,1/2)$ satisfying
          \begin{equation}
            \lim_{m \to \infty} \limsup_{n \to \infty}
            \sum_{k \geq m} N_{R_{n}^{(r)}}(F_{n}^{(r)}, 2^{-k})^{2} \exp(-2^{\alpha_{r} k})=0.
          \end{equation}
          \label{1. assum item: deterministic, metric-entropy condition}
  \end{enumerate}
\end{assum}

\begin{thm} \label{1. thm: main result for deterministic spaces}
  Under Assumption \ref{1. assum: deterministic spaces},
  the limiting space $G$ belongs to $\check{\mathbb{F}}$, 
  and $\cX_{G_{n}}$ converges to $\cX_{G}$ in $\mbM_{L}$.
\end{thm}

Our second theorem is an annealed version of Theorem \ref{1. thm: main result for deterministic spaces} which is suitable for sequences of random spaces.
Suppose that $G$ is a random element of $\check{\mathbb{F}}$ 
built on a probability space equipped with a complete probability measure $\mathbf{P}$.
It is then the case that $\cX_{G}$ is a random element of $\mbM_{L}$ (see Proposition \ref{prop: measurability of XG}).
To state the main result,
we begin with random elements $G_{n}$ of $\check{\mathbb{F}}$ 
built on a probability space equipped with a complete probability measure $\mathbf{P}_{n}$.
If $G_{n}$ converges to a random element $G$ of $\mathbb{F}$ in distribution
and the sequence $G_{n}$ satisfies annealed versions of 
Assumption \ref{1. assum: deterministic spaces}\ref{1. assum item: deterministic, non-explosion condition}, \ref{1. assum item: deterministic, metric-entropy condition},
then we show that $\mathbf{P}_{n}(\cX_{G_{n}} \in \cdot)$ converges to $\mathbf{P}(\cX_{G} \in \cdot)$ 
(see Theorem \ref{1. thm: main result for random spaces}).

\begin{assum} \label{1. assum: random spaces} \leavevmode
  \begin{enumerate} [label=(\roman*), series=assumptions in random case]
    \item
          The sequence of random elements $G_{n} = (F_{n},R_{n},\rho_{n},\mu_{n})$ of $\check{\mathbb{F}}$ satisfies
          \begin{equation}
            (F_{n},R_{n},\rho_{n},\mu_{n})
            \xrightarrow{\mathrm{d}}
            (F, R, \rho, \mu)
          \end{equation}
          in the local Gromov-Hausdorff-vague topology for some random element $G = (F,R,\rho,\mu)$ of $\mathbb{F}$
          built on a probability space with a complete probability measure $\mathbf{P}$.
          \label{1. assum item: random spaces, space convergence}
    \item
          It holds that
          \begin{equation}
            \lim_{r \to \infty} \liminf_{n \to \infty}
            \mathbf{P}_{n}
            \left( R_{n}(\rho_{n},B_{R_{n}}(\rho_{n},r)^{c}) \geq \lambda \right)=1,
            \quad \forall \lambda >0.
          \end{equation}
          \label{1. assum item: random spaces, non-explosion}
    \item
          For every $r>0$, there exists $\alpha_{r} \in (0,1/2)$ such that
          \begin{equation}
            \lim_{m \to \infty} \limsup_{n \to \infty}
            \mathbf{P}_{n}
            \left(
            \sum_{k \geq m} N_{R_{n}^{(r)}}(F_{n}^{(r)}, 2^{-k})^{2} \exp(-2^{\alpha_{r} k}) \geq \varepsilon
            \right)
            =0,
            \quad \forall \varepsilon >0.
          \end{equation}
          \label{1. assum item: random, metric-entropy condition}
  \end{enumerate}
\end{assum}

\begin{thm} \label{1. thm: main result for random spaces}
  Under Assumption \ref{1. assum: random spaces},
  $G$ is a random element of $\check{\mathbb{F}}$,
  and $\cX_{G_{n}} \xrightarrow{\mathrm{d}} \cX_{G}$ 
  as random elements of $\mbM_{L}$.
\end{thm}

\begin{rem} \leavevmode
  \begin{enumerate} [label=(\roman*)]
    \item 
          In Assumption \ref{1. assum: deterministic spaces} and \ref{1. assum: random spaces},
          we assume that the limiting space belongs to $\mathbb{F}$.
          However,
          it is possible to remove this condition on the limiting space.
          For example,
          in the case of Theorem \ref{1. thm: main result for random spaces},
          if $(F_{n}, R_{n}, \rho_{n}, \mu_{n})$ converges to some random element $G$ of $\mathbb{G}$
          in the local Gromov-Hausdorff-vague topology 
          and Assumption \ref{1. assum: random spaces}\ref{1. assum item: random spaces, non-explosion} 
          and \ref{1. assum item: random, metric-entropy condition} are satisfied,
          then the assertion of Theorem \ref{1. thm: main result for random spaces} still holds.
          For details, see \cite{Noda_pre_Scaling}.
    \item \label{rem item: metric-entropy via volume estimates}
          Checking the metric-entropy condition of 
          Assumption \ref{1. assum: deterministic spaces}\ref{1. assum item: deterministic, metric-entropy condition}
          or Assumption \ref{1. assum: random spaces}\ref{1. assum item: random, metric-entropy condition} 
          directly can be challenging.
          However, it can be verified via suitable volume estimates of balls.
          For example, suppose that we have a sequence $G_{n}=(F_{n}, a_{n}^{-1}R_{n}, \rho_{n}, b_{n}^{-1}\mu_{n})$ in $\check{\mathbb{F}}_{c}$
          where we assume that $F_{n}$ is a finite set, $R_{n}$ is a resistance metric on this, $\mu_{n}$ is the counting measure on it, and $a_{n}, b_{n}$ are scaling factors.
          Convergence of $G_{n}$ in the local Gromov-Hausdorff-vague topology implies a lower estimate on volumes of balls with radius of order at least $a_{n}$ in the metric $R_{n}$.
          The metric-entropy condition requires a lower bound of volumes of balls with radius $a_{n}/(\log b_{n})^{2+\varepsilon}$.
          The details are left to Section \ref{sec: metric entropy and volume estimates}.
    \item 
          A typical example of resistance metric spaces is an electrical network. 
          In this case, 
          the associated stochastic process considered in our main theorems 
          is a continuous-time Markov chain on the electrical network. 
          (The holding times are determined by the weights put on the electrical network.) 
          However, a discrete-time Markov chain is sometimes a more natural object.
          It is possible to establish the same convergence results for discrete-time Markov chains on electrical networks,
          but there are some non-trivial technical issues with proofs.
          The readers interested in discrete-time cases
          refer to \cite{Noda_pre_Scaling}.
    \item
          As might be expected given Remark \ref{rem: about vF}\ref{rem item: a connection between Gaussian processes and vF},
          the metric-entropy condition is also useful for showing convergence of Gaussian processes.
          We provide a precise presentation in Appendix \ref{application to Gaussian processes},
          but the situation is roughly described as follows.
          For each $n \in \mathbb{N} \cup \{ \infty \}$, let $(G_{n}(x))_{x \in F_{n}}$ be a mean zero Gaussian process with a covariance function $\Sigma_{n}$.
          Let $d_{G_{n}}$ be the natural pseudometric on $F_{n}$, that is, $d_{G_{n}}(x,y)=E((G_{n}(x)-G_{n}(y))^{2})^{1/2}$.
          We assume that $d_{G_{n}}$ is a metric on $F_{n}$ and $(F_{n},d_{G_{n}})$ is compact.
          If $(F_{n}, d_{G_{n}}, \Sigma_{n})$ converges to $(F_{\infty}, d_{G_{\infty}}, \Sigma_{\infty})$
          and $(F_{n}, d_{G_{n}})_{n \in \mathbb{N}}$ satisfies the metric-entropy condition,
          then $(F_{n}, d_{G_{n}}, \nu_{n})$ converges to $(F_{\infty},d_{G_{\infty}}, \nu_{\infty})$,
          where $\nu_{n}$ and $\nu_{\infty}$ denote the laws of $G_{n}$ and $G_{\infty}$, respectively.
          This is because 
          the convergence of the covariance functions gives the convergence of the finite dimensional distributions
          and the metric-entropy condition gives the tightness of the processes.
    \item
          As can be seen in the proofs of the main theorems and the above-mentioned argument about convergence of Gaussian processes,
          the metric-entropy condition gives the tightness of the relevant processes.
          Since the condition only depends on the geometries of the index sets of processes,
          we believe that the condition can be used for arguments about scaling limits of other random fields.
  \end{enumerate}
\end{rem}

Via suitable volume estimates, we have confirmed that
the metric-entropy condition holds for a lot of (random-graph) models
and hence the convergence of Markov processes and their local times holds.
The following is the list of such examples and the details are found in Section \ref{sec: examples}.

\begin{itemize}
  \item \textit{Models in the UVD regime}:
        One can check that if spaces satisfy the UVD condition,
        then the metric-entropy condition is satisfied.
        Hence the present work includes all the examples of \cite{Croydon_Hambly_Kumagai_17_Time-changes},
        such as scaling limits of Liouville Brownian motions
        defined on resistance metric spaces (hence, not on $\mbR^{2}$) satisfying the UVD condition.
  \item \textit{A random recursive Sierpi\'{n}ski gasket}:
        The Sierpi\'{n}ski gasket is a deterministic space
        obtained by repeating the operation of removing several smaller equilateral triangles from an equilateral triangle.
        A random recursive gasket is
        a random space obtained by randomizing such removals, which was introduced by Hambly \cite{Hambly_97_Brownian}.
        In Section \ref{subsec: a random recursive SG},
        we provide a uniform (polynomial) volume estimate of electrical networks convergent to a random recursive Sierpi\'{n}ski gasket.
        Although it turns out that this model is not in the UVD regime, 
        the volume estimate enables us to apply our main result.
  \item \textit{Critical Galton-Watson trees}:
        The convergence of Galton-Watson trees was proven by Aldous \cite{Aldous_93_The_continuum}
        in the case of the finite-variance offspring distribution,
        and this convergence result was extended
        to Galton-Watson trees with infinite-variance offspring distribution
        by Duquesne \cite{Duquesne_03_A_limit}.
        In \cite{Andriopoulos_23_Convergence},
        the convergence of stochastic processes and local times was established
        in the case of finite variance.
        In Section \ref{subsubsec: critical GW trees},
        we show that
        the H\"{o}lder continuity of height functions gives a suitable volume estimate,
        and hence the convergence of stochastic processes and local times
        still holds in the case of infinite variance.
  \item \textit{Uniform spanning trees}:
        In \cite{Barlow_Croydon_Kumagai_17_Subsequential,Angel_Croydon_Hernandez-Torres_Shiraishi_21_Scaling},
        the tightnesses of scaled uniform spanning trees in two and three dimensions were deduced respectively,
        and in \cite{Archer_Nachmias_Shalev_pre_The_GHP},
        the scaling limit of uniform spanning trees in five and higher dimensions was established.
        In Section \ref{subsec: UST in high dimensions} and \ref{subsec: UST in 2 and 3 dimensions},
        we provide a volume estimate for those uniform spanning trees,
        which implies that they satisfy the metric-entropy condition.
        As a consequence,
        we show the convergence of the sequence (or subsequence) of stochastic processes and their local times
        on those uniform spanning trees.
  \item \textit{The critical \erdosrenyi\ random graph}:
        In \cite{Berry_Broutin_Goldschmidt_12_The_continuum},
        the scaling limit of critical \erdosrenyi\ random graphs equipped with the graph distance was established.
        The key to this convergence was
        that the sequence of  \erdosrenyi\ random graphs
        has the same asymptotic behavior as
        the sequence of random graphs
        obtained by fusing random points in tilted trees.
        In \cite[Section 8.3]{Croydon_18_Scaling},
        a theory for convergence of such fused resistance metric spaces was developed.
        Using this theory,
        the convergence of critical \erdosrenyi\ random graphs equipped with the resistance metric is deduced.
        Moreover,
        the volume estimate of Galton-Watson trees mentioned above essentially
        implies a volume estimate of the \erdosrenyi\ random graphs.
        The details are given in Section \ref{subsec: critical ER random grpah}.
  \item \textit{The critical configuration model}:
        In \cite{Bhamidi_Sen_20_Geometry},
        the scaling limit of critical configuration models equipped with the graph distance was established.
        In Section \ref{subsec: the critical configuration model},
        using the above-mentioned theory of \cite[Section 8.3]{Croydon_18_Scaling},
        we prove that the convergence holds even when the graph metric is replaced by the resistance metric.
        Furthermore,
        we provide a volume estimate by a similar argument to critical \erdosrenyi\ random graphs,
        which yields the convergence of stochastic processes and their local times on critical configuration models.
\end{itemize}
We also mention that the random conductance model on unbounded fractals is considered 
in the follow-up paper \cite{Noda_pre_Scaling}.

The remainder of the article is organized as follows.
In Section \ref{sec: GH-type topologies},
we introduce the Gromov-Hausdorff-type topologies which are used to describe convergence of resistance metric spaces
equipped with measures and laws of stochastic processes and associated local times.
In Section \ref{sec: uniform continuity of stochastic processes},
we provide a preliminary result regarding uniform continuity of a stochastic process where metric entropy plays an important role.
In Section \ref{sec: resistance},
we recall some fundamental results 
about the theory of resistance forms and study joint continuity of local times on resistance metric spaces.
In Section \ref{sec: proof of the main theorem for deterministic spaces} and Section \ref{sec: proof of main results in random case}, respectively,
the main results Theorem \ref{1. thm: main result for deterministic spaces} and Theorem \ref{1. thm: main result for random spaces} are proved.
Methods for checking the metric-entropy conditions via volume estimates are provided in Section \ref{sec: metric entropy and volume estimates},
before finally, in Section \ref{sec: examples}, we present some examples to which our main results are applicable.


\section{Gromov-Hausdorff-type topologies} \label{sec: GH-type topologies}
\newcommand{\cC}{\mathcal{C}}
\newcommand{\cCc}{\mathcal{C}_{\mathrm{cpt}}}

When one considers convergence of compact metric spaces,
the Gromov-Hausdorff metric (see \cite{Burago_Burago_Ivanov_01_A_course}) gives a topology
(called the \textit{Gromov-Hausdorff topology})
that is useful for many applications.
From the viewpoint of probability theory,
it is natural to also ask about convergence of additional structures on the spaces, such as processes and measures.
Furthermore, in some cases, the spaces of interest are not compact and one needs to extend the topology accordingly.
We call the topology suitable for such arguments Gromov-Hausdorff-type topologies.
For example in \cite{Croydon_Hambly_Kumagai_17_Time-changes,Croydon_18_Scaling},
the (local) Gromov-Hausdorff-vague topology, which is a topology on tuples consisting of a boundedly-compact metric space, a root and a Radon measure,
is used to discuss the convergence of the underlying state spaces of random walks
(the details about this topology are given in Section \ref{subsec: the GHV}).
For the application of the probability theory,
it is important to have a metric for the Gromov-Hausdorff-type topology
and recently a unified method for constructing a metric for Gromov-Hausdorff-type topologies has been proposed 
in \cite{Khezeli_23_A_unified,Noda_pre_Metrization}.
(The differences in methods of those papers are discussed in \cite[Section 1]{Noda_pre_Metrization}.)
In this section, following the method given in \cite{Noda_pre_Metrization},
we introduce two Gromov-Hausdorff-type topologies used in our main results.
Henceforth, we set $a \wedge b \coloneqq  \min \{a, b\}$ 
and $a \vee b \coloneqq  \max \{ a, b \}$ for $a, b \in \mbR \cup \{ \infty \}$.


\subsection{The local Gromov-Hausdorff-vague topology} \label{subsec: the GHV}

In this section,
we define the local Gromov-Hausdorff-vague topology,
which is a generalization of the Gromov-Hausdorff-Prohorov topology 
introduced in \cite{Abraham_Delmas_Hoscheit_13_A_note},
and it is used to discuss the convergence of resistance metric spaces equipped with Radon measures
in our main results.

Let $(S, d, \rho)$ be a rooted boundedly-compact metric space.
For a subset $A \subseteq S$,
the \textit{(closed) $\varepsilon$-neighborhood} of $A$ in $(S, d)$ is given by 
\begin{equation}
  A^{\varepsilon} 
  \coloneqq
  \{
    x \in S : \exists y \in A\ \text{such that}\ d(x,y) \leq \varepsilon
  \}.
\end{equation} 
Let $\cC(S)$ be the set of closed subsets in $S$
and $\cCc(S)$ be the set of compact subsets in $S$
(containing the empty set).
The \textit{Hausdorff metric} $d_{H}$ on $\cCc(S)$ is defined by setting
\begin{equation}
  d_{H}(A, B)
  \coloneqq
  \inf\{
    \varepsilon \geq 0 : A \subseteq B^{\varepsilon}, B \subseteq A^{\varepsilon}
  \},
\end{equation}
where the infimum over the empty set is defined to be $\infty$.
It is known that $d_{H}$ is indeed a metric (allowed to take the value $\infty$ due to the empty set) on $\cCc(S)$
(see \cite[Section 7.3.1]{Burago_Burago_Ivanov_01_A_course}),
and the induced topology is called the \textit{Hausdorff topology}.
To deal with non-compact sets,
we introduce a metric on $\cC(S)$.
One candidate is of course the Hausdorff metric,
but it is not a good metric for non-compact sets in the following sense:
if $A$ is compact and $B$ is non-compact,
then it is the case that $d_{H}(A, B) = \infty$;
this means that it is impossible to approximate a non-compact set by a sequence of compact sets
as long as one uses the Hausdorff metric.
Hence, we introduce another metric $d_{\bar{H}, \rho}$,
which is the same spirit as metrics 
introduced in \cite{Abraham_Delmas_Hoscheit_13_A_note,Athreya_Lohr_Winter_16_The_gap,Khezeli_20_Metrization,Whitt_80_Some}.

\begin{dfn} [{The local Hausdorff metric $d_{\bar{H}, \rho}$}]
  For $A \in \cC(S)$ and $r>0$,
  we write 
  \begin{equation}  \label{eq: restriction of a set to a closed ball}
    A^{(r)} 
    \coloneqq 
    \mathrm{cl}(A \cap B_{d^{S}}(\rho, r)), 
  \end{equation}
  where we recall that $\mathrm{cl}(\cdot)$ denotes the closure of a set.
  We then define
  \begin{equation}
    d_{\bar{H}, \rho} (A, B) 
    \coloneqq 
    \int_{0}^{\infty} e^{-r} \left( 1 \wedge d_{H}(A^{(r)}, B^{(r)}) \right) dr,
    \quad 
    A,B \in \cC(S).
  \end{equation}
  We call $d_{\bar{H}, \rho}$ the \textit{local Hausdorff metric (with root $\rho$)}.
\end{dfn}

The function $d_{\bar{H}, \rho}$ is indeed a metric on $\cC(S)$
and a natural extension of the Hausdorff metric for non-compact sets.
The following is a basic property of $d_{\bar{H}, \rho}$.

\begin{thm} [{\cite[Section 2.2.1]{Noda_pre_Metrization}}]
  The function $d_{\bar{H}, \rho}$ is a metric 
  on $\cC(S)$
  and the metric space $(\cC(S), d_{\bar{H}, \rho})$ is compact.
  A sequence $(A_{n})_{n \geq 1}$ converges to $A$ with respect to 
  the local Hausdorff metric $d_{\bar{H}, \rho}$
  if and only if 
  $A_{n}^{(r)}$ converges to $A^{(r)}$
  in the Hausdorff topology
  for all but countably many $r>0$.
  Moreover,
  the topology on $\cC(S)$ induced from $d_{\bar{H}, \rho}$ 
  is independent of the root $\rho$.
\end{thm}

\begin{dfn} [{The local Hausdorff topology}]
  We call the topology induced from the local Hausdorff metric the \textit{local Hausdorff topology}. 
\end{dfn}

For convergence of measures, 
we use the vague topology.
So, we next introduce a metric inducing the vague topology.
Write $\mathcal{M}_{\mathrm{fin}}(S)$
for the set of finite Borel measures on $S$,
which we equip with the weak topology.
Recall that the weak topology is induced from the \textit{Prohorov metric} $d_{P}$ 
given by 
\begin{equation}
  d_{P}(\mu, \nu) 
  \coloneqq
  \inf\{
    \varepsilon : \mu(A) \leq \nu(A^{\varepsilon}) + \varepsilon, \nu(A) \leq \mu(A^{\varepsilon}) + \varepsilon, \forall A \subseteq S
  \}.
\end{equation}

\begin{dfn} [{The vague metric $d_{V, \rho}$}]
  We denote the set of Radon measures on $S$ by $\mathcal{M}(S)$.
  For $\mu \in \mathcal{M}(S)$,
  we denote the restriction of $\mu$ to $\mathrm{cl} (B_{d^{S}}(\rho, r))$ by $\mu^{(r)}$, 
  that is,
  $\mu^{(r)}$ is a finite Borel measure given by
  \begin{equation}  \label{eq: restriction of a measure to a closed ball}
    \mu^{(r)}(\cdot) 
    \coloneqq
    \mu \left( \cdot \cap \mathrm{cl} (B_{d^{S}}(\rho, r)) \right).
  \end{equation} 
  We then define
  \begin{equation}
    d_{V, \rho}(\mu, \nu)
    \coloneqq
    \int_{0}^{\infty} e^{-r} \left( 1 \wedge d_{P}(\mu^{(r)}, \nu^{(r)}) \right) dr,
    \quad 
    \mu, \nu \in \mathcal{M}(S).
  \end{equation}
  We call $d_{V, \rho}$ the \textit{vague metric (with root $\rho$)}.
\end{dfn}

\begin{thm} [{\cite[Section 2.2.2]{Noda_pre_Metrization}}]
  The function $d_{V, \rho}$ is a metric on $\mathcal{M}(S)$.
  The metric space $(\mathcal{M}(S), d_{V, \rho})$ is complete and separable.
  Let $\mu, \mu_{1}, \mu_{2}, \ldots$ be 
  Radon measures on $S$.
  Then these conditions are equivalent:
  \begin{enumerate}
    \item $\mu_{n}$ converges to a Radon measure $\mu$ with respect to $d_{V, \rho}$;
    \item $\mu_{n}^{(r)}$ converges weakly to $\mu^{(r)}$ for all but countably many $r>0$;
    \item $\mu_{n}$ converges vaguely to $\mu$, that is, for all continuous functions $f : S \to \mbR$ with compact support,
      it holds that 
      \begin{equation}
        \lim_{n \to \infty} \int_{S} f(x)\, \mu_{n}(dx) = \int_{S} f(x)\, \mu(dx).
      \end{equation}
  \end{enumerate}
\end{thm}

Now, we introduce the Gromov-Hausdorff-Prohorov topology 
and the local Gromov-Hausdorff-vague topology.
Let $\mathbb{G}^{\circ}$ be the collection of rooted-and-measured boundedly-compact metric spaces,
i.e., the collection of $G=(S, d, \rho, \mu)$
such that $(S, d)$ is a boundedly-compact metric space,
$\rho$ is a distinguished element of $S$ called the \textit{root}
and $\mu$ is a Radon measure on $S$.
For $G_{i} = (S_{i}, d_{i}, \rho_{i}, \mu_{i}) \in \mathbb{G}^{\circ},\, i=1,2$,
we say that $G_{1}$ and $G_{2}$ are \textit{GHV-equivalent} 
if and only if there exists a root-preserving isometry $f : S_{1} \to S_{2}$ such that 
$\mu_{2} = \mu_{1} \circ f^{-1}$.
Note that $f$ being an \textit{isometry} means that $f$ is distance-preserving and surjective (and hence bijective)
and $f$ being \textit{root-preserving} means that $f(\rho_{1})=\rho_{2}$.

\begin{dfn} [{The set $\mathbb{G}$ and $\mathbb{G}_{c}$}]
  We denote by $\mathbb{G}$ the collection of GHV-equivalence classes of elements in $\mathbb{G}^{\circ}$.
  We define $\mathbb{G}_{c}$ to be the collection of $(S, d, \rho, \mu) \in \mathbb{G}$
  such that $(S, d)$ is compact. 
\end{dfn}

\begin{rem} \label{rem: how to regard G as a set}
  From the rigorous point of view of set theory,
  neither $\mathbb{G}_{c}$ nor $\mathbb{G}$ is a set.
  However, it is possible to think of $\mathbb{G}$ as a set
  and introduce a metric structure.
  This is because one can construct a legitimate set $\mathscr{G}$ of rooted-and-measured boundedly-compact spaces 
  such that any rooted-and-measured boundedly-compact space is GHV-equivalent to an element of $\mathscr{G}$
  (see \cite[Section 3.1]{Noda_pre_Metrization}).
  Therefore,
  in this article,
  we will proceed with the discussion by treating $\mathbb{G}$ as a set 
  to avoid complications.
\end{rem}

Recall that 
the \textit{(pointed) Gromov-Hausdorff-Prohorov metric} $d_{GHP}$ on $\mathbb{G}_{c}$ is given 
by setting,
for $G_{i} = (S_{i}, d_{i}, \rho_{i}, \mu_{i}) \in \mathbb{G}_{c},\, i=1,2$,
\begin{equation}
  d_{\mathbb{G}_{c}}(G_{1}, G_{2})
  \coloneqq 
  \inf_{f_{1}, f_{2}, M}
  \{
    d(f_{1}(\rho_{1}), f_{2}(\rho_{2})) 
    \vee 
    d_{H}(f_{1}(S_{1}), f_{2}(S_{2})) 
    \vee 
    d_{P}(\mu_{1} \circ f_{1}^{-1}, \mu_{2}  \circ f_{2}^{-1})
  \},
\end{equation}
where the infimum is taken over all compact metric spaces $(M, d)$ 
and all distance-preserving maps $f_{i} : S_{i} \to M,\, i=1,2$.
The induced topology on $\mathbb{G}_{c}$ is called the \textit{(pointed) Gromov-Hausdorff-Prohorov topology}.
It is known that this topology is Polish 
and further details of this metric are found in \cite{Abraham_Delmas_Hoscheit_13_A_note,Khezeli_20_Metrization}.
We define a metric on $\mathbb{G}$ in a similar way
(with a slight change in the handling of roots).

\begin{dfn} [{The metric $d_{\mathbb{G}}$}]
  For $G_{i} = (S_{i}, d_{i}, \rho_{i}, \mu_{i}) \in \mathbb{G},\, i=1,2$,
  we set 
  \begin{equation}
    d_{\mathbb{G}}(G_{1}, G_{2})
    \coloneqq
    \inf_{f_{1}, f_{2}, M} 
    \left\{
      d_{\bar{H}, \rho} (f_{1}(S_{1}), f_{2}(S_{2})) 
      \vee 
      d_{V, \rho} (\mu_{1} \circ f_{1}^{-1}, \mu_{2} \circ f_{2}^{-1})
    \right\},
  \end{equation}
  where the infimum is taken over all rooted boundedly-compact metric spaces $(M, d, \rho)$ 
  and all root-and-distance-preserving maps $f_{i} : S_{i} \to M,\, i=1,2$.
\end{dfn}

\begin{thm} [{\cite[Corollary 3.30 and Proposition 4.11]{Noda_pre_Metrization}}]
  The function $d_{\mathbb{G}}$ is a well-defined metric on $\mathbb{G}$,
  and the metric space $(\mathbb{G}, d_{\mathbb{G}})$ is complete and separable.
\end{thm}

\begin{dfn} [{The local Gromov-Hausdorff-vague topology}]
  We call the topology on $\mathbb{G}$ induced by the metric $d_{\mathbb{G}}$ 
  the \textit{local Gromov-Hausdorff-vague topology}.
\end{dfn}

Regarding convergence in $\mathbb{G}$,
we have the following result.

\begin{thm} [{\cite[Theorem 4.13]{Noda_pre_Metrization}}] \label{2. thm: characterization of convergence in the local. GHV}
  Let $G=(S, d, \rho, \mu)$ and $G_{n} = (S_{n}, d^{n}, \rho_{n}, \mu_{n}),\, n \in \mathbb{N}$
  be elements in $\mathbb{G}$.
  Then, the following statements are equivalent:
  \begin{enumerate} [label = (\roman*)]
    \item 
      $G_{n}$ converges to $G$ in the local Gromov-Hausdorff-vague topology;
    \item 
      $G_{n}^{(r)}$ converges to $G^{(r)}$ in the Gromov-Hausdorff-Prohorov topology 
      for all but countably many $r>0$;
    \item 
      there exist a rooted boundedly-compact metric space $(M, d^{M}, \rho_{M})$ 
      and root-and-distance-preserving maps $f_{n} : S_{n} \to M$ and $f : S \to M$ such that 
      $f_{n}(S_{n}) \to f(S)$ in the local Hausdorff topology in $M$ 
      and 
      $\mu_{n} \circ f_{n}^{-1} \to \mu \circ f^{-1}$ vaguely as measures on $M$.
  \end{enumerate}  
\end{thm}

\begin{rem} 
  The local Gromov-Hausdorff-vague topology is an extension of the one 
  in \cite[Section 2.2]{Croydon_Hambly_Kumagai_17_Time-changes},
  and this is a little different from the Gromov-Hausdorff-vague topology introduced in \cite{Athreya_Lohr_Winter_16_The_gap}.
  This is because
  the topology in \cite{Athreya_Lohr_Winter_16_The_gap} deals with the convergence of the supports of measures instead of the whole spaces.
  However since we assume that all measures contained in $\mathbb{F}$ are of full support,
  the topology induced into $\mathbb{F}$ is same
  whether one uses the local Gromov-Hausdorff-vague topology
  or the Gromov-Hausdorff-vague topology.
\end{rem}

\begin{rem}
  The local Gromov-Hausdorff-vague topology is strictly coarser than the Gromov-Hausdorff-Prohorov topology.
  Indeed, one can check this 
  by noting that the subsets $\{0, n\}$ in $\mbR$ converge to ${0}$ in the local Hausdorff topology in $\mbR$
  while the diameter of $\{0, n\}$ diverges,
  which implies that $\{0, n\}$ does not converge in the Gromov-Hausdorff topology.
  In this paper,
  we equip $\mathbb{G}_{c}$ with the Gromov-Hausdorff-Prohorov topology 
  and $\mathbb{G}$ with the local Gromov-Hausdorff-vague topology.
\end{rem}

The following results concern the (joint) continuity of the operation $G^{(\cdot)}$
(recall this from \eqref{1. eq: dfn of restriction operator}),
which are used in Section \ref{sec: proof of main results in random case}.

\begin{lem} \label{2. lem: continuity of the restriction of bcms equipped with a measure}
  For every $G\in \mathbb{G}$,
  the map $(0,\infty)\ni r \mapsto G^{(r)} \in \mathbb{G}_{c}$ is left-continuous with right limits.
  In particular,
  it is continuous for all but countably many $r$.
\end{lem}

\begin{proof}
  Write $G = (S, d, \rho, \mu)$.
  Consider the maps 
  $f_{1}: (0, \infty) \ni r \mapsto S^{(r)} \in \cCc(S)$
  and 
  $f_{2}: (0, \infty) \ni r \mapsto \mu^{(r)} \in \mathcal{M}_{\mathrm{fin}}(S)$.
  It is not difficult to check that both maps are left-continuous with left limits
  with respect to the Hausdorff topology and the weak topology, respectively
  (the right limits at $r$ are $D_{d}(\rho, r)$ and $\mu(\cdot \cap D_{d}(\rho, r))$, respectively).
  Hence, we obtain the desired result.
\end{proof}

\begin{lem} \label{2. lem: joint continuity of the restriction operator}
  Let $G, G_{1}, G_{2}, \ldots$ be elements of $\mathbb{G}$
  such that $G_{n} \to G$ in the Gromov-Hausdorff-Prohorov topology
  and $r>0$ be a continuity point of $f_{1}$ and $f_{2}$ 
  defined in the proof of Lemma \ref{2. lem: continuity of the restriction of bcms equipped with a measure}.
  Then,
  for any sequence $(r_{n})_{n \geq 1}$ of positive numbers converging to $r$,
  it holds that $G_{n}^{(r_{n})} \to G^{(r)}$ in the Gromov-Hausdorff-Prohorov topology.
\end{lem}

\begin{proof}
  Write $G_{n}=(S_{n}, d^{n}, \rho_{n}, \mu_{n})$ and $G=(S, d, \rho, \mu)$.
  Fix $\varepsilon \in (0,1)$.
  By assumption,
  there exists $\delta \in (0, \varepsilon)$ such that,
  for any $r'>0$ with $|r-r'| \leq \delta$,
  it holds that $d_{H}(S^{(r)}, S^{(r')}) < \varepsilon$
  and $d_{P}(\mu^{(r)}, \mu^{(r')}) < \varepsilon$.
  From Theorem \ref{2. thm: characterization of convergence in the local. GHV},
  for some $a>1$,
  we have that $G_{n}^{(r+a)} \to G^{(r+a)}$ in the Gromov-Hausdorff-Prohorov topology,
  and hence we may assume that
  $S_{n}^{(r+a)}$ and $S^{(r+a)}$ are embedded isometrically
  into a common rooted compact metric space $(K, d^{K}, \rho_{K})$
  in such a way that 
  $S_{n}^{(r+a)} \to S^{(r+a)}$ in the Hausdorff topology in $K$,
  $\rho_{n} = \rho = \rho_{K}$ as elements of $K$,
  and $\mu_{n}^{(r+a)} \to \mu^{(r+a)}$ weakly as measures on $K$.
  It is then the case that
  $|r_{n} - r| < \delta/2$,
  $d_{H}^{K}(S_{n}^{(r+a)}, S^{(r+a)}) < \delta/2$,
  and $d_{P}^{K}(\mu_{n}^{(r+a)}, \mu^{(r+a)}) < \delta/2$
  (at least, for all sufficiently large $n$).
  For $x_{n} \in S_{n}^{(r_{n})}$,
  choose $y \in S^{(r+a)}$ satisfying $d^{K}(x_{n}, y) < \delta/2$.
  Then, 
  \begin{equation}
    d(\rho, y) 
    \leq 
    d^{K}(\rho, x_{n}) + d^{K}(x_{n}, y)
    <r_{n}+\delta/2
    < r+ \delta,
  \end{equation}
  which implies that $y \in S^{(r+\delta)}$.
  Thus, we can find $x \in S^{(r)}$ such that $d(x, y) < \varepsilon$.
  It then follows that $d^{K}(x_{n}, x) < 2 \varepsilon$.
  Similarly,
  for any $x \in S^{(r)}$,
  one can find $x_{n} \in S_{n}^{(r_{n})}$ satisfying $d^{K}(x, x_{n}) < 2 \varepsilon$.
  Therefore,
  we deduce that $d_{H}^{K}(S_{n}^{(r_{n})}, S^{(r)}) < 2 \varepsilon$.
  Moreover,
  using $d_{P}^{K}(\mu_{n}^{(r+a)}, \mu^{(r+a)}) < \delta/2$
  and $|r_{n} - r| < \delta/2$,
  it is not difficult to show that $d^{K}(\mu_{n}^{(r_{n})}, \mu^{(r)}) < 2 \varepsilon$.
  Hence,
  we complete the proof.
\end{proof}


\subsection{The space $\mbM_{L}$} \label{sec: the space M_L}
\newcommand{\hatC}{\widehat{C}}
\newcommand{\dhatCrho}{d_{\widehat{C}, \rho}}
\newcommand{\dhatCc}{d_{\widehat{C}_{c}}}

In this section,
we define an extended version of the local Gromov-Hausdorff-vague topology
on tuples consisting of 
a rooted-and-measured boundedly-compact metric space 
equipped with a probability measure on the set of cadlag functions and local-time-type functions.

Let $C(\mbRp, \mbR)$ be the space of continuous functions from $\mbRp$ to $\mbR$
equipped with the compact-convergence topology.
This topology is induced from the metric $d^{C(\mbRp, \mbR)}$ given by
\begin{equation} \label{2. eq: a metric on C(R_+, R)}
  d^{C(\mbRp, \mbR)} (f, g)\coloneqq \sum_{n \geq 1} 2^{-n} \max_{0 \leq t \leq n}( |f(t)-g(t)| \wedge 1).
\end{equation}
Let $(S, d, \rho)$ be a rooted boundedly-compact metric space. 

\begin{dfn} [{The sets $\hatC_{c}(S \times \mbRp, \mbR)$ and $\hatC(S \times \mbRp, \mbR)$}]
  We define 
  \begin{equation}
    \hatC (S \times \mbRp, \mbR) 
    \coloneqq 
    \bigcup_{X \in \cC(S)} 
    C(X \times \mbRp, \mbR),
  \end{equation} 
  Note that $\hatC(S \times \mbRp, \mbR)$ contains 
  the empty map $\emptyset_{\mbR}: \emptyset \to \mbR$.
  For each $L \in \hatC(S \times \mbRp, \mbR)$,
  if $L \in C(X \times \mbRp, \mbR)$,
  then we write $\dom_{1}(L) \coloneqq X$.
  We define $\hatC_{c}(S \times \mbRp, \mbR)$ 
  to be the subset of $\hatC(S \times \mbRp, \mbR)$ consisting of 
  $L$ such that $\dom_{1}(L)$ is compact in $S$.
\end{dfn}

\begin{dfn} [{The metrics $\dhatCc$ and $\dhatCrho$}]
  For $L_{1}, L_{2} \in \hatC_{c}(S \times \mbRp, \mbR)$ and $\varepsilon>0$, 
  consider the following condition.
  \begin{enumerate} [label = ($\hat{C}_{c}$)]
    \item \label{2. dfn item: epsilon condition for metric on hatC_c}
      For any $x \in \dom_{1}(L_{1})$, there exists an element $y \in \dom_{1}(L_{2})$ such that 
      \begin{equation}
        d(x, y) \vee d^{C(\mbRp, \mbR)}(L_{1}(x, \cdot), L_{2}(y, \cdot)) \leq \varepsilon.
      \end{equation}
      Similarly,
      for any $y \in \dom_{1}(L_{2})$, there exists an element $x \in \dom_{1}(L_{1})$ such that 
      the above inequality holds.
  \end{enumerate}
  We then define
  \begin{equation}
    \dhatCc(L_{1}, L_{2}) 
    \coloneqq
    \inf 
    \{
      \varepsilon > 0 \mid \varepsilon\ \text{satisfies \ref{2. dfn item: epsilon condition for metric on hatC_c}}
    \},
  \end{equation}
  where the infimum over the empty set is defined to be $\infty$.
  For $L \in \hatC(S \times \mbRp, \mbR)$,
  we set 
  \begin{equation}
    L^{(r)} \coloneqq L|_{\dom_{1}(L)^{(r)} \times \mbRp},
  \end{equation}
  where we recall that $\dom_{1}(L)^{(r)} = \dom_{1}(L) \cap S^{(r)}$.
  Obviously, $\dom_{1}(L^{(r)}) = \dom_{1}(L)^{(r)}$.
  We then define 
  \begin{equation}
    \dhatCrho(L_{1}, L_{2}) 
    \coloneqq 
    \int_{0}^{\infty} 
    e^{-r} 
    \left(
      1 \wedge \dhatCc(L_{1}^{(r)}, L_{2}^{(r)})
    \right)\,
    dr, 
    \quad 
    L_{1}, L_{2} \in \hatC(S \times \mbRp, \mbR).
  \end{equation}
\end{dfn}

\begin{thm} [{\cite[Theorem 2.45]{Noda_pre_Metrization}}]
  The function $\dhatCrho$ is a well-defined metric on $\hatC(S \times \mbRp, \mbR)$.
  The induced topology is Polish and independent of the root $\rho$.
\end{thm}

\begin{dfn} [{The compact-convergence topology with variable domains}]
  We call the topology on $\hatC(S \times \mbRp, \mbR)$ induced from $\dhatCrho$ 
  the \textit{compact-convergence topology with variable domains}.
\end{dfn}

\begin{thm}  [{Convergence in $\hatC(S \times \mbRp, \mbR)$}]  \label{2. thm: convergence in hatC}
  Let $L, L_{1}, L_{2}, \ldots$ be elements of $\hatC(S \times \mbRp, \mbR)$.
  The following conditions are equivalent.
  \begin{enumerate} [label = (\roman*)]
    \item \label{2. thm item: convergence in hatC, f_n converges to f} 
      The functions $L_{n}$ converge to $L$ in the compact-convergence topology with variable domains.
    \item \label{2. thm item: convergence in hatC, domain and value convergence}
      The sets $\dom_{1}(L_{n})$ converge to $\dom_{1}(L)$ in the local Hausdorff topology in $S$,
      and it holds that, for all $T>0$ and $r>0$, 
      \begin{equation}  \label{2. eq: convergence in hatC, uniform convergence on compact subsets}
        \lim_{\delta \to 0}
        \limsup_{n \to \infty}
        \sup_{\substack{ x_{n} \in \dom_{1}(L_{n})^{(r)}, \\ x \in \dom_{1}(L)^{(r)}, \\ d(x_{n},x) < \delta}}
        \sup_{0 \leq t \leq T}
        |L_{n}(x_{n}, t) - L(x, t)| 
        =0.
      \end{equation}
    \item \label{2. thm item: convergence in hatC, Tietze extension}
      The sets $\dom_{1}(L_{n})$ converge to $\dom_{1}(L)$ in the local Hausdorff topology in $S$,
      and there exist continuous functions $L'_{n}, L' \in C(S \times \mbRp, \mbR)$ such that 
      $L'_{n}|_{\dom_{1}(L_{n}) \times \mbRp} = L_{n}$,
      $L'|_{\dom_{1}(L) \times \mbRp} = L$ and 
      $L'_{n} \to L'$ in the compact-convergence topology in $C(S \times \mbRp, \mbR)$. 
  \end{enumerate}
\end{thm}

\begin{proof}
  This is an immediate consequence of \cite[Theorem 2.59]{Noda_pre_Metrization}.
\end{proof}

\begin{cor} \label{2. cor: embedding of C(S times R) to hatC}
  The following map is a topological embedding, 
  i.e., a homeomorphism onto its image:
  \begin{equation}
    C(S \times \mbRp, \mbR) \ni 
    L \mapsto L 
    \in \hatC(S \times \mbRp, \mbR).
  \end{equation}
  (NB.\ The space $C(S \times \mbRp, \mbR)$ is equipped with the compact-convergence topology,
  while $\hatC(S \times \mbRp, \mbR)$ is equipped with the compact-convergent topology with variable domains.)
\end{cor}

\begin{dfn} [{The embedding of $C(S \times \mbRp, \mbR)$ into $\hatC(S \times \mbRp, \mbR)$}]
  Via the topological embedding given in Corollary \ref{2. cor: embedding of C(S times R) to hatC},
  we always regard $C(S \times \mbRp, \mbR)$ as a subspace of $\hatC(S \times \mbRp, \mbR)$.
\end{dfn}

\begin{thm} [{Precompactness in $\hatC(S \times \mbRp, \mbR)$}] \label{2. thm: precompactness in hatC}
  A non-empty subset $\{L_{\alpha} \mid \alpha \in \mathscr{A}\}$ of $\hatC(S \times \mbRp, \mbR)$ 
  is precompact if and only if the following conditions are satisfied.
  \begin{enumerate} [label=(\roman*)]
    \item \label{2. thm item: precompactness in hatC, boundedness} 
      For each $r>0$, it holds that 
      $\displaystyle \sup_{\alpha \in \mathscr{A}} 
        \sup_{x \in \dom_{1}(L_{\alpha})^{(r)}} 
        |L_{\alpha}(x, 0)| 
        < \infty
      $. 
    \item \label{2. thm item: precompactness in hatC, equicontinuity}
      For each $r>0$ and $T>0$, it holds that
      \begin{equation}
        \lim_{\delta \downarrow 0} 
        \sup_{\alpha \in \mathscr{A}} 
        \sup_{\substack{x, y \in \dom_{1}(L_{\alpha})^{(r)},\\ d(x, y) < \delta}} 
        \sup_{\substack{0 \leq s, t \leq T, \\ |t -s| < \delta}}
        |L_{\alpha}(x, t) - L_{\alpha}(y, s)|
        = 0.
      \end{equation}
  \end{enumerate}
\end{thm}

\begin{proof}
  By \cite[Theorem 2.62]{Noda_pre_Metrization}, 
  $\{L_{\alpha} \mid \alpha \in \mathscr{A} \}$ is precompact if and only if the following conditions are satisfied.
  \begin{enumerate} [label = (\roman*')]
    \item \label{2. pr item: precompactness in C(Rp, R)}
      For each $r>0$, the set $\{L_{\alpha}(x, \cdot) \mid x \in \dom_{1}(L_{\alpha})^{(r)},\, \alpha \in \mathscr{A}\}$
      is precompact in $C(\mbRp, \mbR)$.
    \item \label{2. pr item: equicontinuity on S} 
      For each $r>0$, it holds that 
      \begin{equation}
        \lim_{\delta \downarrow 0} 
        \sup_{\alpha \in \mathscr{A}} 
        \sup_{\substack{x, y \in \dom_{1}(L_{\alpha})^{(r)},\\ d(x, y) < \delta}} 
        d^{C(\mbRp, \mbR)}(L_{\alpha}(x,\cdot), L_{\alpha}(y, \cdot))
        = 0.
      \end{equation}
  \end{enumerate}
  Using the Arzel\`{a}-Ascoli theorem (c.f.\ \cite[Chapter VI. Theorem 1.5]{Jacod_Shiryaev_03_Limit}),
  we deduce that \ref{2. pr item: precompactness in C(Rp, R)}
  is equivalent to the following conditions.
  \begin{enumerate} [label = (i'' -\alph*), leftmargin = *]
    \item \label{2. pr item: boundedness of L(x, 0)}
      For each $r>0$, it holds that 
      $\displaystyle  
        \sup_{\alpha \in \mathscr{A}} 
        \sup_{x \in \dom_{1}(L_{\alpha})^{(r)}}
        L_{\alpha}(x, 0)
        < \infty 
      $.
    \item \label{2. pr item: equicontinuity of L(x, dot)} 
      For each $r>0$ and $T>0$, it holds that 
      \begin{equation}
        \lim_{\delta \downarrow 0} 
        \sup_{\alpha \in \mathscr{A}} 
        \sup_{x \in \dom_{1}(L_{\alpha})^{(r)}} 
        \sup_{\substack{0 \leq s, t \leq T,\\ |t - s| < \delta}}
        |L_{\alpha}(x, s) - L_{\alpha}(x, t)|
        = 0.
      \end{equation}
  \end{enumerate}
  From the definition of $d^{C(\mbRp, \mbR)}$ given in \eqref{2. eq: a metric on C(R_+, R)},
  we also deduce that \ref{2. pr item: equicontinuity on S} is equivalent to the following condition.
  \begin{enumerate} [label = (ii''), leftmargin = *]
    \item \label{2. pr item: equicontinuity of L(dot, t)} 
    For each $r>0$ and $T>0$, it holds that 
    \begin{equation}
      \lim_{\delta \downarrow 0} 
      \sup_{\alpha \in \mathscr{A}} 
      \sup_{\substack{x, y \in \dom_{1}(L_{\alpha})^{(r)},\\ d(x, y) < \delta}} 
      \sup_{0 \leq t \leq T}
      |L_{\alpha}(x,t) - L_{\alpha}(y, t)|
      = 0.
    \end{equation}
  \end{enumerate}
  It is easy to check that \ref{2. pr item: equicontinuity of L(x, dot)} and \ref{2. pr item: equicontinuity of L(dot, t)} 
  are equivalent to \ref{2. thm item: precompactness in hatC, equicontinuity}.
  Now, the desired result is immediate.
\end{proof}

The precompactness criterion of Theorem \ref{2. thm: precompactness in hatC} yields 
the following tightness criterion in $\hatC(S \times \mbRp, \mbR)$.

\begin{thm} [{Tightness in $\hatC(S \times \mbRp, \mbR)$}]  \label{2. thm: tightness in hatC}
  Let $(L_{n})_{n \geq 1}$ be a sequence of 
  random elements of $\hatC(S \times \mbRp, \mbR)$.
  We write the underlying probability measure of $L_{n}$ by $P_{n}$.
  Then, $(L_{n})_{n \geq 1}$ is tight if and only if the following conditions are satisfied.
  \begin{enumerate} [label = (\roman*)]
    \item \label{2. thm item: tightness in hatC, bouncedness}
      For each $r>0$, it holds that
      $\displaystyle 
        \lim_{M \to \infty} 
        \limsup_{n \to \infty} 
        P_{n} \left( \sup_{x \in \dom_{1}(L_{n})^{(r)}} L_{n}(x, 0) > M \right) = 0.
      $
    \item \label{2. thm item: tightness in hatC, equicontinuity}
      For each $r>0$ and $T > 0$,
      it holds that, for all $\varepsilon > 0$,
      \begin{equation}
        \lim_{\delta \downarrow 0} 
        \limsup_{n \to \infty} 
        P_{n} 
        \left(
          \sup_{\substack{ x, y \in \dom_{1}(L_{n})^{(r)}, \\ d(x, y) < \delta}}
          \sup_{\substack{0 \leq s, t \leq T, \\ |t-s| < \delta}} 
          |L_{n}(x,t) - L_{n}(y,s)| 
          > 
          \varepsilon
        \right)
        = 
        0.
    \end{equation}
  \end{enumerate}
\end{thm}

\begin{proof}
  Suppose that $(L_{n})_{n \geq 1}$ is tight.
  Fix a sequence $(\varepsilon_{l})_{l \geq 1}$ of positive numbers with $\varepsilon_{l} \downarrow 0$.
  Let $\mathcal{K}_{l}$ be a compact subset of $\hatC(S \times \mbRp, \mbR)$ satisfying 
  $\sup_{n} P_{n}(L_{n} \notin \mathcal{K}_{l}) < \varepsilon_{l}$.
  Set, for each $l \geq 1,r>0$ and $T>0$,
  \begin{align}
    M_{l}^{(r)} 
    \coloneqq 
    \sup\{ |L(x,0)| : x \in \dom_{1}(L)^{(r)}, L \in \mathcal{K}_{l} \}, \\
    \varepsilon_{\delta}^{(r), T}
    \coloneqq 
    \sup_{L \in \mathcal{K}_{l}} 
    \sup_{\substack{x, y \in \dom_{1}(L_{\alpha})^{(r)},\\ d(x, y) < \delta}} 
    \sup_{\substack{0 \leq s, t \leq T, \\ |t -s| < \delta}}
    |L(x, t) - L(y, s)|.
  \end{align}
  By Theorem \ref{2. thm: precompactness in hatC},
  we have that $M_{l}^{(r)} < \infty$ and $\varepsilon_{\delta}^{(r), T} \to  0$ 
  as $\delta \downarrow 0$.
  We then deduce that 
  \begin{equation}
    \sup_{n \geq 1}
    P_{n}
    \left(
      \sup_{x \in \dom_{1}(L_{n})^{(r)}} |L_{n}(x, 0)| > M_{l}^{(r)} 
    \right)
    \leq 
    \sup_{n} 
    P_{n}(L_{n} \notin \mathcal{K}_{l})
    < \varepsilon_{l},
  \end{equation}
  and by choosing $\delta$ small so that $\varepsilon_{\delta}^{(r), T} < \varepsilon$,
  \begin{align}
    \sup_{n \geq 1} 
    P_{n} 
    \left(
      \sup_{\substack{ x, y \in \dom_{1}(L_{n})^{(r)}, \\ d(x, y) < \delta}}
      \sup_{\substack{0 \leq s, t \leq T, \\ |t-s| < \delta}} 
      |L_{n}(x,t) - L_{n}(y,s)| 
      > 
      \varepsilon
    \right)
    \leq 
    \sup_{n \geq 1} P_{n}(L_{n} \notin \mathcal{K}_{l}) 
    \leq 
    \varepsilon_{l}.
  \end{align}
  Therefore, we obtain \ref{2. thm item: tightness in hatC, bouncedness} and \ref{2. thm item: tightness in hatC, equicontinuity}.
  Conversely, assume \ref{2. thm item: tightness in hatC, bouncedness} and \ref{2. thm item: tightness in hatC, equicontinuity}.
  Since $\hatC(S \times \mbRp, \mbR)$ is Polish,
  each random element $L_{n}$ is tight by itself.
  Hence,
  $\limsup_{n \to \infty}$ that appears in both conditions can be replaced by $\sup_{n \geq 1}$.
  Fix $\varepsilon>0$.
  Let $(r_{l})_{l \geq 1}$ and $(T_{l})_{l \geq 1}$ 
  be increasing sequences of positive numbers with $r_{l} \wedge T_{l} \uparrow \infty$.
  We choose $(M_{l})_{l \geq 1}$ and $(\delta_{l})_{l \geq 1}$ with $M_{l} < \infty$ and $\delta_{l} \downarrow 0$
  satisfying, for each $l \geq 1$,
  \begin{gather}
    \sup_{n \geq 1} 
    P_{n} 
    \left(
      \sup_{x \in \dom_{1}(L_{n})^{(r_{l})}} 
      |L_{n}(x, 0)|
      > M_{l}
    \right) 
    < \frac{\varepsilon}{2^{l}}, 
    \label{2. eq: tightness in hatC, choice of M_l}\\ 
    \sup_{n \geq 1} 
    P_{n} 
    \left(
      \sup_{\substack{ x, y \in \dom_{1}(L_{n})^{(r_{l})}, \\ d(x, y) < \delta_{l}}}
      \sup_{\substack{0 \leq s, t \leq T_{l}, \\ |t-s| < \delta_{l}}} 
      |L_{n}(x,t) - L_{n}(y,s)| 
      > 
      \frac{1}{2^{l}}
    \right) 
    < \frac{\varepsilon}{2^{l}}.
    \label{2. eq: tightness in hatC, choice of delta_l}
  \end{gather}
  Define $\mathcal{K}$ to be the collection of $L \in \hatC(S \times \mbRp, \mbR)$ 
  such that, for each $l \geq 1$, 
  \begin{gather}
    \sup_{x \in \dom_{1}(L)^{(r_{l})}} 
    |L(x, 0)|
    \leq M_{l},\\
    \sup_{\substack{ x, y \in \dom_{1}(L)^{(r_{l})}, \\ d(x, y) < \delta_{l}}}
    \sup_{\substack{0 \leq s, t \leq T_{l}, \\ |t-s| < \delta_{l}}} 
    |L(x,t) - L(y,s)| 
    \leq
    \frac{1}{2^{l}}.
  \end{gather}
  By Theorem \ref{2. thm: precompactness in hatC}, $\mathcal{K}$ is precompact in $\hatC(S \times \mbRp, \mbR)$,
  and by \eqref{2. eq: tightness in hatC, choice of M_l} and \eqref{2. eq: tightness in hatC, choice of delta_l},
  we deduce that 
  \begin{equation}
    \sup_{n \geq 1} P_{n}(L_{n} \notin \mathcal{K})
    \leq 
    \sum_{l \geq 1} \left( \frac{\varepsilon}{2^{l}} + \frac{\varepsilon}{2^{l}} \right) 
    \leq 
    2 \varepsilon.
  \end{equation}
  Hence, $(L_{n})_{n \geq 1}$ is tight.
\end{proof}

Recall that $D(\mbRp, S)$ denotes the space of cadlag functions with values in $S$
equipped with the usual $J_{1}$-Skorohod topology.
We write $d_{J_{1}}$ for the complete, separable metric on $D(\mbRp, S)$
(see \cite{Billingsley_99_Convergence} or \cite{Whitt_80_Some}
for such a metric).
We then equip $D(\mbRp, S) \times \hatC(S \times \mbRp, \mbR)$ 
with the \textit{max product metric} given by 
\begin{equation}
  d_{J_{1} \times \hatC, \rho} ( (f_{1}, L_{1}), (f_{2}, L_{2})) 
  \coloneqq 
  d_{J_{1}}(f_{1}, f_{2}) 
  \vee 
  \dhatCrho(L_{1}, L_{2}).
\end{equation}
Given two maps $f: A \to B$ and $f': A' \to B'$,
we define $f \times f': A \times A' \to B \times B'$ by setting
\begin{equation}
  (f \times f') (a, a')
  \coloneqq
  (f(a), f'(a')).
\end{equation}
We write $\idty_{A}$ for the identity map from $A$ to itself.

Finally,
it is possible to define the space $\mbM_{L}$.
Let $\mbM_{L}^{\circ}$ be the collection of $(S, d, \rho, \mu, \pi)$ 
such that $(S, d, \rho, \mu) \in \mathbb{G}^{\circ}$ and $\pi$ 
is a probability measure on $D(\mbRp, S) \times \hatC(S \times \mbRp, \mbR)$.
To introduce an equivalence relation on $\mbM_{L}^{\circ}$,
we need a preparation.
For a distance-preserving map $f: S_{1} \to S_{2}$,
we define
\begin{gather}  \label{2. eq: functor for cadlag functions and local-time functions}
\begin{split}
  \tau_{f}^{J_{1}}: D(\mbRp, S_{1}) \ni 
    X &\mapsto f \circ X 
  \in D(\mbRp, S_{2}),\\
  \tau_{f}^{\hatC}: \hatC(S_{1} \times \mbRp, \mbR) \ni 
    L &\mapsto L \circ (f^{-1} \times \idty_{\mbRp}) 
  \in \hatC(S_{2} \times \mbRp, \mbR),
\end{split}
\end{gather}
where $f^{-1}$ is restricted to $f(\dom_{1}(L))$ so that $L \circ (f^{-1} \times \idty_{\mbRp})$ is well-defined.
We then define $\tau_{f}^{J_{1} \times \hatC} \coloneqq \tau_{f}^{J_{1}} \times \tau_{f}^{\hatC}$.
For $\cX_{i} = (S_{i}, d_{i}, \rho_{i}, \mu_{i}, \pi_{i}) \in \mbM_{L}^{\circ},\, i=1,2$,
we say that $\cX_{1}$ is $(\tau^{J_{1} \times \hatC})^{-1}$-equivalent to $\cX_{2}$ 
if and only if there exists a root-preserving isometry $f: S_{1} \to S_{2}$ 
such that $\mu_{2} = \mu_{1} \circ f^{-1}$ and $\pi_{2} = \pi_{1} \circ (\tau_{f}^{J_{1} \times \hatC})^{-1}$.

\begin{dfn} [{The space $\mbM_{L}$}]
  We define $\mbM_{L}$ to be the collection of $(\tau^{J_{1} \times \hatC})^{-1}$-equivalence classes of elements in $\mbM_{L}^{\circ}$.
\end{dfn}

\begin{rem}
  By the same reason given in Remark \ref{rem: how to regard G as a set}, 
  we can safely regard $\mbM_{L}$ as a set and introduce a metric structure.
\end{rem}

\begin{dfn} [{The metric $d_{\mbM_{L}}$}]
  For $\cX_{i} = (S_{i}, d_{i}, \rho_{i}, \mu_{i}, \pi_{i}) \in \mbM_{L},\, i=1,2$,
  we set 
  \begin{equation}
    \begin{split}
      d_{\mbM_{L}}(\cX_{1}, \cX_{2}) 
      \coloneqq
      \inf_{f_{1}, f_{2}, M} 
      &\Bigl\{ 
        d_{\bar{H}, \rho} (f_{1}(S_{1}), f_{2}(S_{2})) 
        \vee 
        d_{V, \rho} (\mu_{1} \circ f_{1}^{-1}, \mu_{2} \circ f_{2}^{-1})
      \Bigr.\\
      &\vee 
        \tilde{d}_{P}\left(
          \pi_{1} \circ (\tau_{f_{1}}^{J_{1} \times \hatC})^{-1},
          \pi_{2} \circ (\tau_{f_{2}}^{J_{1} \times \hatC})^{-1}
        \right)
      \Bigr\},
    \end{split}
  \end{equation}
  where the infimum is taken over all rooted boundedly-compact metric spaces $(M, d, \rho)$ 
  and all root-and-distance-preserving maps $f_{i} : S_{i} \to M,\, i=1,2$,
  and $\tilde{d}_{P}$ denotes the Prohorov metric on the set of probability measures 
  on the metric space $(D(\mbRp, M) \times \hatC(M \times \mbRp, \mbR), d_{J_{1} \times \hatC, \rho})$.
\end{dfn}

Using the theory established in \cite{Noda_pre_Metrization},
we obtain the following result readily.

\begin{thm} \label{2. thm: metric on M_L}
  The function $d_{\mbM_{L}}$ is a well-defined metric on $\mbM_{L}$,
  and the induced topology is Polish.
  (NB. The metric $d_{\mbM_{L}}$ is not necessarily complete.)
\end{thm}

\begin{proof}
  By setting $\tau^{J_{1}}(S) \coloneqq D(\mbRp, S)$ and $\tau^{\hat{C}}(S) \coloneqq \hat{C}(S \times \mbRp, \mbRp)$
  for each rooted boundedly-compact metric space,
  we obtain functors $\tau^{J_{1}}$ and $\tau^{\hat{C}}$,
  where morphisms are given by \eqref{2. eq: functor for cadlag functions and local-time functions}
  (for the notion of functors here, see \cite[Definition 3.14]{Noda_pre_Metrization}).
  By \cite[Proposition 4.15 and 4.27]{Noda_pre_Metrization},
  both functors are Polish functors
  and hence the product functor $\tau \coloneqq \tau^{J_{1}} \times \tau^{\hat{C}}$ is also a Polish functor
  by \cite[Proposition 3.38]{Noda_pre_Metrization}
  (see \cite[Definition 3.34 and 3.37]{Noda_pre_Metrization} for Polish functors and product functors, respectively).
  Let $\sigma^{\mathcal{P}(\tau)}$ be the functor introduced in \cite[Section 4.8]{Noda_pre_Metrization}.
  It is then from \cite[Theorem 4.32]{Noda_pre_Metrization} that $\sigma^{\mathcal{P}(\tau)}$ is a Polish functor.
  By taking the product of $\sigma^{\mathcal{P}(\tau)}$ and the functor for measures $\tau^{m}$,
  which is introduced in \cite[Section 4.4]{Noda_pre_Metrization},
  and applying \cite[Theorem 3.23, 3.36 and Proposition 3.38]{Noda_pre_Metrization},
  we deduce the desired result.
\end{proof}

Regarding convergence in $\mbM_{L}$,
we have the following result.

\begin{thm} [{\cite[Theorem 3.24]{Noda_pre_Metrization}}] \label{2. thm: space M_L, convergence}
  Let $\cX=(S, d, \rho, \mu, \pi)$ 
  and $\cX_{n} = (S_{n}, d^{n}, \rho_{n}, \mu_{n}, \pi_{n}),\, n \in \mathbb{N}$
  be elements in $\mbM_{L}$.
  Then, $\cX_{n}$ converges to $\cX$ with respect to $d_{\mbM_{L}}$
  if and only if there exist a rooted boundedly-compact metric space $(M, d, \rho)$ 
  and root-and-distance-preserving maps $f_{n} : S_{n} \to M$ and $f : S \to M$ such that 
  $f_{n}(S_{n}) \to f(S)$ in the local Hausdorff topology,
  $\mu_{n} \circ f_{n}^{-1} \to \mu \circ f^{-1}$ vaguely as measures on $M$
  and $\pi_{n} \circ (\tau_{f_{n}}^{J_{1} \times \hatC})^{-1} \to \pi \circ (\tau_{f}^{J_{1} \times \hatC})^{-1}$
  weakly as probability measures on $D(\mbRp, M) \times \hatC(M \times \mbRp, \mbR)$.
\end{thm}

Let us prepare to describe a precompactness criterion.
For $\xi \in D(\mbRp, S)$, 
we define  
\begin{equation}  \label{2. eq: modulus continuity for cadlag curves}
  \tilde{w}_{S}(\xi, h, t) 
  \coloneqq 
  \inf_{(I_{k}) \in \Pi_{t}} \max_{k} \sup_{r, s \in I_{k}} d(\xi(r), \xi(s)),
  \quad 
  t, h > 0,
\end{equation}
where $\Pi_{t}$ denotes the set of all partitions of the interval $[0, t)$
into subintervals $I_{k} = [u, v)$ with $v - u \geq h$ when $v < t$.

\begin{thm} [{Precompactness in $\mbM_{L}$}]  \label{2. thm: precompactness in M_L}
  Fix a sequence $((S_{n}, d^{n}, \rho_{n}, \mu_{n}, \pi_{n}))_{n \geq 1}$ of elements of $\mbM_{L}$.
  For each $n \in \mathbb{N}$,
  let $(X_{n}, L_{n})$ be a random element of $D(\mbRp, S_{n}) \times \hatC(S_{n} \times \mbRp, \mbR)$
  whose law coincides with $\pi_{n}$.
  We denote the underlying probability measure of $(X_{n}, L_{n})$ by $P_{n}$.
  Fix a dense set $I \subseteq \mbRp$.
  Then,
  the sequence $((S_{n}, d^{n}, \rho_{n}, \pi_{n}))_{n \geq 1}$ is precompact 
  if and only if the following conditions are satisfied.
  \begin{enumerate} [label = (\roman*), series = precompactness in M_L]
    \item \label{2. thm item: precompactness in M_L, spaces are precompact}
      The sequence $((S_{n}, d^{n}, \rho_{n}, \mu_{n}))_{n \geq 1}$ is precompact in the local Gromov-Hausdorff-vague topology. 
    \item \label{2. thm item: precompactness in M_L, values of processes are precompact} 
      For each $t \in I$,
      it holds that 
      $\displaystyle 
        \lim_{r \to \infty} 
        \limsup_{n \to \infty} 
        P_{n} \left( X_{n}(t) \notin S_{n}^{(r)} \right) = 0
      $.
    \item \label{2. thm item: precompactness in M_L, equicontinuity of processes} 
      For each $t > 0$,
      it holds that, for all $\varepsilon > 0$, 
      $\displaystyle 
        \lim_{h \downarrow 0} 
        \limsup_{n \to \infty} 
        P_{n} \left( \tilde{w}_{S_{n}} (X_{n}, h, t) > \varepsilon \right) = 0
      $.
    \item \label{2. thm item: precompactness in M_L, boundedness of local times} 
      For each $r>0$, 
      it holds that 
      $\displaystyle 
        \lim_{M \to \infty} 
        \limsup_{n \to \infty} 
        P_{n} \left( \sup_{x \in \dom_{1}(L_{n})^{(r)}} L_{n}(x, 0) > M \right) = 0.
      $
    \item \label{2. thm item: precompactness in M_L, equicontinuity of local times} 
      For each $r>0$ and $T > 0$,
      it holds that, for all $\varepsilon > 0$,
      \begin{equation}
        \lim_{\delta \downarrow 0} 
        \limsup_{n \to \infty} 
        P_{n} 
        \left(
          \sup_{\substack{ x, y \in \dom_{1}(L_{n})^{(r)}, \\ d^{n}(x, y) < \delta}}
          \sup_{\substack{0 \leq s, t \leq T, \\ |t-s| < \delta}} 
          |L_{n}(x,t) - L_{n}(y,s)| 
          > 
          \varepsilon
        \right)
        = 
        0.
      \end{equation}
  \end{enumerate}
  In that case, the following result holds.
    \begin{enumerate} [resume* = precompactness in M_L]
      \item \label{2. thm item: precompactness in M_L, uniform boundedness of processes}
        For each $t \geq 0$, 
        it holds that 
        $\displaystyle
          \lim_{r \to \infty} 
          \limsup_{n \to \infty} 
          P_{n}\left( X_{n}(s) \notin S_{n}^{(r)}\ \text{for some}\ s \leq t \right) 
          = 0.
        $
    \end{enumerate}
\end{thm}

\begin{proof}
  We have a tightness criterion in $\hatC(S \times \mbRp, \mbR)$ by Theorem \ref{2. thm: tightness in hatC}.
  A tightness criterion in $D(\mathbb{R}_{+}, S)$ is well-known (see \cite[Lemma 4.36]{Noda_pre_Metrization} for exmaple).
  These immediately yield a tightness criterion in $D(\mathbb{R}_{+}, S) \times \hatC(S \times \mbRp, \mbR)$.
  Now, following the proof of \cite[Theorem 4.37]{Noda_pre_Metrization},
  we obtain the desired result readily.
\end{proof}

The precompactness criterion yields the following tightness criterion in $\mbM_{L}$.
It is proven similarly to Theorem \ref{2. thm: tightness in hatC},
and so we omit the proof.

\begin{thm} [{Tightness in $\mbM_{L}$}]  \label{2. thm: tightness in M_L}
  For each $n \in \mathbb{N}$,
  let $(S_{n}, d^{n}, \rho_{n}, \mu_{n}, \pi_{n})$ 
  be a random element of $\mbM_{L}$ 
  built on a probability space $(\Omega_{n}, \mathcal{F}_{n}, \mathbf{P}_{n})$.
  For each $\omega \in \Omega_{n}$,
  let $(X_{n}^{\omega}, L_{n}^{\omega})$ 
  be a random element of $D(\mbRp, S_{n}) \times \hatC(S_{n} \times \mbRp, \mbR)$
  whose law coincides with $\pi_{n}(\omega)$.
  We denote the underlying probability measure of $(X_{n}^{\omega}, L_{n}^{\omega})$ by $P_{n}^{\omega}$.
  Fix a dense set $I \subseteq \mbRp$.
  Then,
  the sequence $((S_{n}, d^{n}, \rho_{n}, \pi_{n}))_{n \geq 1}$ is tight
  if and only if the following conditions are satisfied.
  \begin{enumerate} [label = (\roman*)]
    \item \label{2. thm item: tightness in M_L, spaces are tight}
      The sequence $((S_{n}, d^{n}, \rho_{n}, \mu_{n}))_{n \geq 1}$ is tight in the local Gromov-Hausdorff-vague topology. 
    \item \label{2. thm item: tightness in M_L, values of processes are precompact} 
      For each $t \in T$,
      it holds that, for all $\varepsilon > 0$,
      $\displaystyle 
        \lim_{r \to \infty} 
        \limsup_{n \to \infty} 
        \mathbf{P}_{n}
        \left(
          P_{n}^{\omega} \left( X_{n}^{\omega}(t) \notin S_{n}^{(r)} \right) > \varepsilon
        \right)
        = 0
      $.
    \item \label{2. thm item: tightness in M_L, equicontinuity of processes} 
      For each $t > 0$,
      it holds that, for all $\varepsilon, \delta >0$,
      \begin{equation}
        \lim_{h \downarrow 0} 
        \limsup_{n \to \infty} 
        \mathbf{P}_{n}
        \left(
          P_{n}^{\omega} \left( \tilde{w}_{S_{n}} (X_{n}^{\omega}, h, t) > \varepsilon \right)
          > 
          \delta
        \right)
         = 0.
      \end{equation}
    \item \label{2. thm item: tightness in M_L, boundedness of local times} 
      For each $r>0$, 
      it holds that, for all $\varepsilon > 0$,
      \begin{equation}
        \lim_{M \to \infty} 
        \limsup_{n \to \infty} 
        \mathbf{P}_{n}
        \left(
          P_{n}^{\omega} \left( \sup_{x \in \dom(L_{n})^{(r)}} L_{n}^{\omega}(x, 0) > M \right) 
          > 
          \varepsilon
        \right)
         = 0.
      \end{equation}
    \item \label{2. thm item: tightness in M_L, equicontinuity of local times} 
      For each $r>0$ and $T > 0$,
      it holds that, for all $\varepsilon, \delta > 0$,
      \begin{equation}
        \lim_{\delta \downarrow 0} 
        \limsup_{n \to \infty} 
        \mathbf{P}_{n}
        \left(
          P_{n}^{\omega} 
          \left(
            \sup_{\substack{ x, y \in \dom_{1}(L_{n})^{(r)}, \\ d^{n}(x, y) < \delta}}
            \sup_{\substack{0 \leq s, t \leq T, \\ |t-s| < \delta}} 
            |L_{n}^{\omega}(x,t) - L_{n}^{\omega}(y,s)| 
            > 
            \varepsilon
          \right)
          > 
          \delta
        \right)
        = 
        0.
      \end{equation}
  \end{enumerate}
\end{thm}

Although it is not a space of our main interest,
we introduce another space $\mbM$ for convenience.
This space is used in the proofs of our main results.
Roughly speaking,
it is the space of measured metric spaces equipped with laws of stochastic processes,
and precisely defined as follows.
Let $\mbM^{\circ}$ be the collection of $(S, d, \rho, \mu, \pi')$ 
such that $(S, d, \rho, \mu) \in \mathbb{G}^{\circ}$ and $\pi'$ 
is a probability measure on $D(\mbRp, S)$.
Recall $\tau^{J_{1}}$ from \eqref{2. eq: functor for cadlag functions and local-time functions}.
For $\cX_{i} = (S_{i}, d_{i}, \rho_{i}, \mu_{i}, \pi'_{i}) \in \mbM^{\circ},\, i=1,2$,
we say that $\cX_{1}$ is $(\tau^{J_{1}})^{-1}$-equivalent to $\cX_{2}$ 
if and only if there exists a root-preserving isometry $f: S_{1} \to S_{2}$ 
such that $\mu_{2} = \mu_{1} \circ f^{-1}$ and $\pi'_{2} = \pi'_{1} \circ (\tau_{f}^{J_{1}})^{-1}$.

\begin{dfn} [{The space $\mbM$}]
  We define $\mbM$ to be the collection of $(\tau^{J_{1}})^{-1}$-equivalence classes of elements in $\mbM^{\circ}$.
\end{dfn}

\begin{dfn} [{The metric $d_{\mbM}$}]
  For $\cX_{i} = (S_{i}, d_{i}, \rho_{i}, \mu_{i}, \pi'_{i}) \in \mbM_{L},\, i=1,2$,
  we set 
  \begin{equation}
    \begin{split}
      d_{\mbM_{L}}(\cX_{1}, \cX_{2}) 
      \coloneqq
      \inf_{f_{1}, f_{2}, M} 
      &\Bigl\{ 
        d_{\bar{H}, \rho} (f_{1}(S_{1}), f_{2}(S_{2})) 
        \vee 
        d_{V, \rho} (\mu_{1} \circ f_{1}^{-1}, \mu_{2} \circ f_{2}^{-1})
      \Bigr.\\
      &\vee 
        \tilde{d}_{P}\left(
          \pi'_{1} \circ (\tau_{f_{1}}^{J_{1}})^{-1},
          \pi'_{2} \circ (\tau_{f_{2}}^{J_{1}})^{-1}
        \right)
      \Bigr\},
    \end{split}
  \end{equation}
  where the infimum is taken over all rooted boundedly-compact metric spaces $(M, d, \rho)$ 
  and all root-and-distance-preserving maps $f_{i} : S_{i} \to M,\, i=1,2$,
  and $\tilde{d}_{P}$ denotes the Prohorov metric on the set of probability measures 
  on the metric space $(D(\mbRp, M), d_{J_{1}})$.
\end{dfn}

\begin{thm} 
  The function $d_{\mbM}$ is a well-defined complete, separable metric on $\mbM$.
\end{thm}

\begin{proof}
  Recall the functors $\tau^{J_{1}}$ and $\tau^{m}$ introduced in the proof of Theorem \ref{2. thm: metric on M_L}.
  Let $\sigma^{\mathcal{P}(\tau^{J_{1}})}$ be the functor introduced in \cite[Section 4.8]{Noda_pre_Metrization}.
  It is then from \cite[Theorem 4.31]{Noda_pre_Metrization} that $\sigma^{\mathcal{P}(\tau^{J_{1}})}$ is a 
  complete, separable, continuous functor
  (see \cite[Section 3.2]{Noda_pre_Metrization} for the notion of 
  completeness, separability and continuity for functors).
  By taking the product of $\sigma^{\mathcal{P}(\tau^{J_{1}})}$ and $\tau^{m}$
  and applying \cite[Corollary 3.30 and Proposition 3.38]{Noda_pre_Metrization},
  we deduce the desired result.
\end{proof}

We have the following results similar to $d_{\mbM_{L}}$.
The proofs are omitted because they are proved similarly.

\begin{thm} [{Convergence in $\mbM$}] \label{2. thm: space M, convergence}
  Let $\cX=(S, d, \rho, \mu, \pi')$ 
  and $\cX_{n} = (S_{n}, d^{n}, \rho_{n}, \mu_{n}, \pi'_{n}),\, n \in \mathbb{N}$
  be elements in $\mbM$.
  Then, $\cX_{n}$ converges to $\cX$ in $\mbM$
  if and only if there exist a rooted boundedly-compact metric space $(M, d^{M}, \rho_{M})$ 
  and root-and-distance-preserving maps $f_{n} : S_{n} \to M$ and $f : S \to M$ such that 
  $f_{n}(S_{n}) \to f(S)$ in the local Hausdorff topology,
  $\mu_{n} \circ f_{n}^{-1} \to \mu \circ f^{-1}$ vaguely as measures on $M$
  and $\pi'_{n} \circ (\tau_{f_{n}}^{J_{1}})^{-1} \to \pi' \circ (\tau_{f}^{J_{1}})^{-1}$
  weakly as probability measures on $D(\mbRp, M)$.
\end{thm}

\begin{thm} [{Precompactness in $\mbM$}]  \label{2. thm: precompactness in M}
  Fix a sequence $((S_{n}, d^{n}, \rho_{n}, \mu_{n}, \pi'_{n}))_{n \geq 1}$ of elements of $\mbM$.
  Then,
  the sequence $((S_{n}, d^{n}, \rho_{n}, \pi'_{n}))_{n \geq 1}$ is precompact 
  if and only if 
  the conditions \ref{2. thm item: precompactness in M_L, spaces are precompact},
  \ref{2. thm item: precompactness in M_L, values of processes are precompact} 
  and \ref{2. thm item: precompactness in M_L, equicontinuity of processes} 
  of Theorem \ref{2. thm: precompactness in M_L} are satisfied.
\end{thm}

\begin{thm} [{Tightness in $\mbM$}]  \label{2. thm: tightness in M}
  For each $n \in \mathbb{N}$,
  let $(S_{n}, d^{n}, \rho_{n}, \mu_{n}, \pi'_{n})$ 
  be a random element of $\mbM$.
  Then,
  the sequence $((S_{n}, d^{n}, \rho_{n}, \pi'_{n}))_{n \geq 1}$ is tight
  if and only if 
  the conditions \ref{2. thm item: tightness in M_L, spaces are tight},
  \ref{2. thm item: tightness in M_L, values of processes are precompact} 
  and \ref{2. thm item: tightness in M_L, equicontinuity of processes} 
  of Theorem \ref{2. thm: tightness in M_L}
  are satisfied.
\end{thm}


\section{Uniform continuity of stochastic processes} \label{sec: uniform continuity of stochastic processes}

In this section,
we provide a version of the Kolmogorov-Chentsov continuity theorem (cf.\ \cite[Theorem 3.23]{Kallenberg_02_Foundations}).
One can find similar results and detailed arguments for random fields on $\mbR^{m}$ in  \cite{Potthoff_09_SampleII}.
The difference of our result is that
the condition we assume for an index set of a stochastic process is weaker than that of \cite{Potthoff_09_SampleII},
and moreover, we provide a quantitative estimate for the continuity of a process.

Let $(F,R)$ be a compact metric space
and $(M,d)$ be a separable metric space.
Suppose that $X=(X(x))_{x \in F}$ is a family of random elements of $M$ 
built on a probability space equipped with a probability measure $P$.
Consider the following assumption.

\begin{assum} \label{assum: functions for tail bound for the rv}
  There exist non-decreasing functions $r,q: (0, \infty) \to \mbRp$ 
  satisfying 
  \begin{equation}  \label{eq: a tail bound for the rv}
    P \bigl( d(X(x), X(y)) > r(R(x,y)) \bigr)\leq q(R(x,y)), \quad \forall x,y \in F.
  \end{equation}
\end{assum}

\begin{prop} \label{prop: equicontinuity of stochastic process with compact index}
  Suppose that Assumption \ref{assum: functions for tail bound for the rv} is satisfied.
  Then there exists a dense subset $D \subseteq F$ such that, for each $n \geq 0$,
  \begin{equation}  
    P
    \left(
    \sup_{\substack{x,y \in D \\ R(x,y)<2^{-n+1}}}
    d(X(x),X(y))
    >
    2
    \sum_{k \geq n} r(2^{-k+3})
    \right)
    \leq \sum_{k\geq n} (k+1)^{2}N_{R}(F,2^{-k})^{2} q(2^{-k+3}).
  \end{equation}
  If $X$ is continuous almost-surely,
  then the supremum over $x,y \in D$ can be extended to $x, y \in F$.
\end{prop}

\begin{proof}
  For each $n \geq 0$,
  we let $D'_{n}$ be a minimal $2^{-n}$-covering of $F$ and set
  \begin{equation}
    \delta_{n} \coloneqq 2^{-n+1}, \quad 
    \delta_{n}' \coloneqq 2 \sum_{k \geq n} \delta_{k} = 2^{-n+3}, \quad 
    D_{n} \coloneqq\bigcup_{k=0}^{n} D'_{k}.
  \end{equation}
  Observe that the increasing sequence $(D_{n})_{n}$ satisfies the following condition:
  \begin{enumerate} [label=(U)]
    \item \label{cond: chaining-set condition}
      For any $n \geq 0$ and $x \in D_{n+1}$, there exists a $y \in D_{n}$ such that $R(x,y)\leq \delta_{n+1}$.
  \end{enumerate}
  We define 
  \begin{equation}
    \pi_{n} 
    \coloneqq 
    \{ (x,y) \in D_{n} \times D_{n}: R(x,y) \leq \delta_{n}'\}.
  \end{equation}
  If $(x,y) \in \pi_{n}$,
  then by monotonicity,
  we have $r(R(x,y)) \leq r(\delta'_{n})$ and $q(R(x,y)) \leq q(\delta'_{n})$.
  Thus using \eqref{eq: a tail bound for the rv},
  we obtain that
  \begin{equation}
    \begin{split}
      P\left(\max_{(x,y) \in \pi_{n}} d(X(x),X(y)) > r(\delta'_{n})\right)
      &\leq \sum_{(x,y) \in \pi_{n}} P\left(d(X(x),X(y)) > r(R(x,y))\right) \\
      &\leq  \sum_{(x,y) \in \pi_{n}} q(R(x,y))\\
      &\leq  |\pi_{n}| q(\delta'_{n}) 
      \leq (n+1)^{2} N_{R}(F, 2^{-n})^{2} 1(\delta'_{n}),
    \end{split}
  \end{equation}
  where we use that $|\pi_{n}| \leq |D_{n}|^{2} \leq (n+1)^{2}N_{R}(F,2^{-n})^{2}$ at the last inequality.
  This immediately yields that
  \begin{equation}  \label{eq: upper bound for k geq n}
    P\left(\bigcup_{k \geq n}
    \left\{
    \max_{(x,y) \in \pi_{k}} d(X(x),X(y)) > r(\delta'_{k})
    \right\} \right)
    \leq
    \sum_{k \geq n} (k+1)^{2} N_{R}(F, 2^{-k})^{2} q(\delta'_{k}).
  \end{equation}
  Define $D \coloneqq \bigcup_{n \geq 0} D_{n}$,
  which is dense in $F$.
  Fix $n \geq 0$.
  Assume that 
  \begin{gather} \label{eq: chaining 1}
    \sup_{\substack{x,y \in D \\ R(x,y)<\delta_{n}}}
    d(X(x),X(y)) > 2\sum_{k \geq n} r(\delta'_{k}),
  \end{gather}
  and also
  \begin{gather} \label{eq: chaining 2}
    \max_{(x,y) \in \pi_{k}} d(X(x),X(y))
    \leq
    r(\delta'_{k}),\quad
    \forall k > n.
  \end{gather}
  By \eqref{eq: chaining 1},
  we can find $x, y \in D_{m}$ for some $m > n$
  such that $R(x,y)<\delta_{n}$ and $d(X(x),X(y)) > 2\sum_{k \geq n} r(\delta'_{k})$.
  By \ref{cond: chaining-set condition},
  we have $x_{i} \in D_{i},i=n,n+1, \ldots ,m$
  such that $x_{m}=x,$ and $R(x_{i},x_{i+1}) \leq \delta_{i+1}$ for $i=n, \ldots ,m-1$, i.e., $(x_{i}, x_{i+1}) \in \pi_{i+1}$.
  Similarly, we choose $(y_{i})_{i=n}^{m}$.
  Then, by \eqref{eq: chaining 2} and the triangle inequality,
  we deduce that
  \begin{align}
    d(X(x_{n}), X(y_{n}))
     & \geq
    d(X(x_{m}), X(y_{m}))
    -\sum_{i=n}^{m-1} d(X(x_{i}), X(x_{i+1}))
    -\sum_{i=n}^{m-1} d(X(y_{i}), X(y_{i+1})) \\
     & \geq 2r(\delta'_{n}).
  \end{align}
  By the choice of $(x_{i})_{i=n}^{m}$ and $(y_{i})_{i=n}^{m}$,
  we also have $(x_{n}, y_{n}) \in \pi_{n}$.
  Therefore we obtain that
  \begin{equation}
    \left\{
    \sup_{
      \substack{x,y \in D \\ R(x,y)<\delta_{n}}}
    d(X(x),X(y)) > 2\sum_{k \geq n} r(\delta'_{k})
    \right\}
    \subseteq
    \bigcup_{k \geq n}
    \left\{ \max_{(x,y) \in \pi_{k}} d(X(x),X(y)) > r(\delta'_{k}) \right\}.
  \end{equation}
  This, combined with \eqref{eq: upper bound for k geq n}, yields the desired inequality.
  The last assertion about the replacement of $D$ by $F$ is straightforward.
\end{proof}


\section{Resistance forms and local times} \label{sec: resistance}


\subsection{Resistance forms and associated processes} \label{sec: introduction to resistance metrics}

Following \cite{Croydon_18_Scaling},
in this section we recall some basic properties of resistance forms, starting with their definition.
The reader is referred to \cite{Kigami_12_Resistance} for further background.
Also,
for further study of resistance forms and their extended Dirichlet spaces,
see \cite[Section 3]{Noda_pre_Scaling}.

\begin{dfn} [{Resistance form and resistance metric, \cite[Definition 3.1]{Kigami_12_Resistance}}] 
  \label{4. dfn: resistance forms}
  Let $F$ be a non-empty set.
  A pair $(\mathcal{E}, \mathcal{F})$ is called a \textit{resistance form} on $F$ if it satisfies the following conditions.
  \begin{enumerate} [label=(RF\arabic*), leftmargin = *]
    \item
          The symbol $\mathcal{F}$ is a linear subspace of the collection of functions $\{ f : F \to \mbR \}$ 
          containing constants,
          and $\mathcal{E}$ is a non-negative symmetric bilinear form on $\mathcal{F}$
          such that $\mathcal{E}(f,f)=0$ if and only if $f$ is constant on $F$.
    \item \label{4. dfn cond: the quotient space of resistance form is Hilbert}
          Let $\sim$ be the equivalence relation on $\mathcal{F}$ defined by saying $f \sim g$ if and only if $f-g$ is constant on $F$.
          Then $(\mathcal{F}/\sim, \mathcal{E})$ is a Hilbert space.
    \item
          If $x \neq y$, then there exists an $f \in \mathcal{F}$ such that $f(x) \neq f(y)$.
    \item \label{4. dfn cond: resistance forms condition 4}
          For any $x, y \in F$,
          \begin{equation} \label{eq: definition of resistance metric}
            R(x,y)
            \coloneqq
            \sup
            \left\{
            \frac{|f(x) - f(y)|^{2}}{\mathcal{E}(f,f)}
            :
            f \in \mathcal{F},\
            \mathcal{E}(f,f) > 0
            \right\}
            < \infty.
          \end{equation}     
    \item
          If $\bar{f}\coloneqq  (f \wedge 1) \vee 0$,
          then $\bar{f} \in \mathcal{F}$ and $\mathcal{E}(\bar{f}, \bar{f}) \leq \mathcal{E}(f,f)$ for any $f \in \mathcal{F}$.
  \end{enumerate}
  The function $R: F \times F \to \mbRp$ is a metric on $F$ (see \cite[Proposition 3.3]{Kigami_12_Resistance})
  and called a \textit{resistance metric} (associated with the resistance form $(\mathcal{E}, \mathcal{F})$).
\end{dfn}

For the following definition,
recall the effective resistance on an electrical network with a finite vertex set 
from \cite[Section 9.4]{Levin_Peres_17_Markov}
(see also \cite[Section 2.1]{Kigami_01_Analysis}).

\begin{dfn} [{Resistance metric, \cite[Definition 2.3.2]{Kigami_01_Analysis}}]
  \label{3. dfn: resistance metrics}
  A metric $R$ on a non-empty set $F$ is called a \textit{resistance metric}
  if and only if,
  for any non-empty finite subset $V \subseteq F$,
  there exists an electrical network $G$ with the vertex set $V$ 
  such that the effective resistance on $G$ coincides with $R|_{V \times V}$.
\end{dfn}

\begin{thm} [{\cite[Theorem 2.3.6]{Kigami_01_Analysis}}]  \label{3. thm: one-to-one correspondence of forms and metrics}
  There exists a one-to-one correspondence between resistance forms $(\mathcal{E}, \mathcal{F})$ on $F$ 
  and resistance metrics $R$ on $F$ via $R = R_{(\mathcal{E}, \mathcal{F})}$.
  In other words,
  a resistance form $(\mathcal{E}, \mathcal{F})$ is characterized by $R_{(\mathcal{E}, \mathcal{F})}$ 
  given in \ref{4. dfn cond: resistance forms condition 4}.
\end{thm}

In Assumption \ref{1. assum: deterministic spaces}\ref{1. assum item: deterministic, non-explosion condition} 
and Assumption \ref{1. assum: random spaces}\ref{1. assum item: random spaces, non-explosion},
we consider effective resistance between sets.
This is precisely defined below.

\begin{dfn} [{Effective resistance between sets}]  \label{3. dfn: effective resistance between sets}
  Fix a resistance form $(\mathcal{E}, \mathcal{F})$ on $F$
  and write $R$ for the corresponding resistance metric.
  For sets $A, B \subseteq F$,
  we define
  \begin{equation}
    R(A,B)
    \coloneqq
    \left(
    \inf\{
    \mathcal{E}(f,f):
    f \in \mathcal{F},\
    f|_{A}=1,\
    f|_{B}=0
    \}
    \right)^{-1},
  \end{equation}
  which is defined to be zero if the infimum is taken over the empty set.
  Note that by \ref{4. dfn cond: resistance forms condition 4} 
  we clearly have $R( \{x\}, \{y\}) = R(x,y)$.
\end{dfn}

By \ref{4. dfn cond: resistance forms condition 4},
we have that
\begin{equation}
  | f(x) - f(y) |^{2}
  \leq
  \mathcal{E}(f,f) R(x,y),
\end{equation}
which implies that any function in $\mathcal{F}$ is continuous on the metric space $(F,R)$.

We will henceforth assume that we have a non-empty set $F$ equipped with a resistance form $(\mathcal{E},\mathcal{F})$,
and denote the corresponding resistance metric $R$.
Furthermore,
we assume that $(F, R)$ is locally compact and separable,
and the resistance form $(\mathcal{E}, \mathcal{F})$ is regular, as described by the following.

\begin{dfn} [{Regular resistance form, \cite[Definition 6.2]{Kigami_12_Resistance}}] \label{dfn: regular resistance forms}
  Let $C_{c}(F)$ be the collection of compactly supported, continuous functions on $(F,R)$,
  and $\| \cdot \|$ be the supremum norm for functions on $F$.
  A resistance form $(\mathcal{E}, \mathcal{F})$ on $F$ is called \textit{regular}
  if and only if $\mathcal{F} \cap C_{c}(F)$ is dense in $C_{c}(F)$ with respect to $\| \cdot \|$.
\end{dfn}

We next introduce related Dirichlet forms and stochastic processes.
First, suppose that we have a Radon measure $\mu$ of full support on $(F,R)$.
Let $\mathcal{B}(F)$ be the Borel $\sigma$-algebra on $(F,R)$
and $\mathcal{B}^{\mu}(F)$ be the completion of $\mathcal{B}(F)$ with respect to $\mu$.
Two extended real-valued functions are said to be $\mu$-equivalent
if they coincide outside a $\mu$-null set.
The $L^{2}$-space $L^{2}(F,\mu)$ consists of $\mu$-equivalence classes of square-integrable $\mathcal{B}^{\mu}(F)$-measurable extended real-valued functions on $F$.
Now, we define a bilinear form $\mathcal{E}_{1}$ on $\mathcal{F} \cap L^{2}(F, \mu)$
(where we regard $\mathcal{F}$ as a subspace of $L^{2}(F, \mu)$)
by setting
\begin{equation}
  \mathcal{E}_{1}(f,\, g)
  \coloneqq
  \mathcal{E}(f,\, g)
  +
  \int_{F}fg\, d\mu.
\end{equation}
Then $(\mathcal{F} \cap L^{2}(F, \mu), \mathcal{E}_{1})$ is a Hilbert space (see \cite[Theorem 2.4.1]{Kigami_01_Analysis}).
We write $\mathcal{D}$ to be the closure of $\mathcal{F} \cap C_{c}(F)$ with respect to $\mathcal{E}_{1}$.
Under the assumption that $(\mathcal{E},\mathcal{F})$ is regular,
we then have from \cite[Theorem 9.4]{Kigami_12_Resistance}
that $(\mathcal{E}, \mathcal{D})$ is a regular Dirichlet form on $L^{2}(F,\mu)$
(see \cite{Fukushima_Oshima_Takeda_11_Dirichlet} for the definition of a regular Dirichlet form).
Moreover, standard theory gives us the existence of an associated Hunt process $((X_{t} )_{t \geq 0}, (P_{x} )_{x \in F} )$
(e.g. \cite[Theorem 7.2.1]{Fukushima_Oshima_Takeda_11_Dirichlet}).
Note that such a process is, in general, only specified uniquely for starting points outside a set of zero capacity.
However, in this setting,
every point has strictly positive capacity (see \cite[Theorem 9.9]{Kigami_12_Resistance}),
and so the process is defined uniquely everywhere.

\begin{rem}
  In \cite[Chaper 9]{Kigami_12_Resistance},
  in addition to the above assumptions,
  $(F, R)$ is assumed to be complete but it is easy to remove this assumption.
\end{rem}

The extended Dirichlet space $\mathcal{D}_{e}$ is the totality of $\mu$-equivalence classes of $\mathcal{B}^{\mu}(F)$-measurable functions $f$ on $F$
such that $|f| < \infty$, $\mu$-a.e.\
and there exists an $\mathcal{E}$-Cauchy sequence $(f_{n})_{n \geq 0}$ in $\mathcal{D}$ with $f_{n}(x) \to f(x)$, $\mu$-a.e.\
(see \cite[Definition 1.1.4]{Chen_Fukushima_12_Symmetric}).
For $(F,R,\rho,\mu) \in \mathbb{F}$,
it is an assumption that $(\mathcal{E}, \mathcal{D})$ is recurrent.
We recall that a Dirichlet form $(\mathcal{E}, \mathcal{D})$ being recurrent is equivalent to
$1 \in \mathcal{D}_{e}$ and $\mathcal{E}(1,1)=0$ both holding (see \cite[Theorem 1.6.3]{Fukushima_Oshima_Takeda_11_Dirichlet}).
In the present setting, the recurrence of a Dirichlet form is characterized by a resistance form as follows.

\begin{lem} [{\cite[Lemma 2.3]{Croydon_18_Scaling}}] \label{lem: characterization of recurrence}
  In the above setting,
  the associated regular Dirichlet form $(\mathcal{E}, \mathcal{D})$ is recurrent
  if and only if $\displaystyle \lim_{r \to \infty} R(\rho, B_{R} (\rho, r)^{c}) =\infty$ 
  for some (or equivalently, any) $\rho \in F$.
\end{lem}

\begin{rem} \label{4. rem: recurrence implies regularity}
  By \cite[Corollary 3.22]{Noda_pre_Scaling},
  if a resistance metric satisfies $\displaystyle \lim_{r \to \infty} R(\rho, B_{R} (\rho, r)^{c}) =\infty$
  for some (or equivalently, any) $\rho \in F$,
  then the corresponding resistance form is regular.
\end{rem}


\subsection{Joint continuity of local times}
In this section,
we give a sufficient condition for the existence of jointly continuous local times of a Hunt process associated with a resistance form.
Let $(\mathcal{E},\mathcal{F})$ be a regular resistance form on a non-empty set $F$ and $R$ be the corresponding resistance metric.
We assume that $(F, R)$ is separable and locally compact.
Let $\mu$ be a Radon measure of full support on $(F, R)$
and $(\mathcal{E}, \mathcal{D})$ be the regular Dirichlet form associated with $(F, R, \mu)$.
Let $(X=(X_{t})_{t \geq 0},(P_{x} )_{x \in F})$ be the associated Hunt process.

For discussing the joint continuity of local times,
we begin by introducing the $\alpha$-potential density, following \cite{Barlow_98_Diffusions}.
In particular, as in \cite[Theorem 7.20]{Barlow_98_Diffusions},
one can check that the Hunt process has a continuous $\alpha$-potential density with respect to $\mu$.
(NB.\ in \cite[Theorem 7.20]{Barlow_98_Diffusions}, the space is assumed to be compact, but it is not difficult to remove this assumption.)

\begin{lem} \label{lem: continuous potential density of a resistance form}
  The Hunt process has a continuous and symmetric $\alpha$-potential density $u_{\alpha}(x,y)$ with respect to $\mu$ for each $\alpha > 0$
  such that, for any $x \in F$, $u_{\alpha}(x, \cdot) \in \mathcal{D}$ and
  \begin{equation}
    \mathcal{E}_{\alpha}(u_{\alpha}(x,\cdot),f)=f(x)
  \end{equation}
  for any $f \in \mathcal{D}$,
  where $\mathcal{E}_{\alpha}(g,h)=\mathcal{E}(g,h)+\alpha \int gh\,  d\mu$ for each $g,h \in \mathcal{D}$.
  Moreover, it holds that
  \begin{equation} \label{eq: inequality for alpha potential density}
    (u_{\alpha}(x,y)-u_{\alpha}(x,z))^{2}
    \leq
    u_{\alpha}(x,x)R(y,z)
  \end{equation}
  for any $x,y,z \in F$.
\end{lem}

We recall the definition of local times.
Let $(\Omega, \mathcal{F})$ be the measurable space where the probability measures $(P_{x})_{x \in F}$ are defined.
We denote the minimum completed admissible filtration of $X$ by $(\mathcal{F}_{t})_{t \geq 0}$,
the family of the translation (shift) operators for $X$ by $(\theta_{t})_{t \geq 0}$
and the lifetime of $X$ by $\zeta$
(see \cite{Fukushima_Oshima_Takeda_11_Dirichlet} for these definitions).

\begin{dfn} [{PCAF and local time}]
  A non-decreasing, continuous, $(\mathcal{F}_{t})_{t \geq 0}$-adapted process $A = (A_{t})_{t \geq 0}$ on $(\Omega, \mathcal{F})$
  is called a \textit{positive continuous additive functional (PCAF)} of $X$ 
  if for all $x \in F$ it holds $P_{x}$-a.s.\ that $A_{0} = 0$, $A_{t} = A_{\zeta}$ for all $t \geq \zeta$ 
  and $A_{s+t} = A_{s} + A_{t} \circ \theta_{s}$ for all $s,t \geq 0$.
  A PCAF $A = (A_{t})_{t \geq 0}$ of $X$ is called a \textit{local time} of $X$ at $x \in F$ 
  if $P_{x}(R_{A} = 0) = 1$ and $P_{y}(R_{A} = 0) = 0$ for all $y \neq x$,
  where we set $R_{A}(\omega) \coloneqq \inf \{t \geq 0 : A_{t}(\omega) > 0\}$.
\end{dfn}

\begin{rem}
  Lemma \ref{lem: continuous potential density of a resistance form} implies that the Hunt process $X$ is strongly symmetric.
  Note that, by saying $X$ is strongly symmetric,
  we mean that $X$ is symmetric with respect to $\mu$ and the measure $U_{\alpha}(\cdot)=U_{\alpha}(x, \cdot)$ given by
  \begin{equation}
    U_{\alpha}(x,\cdot)
    =
    \int_{0}^{\infty} e^{-\alpha t} P_{t}(x, \cdot) dt
  \end{equation}
  is absolutely continuous with respect to $\mu$, where $P_{t}$ is the transition function of $X$.
\end{rem}

Throughout the article,
we write
\begin{equation}
  \sigma_{A} \coloneqq  \inf \{t > 0 : X_{t} \in A\}
\end{equation}
for the hitting time of a set $A$ by $X$, and abbreviate $\sigma_{x} \coloneqq  \sigma_{\{x\}}$.
Since the $1$-potential density $u_{1}$ is finite,
we have that
\begin{equation}\label{eq: expectation of hitting time}
  E_{x}(e^{-\sigma_{y}})
  =\frac{u_{1}(x,y)}{u_{1}(y,y)},\quad
  \forall x, y \in F
\end{equation}
(see \cite[Theorem 3.6.5]{Marcus_Rosen_06_Markov}).
In particular,
$E_{x}(e^{-\sigma_{y}})$ is jointly measurable,
and thus, by \cite[Theorem 1]{Getoor_Kesten_72_Continuity},
there exists a local time $(L_{t}(x))_{t \geq 0}$ at each $x \in F$
such that $L=(L_{t}(x))_{t \geq 0,\, x \in F}$ is jointly measurable.
Joint continuity of local times of a strongly symmetric Markov process was studied intensively in \cite{Marcus_Rosen_92_Sample},
and a strong connection between sample path properties of local times and those of Gaussian processes was obtained.
Let $(G(y))_{y \in F}$ be a mean zero Gaussian process with covariance
\begin{equation} \label{eq: definition of corresponding Gaussian process}
  E(G(x)G(y))=u_{1}(x,y),\quad
  \forall x,y \in F.
\end{equation}
Then a psudometric $d_{G}$ on $F$ is defined by setting
\begin{equation} \label{eq: metric for Gaussian process}
  d_{G}(x,y)
  =
  (E(G(x)-G(y))^{2})^{1/2}
  =
  (u_{1}(x,x)+u_{1}(y,y)-2u_{1}(x,y))^{1/2}.
\end{equation}
By \eqref{eq: expectation of hitting time} (see also \cite[Lemma 3.6]{Marcus_Rosen_92_Sample}),
we have $u_{1}(x,y) < u_{1}(x,x) \wedge u_{1}(y,y)$ for $x \neq y$,
which implies that $d_{G}$ is a metric on $F$.
Now we have two metrics $R$ and $d_{G}$ on $F$, and the following result concerns the topologies on $F$ induced by these metrics.

\begin{lem} \label{lem: equivalence of R and dG}
  The identity map $\mathrm{id}_{F} : (F,R) \to (F,d_{G})$ is a homeomorphism.
\end{lem}

\begin{proof}
  Let $(U_{n})_{n}$ be an increasing sequence of open subsets of $(F, R)$
  such that the closure $\overline{U}_{n}$ with respect to $R$ is compact in $(F, R)$.
  It is enough to show
  that the identity map $\mathrm{id}_{\overline{U}_{n}} : (\overline{U}_{n}, R|_{\overline{U}_{n}}) \to (\overline{U}_{n}, d_{G}|_{\overline{U}_{n}})$
  is a homeomorphism, where $R|_{\overline{U}_{n}}$ and $d_{G}|_{\overline{U}_{n}}$ are the restrictions of $R$ and $d_{G}$ to $\overline{U}_{n}$, respectively.
  Set $c_{n}\coloneqq \sup_{x \in \overline{U}_{n}} u_{1}(x,x)^{1/4} <\infty$.
  By \eqref{eq: inequality for alpha potential density},
  we have
  \begin{equation} \label{eq: relation of two metrics for Gaussian process}
    d_{G}(x,y)
    \leq (u_{1}(x,x)-u_{1}(x,y))^{1/2}+ (u_{1}(y,y)-u_{1}(x,y))^{1/2}
    \leq 2c_{n}R(x,y)^{1/4}
  \end{equation}
  for all $x,y \in \overline{U}_{n}$.
  By \eqref{eq: relation of two metrics for Gaussian process}, $\mathrm{id}_{\overline{U}_{n}}$ is continuous.
  Since $(\overline{U}_{n}, R|_{\overline{U}_{n}})$ is a compact space and $(\overline{U}_{n}, d_{G}|_{\overline{U}_{n}})$ is a Hausdorff space,
  $\mathrm{id}_{\overline{U}_{n}}$ is a homeomorphism, which completes the proof.
\end{proof}

By \cite[Theorem 2]{Marcus_Rosen_92_Sample},
the local time $L$ is jointly continuous on $\mbRp \times F$ almost-surely
if and only if $G$ is continuous on $(F,R)$ almost-surely.
Combining this with Lemma \ref{lem: equivalence of R and dG}, we obtain the following result.

\begin{thm} \label{thm: equivalence of continuity of local times and a Gaussian process}
  The local time $L$ is jointly continuous on $\mbRp \times F$ almost-surely
  if and only if the Gaussian process $G$ defined by \eqref{eq: definition of corresponding Gaussian process} is continuous almost-surely on $(F, R)$,
  which is equivalent to saying that it is continuous almost-surely on $(F, d_{G})$,
  where $d_{G}$ is given by \eqref{eq: metric for Gaussian process}.
\end{thm}

\begin{rem}
  Whenever we say that the local time $L$ is jointly continuous almost-surely,
  we mean that
  we can find a stochastic process $\bar{L} = (\bar{L}_{t}(y))_{t \geq 0, y \in F}$
  that is jointly continuous on $\mbRp \times F,\ P_{x}$-a.s.\
  for all $x \in F$
  and which satisfies
  \begin{equation}
    \bar{L}_{t}(y)=L_{t}(y), \quad
    \forall t \in \mbRp, \quad
    P_{x} \text{-a.s.}
  \end{equation}
  (Note that $(\bar{L}_{t}(x))_{t \geq 0}$ is also a local time of $X$ at $x$.)
  Also, whenever we say that the Gaussian process $G$ is continuous almost-surely,
  we mean that we can find a stochastic process $\bar{G}=(\bar{G}(x))_{x \in F}$
  such that $\bar{G}$ is continuous on $F$ almost-surely and $G(x)=\bar{G}(x)$ almost-surely for each $x \in F$.
\end{rem}

Necessary and sufficient conditions for the continuity of the Gaussian process $G$ with respect to $d_{G}$ were obtained by Talagrand \cite{Talagrand_87_Regularity},
and Theorem \ref{thm: equivalence of continuity of local times and a Gaussian process} implies
that the conditions are also necessary and sufficient conditions for the joint continuity of the local time $L$.
However, for our arguments,
a simple sufficient condition in terms of metric entropy due to Dudley \cite{Dudley_67_The_sizes}
(see also \cite{Dudley_73_Sample,Marcus_Rosen_06_Markov} ) is enough.

\begin{prop} \label{prop: dudley condition}
  Let $(K_{n})_{n}$ be an increasing sequence of compact subsets of $(F,R)$ such that their union is $F$.
  Assume that
  \begin{equation} \label{eq: covering condition for dudley}
    \int_{0}^{1} (\log N_{R^{1/4}}(K_{n},r))^{1/2} dr < \infty, \quad
    \forall n.
  \end{equation}
  Then the Hunt process $X$ admits jointly continuous local times $L=(L_{t}(x))_{t \geq 0,\, x \in F}$.
  Moreover, the local times satisfy the occupation density formula,
  i.e.\ it holds that, for all $x \in F$, $t \geq 0$ and all non-negative measurable functions $f: F \to \mbRp$,
  \begin{equation} \label{eq: the occupation density formula}
    \int_{0}^{t} f(X_{s})ds = \int_{F}f(y) L_{t}(y) \mu(dy), \quad
    P_{x} \text{-a.s.}
  \end{equation}
\end{prop}

\begin{proof}
  By Theorem \ref{thm: equivalence of continuity of local times and a Gaussian process} and \cite[Theorem 6.1.2]{Marcus_Rosen_06_Markov},
  the Hunt process admits jointly continuous local times if it holds that
  \begin{equation} \label{eq: covering condition wrt dG}
    \int_{0}^{1} (\log N_{d_{G}}(K_{n},r))^{1/2} dr < \infty, \quad
    \forall n.
  \end{equation}
  Set $c_{n}=\sup_{x \in K_{n}} u_{1}(x,x)^{1/4} <\infty$.
  Then the inequality \eqref{eq: relation of two metrics for Gaussian process} holds for all $x,y \in K_{n}$.
  This yields that $N_{d_{G}}(K_{n},2c_{n}r) \leq N_{R^{1/4}}(K_{n},r)$.
  Therefore if the condition \eqref{eq: covering condition for dudley} is satisfied,
  then the condition \eqref{eq: covering condition wrt dG} is satisfied,
  and thus the Hunt process admits jointly continuous local times.
  The second part of the assertion follows from \cite[Theorem 3.7.1]{Marcus_Rosen_06_Markov}.
\end{proof}

\begin{cor} \label{cor: joint continuity of local times with sum condition}
  Let $(K_{n})_{n}$ be an increasing sequence of compact subsets of $(F,R)$ such that their union is $F$.
  If, for each $n$,
  there exists $\alpha_{n} \in (0,1/2)$ such that
  \begin{equation}\label{eq: covering condition for mine}
    \sum_{k} N_{R}(K_{n}, 2^{-k})^{2} \exp (-2^{\alpha_{n}k})
    <
    \infty,
  \end{equation}
  then the Hunt process admits jointly continuous local times satisfying the occupation density formula \eqref{eq: the occupation density formula}.
\end{cor}

\begin{proof}
  From \eqref{eq: covering condition for mine}, we deduce that
  \begin{equation}
    \begin{split}
      \infty
      &>
      \sum_{k \geq 1} N_{R}(K_{n}, 2^{-k})^{2} \exp (-2^{\alpha_{n} k}) \\
      &=
      \sum_{k \geq 1} \int_{2^{-k}}^{2^{-k+1}} N_{R}(K_{n}, 2^{-k})^{2}
      \exp (-2^{\alpha_{n} k}) 2^{k} dr\\
      &\geq
      \sum_{k \geq 1} \int_{2^{-k}}^{2^{-k+1}} N_{R}(K_{n}, r)^{2}
      \exp (-(2r^{-1})^{\alpha_{n}}) r^{-1} dr\\
      &=
      \int_{0}^{1} N_{R}(K_{n}, r)^{2}
      \exp (-(2r^{-1})^{\alpha_{n}}) r^{-1} dr\\
      &=
      \int_{0}^{1} N_{R^{1/4}}(K_{n}, u)^{2}
      \exp (-(2u^{-4})^{\alpha_{n}}) 4u^{-1} du.
    \end{split}
  \end{equation}
  Observe that there exists a constant $a > 0$ such that $a ((\log x) \lor 0)^{1/2} \leq x$ for all $x >0$.
  Using this inequality yields that
  \begin{equation} \label{eq: int of log plus g is finite}
    a \int_{0}^{1} ((2\log N_{R^{1/4}}(K_{n}, u)+g(u)) \lor 0)^{1/2}\ du
    <
    \infty,
  \end{equation}
  where we set $g(u)=-(2u^{-4})^{\alpha_{n}}+\log (4u^{-1})$.
  Note that
  \begin{equation} \label{eq: int of g is finite}
    \int_{0}^{1} |g(u)|^{1/2} du
    <
    \infty
  \end{equation}
  since $\alpha_{n} \in (0, 1/2)$.
  By the inequalities \eqref{eq: int of log plus g is finite}, \eqref{eq: int of g is finite} and the triangle inequality,
  we obtain that
  \begin{equation}
    \begin{split}
      \int_{0}^{1} (2\log N_{R^{1/4}}(K_{n}, u))^{1/2} du
      &=
      \int_{0}^{1} (2\log N_{R^{1/4}}(K_{n}, u) \lor 0)^{1/2} du \\
      &\leq
      \int_{0}^{1} ((2\log N_{R^{1/4}}(K_{n}, u)+g(u)) \lor 0
      +|g(u)|)^{1/2} du \\
      &\leq
      \int_{0}^{1} ((2\log N_{R^{1/4}}(K_{n}, u)+g(u)) \lor 0)^{1/2}du
      +\int_{0}^{1} |g(u)|^{1/2} du \\
      &<
      \infty.
    \end{split}
  \end{equation}
  Therefore the condition \eqref{eq: covering condition for dudley} in Proposition \ref{prop: dudley condition} is satisfied.
\end{proof}

\begin{rem} \label{rem: difference of dudley and mine}
  The conditions \eqref{eq: covering condition for dudley} and \eqref{eq: covering condition for mine} look quite similar
  and actually when one considers the case $N_{R}(K_{n}, r)=\exp(r^{-\beta}), r>0$,
  then the left-hand sides of \eqref{eq: covering condition for dudley} and \eqref{eq: covering condition for mine} are each finite
  if and only if $\beta < 1/2$.
  However the condition \eqref{eq: covering condition for dudley} is slightly weaker because if $N_{R}(K_{n},r)=\exp(f(r))$,
  where $f$ is given by
  \begin{equation}
    f(r)=r^{\frac{1}{\sqrt{k}}-\frac{1}{2}} \quad \text{if} \quad 2^{-k-1} < r \leq 2^{-k},
  \end{equation}
  then one can check that \eqref{eq: covering condition for dudley} holds,
  but \eqref{eq: covering condition for mine} fails.
\end{rem}


\subsection{Equicontinuity of local times} \label{sec: equicontinuity of local times}

In this section, we study equicontinuity of local times,
which plays a crucial role in the proof of the main results of this article.
We proceed in the same setting as the previous section.

\begin{prop} \label{prop: stochastic continuity in non-cpt case}
  If $(F, R)$ is compact,
  then it holds that 
  \begin{equation} \label{4. eq: pairwise continuity inequality of local times in cpt case}
    P_{z}
    \left(
    \sup_{0 \leq t \leq T}
    |L_{t}(x)-L_{t}(y)|
    >
    2\delta
    \right)
    \leq
    2 e^{T} \exp \left(-\frac{\delta}{\sqrt{2\mu (F) R(x,y)}}\right)
  \end{equation}
  for any $\delta > 0, T \geq 0$ and $x,y,z \in F$.
\end{prop}

\begin{proof}
  By \cite[Chapter\, V. Proposition 3.28]{Blumenthal_Getoor_68_Markov},
  we have that,
  for any $x,y,z \in F,\, T \geq 0$ and $\delta >0$,
  \begin{equation} \label{4. eq: local time inequality by Blumenthal and Getoor}
    P_{z}
    \left(
    \sup_{0 \leq t \leq T}
    |L_{t}(x)-L_{t}(y)|
    >
    2\delta
    \right)
    \leq
    2 e^{T} e^{-\delta / \gamma},
  \end{equation}
  where we set $\gamma \coloneqq  (1-E_{x}(e^{-\sigma_{y}})E_{y}(e^{-\sigma_{x}}))^{1/2}$.
  By the inequality $e^{-x} \geq 1 -x$,
  it follows that 
  \begin{equation}
    \theta(x,y)
    \coloneqq  
    1-E_{x}(e^{-\sigma_{y}})
    \leq 
    E_{x}(\sigma_{y}).
  \end{equation}
  Then, the commute time identity (see \cite[Lemma 2.4]{Croydon_18_Scaling}) yields that 
  \begin{equation}  \label{4. eq: estimate of gamma}
    \gamma =(\theta(x,y)+\theta(y,x)-\theta(x,y) \theta(y,x))^{1/2}
    \leq
    (\theta(x,y)+\theta(y,x))^{1/2}
    \leq
    \sqrt{2 \mu(F) R(x,y)}.
  \end{equation}
  Combining \eqref{4. eq: local time inequality by Blumenthal and Getoor} with \eqref{4. eq: estimate of gamma},
  we obtain \eqref{4. eq: pairwise continuity inequality of local times in cpt case}.
\end{proof}

Now it is possible to show the main results of this section.

\begin{thm} \label{thm: tightness inequality of local times in cpt case}
  Assume that $(F, R)$ is compact 
  and $X$ admits jointly continuous local times $L=(L_{t}(x))_{t \geq 0,\, x \in F}$.
  Then for every $\alpha \in (0,1/2)$,
  there exists a constant $c_{\alpha} \in (0, \infty)$ depending only on $\alpha$ such that
  \begin{align}
    &P_{z}
    \left(
    \sup_{\substack{ x,y \in F \\ R(x,y) < 2^{-n+1}}}
    \sup_{0 \leq t \leq T}
    |L_{t}(x)-L_{t}(y)|
    >
    c_{\alpha} \sqrt{\mu(F)}\, 2^{-(\frac{1}{2}-\alpha)n}
    \right) \\
    \leq
    &2e^{T}
    \sum_{k \geq n} (k+1)^{2} N_{R}(F,2^{-k})^{2} \exp \left(-2^{\alpha(k-3)}
    \right)
  \end{align}
  for any $z \in F,\, T \geq 0,\, n \geq 0$.
\end{thm}

\begin{proof}
  We write $d^{C([0,T])}$ for the uniform metric on $C([0,T],\mbR)$.
  We have that
  \begin{equation}
    d^{C([0,T])}(L_{\cdot}(x),L_{\cdot}(y))
    =
    \sup_{0 \leq t \leq T} |L_{t}(x)-L_{t}(y)|.
  \end{equation}
  Setting $\delta = \sqrt{2\mu(F)} R(x,y)^{\frac{1}{2} - \alpha}$ 
  in \eqref{4. eq: pairwise continuity inequality of local times in cpt case} yields that
  \begin{equation}
    P_{z}
    \left(
    d^{C([0,T])} (L_{\cdot}(x),L_{\cdot}(y))
    >
    2\sqrt{2\mu(F)} R(x,y)^{\frac{1}{2} - \alpha}
    \right)
    \leq
    2e^{T} \exp \left( -R(x,y)^{-\alpha} \right).
  \end{equation}
  Therefore we can apply Proposition \ref{prop: equicontinuity of stochastic process with compact index}
  to random elements $(L_{\cdot}(x))_{x \in K}$ of $C([0,T],\mbR)$
  with functions $r(u)= 2\sqrt{2\mu(F)} u^{\frac{1}{2}-\alpha}$ and $q(u)=2e^{T} \exp(-u^{-\alpha})$
  to obtain that, for some dense subset $D$ in $F$,
  \begin{align}
    &P_{z}
    \left(
    \sup_{\substack{ x,y \in D \\ R(x,y) < 2^{-n+1}}}
    d^{C([0,T])} (L_{\cdot}(x),L_{\cdot}(y))
    >
    c_{\alpha} \sqrt{\mu(F)} 2^{-(\frac{1}{2}-\alpha)n}
    \right) \\
    \leq
    &2e^{T}\sum_{k \geq n} (k+1)^{2} N_{R}(F,2^{-k})^{2}
    \exp \left(-2^{\alpha(k-3)} \right),
  \end{align}
  where $c_{\alpha} \in (0,\infty)$ is a constant depending on $\alpha$.
  By the joint continuity of the local times, we can replace $D$ by $F$ in the above inequality.
\end{proof}


\section{Proof of Theorem \ref{1. thm: main result for deterministic spaces}}  
\label{sec: proof of the main theorem for deterministic spaces}

The first assertion of Theorem \ref{1. thm: main result for deterministic spaces} follows from the next proposition.

\begin{prop} \label{5. prop: limit belongs to vF}
  Assume that a sequence $G_{n}=(F_{n},R_{n},\rho_{n},\mu_{n})$ in $\check{\mathbb{F}}$ converges to $G=(F,R,\rho,\mu)$ in $\mathbb{F}$
  in the local Gromov-Hausdorff-vague topology and,
  for every $r >0$,
  there exists $\alpha_{r} \in (0, 1/2)$ such that
  \begin{equation}
    \liminf_{n \to \infty}
    \sum_{k} N_{R_{n}}(F_{n}^{(r)},2^{-k})^{2} \exp(-2^{\alpha_{r} k})
    <
    \infty.
  \end{equation}
  It is then the case that $G$ belongs to $\check{\mathbb{F}}$.
\end{prop}

\begin{proof}
  Choose $r' > r$ so that $G_{n}^{(r')}$ converges to $G^{(r')}$ in the Gromov-Hausdorff-Prohorov topology.
  By \cite[Theorem 3.12]{Noda_pre_Metrization},
  we can find $r_{k} \in [2^{-k-2},2^{-k-1}]$ such that
  \begin{equation}
    \lim_{n \to \infty}N_{R^{(r')}_{n}}(F^{(r')}_{n},r_{k})
    =
    N_{R^{(r')}}(F^{(r')},r_{k}).
  \end{equation}
  Choose $\alpha_{r'}' \in (\alpha_{r'}, 1/2)$.
  Using that $N_{R^{(r)}}(F^{(r)}, 2^{-k}) \leq N_{R^{(r')}}(F^{(r')}, r_{k})$ and Fatou's lemma,
  we deduce that
  \begin{align}
    \sum_{k \geq 1}
    N_{R^{(r)}}(F^{(r)},2^{-k})^{2}
    \exp(-2^{\alpha_{r'}' k })
     & \leq
    \sum_{k \geq 1}
    N_{R^{(r')}}(F^{(r')},r_{k})^{2}
    \exp(-2^{\alpha_{r'} ' k }) \notag \\
     & \leq
    \liminf_{n \to \infty}
    \sum_{k \geq 1 }
    N_{R^{(r')}_{n}}(F^{(r')}_{n},r_{k})^{2}
    \exp(-2^{\alpha_{r'} ' k }) \notag \\
     & \leq
    \liminf_{n \to \infty}
    \sum_{k \geq 3}
    N_{R^{(r')}_{n}}(F^{(r')}_{n},2^{-k})^{2}
    \exp(-2^{\alpha_{r'} ' (k-2) })
    <
    \infty.
  \end{align}
\end{proof}

We recall important results from \cite{Croydon_18_Scaling}.

\begin{lem}[{\cite[Lemma 4.2]{Croydon_18_Scaling}}] \label{5. lem: exit time estimate by croydon}
  For every $G=(F,R,\rho,\mu) \in \mathbb{F},\, \delta \in (0, R(\rho, B_{R}(\rho,r)^{c}))$ and $T \geq 0$,
  it holds that
  \begin{equation}
    P_{\rho}^{G}
    (\sigma_{B_{R}(\rho,r)^{c}} \leq T)
    \leq\frac{4\delta}{R(\rho,B_{R}(\rho,r)^{c})}
    +
    \frac{4T}
    {\mu(B_{R}(\rho,\delta))(R(\rho,B_{R}(\rho,r)^{c})-\delta)}.
  \end{equation}
\end{lem}

\begin{lem} \label{5. lem: uniform exit time estimate}
  Under Assumption \ref{1. assum: deterministic spaces}, 
  it holds that 
  \begin{equation}  \label{5. lem eq: uniform exit time estimate}
    \lim_{r \to \infty}
    \limsup_{n \to \infty}
    P_{\rho_{n}}^{G_{n}} \left( \sigma_{B_{R_{n}}(\rho, r)^{c}} \leq T \right) 
    = 0,
    \quad 
    \forall T >0.
  \end{equation}
\end{lem}

\begin{proof}
  Using the convergence $G_{n} \to G$ in the local Gromov-Hausdorff-vague topology 
  and Theorem \ref{2. thm: characterization of convergence in the local. GHV},
  we deduce that $\liminf_{n \to \infty} \mu_{n}(B_{R_{n}}(\rho_{n}, 1)) \geq \mu(B_{R}(\rho, 1)) > 0$.
  This, combined with Lemma \ref{5. lem: exit time estimate by croydon} 
  and Assumption \ref{1. assum: deterministic spaces}\ref{1. assum item: deterministic, non-explosion condition},
  yields the desired result.
\end{proof}

\begin{prop}  \label{5. prop: convergence of processes in M}
  If Assumption \ref{1. assum: deterministic spaces}\ref{1. assum item: deterministic, convergence of spaces} 
  is satisfied and 
  \eqref{5. lem eq: uniform exit time estimate} holds,
  then 
  it holds that 
  \begin{equation}
    \left( F_{n}, R_{n}, \rho_{n}, \mu_{n}, P_{\rho_{n}}^{G_{n}}(X_{G_{n}} \in \cdot) \right) 
    \to 
    \left( F, R, \rho, \mu, P_{\rho}^{G}(X_{G} \in \cdot) \right)
  \end{equation}
  as elements in $\mbM$
  (recall this space from Section \ref{sec: the space M_L}).
  In particular,
  under Assumption \ref{1. assum: deterministic spaces}\ref{1. assum item: deterministic, convergence of spaces}
  and \ref{1. assum item: deterministic, non-explosion condition},
  the above convergence holds.
\end{prop}

\begin{proof}
  By Theorem \ref{2. thm: characterization of convergence in the local. GHV},
  there exist a boundedly-compact metric space $(Z, d, \rho)$ 
  and root-and-distance-preserving maps $f_{n}: F_{n} \to Z$ and $f: F \to Z$ 
  such that $f_{n}(F_{n}) \to f(F)$ in the local Hausdorff topology in $Z$ and
  $\mu_{n} \circ f_{n}^{-1} \to \mu \circ f^{-1}$ vaguely as measures on $Z$.
  By Lemma \ref{5. lem: uniform exit time estimate},
  if Assumption \ref{1. assum: deterministic spaces}\ref{1. assum item: deterministic, non-explosion condition} is satisfied,
  then we have \eqref{5. lem eq: uniform exit time estimate}.
  In the proof of \cite[Theorem 1.2]{Croydon_18_Scaling},
  using \eqref{5. lem eq: uniform exit time estimate},
  it is proven that 
  \begin{equation}
    P_{\rho_{n}}^{G_{n}}(f_{n} \circ X_{G_{n}} \in \cdot)
    \rightarrow
    P_{\rho}^{G}(f \circ X_{G} \in \cdot)
  \end{equation}
  weakly as probability measures on $D(\mbRp,Z)$.
  Therefore, by Theorem \ref{2. thm: space M, convergence}, we obtain the desired result.
\end{proof}

For a topological space $E$,
we denote by $\mathcal{P}(E)$ the totality of Borel probability measures on $E$
which we equip with the weak topology.
Let $G=(F,R,\rho, \mu)$ be an element of $\check{\mathbb{F}}$.
Since the map
\begin{equation}
  \mathcal{P}
  \left(
    D(\mbRp, F)
    \times
    \hatC(F \times \mbRp, \mbR)
  \right)
  \ni P
  \mapsto
  (F,R,\rho, \mu, P)
  \in
  \mbM_{L}
\end{equation}
is continuous,
$\cX_{G}$ is a random element of $\mbM_{L}$.
Given a rooted boundedly-compact metric space $G'=(S, d, \rho, \mu)$
such that $\mu$ is of full support and $\delta > 0$,
we define two functions $f_{\delta}^{S} : S \times S \rightarrow  \mbRp$
and $g_{\delta}^{G'} : D(\mbRp, S) \rightarrow C(S \times \mbRp, \mbRp)$
by setting
\begin{equation}
  f_{\delta}^{S}(x,y)=0 \vee (\delta - d(x,y)),
  \quad
  g_{\delta}^{G'}(Y)(x,t)
  =
  \frac{\int_{0}^{t} f_{\delta}^{S}(x,Y_{s})\,ds}
  {\int_{S} f_{\delta}^{S}(x,y)\, \mu(dy)}.
\end{equation}
In particular,
$f_{\delta}$ approximates a delta function,
and $g_{\delta}$ is an approximation of the local time, as made precise in the next lemma.

\begin{lem} \label{5. lem: delta approximation of local times}
  For every $G=(F,R,\rho, \mu) \in \check{\mathbb{F}}$,
  $g_{\delta}^{G}(X_{G}) \to L_{G}$ in $C(F \times \mbRp, \mbRp)$
  as $\delta \downarrow 0$, $P_{\rho}^{G}$-a.s.
\end{lem}

\begin{proof}
  By the occupation density formula \eqref{eq: the occupation density formula}
  and the continuity of $f_{\delta}^{F}(x,y)$ with respect to $(\delta,x,y)$,
  we may assume that $P_{\rho}^{G}$-a.s.\
  it holds that
  \begin{equation}
    \int_{0}^{t}f_{\delta}^{F}(x,X_{s})ds
    =
    \int_{F} f_{\delta}^{F}(x,y)L_{G}(y,t) \mu(dy)
  \end{equation}
  for all $t \geq 0,\, x \in F$ and $\delta >0$.
  Using this identity,
  we obtain that
  \begin{align} 
    \sup_{x \in F^{(r)}}
    \sup_{ 0 \leq t \leq T}
    |g_{\delta}^{G}(X_{G})(x,t)-L_{G}(x,t)|
     & \leq
    \sup_{x \in F^{(r)}}
    \sup_{0 \leq t \leq T}
    \frac{\int_{F}f_{\delta}^{F}(x,y)|L_{G}(y,t)-L_{G}(x,t)| \mu(dy)}
    {\int_{F} f_{\delta}^{F}(x,y)\mu(dy)} \notag \\
     & \leq
    \sup_{0 \leq t \leq T}\,
    \sup_{\substack{(x,y) \in F^{(r+2\delta)}    \\ R(x,y) < \delta}}
    |L_{G}(y,t)-L_{G}(x,t)|
    \label{5. eq: error of delta approximation for local times}
  \end{align}
  for all $\delta,\, r,\, T >0$.
  The joint continuity of local times yields the desired result.
\end{proof}

For $\delta >0$ and $G=(F, R, \rho, \mu) \in \check{\mathbb{F}}$, 
set 
\begin{equation}
  P_{G, \delta}(\cdot)
  \coloneqq
  P_{\rho}^{G} 
  \left(
    ( X_{G}, g_{\delta}^{G}(X_{G}) ) \in \cdot
  \right), \quad
  \cX_{G, \delta} 
  \coloneqq  
  (F,R,\rho, \mu, P_{G, \delta}).
\end{equation}
The following is an immediate consequence of Lemma \ref{5. lem: delta approximation of local times}.

\begin{lem} \label{5. lem: X_G_delta convergence}
  For every $G=(F,R,\rho,\mu) \in \check{\mathbb{F}}$,
  $\cX_{G, \delta} \to \cX_{G}$ in $\mbM_{L}$ as $\delta \downarrow 0$.
\end{lem}

In what follows,
we recall that 
$\mathbb{F}_{c}$ and $\check{\mathbb{F}}_{c}$
are equipped with the Gromov-Hausdorff-Prohorov topology,
and $\check{\mathbb{F}}$ and $\mathbb{F}$ are equipped with the local Gromov-Hausdorff-vague topology.

\begin{prop} \label{5. prop: continuity of delta approximation}
  The map $\check{\mathbb{F}}_{c} \ni G \mapsto \cX_{G,\delta} \in \mathcal{P}(\mbM_{L})$ is continuous.
\end{prop}

\begin{proof}
  Assume that $G_{n}=(F_{n},R_{n},\rho_{n},\mu_{n}) \in \check{\mathbb{F}}_{c}$ 
  converges to $G=(F,R,\rho,\mu) \in \check{\mathbb{F}}_{c}$
  in the Gromov-Hausdorff-vague topology
  (and hence in the local Gromov-Hausdorff-vague topology).
  Lemma \ref{lem: characterization of recurrence}
  implies that
  the sequence $(G_{n})_{n \geq 1}$ satisfies 
  Assumption \ref{1. assum: deterministic spaces}\ref{1. assum item: deterministic, non-explosion condition}.
  Thus, by Proposition \ref{5. prop: convergence of processes in M},
  we may assume that all the spaces $(F_{n}, R_{n}, \rho_{n})$ and $(F, R, \rho)$
  are isometrically embedded into a common rooted compact metric space $(M, d^{M}, \rho_{M})$
  in such a way that $\rho_{n} = \rho = \rho_{M}$ as elements in $M$,
  $F_{n} \to F$ in the local Hausdorff topology on $M$,
  $\mu_{n} \to \mu$ vaguely as measures on $M$ 
  and $P_{\rho_{n}}^{G_{n}}(X_{G_{n}} \in \cdot) \to P_{\rho}^{G}(X_{G} \in \cdot)$ 
  as probability measures on $D(\mbRp, M)$.
  By the Skorohod representation theorem,
  we may assume that $X_{G_{n}}$ started from $\rho_{n}$ and $X_{G}$ started from $\rho$ are coupled
  so that $X_{G_{n}} \to X_{G}$ almost-surely.
  Fix $r > 0$. Choose $r' > r+1$ satisfying $\mu_{n}^{(r')} \to \mu^{(r)}$
  weakly as probability measures on $M$.
  Set 
  $\cC_{n}^{\eta} 
    \coloneqq
    \left\{
    (x,y) \in F_{n}^{(r)} \times F^{(r)}
    : d^{M}(x,y) < \eta \right\}
  $.
  Note that 
  \begin{equation}  \label{5. eq: Lipsitz continuity of delta function}
    |f_{\delta}^{M}(x, y) - f_{\delta}^{M}(x, z)| \leq d^{M}(y, z),
  \end{equation}
  We then deduce that, for any $\eta \in (0,1)$,
  \begin{align}
    &\sup_{(x,y) \in \cC_{n}^{\eta}}
    \left| 
      \int_{F_{n}} f_{\delta}^{F_{n}}(x,z)\, \mu_{n}(dz)
      -\int_{F} f_{\delta}^{F}(y,z)\, \mu(dz) 
    \right|\\
    =
    &\sup_{(x,y) \in \cC_{n}^{\eta}}
    \left| 
      \int_{M} f_{\delta}^{M}(x,z)\, \mu_{n}^{(r')}(dz)
      -\int_{M} f_{\delta}^{M}(y,z)\, \mu^{(r')}(dz) 
    \right|\\
    \leq
    &\sup_{x \in F_{n}^{(r)}}
    \left| 
      \int_{M} f_{\delta}^{M}(x,z)\, \mu_{n}^{(r')}(dz)
      -
      \int_{M} f_{\delta}^{M}(x,z)\, \mu^{(r')}(dz)
    \right|
    +
    \sup_{(x,y) \in \cC_{n}^{\eta}}
    \left|
      \int_{M} 
      \bigl( f_{\delta}^{M}(x,z) - f_{\delta}^{M}(y,z) \bigr)
      \, \mu^{(r')}(dz) 
    \right|\\
    \leq
    &\sup_{x \in M^{(r)}}
    \left| 
      \int_{M} f_{\delta}^{M}(x,z)\, \mu_{n}^{(r')}(dz)
      -
      \int_{M} f_{\delta}^{M}(x,z)\, \mu^{(r')}(dz)
    \right|
    +
    2 \eta
    \mu(F^{(r')}),
  \end{align}
  where we use \eqref{5. eq: Lipsitz continuity of delta function} at the last inequality.
  By \eqref{5. eq: Lipsitz continuity of delta function},
  the family $\{f_{\delta}^{M}(x, \cdot) \mid x \in M^{(r)}\}$ is equicontinuous.
  This, combined with $\mu_{n}^{(r')} \to \mu^{(r')}$,
  we obtain that 
  \begin{equation}  \label{5. eq: convergence of denominator}
    \lim_{\eta \to 0}
    \limsup_{n \to \infty}
    \sup_{(x,y) \in \cC_{n}^{\eta}}
    \left| 
      \int_{F_{n}} f_{\delta}^{F_{n}}(x,z)\, \mu_{n}(dz)
      -\int_{F} f_{\delta}^{F}(y,z)\, \mu(dz) 
    \right|
    =0.
  \end{equation}
  Since $X_{G_{n}} \rightarrow X_{G}$ in the usual Skorohod $J_{1}$-topology,
  there exists a strictly increasing continuous function $\lambda_{n}:\mbRp \rightarrow \mbRp$ with $\lambda(0)=0$
  such that
  \begin{gather} \label{5. eq: time-change function for tilde process}
    \sup_{t \in \mbRp}
    |\lambda_{n}(t)-t|
    \rightarrow 0,
    \quad
    \sup_{0 \leq t \leq T}
    d^{M}(X_{G_{n}}(t),X_{G}(\lambda_{n}(t)))
    \rightarrow 0,
    \quad
    \forall T \geq0
  \end{gather}
  (cf.\ \cite[Theorem 16.1]{Billingsley_99_Convergence}).
  We have that 
  \begin{align}
    &\sup_{(x, y) \in \cC_{n}^{\eta}} 
    \sup_{0 \leq t \leq T} 
    \left|
      \int_{0}^{t} f_{\delta}^{F_{n}}(x, X_{G_{n}}(s))\, ds 
      - 
      \int_{0}^{t} f_{\delta}^{F}(y, X_{G}(s))\, ds
    \right|\\
    \leq 
    &\sup_{x \in M^{(r)}} 
    \int_{0}^{T} 
    \left| 
      f_{\delta}^{M}(x, X_{G_{n}}(s)) 
      - 
      f_{\delta}^{M}(x, X_{G} \circ \lambda_{n}(s)) 
    \right| \, ds \\
    &\quad
    + 
    \sup_{(x, y) \in \cC_{n}^{\eta}} 
    \int_{0}^{T}
    \left| 
      f_{\delta}^{M}(x, X_{G} \circ \lambda_{n} (s)) 
      - 
      f_{\delta}^{M}(y, X_{G}(s)) 
    \right|\, ds\\
    \leq 
    &\,
    T \sup_{0 \leq t \leq T} d^{M}(X_{G_{n}}(s), X_{G} \circ \lambda_{n}(s)) 
    +
    \sup_{x \in M^{(r)}} 
    \int_{0}^{T}
    \left| 
      f_{\delta}^{M}(x, X_{G} \circ \lambda_{n} (s)) 
      - 
      f_{\delta}^{M}(x, X_{G}(s)) 
    \right|\, ds \\
    &\quad 
    +
    \sup_{(x, y) \in \cC_{n}^{\eta}} 
    \int_{0}^{T}
    \left| 
      f_{\delta}^{M}(x, X_{G}(s)) 
      - 
      f_{\delta}^{M}(y, X_{G}(s)) 
    \right|\, ds\\ 
    \leq 
    &\,
    2T \sup_{0 \leq t \leq T} d^{M}(X_{G_{n}}(s), X_{G} \circ \lambda_{n}(s)) 
    + 
    \eta T,
  \end{align}
  where we use \eqref{5. eq: Lipsitz continuity of delta function} at the last inequality.
  Using \eqref{5. eq: time-change function for tilde process},
  we obtain that 
  \begin{equation}  \label{5. eq: convergence of numerator}
    \lim_{\eta \to 0}
    \limsup_{n \to \infty}
    \sup_{(x, y) \in \cC_{n}^{\eta}} 
    \sup_{0 \leq t \leq T} 
    \left|
      \int_{0}^{t} f_{\delta}^{F_{n}}(x, X_{G_{n}}(s))\, ds 
      - 
      \int_{0}^{t} f_{\delta}^{F}(y, X_{G}(s))\, ds
    \right|
    = 0.
  \end{equation}
  From \eqref{5. eq: convergence of denominator} and \eqref{5. eq: convergence of numerator},
  it follows that,
  for all $r , T >0$,
  \begin{equation}
    \lim_{\eta \to 0}
    \limsup_{n \to \infty}
    \sup_{(x, y) \in \cC_{n}^{\eta}}
    \sup_{0 \leq t \leq T}
    \left|
      g_{\delta}^{G_{n}}(X_{G_{n}})(x,t)
      -
      g_{\delta}^{G}(X_{G})(y,t)
    \right|
    =0.
  \end{equation}
  Therefore,
  by Theorem \ref{2. thm: convergence in hatC},
  $g_{\delta}^{G_{n}}(X_{G_{n}})$ converges to $g_{\delta}^{G}(X_{G})$
  in $\hatC(M \times \mbRp, \mbR)$ almost-surely.
  Now the result is immediate.
\end{proof}

\begin{lem} \label{5. lem: for precompactness of local times}
  Under Assumption \ref{1. assum: deterministic spaces}, 
  it holds that
  \begin{equation}  \label{5. lem eq: for precompactness, equicontinuity of local times}
    \lim_{\delta \to 0}
    \limsup_{n \to \infty}
    P_{\rho_{n}^{(r)}}^{G_{n}^{(r)}} 
    \left( 
      \sup_{\substack{x,y \in F_{n}^{(r)},\\ R_{n}^{(r)}(x,y) < \delta }}
      \sup_{ 0 \leq t \leq T}
      |L_{G_{n}^{(r)}}(x,t) - L_{G_{n}^{(r)}}(y,t)|
      >
      \varepsilon
    \right) 
    = 0,
    \quad 
    \forall \varepsilon, r, T >0.
  \end{equation}
\end{lem}

\begin{proof}
  By Assumption  \ref{1. assum: deterministic spaces} \ref{1. assum item: deterministic, metric-entropy condition},
  we can find $\alpha \in (0,1/2)$ such that
  \begin{equation} \label{5. eq: tightness of metric entropies}
    \lim_{m \to \infty}
    \limsup_{n \to \infty}
    \sum_{k \geq m}
    (k+1)^{2} N_{R_{n}^{(r)}}(F_{n}^{(r)},2^{-k})^{2}
    \exp \left(-2^{\alpha (k-3)} \right)
    =0.
  \end{equation}
  Let $c_{\alpha} \in (0,\infty)$ be a constant
  satisfying the conditions in Theorem \ref{thm: tightness inequality of local times in cpt case} corresponding to $\alpha$.
  Then, by Theorem \ref{thm: tightness inequality of local times in cpt case},
  we deduce that
  \begin{align} \label{5. eq: equicontinuity at same time, basic inequality}
     & P_{\rho_{n}^{(r)}}^{G_{n}^{(r)}}
    \left(
      \sup_{\substack{x,y \in F_{n}^{(r)} \\ R_{n}^{(r)}(x,y)<\delta_{m}}}
      \sup_{0 \leq t \leq T}
      |L_{G_{n}^{(r)}}(x,t)-L_{G_{n}^{(r)}}(y,t)|
      >
      \varepsilon
    \right)                             \\
    \leq
    &2e^{T}
    \sum_{k \geq m}
    (k+1)^{2} N_{R_{n}^{(r)}}(F_{n}^{(r)},2^{-k})^{2}
    \exp \left(-2^{\alpha(k-3)} \right)
    +
    P_{\rho_{n}^{(r)}}^{G_{n}^{(r)}}
    \left(
      c_{\alpha} 2^{-(\frac{1}{2} - \alpha)m} \sqrt{\mu_{n}^{(r)}(F_{n}^{(r)})} \geq \varepsilon
    \right).
  \end{align}
  Assumption \ref{1. assum: deterministic spaces}\ref{1. assum item: deterministic, convergence of spaces} 
  implies that 
  $\sup_{n} \mu_{n}^{(r)}(F_{n}^{(r)}) < \infty$.
  Therefore, from \eqref{5. eq: tightness of metric entropies},
  we obtain the desired result.
\end{proof}

To approximate the laws for non-compact spaces by compact ones,
we introduce the notion of the trace of a process onto a subset.
For $G \in \check{\mathbb{F}}$,
we set $A_{G}^{(r)}$ to be the PCAF of $X_{G}$ given by $A_{G}^{(r)}(t) = \int_{0}^{t}1_{F^{(r)}}(X_{G}(s))ds$,
and $\gamma_{G}^{(r)}$ to be the right-continuous inverse of $A_{G}^{(r)}$,
i.e.\ $\gamma_{G}^{(r)}(t)\coloneqq  \inf \{ s >0 : A_{G}^{(r)}(s)>t \}$.
Then the trace of $X_{G}$ onto $F^{(r)}$ is defined by setting $\mathrm{tr}_{(r)}X_{G} \coloneqq X_{G} \circ \gamma_{G}^{(r)}$.
By \cite[Lemma 2.6]{Croydon_18_Scaling},
$\mathrm{tr}_{(r)}X_{G}$ is a strong Markov process associated with $G^{(r)}$.
In other words,
$P_{x}^{G}(\mathrm{tr}_{(r)}X_{G} \in \cdot)=P_{x}^{G^{(r)}}(X_{G^{(r)}} \in \cdot)$
as probability measures on $D(\mbRp,F^{(r)})$ for every $x \in F^{(r)}$.
This, combined with Lemma \ref{5. lem: delta approximation of local times},
ensures the existence of the almost-sure limit of $g_{\delta}^{G^{(r)}}(\mathrm{tr}_{(r)}X_{G})$ in $C(F^{(r)} \times \mbRp, \mbRp)$
under $P_{\rho}^{G}$ as $\delta \to 0$.
We write $\mathrm{tr}_{(r)}L_{G}$ for the limit.
By the construction,
we have that $P_{x}^{G}(\mathrm{tr}_{(r)}L_{G} \in \cdot)=P_{x}^{G^{(r)}}(L_{G^{(r)}} \in \cdot)$
as probability measures on $C(F^{(r)} \times \mbRp, \mbRp)$ for every $x \in F^{(r)}$.
We set
\begin{equation}
  \mathrm{tr}_{(r)}\cX_{G}
  \coloneqq (F^{(r)},R^{(r)},\rho^{(r)},\mu^{(r)},
  \mathrm{tr}_{(r)}X_{G}, \mathrm{tr}_{(r)}L_{G}).
\end{equation}
Note that $P_{\rho}^{G}( \mathrm{tr}_{(r)}\cX_{G} \in \cdot)=P_{\rho^{(r)}}^{G^{(r)}}(\cX_{G^{(r)}} \in \cdot)$
as probability measures on $\mbM_{L}$.
The following two lemmas are important for the argument of approximation.

\begin{lem} \label{5. lem: coincidence of trace and original up to exit time}
  Let $G=(F,R,\rho,\mu)$ be an element of $\check{\mathbb{F}}$.
  Then, $P_{\rho}$-a.s., it holds that
  \begin{gather}
    \mathrm{tr}_{(r)}X_{G}(t) =X_{G}(\gamma_{G}^{(r)}(t)), \quad
    \mathrm{tr}_{(r)}L_{G}(x,t) = L_{G}(x,\gamma_{G}^{(r)}(t))
  \end{gather}
  for all $t \geq 0,\, r >0$ and $x \in F^{(r)}$.
  In particular,
  on the event $\{ \sigma_{B_{R}(\rho, r)^{c}} > T \}$,
  it holds that
  \begin{gather}
    \mathrm{tr}_{(r)}X_{G}(t) =X_{G}(t),\quad
    \mathrm{tr}_{(r)}L_{G}(x,t) = L_{G}(x,t)
  \end{gather}
  for all $t \leq T$ and $x \in F^{(r)}$.
\end{lem}

\begin{proof}
  Let $\mathcal{R}$ be a countable dense set in $(0,\infty)$ 
  containing all $r \in \mathcal{R}$ such that $F^{(r)} \neq D_{R}(\rho, r)$.
  By the occupation density formula 
  and the continuity of $f_{\delta}^{F}(x,y)$ with respect to $(x, y, \delta)$,
  we deduce that $P_{\rho}$-a.s.,
  \begin{equation} \label{5. eq: trace relation, occupation density formula}
    \int_{0}^{t}
    1_{F^{(r)}}(X_{G}(s))
    f_{\delta}^{F}(x, X_{G}(s))
    ds
    =
    \int_{G}
    1_{F^{(r)}}(y) L_{G}(y,t)
    f_{\delta}^{F}(x,y)
    \mu(dy)
  \end{equation}
  for all $r \in \mathcal{R},\, t \geq 0,\, \delta>0$ and $x \in F$.
  For $r \notin \mathcal{R}$,
  since we have that $F^{(r)} = D_{R}(\rho, r)$,
  it holds that $1_{F^{r'}} \downarrow 1_{F^{(r)}}$ as $r' \downarrow r$.
  Therefore,
  it is the case that, $P_{\rho}$-a.s.,
  the occupation density formula given in \eqref{5. eq: trace relation, occupation density formula} 
  holds for all $r>0,\, t \geq 0,\, \delta >0$ and $x \in F$.
  Then, by \eqref{5. eq: trace relation, occupation density formula},
  we deduce that,
  for all $r>0,\, t \geq 0,\, \delta>0$ and $x \in F^{(r)}$,
  \begin{align} \label{eq: odf for time changes}
    \int_{0}^{t} f_{\delta}^{F^{(r)}}(x, X_{G}(\gamma_{G}^{(r)}(s)))ds
     & =
    \int_{\mbRp}
    1_{(0, \gamma_{G}^{(r)}(t)]}(\gamma_{G}^{(r)}(s))
    f_{\delta}^{F}(x, X_{G}(\gamma_{G}^{(r)}(s)))
    ds
    \notag \\
     & =
    \int_{\mbRp}
    1_{(0, \gamma_{G}^{(r)}(t)]}(s)
    f_{\delta}^{F}(x, X_{G}(s))
    dA_{G}^{(r)}(s)
    \notag \\
     & =
    \int_{0}^{\gamma_{G}^{(r)}(t)}
    f_{\delta}^{F}(x, X_{G}(s))
    1_{F^{(r)}}(X_{G}(s))
    ds
    \notag \\
     & =
    \int_{G}
    1_{F^{(r)}}(y) L_{G}(y,\gamma_{G}^{(r)}(t))
    f_{\delta}^{F}(x,y)
    \mu(dy).
  \end{align}
  Using \eqref{eq: odf for time changes},
  we deduce that
  \begin{align}
    |\mathrm{tr}_{(r)}L_{G}(x,t) - L_{G}(x,\gamma_{G}^{(r)}(t))|
     & \leq
    |\mathrm{tr}_{(r)}L_{G}(x,t)
    -g_{\delta}^{G^{(r)}}(\mathrm{tr}_{(r)}X_{G})(x,t)| \\
     & \hspace{9pt}
    +\left| \frac{\int_{0}^{t} f_{\delta}^{F^{(r)}}(x, X_{G}(\gamma_{G}^{(r)}(s)))ds}
    {\int_{F^{(r)}}f_{\delta}^{F^{(r)}}(x,y)\mu(dy)}
    -L_{G}(x,\gamma_{G}^{(r)}(t))\right|                \\
     & \leq
    |\mathrm{tr}_{(r)}L_{G}(x,t)
    -g_{\delta}^{G^{(r)}}(\mathrm{tr}_{(r)}X_{G})(x,t)| \\
     & \hspace{9pt}
    + \sup_{\substack{y \in F^{(r)}                     \\ R(x,y) \leq \delta}}
    |L_{G}(x,\gamma_{G}^{(r)}(t)) - L_{G}(y,\gamma_{G}^{(r)}(t))|,
  \end{align}
  and letting $\delta \to 0$ yields that $\mathrm{tr}_{(r)}L_{G}(x,t) = L_{G}(x,\gamma_{G}^{(r)}(t))$.
  If $\sigma_{B_{R}(\rho, r)^{c}} > T$,
  then we have that $A_{G}^{(r)}(t)=t$ for $t \leq T+\eta$
  for some $\eta >0$,
  which yields that $\gamma_{G}^{(r)}(t)=t$ for $t \leq T$.
  Now the second assertion is immediate.
\end{proof}

\begin{lem}  \label{5. lem: precompactness of cX_n for deterministic spaces}
  If Assumption \ref{1. assum: deterministic spaces}\ref{1. assum item: deterministic, convergence of spaces},
  \eqref{5. lem eq: uniform exit time estimate} and \eqref{5. lem eq: for precompactness, equicontinuity of local times}
  are satisfied,
  then $(\cX_{G_{n}})_{n \geq 1}$ is precompact in $\mbM_{L}$.
  In particular,
  under Assumption \ref{1. assum: deterministic spaces},
  $(\cX_{G_{n}})_{n \geq 1}$ is precompact in $\mbM_{L}$.
\end{lem}

\begin{proof}
  We check that $(\cX)_{n \geq 1}$ satisfies all the conditions 
  \ref{2. thm item: precompactness in M_L, spaces are precompact}
  -
  \ref{2. thm item: precompactness in M_L, equicontinuity of local times}
  of Theorem \ref{2. thm: precompactness in M_L}.
  By Theorem \ref{2. thm: precompactness in M} 
  and Proposition \ref{5. prop: convergence of processes in M},
  we obtain 
  \ref{2. thm item: precompactness in M_L, spaces are precompact},
  \ref{2. thm item: precompactness in M_L, values of processes are precompact} 
  and \ref{2. thm item: precompactness in M_L, equicontinuity of processes}.
  Since we have that, $L_{n}(x,0)=0$ for all $x \in F_{n}$,
  we obtain \ref{2. thm item: precompactness in M_L, boundedness of local times}.
  Fix $\varepsilon, \delta, T, r_{0} > 0$.
  Lemma \ref{5. lem: coincidence of trace and original up to exit time} yields that,
  for all $r>r_{0}$,
  \begin{align} 
    &P_{\rho_{n}}^{G_{n}} 
    \left(
      \sup_{\substack{x, y \in F_{n}^{(r_{0})}, \\ R_{n}(x,y) < \delta}} 
      \sup_{\substack{0 \leq s,t \leq T,\\ | t-s | < \delta}} 
      \left| L_{G_{n}}(x,t) - L_{G_{n}}(y, s) \right| 
      > 
      \varepsilon
    \right)\\
    \leq
    &P_{\rho_{n}}^{G_{n}} 
    \left(
      \sup_{\substack{x, y \in F_{n}^{(r)}, \\ R_{n}(x,y) < \delta}} 
      \sup_{\substack{0 \leq s,t \leq T,\\ | t-s | < \delta}} 
      \left| L_{G_{n}}(x,t) - L_{G_{n}}(y, s) \right| 
      > 
      \varepsilon
    \right)\\
    \leq 
    &P_{\rho_{n}}^{G_{n}} 
    \left(
      \sigma_{B_{R_{n}}(\rho_{n}, r)^{c}} \leq  T
    \right)
    +
    P_{\rho_{n}^{(r)}}^{G_{n}^{(r)}} 
    \left(
      \sup_{\substack{x, y \in F_{n}^{(r)}, \\ R_{n}(x,y) < \delta}} 
      \sup_{\substack{0 \leq s,t \leq T,\\ | t-s | < \delta}} 
      \left| L_{G_{n}^{(r)}}(x,t) - L_{G_{n}^{(r)}}(y, s) \right| 
      > 
      \varepsilon
    \right).
    \label{5. eq: precompactness, reducing to traces}
  \end{align}
  By the triangle inequality, we deduce that, for each $\eta > 0$, 
  \begin{align}
    &\sup_{\substack{x, y \in F_{n}^{(r)}, \\ R_{n}(x,y) < \delta}} 
    \sup_{\substack{0 \leq s,t \leq T,\\ | t-s | < \delta}} 
    \left| L_{G_{n}^{(r)}}(x,t) - L_{G_{n}^{(r)}}(y, s) \right| \\
    \leq 
    &2\sup_{x \in F_{n}^{(r)}} 
    \sup_{0 \leq t \leq T} 
    \left| L_{G_{n}^{(r)}}(x, t) - g_{\eta}^{G_{n}^{(r)}}(X_{G_{n}^{(r)}}(x,t)) \right| \\
    &\quad
    + 
    \sup_{\substack{x, y \in F_{n}^{(r)}, \\ R_{n}(x,y) < \delta}} 
    \sup_{\substack{0 \leq s,t \leq T,\\ | t-s | < \delta}} 
    \left| g_{\eta}^{G_{n}^{(r)}}(X_{G_{n}^{(r)}}(x,t)) - g_{\eta}^{G_{n}^{(r)}}(X_{G_{n}^{(r)}})(y,s) \right|\\
    \leq 
    &2\sup_{\substack{x, y \in F_{n}^{(r)}, \\ R_{n}(x,y) < \eta}} 
    \sup_{0 \leq t \leq T} 
    \left| L_{G_{n}^{(r)}}(x, t) - L_{G_{n}^{(r)}}(y, t) \right| \\
    &\quad 
    + 
    \sup_{\substack{x, y \in F_{n}^{(r)}, \\ R_{n}(x,y) < \delta}} 
    \sup_{\substack{0 \leq s,t \leq T,\\ | t-s | < \delta}} 
    \left| g_{\eta}^{G_{n}^{(r)}}(X_{G_{n}^{(r)}}(x,t)) - g_{\eta}^{G_{n}^{(r)}}(X_{G_{n}^{(r)}})(y,s) \right|,
    \label{5. eq: precompactness, triangle inequality for equicontinuity of local times}
  \end{align}
  where we use \eqref{5. eq: error of delta approximation for local times} at the last inequality.
  By Proposition \ref{5. prop: continuity of delta approximation},
  $(\cX_{G_{n}, \eta})_{n \geq 1}$ is precompact in $\mbM_{L}$ for each $\eta > 0$.
  Thus, it follows from Theorem \ref{2. thm: precompactness in M_L} that, for each $r>0$ and $\eta >0$, 
  \begin{equation}
    \lim_{\delta \downarrow 0}
    \limsup_{n \to \infty} 
    P_{\rho_{n}^{(r)}}^{G_{n}^{(r)}} 
    \left(
      \sup_{\substack{x, y \in F_{n}^{(r)}, \\ R_{n}(x,y) < \delta}} 
      \sup_{\substack{0 \leq s,t \leq T,\\ | t-s | < \delta}} 
      \left| g_{\eta}^{G_{n}^{(r)}}(X_{G_{n}^{(r)}}(x,t)) - g_{\eta}^{G_{n}^{(r)}}(X_{G_{n}^{(r)}})(y,s) \right|
      > 
      \frac{\varepsilon}{2}
    \right)
    =0.
  \end{equation}
  This, combined with 
  Lemma \ref{5. lem: for precompactness of local times}
  and \eqref{5. eq: precompactness, triangle inequality for equicontinuity of local times},
  yields that, for each $r_{0}>0$,
  \begin{equation}  \label{5. eq: precompactness, local times on traces are precompact}
    \lim_{\delta \downarrow 0}
    \limsup_{n \to \infty}
    P_{\rho_{n}^{(r)}}^{G_{n}^{(r)}} 
    \left(
      \sup_{\substack{x, y \in F_{n}^{(r)}, \\ R_{n}(x,y) < \delta}} 
      \sup_{\substack{0 \leq s,t \leq T,\\ | t-s | < \delta}} 
      \left| L_{G_{n}^{(r)}}(x,t) - L_{G_{n}^{(r)}}(y, s) \right| 
      > 
      \varepsilon
    \right)
    =
    0.
  \end{equation}
  By \eqref{5. lem eq: uniform exit time estimate},
  \eqref{5. eq: precompactness, reducing to traces} and \eqref{5. eq: precompactness, local times on traces are precompact},
  we obtain 
  the condition \ref{2. thm item: precompactness in M_L, equicontinuity of local times} of Theorem \ref{2. thm: precompactness in M_L}.
\end{proof}

We complete the proof of Theorem \ref{1. thm: main result for deterministic spaces} 
by characterizing the limit of $(\cX_{G_{n}})_{n \geq 1}$.

\begin{proof} [Proof of Theorem \ref{1. thm: main result for deterministic spaces}]
  Since we have the precompactness of $(\cX_{G_{n}})_{n \geq 1}$ from Lemma \ref{5. lem: precompactness of cX_n for deterministic spaces},
  it remains to show that the limit of any convergent subsequence of $(\cX_{G_{n}})_{n \geq 1}$ is $\cX_{G}$.
  To simplify notation,
  we suppose that $\cX_{G_{n}}$ converges to $\cX$ in $\mbM_{L}$,
  and show that $\cX = \cX_{G}$.
  By Assumption \ref{1. assum: deterministic spaces}\ref{1. assum item: deterministic, convergence of spaces},
  we can write $\cX = (F, R, \rho, \mu, \pi)$.
  Let $(X, L)$ be a random element of $D(\mbR, F) \times \hatC(F \times \mbRp, \mbR)$
  whose law coincides with $\pi$.
  From Proposition \ref{5. prop: convergence of processes in M},
  it follows that $X \stackrel{\mathrm{d}}{=} X_{G}$.
  Using Theorem \ref{2. thm: space M_L, convergence},
  we may assume that all the spaces $(F_{n}, R_{n}, \rho_{n})$ and $(F, R, \rho)$
  are embedded isometrically into a common rooted boundedly-compact metric space $(M, d^{M}, \rho_{M})$
  in such a way that $\rho_{n} = \rho = \rho_{M}$ as elements in $M$,
  $F_{n} \to F$ in the local Hausdorff topology in $M$,
  $\mu_{n} \to \mu$ vaguely as measures on $M$
  and $(X_{G_{n}}, L_{G_{n}})$ started at $\rho_{n}$ converges to $(X, L)$ in distribution 
  as random elements of $D(\mbRp, M) \times \hatC(M \times \mbRp, \mbR)$.
  By the Skorohod representation theorem,
  we may assume that $(X_{G_{n}}, L_{G_{n}})$ started at $\rho_{n}$
  converges to $(X, L)$ almost-surely
  on some probability space.
  Note that Theorem \ref{2. thm: convergence in hatC} implies that $\dom_{1}(L) = F$.
  By the occupation density formula,
  with probability $1$,
  it holds that,
  for all $t \geq 0,\, x \in M,\, \delta > 0$ and $n \geq 1$,
  \begin{equation}  \label{5. eq: main proof, ODF for sequence}
    \int_{0}^{t} f_{\delta}^{M}(x, X_{G_{n}}(s))\, ds 
    = 
    \int_{F_{n}} f_{\delta}^{M}(x, y) L_{G_{n}}(y, t)\, \mu_{n}(dy),
  \end{equation}
  where we recall that $f_{\delta}^{M}(x,y) = 0 \vee (\delta - d^{M}(x,y))$.
  By Theorem \ref{2. thm: convergence in hatC},
  we can extend the domains of $L_{G_{n}}$ and $L$ to 
  $M \times \mbRp$ continuously
  in such a way that $L_{G_{n}} \to L$ 
  in the compact-convergence topology on $M \times \mbRp$.
  It is then the case that 
  $L_{G_{n}}(y, t) \mu_{n}(dy)$ converges to $L(y, t) \mu(dy)$ vaguely 
  as measures on $M$.
  Since $f_{\delta}^{M}(x, \cdot)$ is compactly supported,
  we deduce that 
  the right-hand side of \eqref{5. eq: main proof, ODF for sequence} 
  converges to $\int_{F} f_{\delta}^{M}(x,y)L(y,t)\, \mu(dy)$
  as $n \to \infty$.
  Since the left-hand side of \eqref{5. eq: main proof, ODF for sequence} 
  converges to $\int_{0}^{t} f_{\delta}^{M}(x, X(s))\, ds$,
  we obtain that, for all $t \geq 0,\, x \in M,\, \delta > 0$ and $n \geq 1$,
  \begin{equation}  \label{5. eq: main proof, ODF for limit}
    \int_{0}^{t} f_{\delta}^{M}(x, X(s))\, ds 
    = 
    \int_{F} f_{\delta}^{M}(x, y) L(y, t)\, \mu(dy),
  \end{equation}
  A similar argument to Lemma \ref{5. lem: delta approximation of local times} yields that 
  $g_{\delta}^{G}(X) \to L$ in $C(F \times \mbRp, \mbR)$
  as $\delta \to 0$ almost-surely.
  Therefore,
  we deduce that $(X, g_{\delta}^{G}(X)) \to (X, L)$ almost-surely.
  However, we have that $(X, g_{\delta}^{G}(X))$ has the same distribution as $(X_{G}, g_{\delta}^{G}(X_{G}))$,
  which converges to $(X_{G}, L_{G})$ almost-surely by Lemma \ref{5. lem: delta approximation of local times}.
  Hence, we establish that $(X, L) \stackrel{\mathrm{d}}{=} (X_{G}, L_{G})$,
  which completes the proof.
\end{proof}


\section{Proof of Theorem \ref{1. thm: main result for random spaces} } \label{sec: proof of main results in random case}
  Before the proof of Theorem \ref{1. thm: main result for random spaces}, 
  we show the following claim 
  which ensures that if $G$ is a random element of $\check{\mathbb{F}}$,
  then $\cX_{G}$ is a random element of $\mbM_{L}$. 

  \begin{prop} \label{prop: measurability of XG}
    The map $\check{\mathbb{F}} \ni G \mapsto \cX_{G} \in \mbM_{L}$ is measurable.
  \end{prop}

  We need two lemmas below to prove Proposition \ref{prop: measurability of XG}.

  \begin{lem} \label{lem: measurability of X^r}
    For every $G=(F, R,\rho,\mu) \in \check{\mathbb{F}}$, the following maps are 
    left-continuous at every $r$, and continuous
    except for at most a countable number of $r$:
    \begin{align}
      &(0,\infty) \ni r \mapsto \cX_{G^{(r)},\delta} \in \mbM_{L};\\
      &(0,\infty) \ni r \mapsto \cX_{G^{(r)}} \in \mbM_{L}
    \end{align}
  \end{lem}

  \begin{proof}
    The left-continuity of the first map follows 
    from Lemma \ref{2. lem: continuity of the restriction of bcms equipped with a measure} 
    and Proposition \ref{5. prop: continuity of delta approximation}. 
    Fix $r>0$. 
    By the definition of $\check{\mathbb{F}}$, we can find $\alpha \in (0,1/2)$ such that
    \begin{equation}
      \sum_{k} N_{R^{(r+1)}}(F^{(r+1)},2^{-k})^{2} 
        \exp(-2^{\alpha (k-1)}) 
      < \infty.
    \end{equation}
    Since we have that, 
    for any $s < r+1$,
    \begin{equation}
      \sum_{k \geq m} N_{R^{(s)}}(F^{(s)},2^{-k})^{2} 
        \exp(-2^{\alpha k}) 
      \leq 
      \sum_{k \geq m+1} N_{R^{(r+1)}}(F^{(r+1)},2^{-k})^{2} 
        \exp(-2^{\alpha (k-1)}),
    \end{equation}
    we obtain that $\cX_{G^{(s)}} \to \cX_{G^{(r)}}$ as $s \uparrow r$
    by applying Theorem \ref{1. thm: main result for deterministic spaces}.
  \end{proof}

  \begin{lem} \label{6. lem: measurability of X_G^r wtr G and r}
    The map $(0,\infty) \times \check{\mathbb{F}} \ni (r, G) \mapsto \cX_{G^{(r)}} \in \mbM_{L}$
    is measurable.
  \end{lem}

  \begin{proof}
    Fix a measurable function $f: \mbM_{L} \to \mbR$ assumed to be bounded and continuous. 
    It suffices to show that the map 
    $F: (0,\infty) \times \check{\mathbb{F}} \ni (r, G) \mapsto f(\cX_{G^{(r)}}) \in \mbR$
    is measurable.
    By Lemma \ref{lem: measurability of X^r},
    we can define maps $F_{\varepsilon}, F_{\varepsilon, \delta} : (0,\infty) \times \check{\mathbb{F}} \to \mbR$
    by setting 
    \begin{equation}
      F_{\varepsilon}(r, G)
      \coloneqq
      \frac{1}{\varepsilon \wedge (r/2)}
      \int_{0}^{\varepsilon \wedge (r/2)} f(\cX_{G^{(r-s)}})\, ds,
      \quad 
      F_{\varepsilon, \delta} (r, G)
      \coloneqq
      \frac{1}{\varepsilon \wedge (r/2)}
      \int_{0}^{\varepsilon \wedge (r/2)} f(\cX_{G^{(r-s)}, \delta})\, ds.
    \end{equation}
    Suppose that $(r_{n}, G_{n}) \to (r, G)$.
    By Lemma \ref{2. lem: joint continuity of the restriction operator},
    for all but countably many $s$,
    we have that $G_{n}^{(r_{n}-s)} \to G^{(r-s)}$.
    This immediately yields that the map $F_{\varepsilon, \delta}$ is continuous. 
    Since $F_{\varepsilon, \delta} \to F_{\varepsilon}$
    as $\delta \to 0$ pointwise by Lemma \ref{5. lem: X_G_delta convergence},
    the map $F_{\varepsilon}$ is measurable.
    Furthermore, by Lemma \ref{lem: measurability of X^r},
    we have that $F_{\varepsilon} \to F$ as $\varepsilon \to 0$ pointwise,
    which completes the proof.
  \end{proof}

  \begin{lem} \label{5. lem: convergence of traces to original}
    For every $G=(F, R, \rho, \mu) \in \mathbb{F}$,
    $\cX_{G^{(r)}} \to \cX_{G}$
    in $\mbM_{L}$ as $r \to \infty$.
  \end{lem}
  
  \begin{proof}
    Fix an increasing sequence $(r_{n})_{n \geq 1}$ with $r_{n} \uparrow \infty$. 
    Using \cite[Theorem 8.4]{Kigami_12_Resistance},
    we deduce that 
    \begin{equation}
      R^{(r_{n})}
      \left(
        \rho^{(r_{n})}, B_{R^{(r_{n})}}(\rho^{(r_{n})}, r)^{c}
      \right)
      =
      R
      \bigl(
        \rho, F^{(r_{n})} \cap B_{R}(\rho, r)^{c}
      \bigr)
      \geq 
      R(\rho, B_{R}(\rho, r)^{c}).
    \end{equation}
    This, combined with Lemma \ref{lem: characterization of recurrence}, yields that 
    the sequence $(G^{(r_{n})})_{n \geq 1}$ satisfies 
    Assumption \ref{1. assum: deterministic spaces}\ref{1. assum item: deterministic, non-explosion condition}.
    It is easy to check that $(G^{(r_{n})})_{n \geq 1}$ satisfies 
    Assumption \ref{1. assum: deterministic spaces}\ref{1. assum item: deterministic, convergence of spaces} 
    and \ref{1. assum item: deterministic, metric-entropy condition}.
    Hence, 
    the desired result follows from Theorem \ref{1. thm: main result for deterministic spaces}.
  \end{proof}

  \begin{proof}[Proof of Proposition \ref{prop: measurability of XG}]
    Lemma \ref{6. lem: measurability of X_G^r wtr G and r}
    and Lemma \ref{5. lem: convergence of traces to original}
    immediately yield the desired result.
  \end{proof}

  The first part of Theorem \ref{1. thm: main result for random spaces} follows from the next proposition.

  \begin{prop}
    If Assumption \ref{1. assum: random spaces}\ref{1. assum item: random spaces, space convergence} 
    and \ref{1. assum item: random, metric-entropy condition} hold, 
    then $\mathbf{P}(G \in \check{\mathbb{F}})=1$.
    In particular, $G$ is a random element of $\check{\mathbb{F}}$.
  \end{prop}

  \begin{proof}
    By Assumption \ref{1. assum: random spaces}\ref{1. assum item: random spaces, space convergence}, 
    we may assume that $G_{n}$ and $G$ are coupled 
    so that $G_{n} \to G$ in the local Gromov-Hausdorff-vague topology almost-surely under a complete probability measure $P$. 
    Fix $r>0$ and $\varepsilon > 0$. 
    By Assumption \ref{1. assum: random spaces}\ref{1. assum item: random, metric-entropy condition}, 
    there exist $m$ and $\alpha \in (0,1/2)$  such that 
    \begin{equation} \label{eq: limiting object argument 1}
      \limsup_{n \to \infty}
      P 
        \left( 
          \sum_{k \geq m} 
            N_{R_{n}^{(r+1)}}(F_{n}^{(r+1)}, 2^{-k})^{2} 
            \exp(-2^{\alpha (k-3)}) 
          \geq
          1 
        \right) 
      <
      \varepsilon.
    \end{equation}
    Choose $s \in (r, r+1)$ such that $G_{n}^{(s)} \to G^{(s)}$ in the Gromov-Hausdorff-Prohorov topology. 
    By a similar argument to the proof of Proposition \ref{5. prop: limit belongs to vF}, 
    we deduce that 
    \begin{align} \label{eq: limiting object argument 2}
      \sum_{k \geq m} 
        N_{R^{(r)}}(F^{(r)},2^{-k})^{2} 
        \exp(-2^{\alpha k}) 
      &\leq 
      \sum_{k \geq m} 
        N_{R^{(s)}}(F^{(s)},2^{-k-1})^{2}  
        \exp(-2^{\alpha k}) \notag \\
      &\leq 
      \liminf_{n \to \infty} 
      \sum_{k \geq m+3} 
        N_{R_{n}^{(r+1)}}(F_{n}^{(r+1)},2^{-k})^{2}  
        \exp(-2^{\alpha (k-3)}) .
    \end{align}
    By \eqref{eq: limiting object argument 1}, \eqref{eq: limiting object argument 2} and (reverse) Fatou's lemma, 
    we obtain that
    \begin{align}
      1-\varepsilon
      &\leq
      \limsup_{n \to \infty}
      P
        \left( 
          \sum_{k \geq m} 
            N_{R_{n}^{(r+1)}}(F_{n}^{(r+1)}, 2^{-k})^{2} 
            \exp(-2^{\alpha (k-3)}) 
          <
          1 
        \right) \\
      &\leq
      P
        \left( 
          \liminf_{n \to \infty}
          \sum_{k \geq m} 
            N_{R_{n}^{(r+1)}}(F_{n}^{(r+1)}, 2^{-k})^{2} 
            \exp(-2^{\alpha (k-3)}) 
          \leq
          1 
        \right) \\
      &\leq
      P 
        \left( 
          \sum_{k} 
            N_{R_{n}^{(r)}}(F_{n}^{(r)}, 2^{-k})^{2} 
            \exp(-2^{\alpha k}) 
          < 
          \infty 
        \right).
    \end{align}
    Letting $\varepsilon \to 0$ in the above inequality yields the desired result.
  \end{proof}

  \begin{lem} \label{6. lem: annealed exit time estimate}
    Under Assumption \ref{1. assum: random spaces}\ref{1. assum item: random spaces, space convergence} 
    and \ref{1. assum item: random spaces, non-explosion},
    it holds that
    \begin{equation}  \label{6. lem eq: annealed, exit time estimate}
      \lim_{r \to \infty}
      \limsup_{n \to \infty}
      \mathbf{P}_{n}
      \left(
        P_{\rho_{n}}^{G_{n}} \left( \sigma_{B_{R_{n}}(\rho, r)^{c}} \leq T \right) > \varepsilon
      \right)
      = 0,
      \quad 
      \forall 
      T, \varepsilon >0.
    \end{equation}
  \end{lem}

  \begin{proof}
    We may assume that $G_{n}$ and $G$ are coupled 
    so that $G_{n} \to G$ in the local Gromov-Hausdorff-vague topology almost-surely under a complete probability measure $P$.
    Fix $\varepsilon_{1} > 0$ arbitrarily.
    By Assumption \ref{1. assum: random spaces}\ref{1. assum item: random spaces, space convergence},
    we can find $\varepsilon_{2} > 0$ such that 
    \begin{equation} \label{eq: random fractals lower bound1}
      P 
      \left( 
        \inf_{n} 
        \mu_{n} (B_{R_{n}}(\rho_{n},1)) 
        > 
        \varepsilon_{2} 
      \right) 
      \geq 
      1 - \varepsilon_{1}
    \end{equation}
    (see also \cite[Corollary 5.7]{Athreya_Lohr_Winter_16_The_gap}).
    If we have that 
    \begin{equation}
      R_{n}(\rho_{n}, B_{R_{n}}(\rho_{n},r)^{c}) 
      \geq 
      \lambda, 
      \quad  
      \mu_{n}(B_{R_{n}}(\rho_{n},1)) 
      > 
      \varepsilon_{2}.
    \end{equation}
    for some $\lambda > 1$,
    then by Lemma \ref{5. lem: exit time estimate by croydon}
    it holds that 
    \begin{equation}
      P_{\rho_{n}}^{G_{n}}
      (\sigma_{B_{R_{n}}(\rho_{n}, r)^{c}} \leq T)
      \leq 
      \frac
      {4}{\lambda}
      + 
      \frac{4T}{\varepsilon_{2}(\lambda - 1)}.
    \end{equation}
    Since, for all sufficiently large $\lambda$,
    the right-hand side of the above inequality is bounded above by $\varepsilon$,
    we deduce that 
    \begin{equation}
      P
      \left(
        P_{\rho_{n}}^{G_{n}} \left( \sigma_{B_{R_{n}}(\rho, r)^{c}} \leq T \right) > \varepsilon
      \right)
      \leq 
      P 
      \left( 
        \mu_{n} (B_{R_{n}}(\rho_{n},1)) 
        \leq 
        \varepsilon_{2} 
      \right) 
      +
      P(
        R_{n}(\rho_{n}, B_{R_{n}}(\rho_{n},r)^{c}) 
        \geq 
        \lambda
      ).
    \end{equation}
    By \eqref{eq: random fractals lower bound1} and
    Assumption \ref{1. assum: random spaces} \ref{1. assum item: random spaces, non-explosion},
    we obtain the desired result.
  \end{proof}

  \begin{prop}  \label{6. prop: convergence of processes for random spaces}
    If Assumption \ref{1. assum: random spaces}\ref{1. assum item: random spaces, space convergence} 
    and \eqref{6. lem eq: annealed, exit time estimate} are satisfied,
    then 
    \begin{equation}
      \cY_{G_{n}}
      \coloneqq
      \left( F_{n}, R_{n}, \rho_{n}, \mu_{n}, P_{\rho_{n}}^{G_{n}}(X_{G_{n}} \in \cdot) \right) 
      \xrightarrow{\mathrm{d}} 
      \left( F, R, \rho, \mu, P_{\rho}^{G}(X_{G} \in \cdot) \right)
      \eqqcolon
      \cY_{G}
    \end{equation}
    as random elements of $\mbM$
    (recall this space from Section \ref{sec: the space M_L}).
  \end{prop}

  \begin{proof}
    Fix a continuous function $f$ on $\mbM$.
    Set 
    \begin{gather}
      \cY_{G_{n}}^{(r)} 
      \coloneqq 
      \left(
        F_{n}^{(r)}, R_{n}^{(r)}, \rho_{n}^{(r)}, \mu_{n}^{(r)}, P_{\rho_{n}^{(r)}}^{G_{n}^{(r)}}(X_{G_{n}^{(r)}} \in \cdot)
      \right),\\
      \cY_{G}^{(r)}
      \coloneqq
      \left(
        F^{(r)}, R^{(r)}, \rho^{(r)}, \mu^{(r)}, P_{\rho^{(r)}}^{G^{(r)}}(X_{G^{(r)}} \in \cdot)
      \right).
    \end{gather}
    We then define probability measures on $\mbM$ by 
    \begin{equation}
      Q_{G_{n}}^{(r)} (\cdot)
      \coloneqq 
      \int_{r}^{r+1} 
      \mathbf{P}_{n}
      (\cY_{G_{n}}^{(s)} \in \cdot)\, ds, 
      \quad 
      Q_{G}^{(r)} (\cdot)
      \coloneqq 
      \int_{r}^{r + 1} 
      \mathbf{P}
      (\cY_{G}^{(s)} \in \cdot)\, ds.
    \end{equation}
    (By a similar argument to Lemma \ref{6. lem: measurability of X_G^r wtr G and r},
    one can check the measurability of the integrands.)
    Using the Skorohod representation theorem,
    we may assume that $G_{n}$ converges to $G$ in the local Gromov-Hausdorff-vague topology 
    almost-surely on some probability space.
    For $r>0$ such that $G_{n}^{(r)} \to G^{(r)}$ in the Gromov-Hausdorff-Prohorov topology,
    we have from Proposition \ref{5. prop: convergence of processes in M} that 
    $\cY_{G_{n}}^{(r)} \to \cY_{G}^{(r)}$ in $\mbM$.
    Thus, we deduce that $Q_{G_{n}}^{(r)} \to Q_{G}^{(r)}$ weakly as 
    probability measures on $\mbM$.
    By Lemma \ref{5. lem: coincidence of trace and original up to exit time},
    we obtain that,
    for each $h,t,\varepsilon, \delta >0$ and $n \geq 1$,
    \begin{align}
      \mathbf{P}_{n}
      \left(
        P_{\rho_{n}}^{G_{n}}( \tilde{w}_{F_{n}}(X_{G_{n}}, h, t) > \varepsilon) > \delta
      \right)
      &\leq 
      \int_{r}^{r+1}
      \mathbf{P}_{n}
      \left(
        P_{\rho_{n}}^{G_{n}}( \sigma_{B_{R_{n}}(\rho_{n}, s)^{c}} \leq t) > \delta /2
      \right)\, ds\\
      &\quad 
      +
      \int_{r}^{r+1}
      \mathbf{P}_{n}
      \left(
        P_{\rho_{n}^{(s)}}^{G_{n}^{(s)}}( \tilde{w}_{F_{n}^{(s)}}(X_{G_{n}^{(s)}}, h, t) > \varepsilon) > \delta/2
      \right)\, ds,
    \end{align}
    where we recall $\tilde{w}$ from \eqref{2. eq: modulus continuity for cadlag curves}.
    Since $(Q_{G_{n}}^{(r)})_{n \geq 1}$ is tight in $\mbM$,
    by Theorem \ref{2. thm: tightness in M} and \eqref{6. lem eq: annealed, exit time estimate},
    we deduce that, for all $\varepsilon, \delta >0$,
    \begin{equation}
      \lim_{h \downarrow 0}
      \limsup_{n \to \infty}
      \mathbf{P}_{n}
      \left(
        P_{\rho_{n}}^{G_{n}}( \tilde{w}_{F_{n}}(X_{G_{n}}, h, t) > \varepsilon) > \delta
      \right)
      =
      0.
    \end{equation}
    This, combined with \eqref{6. lem eq: annealed, exit time estimate} and Theorem \ref{2. thm: tightness in M},
    yields that $(\cY_{G_{n}})_{n \geq 1}$ is tight in $\mbM$.
    It remains to show that the limit of any convergent subsequence of $(\cY_{G_{n}})_{n \geq 1}$ is $\cY_{G}$.
    To simplify notation,
    we suppose that $\cY_{G_{n}}$ converges to $\cY = (F', R', \rho', \mu', \pi')$
    weakly as random elements of $\mbM$,
    and show that $\cY \stackrel{\mathrm{d}}{=} \cY_{G}$.
    By the Skorohod representation theorem,
    we may assume that $\cY_{G_{n}}$ converges to $\cY$ almost-surely 
    on a probability space $(\Omega, \mathcal{F}, P)$.
    Fix $\omega \in \Omega$.
    Since $(\cY_{G_{n}})_{n \geq 1}$ is precompact in $\mbM$,
    it follows from Theorem \ref{2. thm: precompactness in M} that 
    \begin{equation}
      \lim_{r \to \infty} 
      \limsup_{n \to \infty} 
      P_{\rho_{n}}^{G_{n}}
      \left(
        X_{G_{n}}(s) \in F_{n}^{(r)},\ \forall s \leq T
      \right)
      =
      0,
    \end{equation}
    which implies \eqref{5. lem eq: uniform exit time estimate}.
    Hence, by Proposition \ref{5. prop: convergence of processes in M},
    we deduce that 
    $\cY_{G_{n}}$ converges to $\cY_{G}$,
    which completes the proof.    
  \end{proof}

  \begin{lem} \label{6. lem: for tightness of local times}
    Under Assumption \ref{1. assum: random spaces}, 
    it holds that, for every $T, r, \eta, \varepsilon >0$,
    \begin{equation}  \label{6. lem eq: for tightness of local times}
      \lim_{\delta \to 0}
      \limsup_{n \to \infty}
      \mathbf{P}_{n}
      \left(
        P_{\rho_{n}^{(r)}}^{G_{n}^{(r)}} 
        \left( 
          \sup_{\substack{x,y \in F_{n}^{(r)},\\ R_{n}^{(r)}(x,y) < \delta }}
          \sup_{ 0 \leq t \leq T}
          |L_{G_{n}^{(r)}}(x,t) - L_{G_{n}^{(r)}}(y,t)|
          >
          \eta
        \right) 
        > 
        \varepsilon
      \right)
      = 0.
    \end{equation}
  \end{lem}

  \begin{proof}
    By Assumption \ref{1. assum: random spaces}\ref{1. assum item: random, metric-entropy condition},
    there exists $\alpha \in (0,1/2)$ such that 
    \begin{equation}
      \lim_{m \to \infty} 
      \limsup_{n \to \infty} 
      \mathbf{P}_{n}
      \left( 
        \sum_{k \geq m} 
        N_{R_{n}^{(r)}}(F_{n}^{(r)}, 2^{-k})^{2} 
        \exp(-2^{\alpha k}) 
        \geq 
        \varepsilon' 
      \right)
      =0, \quad 
      \forall \varepsilon' >0.
    \end{equation}
    Fix $\alpha' \in (\alpha , 1/2)$.
    Then, one can check that 
    \begin{equation} \label{eq: cardinality condition stronger ver 1}
      \lim_{m \to \infty} 
      \limsup_{n \to \infty} 
      \mathbf{P}_{n} 
      \left( 
        \sum_{k \geq m} 
        (k+1)^{2} 
        N_{R_{n}^{(r)}}(F_{n}^{(r)}, 2^{-k-1})^{2} 
        \exp(-2^{\alpha' (k-3)}) 
        \geq 
        \varepsilon' 
      \right)
      =0, 
      \quad
      \forall \varepsilon' >0.
    \end{equation}
    Assumption \ref{1. assum: random spaces}\ref{1. assum item: random spaces, space convergence}
    implies that $(\mu_{n}^{(r)}(F_{n}^{(r)}))_{n \geq 1}$ is tight for each $r>0$.
    This, combined with \eqref{eq: cardinality condition stronger ver 1} and \eqref{5. eq: equicontinuity at same time, basic inequality},
    yields the desired result.
  \end{proof}

  Now it is possible to complete the proof of Theorem \ref{1. thm: main result for random spaces}

  \begin{proof} [Proof of the second part of Theorem \ref{1. thm: main result for random spaces}]
    By Theorem \ref{2. thm: tightness in M_L},
    Proposition \ref{6. prop: convergence of processes for random spaces},
    Lemma \ref{6. lem: annealed exit time estimate}
    and Lemma \ref{6. lem: for tightness of local times},
    one can check that $(\cX_{G_{n}})_{n \geq 1}$ is tight in $\mbM_{L}$
    following the proof of Lemma \ref{5. lem: precompactness of cX_n for deterministic spaces}.
    Thus, 
    it remains to show that the limit of any weakly-convergent subsequence of $(\cX_{G_{n}})_{n \geq 1}$ is $\cX_{G}$.
    To simplify notation,
    we suppose that $\cX_{G_{n}}$ converges to $\cX$ 
    as random elements of $\mbM_{L}$,
    and show that $\cX \stackrel{\mathrm{d}}{=} \cX_{G}$.
    Write $\cX = (F', R', \rho', \mu', \pi')$.
    Note that $G' \coloneqq (F', R', \rho', \mu') \stackrel{\mathrm{d}}{=} G$.
    By the Skorohod representation theorem,
    we may assume that $\cX_{G_{n}}$ converges to $\cX$ 
    almost-surely on some probability space.
    In the same way as the proof of Theorem \ref{1. thm: main result for deterministic spaces},
    we obtain that $\pi' = P_{G'}$, which completes the proof.
  \end{proof}

\section{Metric entropy and volume estimates} \label{sec: metric entropy and volume estimates}

In general,
estimating metric entropy directly is not easy.
In this section,
we provide sufficient conditions
for Assumption \ref{1. assum: random spaces}\ref{1. assum item: random, metric-entropy condition}
in terms of volume estimates of balls of the underlying spaces,
which are useful for applications.

\begin{lem} \label{lem: coverings and volume lemma}
  Let $(S,d, \rho)$ be a rooted boundedly-compact metric space and $\mu$ be a Radon measure on $S$.
  Assume that there exist a positive constant $r$ and a function $v:(0,\infty) \to (0,\infty)$ such that
  \begin{equation}
    \inf_{x \in S^{(r)}}
    \mu \left( D_{d} (x, u) \right)
    \geq v(u),
    \quad
    \forall u>0.
  \end{equation}
  Then, for every $r' < r$, it holds that
  \begin{equation}
    N_{d^{(r')}}(S^{(r')}, u)
    \leq
    \mu(S^{(r)}) v(u / 4)^{-1},
    \quad
    \forall u \in (0, r-r').
  \end{equation}
\end{lem}

\begin{proof}
  Fix $u \in (0, r-r')$.
  For a finite subset $(x_{i})_{i=1}^{N}$ in $S^{(r)}$,
  we consider the following condition.
  \begin{enumerate} [label=(D)]
    \item
          The collection $(D_{d}(x_{i}, u/4))_{i=1}^{N}$ is disjoint.
          \label{cond: disjoint-balls condition}
  \end{enumerate}
  If $(x_{i})_{i=1}^{N}$ satisfies \ref{cond: disjoint-balls condition},
  then
  \begin{equation}
    \mu(S^{(r+u)})
    \geq
    \mu
    \left(
    \bigcup_{i=1}^{N}
    D_{d}(x_{i}, u/4)
    \right)
    \geq
    N v(u/4).
  \end{equation}
  Thus
  \begin{equation}
    \left \{
    N \in \mathbb{N}:
    \text{
      there exists a finite subset
    }
    (x_{i})_{i=1}^{N}
    \text{
      of
    }
    S^{(r)}
    \text{
      satisfying (D)
    }
    \right \}
  \end{equation}
  is bounded.
  Choose the maximum $N$ of this set and a corresponding subset $(x_{i})_{i=1}^{N}$ satisfying \ref{cond: disjoint-balls condition}.
  Then $(x_{i})_{i=1}^{N}$ is a $u/2$-covering of $(S^{(r)}, d^{(r)})$.
  If $D_{d}(x_{i}, u/2)$ intersects $S^{(r')}$,
  then we choose an element $y_{i}$ from the intersection.
  Without loss of generality,
  we may assume that we obtain $y_{1},\ldots,y_{N'}$ for some $N' \leq N$.
  It is easy to check that $(y_{i})_{i=1}^{N'}$ is a $u$-covering of $(S^{(r')},d^{(r')})$,
  and thus $N_{d^{(r')}}(S^{(r')},u) \leq N'$.
  For every $y \in D_{d}(x_{i},u/2)$ with $i=1,\ldots,N'$,
  we have that
  \begin{align}
    d(\rho,y)
    \leq
    d(\rho,y_{i})+d(y_{i},x_{i})+d(x_{i},y) < r,
  \end{align}
  which implies that $D_{d}(x_{i},u/2) \subseteq S^{(r)}$ for $i=1,\ldots,N'$.
  Since $(D_{d}(x_{i}, u/4))_{i=1}^{N'}$ is disjoint,
  we obtain that
  \begin{align}
    \mu(S^{(r)})
     & \geq
    \mu
    \left(
    \bigcup_{i=1}^{N'}
    D_{d}(x_{i},u/4)
    \right) \\
     & \geq
    \sum_{i=1}^{N'}
    \mu
    \left(
    D_{d} (x_{i},u/4)
    \right) \\
     & \geq
    N_{d^{(r')}}(S^{(r')},u)
    v(u/4),
  \end{align}
  which completes the proof.
\end{proof}

\begin{prop}  \label{prop: sufficient condition from volume estimate}
  Suppose that $G_{n}=(F_{n},R_{n},\rho_{n},\mu_{n})$ and $G=(F,R,\rho,\mu)$ satisfy
  Assumption \ref{1. assum: random spaces}\ref{1. assum item: random spaces, space convergence}.
  Then Assumption \ref{1. assum: random spaces}\ref{1. assum item: random, metric-entropy condition} is implied
  by the following.
  \begin{enumerate} [resume*=assumptions in random case]
    \item
          There exist a sequence $(r_{k})$ of positive numbers with $r_{k} \to \infty$ and $\alpha_{k} \in (0,1/2)$
          such that, for every $\varepsilon > 0$ and $n$,
          there exists a measurable function $v_{n,k}^{\varepsilon} : (0,\infty) \to (0,\infty)$ such that
          \begin{equation}
            \liminf_{n \to \infty}
            \mathbf{P}_{n}
            \left(
            \inf_{x \in B_{R_{n}}
              (\rho_{n}, r_{k})} \mu_{n} (D_{R_{n}}(x,u))
            \geq v_{n,k}^{\varepsilon}(u),\
            \forall u
            \right)
            \geq
            1-\varepsilon,
            \quad
            \forall k,
          \end{equation}
          and $v_{n,k}^{\varepsilon}$ satisfies
          \begin{equation}
            \lim_{m \to \infty}
            \limsup_{n \to \infty}
            \sum_{l \geq m} v_{n,k}^{\varepsilon} (2^{-l})^{-2}
            \exp (-2^{\alpha_{k} l})=0,
            \quad
            \forall k.
          \end{equation}
          \label{assum item: volume condition for random spaces}
  \end{enumerate}
\end{prop}

\begin{proof}
  We assume that the condition \ref{assum item: volume condition for random spaces} is satisfied.
  Without loss of generality,
  we may assume that $r_{k+1} - r_{k} > 1$.
  Given $r>0$,
  we choose $k$ such that $r_{k} > r$ and $\alpha_{k+1}' \in (\alpha_{k+1},1/2)$.
  We may assume that $G_{n}$ and $G$ are coupled
  so that $G_{n} \to G$ almost-surely under a complete probability measure $P$.
  Fix $\varepsilon >0$,
  and choose $M$ such that
  \begin{equation}
    P
    \left(
    \sup_{m}
    \mu_{m} (F^{(r_{k+1})})
    < M
    \right)
    \geq
    1- \varepsilon.
  \end{equation}
  We define an event $E_{n}$ by setting
  \begin{align}
    E_{n}
    \coloneqq
    \left\{
    \sup_{m}
    \mu_{m} (F^{(r_{k+1})})
    < M
    \right\}
    \cap
    \left\{
    \inf_{x \in F_{n}^{(r_{k}+2^{-1})}}
    \mu_{n} (D_{R_{n}}(x,u))
    \geq
    v_{n,k+1}^{\varepsilon}(u),\quad
    \forall u
    \right\}.
  \end{align}
  Then by condition \ref{assum item: volume condition for random spaces},
  we have that
  \begin{equation} \label{eq: prob of the complement E}
    \limsup_{n \to \infty}
    P(E_{n}^{c})
    \leq
    2\varepsilon.
  \end{equation}
  Given $\delta > 0$,
  by condition \ref{assum item: volume condition for random spaces},
  there exists $m_{0}$ such that,
  for any $m \geq m_{0}$,
  there exists $n(m)$ such that
  \begin{align} \label{eq: volume inequality in proof}
    \sum_{l \geq m}
    v_{n, k+1}^{\varepsilon}(2^{-l-3})^{-2}
    \exp (-2^{\alpha'_{k+1} l })
    <
    \delta/M^{2},\ \
    \forall
    n \geq n(m).
  \end{align}
  Fix $m \geq m_{0}$ for a while.
  Assume that the event $E_{n}$ occurs for some $n \geq n(m)$.
  Then by Lemma \ref{lem: coverings and volume lemma} and \eqref{eq: volume inequality in proof},
  we have that
  \begin{align}
    \sum_{l \geq m}
    N_{R_{n}^{(r)}}(F_{n}^{(r)},2^{-l})^{2}
    \exp(-2^{\alpha_{k+1}' l})
     & \leq
    \sum_{l \geq m}
    N_{R_{n}^{(r_{k})}}(F_{n}^{(r_{k})},2^{-l-1})^{2}
    \exp(-2^{\alpha_{k+1}' l}) \\
     & \leq \sum_{l \geq m}
    \mu_{n}(F_{n}^{(r_{k+1})})^{2}
    v_{n,k+1}^{\varepsilon}(2^{-l-3})^{-2}
    \exp(-2^{\alpha_{k+1}' l})
    < \delta.                  \\
  \end{align}
  Therefore, by \eqref{eq: prob of the complement E},
  we deduce that
  \begin{equation}
    \limsup_{n \to \infty}
    P
    \left(
    \sum_{l \geq m}
    N_{R_{n}^{(r)}}(F_{n}^{(r)},2^{-l})^{2}
    \exp(-2^{\alpha_{k+1}' l})
    \geq
    \delta
    \right)
    \leq
    2\varepsilon.
  \end{equation}
  Letting $m \to \infty$ and $\varepsilon \to 0$ in the above inequality yields the desired result.
\end{proof}

In the next result,
we give a simple version of Proposition \ref{prop: sufficient condition from volume estimate}
which will be useful for several examples.
We consider a finite (or countably infinite) set $F_{n}$ with a distinguished point $\rho_{n}$.
Let $R_{n}$ be a resistance metric on $F_{n}$
inducing the discrete topology on $F_{n}$
and $\mu_{n}$ be the counting measure on $F_{n}$.
Suppose that there exist deterministic sequences $(a_{n})_{n \geq 1}$ and $(b_{n})_{n \geq 1}$ 
of positive numbers with $a_{n} \to \infty,\, b_{n} \to \infty$
and a random element $G=(F,R,\rho,\mu)$ of $\mathbb{F}$ such that
\begin{equation}
  G_{n}\coloneqq (F_{n},a_{n}^{-1}R_{n},\rho_{n},b_{n}^{-1}\mu_{n})
  \xrightarrow{\mathrm{d}}
  G.
\end{equation}

\begin{cor} \label{eq: discrete graph version of volume condition}
  In the above setting,
  Assumption \ref{1. assum: random spaces}\ref{1. assum item: random, metric-entropy condition} is implied
  by the following condition:\
  for all sufficiently large $L >0$ and for all sufficiently small $\varepsilon>0$,
  there exist positive constants $c,\, c_{n}$ and a measurable function $v: (0, \infty) \to (0,\infty)$,
  where $c,\, c_{n}$ and $v$ are allowed to depend on $L$ and $\varepsilon$,
  such that
  \begin{equation} \label{eq: volume estimates for discrete graph cases}
    \liminf_{n \to \infty}
    \mathbf{P}_{n}
    \left(
    \inf_{ x \in B_{R_{n}}(\rho_{n},a_{n}L) }
    b_{n}^{-1}\mu_{n}(D_{R_{n}}(x,a_{n}r))
    \geq v(r),\
    \forall r \in (c_{n}^{-1},c)
    \right)
    \geq 1-\varepsilon,
  \end{equation}
  and
  \begin{gather} \label{eq: condition for scaling factors and volume function}
    \sum_{l}
    v(2^{-l})^{-2}
    \exp(-2^{\alpha l}) < \infty,\quad
    b_{n}^{2}\exp
    \left(
    -c_{n}^{\alpha}
    \right)
    \xrightarrow{n\to\infty}
    0
  \end{gather}
  for some $\alpha \in (0, 1/2)$,
  which depends only on $L$.
\end{cor}

\begin{proof}
  Set
  \begin{equation}
    v_{n}^{\varepsilon}(r)\coloneqq
    \begin{cases}
      v(r),       & (r > c_{n}^{-1}),    \\
      b_{n}^{-1}, & (r \leq c_{n}^{-1}).
    \end{cases}
  \end{equation}
  Then we deduce that for all sufficiently large $n$,
  \begin{align}
    \sum_{l \geq \lceil \log_{2} c_{n} \rceil} v_{n}^{\varepsilon}(2^{-l})^{-2} \exp(-2^{\alpha l})
     & = b_{n}^{2} \sum_{l \geq \lceil \log_{2} c_{n} \rceil} \exp(-2^{\alpha l})                          \\
     & \leq b_{n}^{2}\int_{\log_{2} c_{n}}^{\infty} \exp(-2^{\alpha s}) ds                                 \\
     & \leq b_{n}^{2}\int_{\log_{2} c_{n}}^{\infty} \alpha \log 2 \cdot 2^{\alpha s}\exp(-2^{\alpha s}) ds \\
     & = b_{n}^{2} \exp(-c_{n}^{\alpha}).
  \end{align}
  Using the above inequality,
  it is not difficult
  to check condition \ref{assum item: volume condition for random spaces} in Proposition \ref{prop: sufficient condition from volume estimate}
  under the condition given at \eqref{eq: condition for scaling factors and volume function},
  and hence the desired result follows.
\end{proof}

\begin{rem}
  When showing the Gromov-Hausdorff-Prohorov convergence of $(F_{n}, a_{n}^{-1}R_{n}, \rho_{n}, b_{n}^{-1}\mu_{n})$,
  one needs a lower estimate on volumes of balls with radius of order at least $a_{n}$ in the metric $R_{n}$
  (see \cite[Theorem 6.5]{Archer_Nachmias_Shalev_pre_The_GHP} for a sufficient condition for convergence in the Gromov-Hausdorff-Prohorov topology).
  In \eqref{eq: volume estimates for discrete graph cases},
  we need a lower estimate of volumes of balls with radius of order $a_{n}c_{n}^{-1}$.
  The condition \eqref{eq: condition for scaling factors and volume function} requires
  that $c_{n}$ increases at least as $(\log b_{n})^{2+\varepsilon}$.
  Therefore, roughly speaking,
  if one has a lower estimate on volumes of balls with radius of order $a_{n}/(\log b_{n})^{2+\varepsilon}$,
  then our results are applicable.
  Indeed as we see in Section \ref{sec: examples},
  in many cases $a_{n}, b_{n}$ and $c_{n}$ are polynomial functions of $n$
  and Corollary \ref{eq: discrete graph version of volume condition} can be applied.
\end{rem}

\section{Examples} \label{sec: examples}

\subsection{Real trees and plane trees}

\subsubsection{Gromov-Hausdorff-Prohorov distance between trees} \label{subsubsec: introduction of trees}

In this section,
we recall some results about the Gromov-Hausdorff-Prohorov distance between trees.
For further details about trees the reader should refer to \cite{LeGall_06_Random}, for example.
Write $\mathscr{E}$ for the space of excursions,
that is,
\begin{equation}
  \mathscr{E}
  \coloneqq
  \{
  f \in C(\mbRp, \mbRp)
  :
  f(0)=0,
  \exists \sigma^{f} < \infty\
  \text{such that}\
  f(x) > 0\ \forall x \in (0, \sigma^{f}),
  f(x)=0\ \forall x \geq \sigma^{f}
  \}.
\end{equation}
Given a function $f \in C([0,\sigma], \mbRp)$ 
with $f(0)=f(\sigma)=0$ and $f(x) > 0$ for all $x \in (0, \sigma)$,
we will abuse notation by identifying $f$ with the function $g \in \mathscr{E}$
which has $g(x)=f(x),\, 0 \leq x \leq \sigma$ and $g(x)=0, \, x \geq \sigma$.
We equip $\mathscr{E}$ with the metric induced by the supremum norm $\| \cdot \|$. 
Given an excursion $f \in \mathscr{E}$, 
we define a pseudometric $\bar{d}^{f}$ on $[0, \sigma^{f}]$ by setting
\begin{equation}
  \bar{d}^{f}(s,\, t)
  \coloneqq
  f(s)+f(t)-2 \inf_{u \in [s \wedge t, s \vee t]} f(u).
\end{equation}
Then,
we use the equivalence
\begin{equation}
  s \sim t \qquad
  \Leftrightarrow \qquad
  \bar{d}^{f}(s,\, t)=0
\end{equation}
to define $T^{f} \coloneqq  [0, \sigma^{f}]/ \sim$.
Let $p^{f} : [0,\sigma^{f}] \to T^{f}$ be the canonical projection.
It is then elementary to check that
\begin{equation}
  d^{f}(p^{f}(s), p^{f}(t))
  \coloneqq
  \bar{d}^{f}(s,\, t)
\end{equation}
defines a metric on $T^{f}$.
The metric space $(T^{f}, d^{f})$ is called a \textit{real tree} coded by $f$.
We have a Radon measure $m^{f} \coloneqq  \Leb \circ (p^{f})^{-1}$,
where $\Leb$ stands for the one-dimensional Lebesgue measure.
We define the root $\rho^{f}$ by setting $\rho^{f} \coloneqq  p^{f}(0)$.
For $a,\, b >0$, we set
\begin{equation}
  \mathcal{T}^{f}_{a,b}
  \coloneqq
  (T^{f}, ad^{f}, \rho^{f}, b m^{f}),
\end{equation}
and we abbreviate $\mathcal{T}^{f} \coloneqq  \mathcal{T}_{1,1}^{f}$,
which is the rooted real tree coded by $f$ with the canonical measure.
The following is a basic property of real trees regarding scaling
and we omit the proof.

\begin{lem} \label{lem: invariance wrt scaling of real trees}
  Fix $f \in \mathscr{E}$.
  For $\alpha,\, \beta >0$,
  we set $g(t)\coloneqq \alpha f(\beta t)$.
  Then $\mathcal{T}^{g}$ is Gromov-Hausdorff-Prohorov isometric to $\mathcal{T}^{f}_{\alpha, \beta^{-1}}$,
  i.e.\ $d_{GHP}(\mathcal{T}^{f}_{\alpha, \beta^{-1}}, \mathcal{T}^{g})=0$.
\end{lem}

\begin{prop} [{\cite[Proposition 3.3]{Abraham_Delmas_Hoscheit_13_A_note}}] \label{prop: ghp distance between real trees}
  Let $f,\, g \in \mathscr{E}$.
  Then it holds that
  \begin{equation}
    d_{\mathbb{G}_{c}}
    \left(
    \mathcal{T}^{f},
    \mathcal{T}^{g}
    \right)
    \leq
    6 ||f-g||+|\sigma^{f} - \sigma^{g}|.
  \end{equation}
\end{prop}

Next,
we consider a plane tree.
Let $\mathbf{I}= \bigcup_{n=0}^{\infty} \mathbb{N}^{n}$,
where we set $\mathbb{N}^{0}\coloneqq \{ 0 \}$ and $0$ serves as the root.
If $u=(u_{1}, \ldots, u_{m})$ and $v=(v_{1},\ldots,v_{n})$ belong to $\mathbf{I}$,
then we write $uv=(u_{1}, \ldots., u_{m}, v_{1},\ldots,v_{n})$ for the concatenation of $u$ and $v$.
In particular, $0u=u0=u$. A \textit{plane tree} $\tau$ is a finite subset of $\mathbf{I}$ such that:
\begin{itemize}
  \item
        $\rho_{\tau} \coloneqq0 \in \tau$.
  \item
        If $v \in \tau$ and $v=uj$ for some $u \in \mathbf{I}$ and $j \in \mathbb{N}$,
        then $u \in \tau$
        ($u$ is said to be a \textit{parent} of $v$ 
        and $v$ is said to be a \textit{child} of $u$).
  \item
        For every $u \in \tau$,
        there exists a number $k_{u}\coloneqq k_{u}(\tau) \geq 0$ such that $uj \in \tau$ if and only if $1 \leq j \leq k_{u}$.
\end{itemize}
We denote the set of all plane trees by $\mathbf{T}$.
Note that we can regard a plane tree $\tau$ as a graph 
by declaring $\tau$ is a vertex set and 
$\{u, v\} \subseteq \tau$ is an edge if and only if 
one of $u$ and $v$ is a parent of the other.
Given a plane tree $\tau \in \mathbf{T}$,
we write $d^{\tau}$ and $m^{\tau}$ for the graph distance and the counting measure on $\tau$.

We introduce coding functions of plane trees.
Let $\tau$ be a plane tree with $n+1$ nodes and the root $\rho^{\tau} \coloneqq 0$.
For children $u=vj$ and $u'=vj'$ of $v \in \tau$,
$u$ is said to be older than $u'$ if $j < j'$.
We set $\llbracket a,b \rrbracket \coloneqq  [a,b] \cap \mathbb{Z}$.
We define a map $f_{\tau}:\llbracket 0, 2n \rrbracket \to \tau$ on $\tau$ as follows:
Set $f_{\tau}(0)\coloneqq \rho^{\tau}$.
Given $f_{\tau}(i)=u$,
if there are unvisited children of $u$,
then $f_{\tau}(i+1)$ is the oldest children among them,
and otherwise $f_{\tau}(i+1)$ is the parent of $u$.
The \textit{contour function} $C^{\tau}: [0, 2n] \to \mbRp$ is defined
by setting $C^{\tau}(k)=d^{\tau}(f_{\tau}(k))$ for $k \in \llbracket 0, 2n \rrbracket$ and linear interpolation.
Let $u_{0} (=0), u_{1}, \ldots, u_{n}$ be the vertices of $\tau$ in lexicographical order
(i.e., $u_{i}$ is the $i$-th new node visited by the function $f_{\tau}$).
The \textit{height function} $H^{\tau}:[0,n] \to \mbRp$ is defined
by setting $H^{\tau}(i) \coloneqq  h_{\tau}(u_{i})$ for $i \in \llbracket 0, n \rrbracket$ and linear interpolation.
The \textit{depth-first walk} $X^{\tau} : [0, n+1] \to \mbRp$ is defined by setting
\begin{gather}
  X^{\tau}(0) \coloneqq 0,\\
  X^{\tau}(i) \coloneqq \sum_{j =0}^{i-1} (k_{u_{j}}(\tau)-1)
  \ \text{for}\  i=1,2, \ldots, n+1.
\end{gather}

For $\tau \in \mathbf{T}$ and $a,\, b >0$,
we set
\begin{equation}
  \mathcal{T}^{\tau}_{a,b}
  \coloneqq
  (\tau, ad^{\tau}, \rho^{\tau}, b m^{\tau}).
\end{equation}

\begin{prop} \label{prop: distance between real and plane trees}
  For every $\tau \in \mathbf{T}$ and $a,\, b >0$,
  we have that
  \begin{equation}
    d_{\mathbb{G}_{c}}
    \left(
    \mathcal{T}_{a,b}^{\tau},
    \mathcal{T}_{a,\frac{b}{2}}^{C^{\tau}}
    \right)
    \leq
    \frac{3a}{2}+b.
  \end{equation}
\end{prop}

\begin{proof}
  Define a map $\tilde{f}_{\tau}:[0,2n] \to \tau$ 
  by setting $\tilde{f}_{\tau}(t)$ to be $f_{\tau}(\lceil t \rceil)$ or $f_{\tau}(\lfloor t \rfloor)$,
  whichever is most distant from the root.
  Define a correspondence $\mathcal{R}$ between $\tau$ and $T^{C^{\tau}}$
  by setting
  \begin{equation}
    \mathcal{R}
    \coloneqq
    \left\{
    (u,x) \in \tau \times T^{C^{\tau}} :
    \exists t \in [0,2n]\
    \text{such that} \
    \tilde{f}_{\tau}(t)=u, \,
    p^{C^{\tau}}(t)=x
    \right\}.
  \end{equation}
  (Recall the definition of a correspondence between sets from \cite[Definition 7.3.17]{Burago_Burago_Ivanov_01_A_course}.)
  Then,
  we define a metric $d$ on the disjoint union $\tau \sqcup T^{C^{\tau}}$, extending $ad^{\tau}$ and $ad^{C^{\tau}}$,
  by setting
  \begin{align}
    d(u,x)
    \coloneqq
    \inf
    \left\{
    ad^{\tau}(u,u')
    +
    \frac{1}{2}\, \mathrm{dis}\mathcal{R}
    +
    \delta
    +
    a d^{C^{\tau}}(x', x)
    :
    (u',x') \in \mathcal{R}
    \right\},
  \end{align}
  where $\delta$ is a positive constant and $\mathrm{dis} \mathcal{R}$ is the distortion of the correspondence $\mathcal{R}$.
  Note that the distortion is given by
  \begin{equation}
    \mathrm{dis} \mathcal{R}
    \coloneqq
    \sup
    \left\{
    |ad^{\tau}(u,u')-ad^{C^{\tau}}(x,x')| \,
    :
    (u,x),(u',x') \in \mathcal{R}
    \right\}.
  \end{equation}
  Given $s,\, t \in [0,2n]$ with $s<t$,
  we can find $s',\, t' \in \llbracket 0, 2n \rrbracket$ such that $|s'-s| \vee |t'-t| \leq 1$,
  $\tilde{f}_{\tau}(s)=f_{\tau}(s')$ and $\tilde{f}_{\tau}(t)=f_{\tau}(t')$ hold.
  This yields that
  \begin{align}
     &
    \left|
    d^{\tau}(\tilde{f}(t),\tilde{f}(s))
    -
    d^{C^{\tau}}(p^{C^{\tau}}(t), p^{C^{\tau}}(s))
    \right| \\
    =
     &
    \left|
    C^{\tau}(t')
    +
    C^{\tau}(s')
    -
    \inf_{s' \leq u \leq t'} C^{\tau}(u)
    -
    C^{\tau}(t)
    -
    C^{\tau}
    +
    \inf_{s\leq u \leq t} C^{\tau}(u)
    \right|
    \leq
    3,
  \end{align}
  which implies that $\mathrm{dis} \mathcal{R} \leq 3a$.
  Thus we obtain that
  \begin{align} \label{eq: Hausdorff and root distance for trees}
    d_{H}(\tau, T^{C^{\tau}})
    \vee
    d(\rho^{\tau},\rho^{C^{\tau}})
    \leq
    \frac{1}{2}\mathrm{dis}\mathcal{R}
    +
    \delta
    \leq
    \frac{3a}{2}
    +
    \delta
  \end{align}
  (c.f.\ \cite[Theorem 7.3.25]{Burago_Burago_Ivanov_01_A_course}).
  Next,
  we consider the Prohorov distance.
  For every $u \in \tau \setminus \{\rho^{\tau} \}$,
  there are two intervals $I_{1}(u)$ and $I_{2}(u)$ of unit length satisfying
  \begin{gather}
    I_{1}(u)
    \cap
    I_{2}(u)
    =
    \emptyset,
    \ I_{1}(u)
    \cup
    I_{2}(u)
    =
    \tilde{f}_{\tau}^{-1} (\{ u \}) .
  \end{gather}
  Let $A$ be a subset of $\tau$,
  and define a subset $B$ of $T^{C^{\tau}}$ by setting
  \begin{equation}
    B
    \coloneqq
    p^{C^{\tau}}
    \left(
    \bigcup_{a \in A \setminus \{ \rho^{\tau} \} }
    ( I_{1}(a) \cup I_{2}(a) )
    \right).
  \end{equation}
  It is then the case that
  \begin{align}
    m^{C^{\tau}}(B)
    \geq
    \Leb
    \left(
    \bigcup_{a \in A \setminus \{ \rho^{\tau} \} }
    ( I_{1}(a) \cup I_{2}(a) )
    \right)
    \geq
    2(m^{\tau}(A)-1),
  \end{align}
  and
  $B \subseteq A^{\frac{1}{2} \mathrm{dis} \mathcal{R}+2\delta}
    \coloneqq
    \{ z \in \tau \sqcup T^{C^{\tau}} : d(z, A) <\frac{1}{2} \mathrm{dis} \mathcal{R} + 2\delta \}$.
  Therefore it follows that
  \begin{equation} \label{eq: prohorov for trees 1}
    m^{\tau}(A)
    \leq
    \frac{1}{2}
    m^{C^{\tau}}
    (A^{\frac{1}{2} \mathrm{dis} \mathcal{R}+2\delta})
    +
    1.
  \end{equation}
  Let $B'$ be  a closed set of $T^{C^{\tau}}$,
  and define a subset $A'$ of $\tau$ by setting
  \begin{equation}
    A'
    \coloneqq
    \tilde{f}_{\tau}
    \left(
    (p^{C^{\tau}})^{-1}(B')
    \right).
  \end{equation}
  Then we have that
  \begin{align}
    (p^{C^{\tau}})^{-1} (B')
    \subseteq
    \tilde{f}_{\tau}^{-1}(A')
    \subseteq \{ 0 \}
    \cup
    \bigcup _{a \in A' \setminus \{ \rho^{\tau} \} }
    (I_{1}(a) \cup I_{2}(a)),
  \end{align}
  which yields that
  \begin{align}
    m^{C^{\tau}}(B')
    =
    \Leb
    \left(
    (p^{C^{\tau}})^{-1}(B')
    \right)
    \leq
    2m^{\tau}(A').
  \end{align}
  Since we also have that
  $A' \subseteq (B')^{\frac{1}{2} \mathrm{dis} \mathcal{R}+2\delta}
    \coloneqq  \{ z \in \tau \sqcup T^{C^{\tau}} : d(z, B') < \frac{1}{2} \mathrm{dis} \mathcal{R} + 2\delta \}$,
  we deduce that
  \begin{equation} \label{eq: prohorov for trees 2}
    \frac{1}{2}
    m^{C^{\tau}}(B')
    \leq
    m^{\tau}( (B')^{\frac{1}{2} \mathrm{dis} \mathcal{R}+2\delta}).
  \end{equation}
  By \eqref{eq: prohorov for trees 1} and \eqref{eq: prohorov for trees 2},
  we obtain that
  \begin{equation} \label{eq: prohorov for trees}
    d_{P}
    (
    bm^{\tau},
    \frac{b}{2}m^{C^{\tau}}
    )
    \leq
    \frac{1}{2} \mathrm{dis} \mathcal{R}
    +
    2\delta
    +
    b
    \leq \frac{3}{2}a
    +
    2\delta
    +
    b.
  \end{equation}
  By \eqref{eq: Hausdorff and root distance for trees}, \eqref{eq: prohorov for trees} and letting $\delta \to 0$,
  we obtain the desired result.
\end{proof}

\subsubsection{Critical Galton-Watson trees} \label{subsubsec: critical GW trees}
Let $W=(W_{t})_{0 \leq t \leq 1}$ be the normalized Brownian excursion.
The random real tree $\mathcal{T}^{2W}$ is the continuum random tree (CRT) introduced by Aldous.
In \cite{Aldous_93_The_continuum},
he showed that the properly scaled contour functions of Galton-Watson trees with the finite variance offspring distribution converge to $2W$,
and this result is extended to Galton-Watson trees with possibly infinite variance offspring distribution in \cite{Duquesne_03_A_limit}.
The convergence of the contour functions leads to the convergence of the Galton-Watson trees
in the Gromov-Hausdorff-Prohorov topology (see Corollary \ref{cor: convergence of trees}),
and the convergence of trees yields the convergence of random walks on the trees.
Moreover,
in \cite[Section 4.1]{Andriopoulos_23_Convergence},
the convergence of associated local times is proved through detailed calculations about the equicontinuity of the local times.
In this section,
we provide another proof of this convergence by estimating volumes of balls in the Galton-Watson trees.

\begin{assum} \label{assum: assumption for offspring distribution}
  Let $p$ be a probability measure on $\mathbb{Z}_{+}$
  such that the mean of the distribution $p$ is $1$, $p(0)>0$ and $p$ is aperiodic,
  that is, the greatest common divisor of the set $\{ n : p(n) >0 \}$ is $1$.
  Moreover there exists $\alpha \in (1,2]$
  such that $p$ belongs to the domain of attraction of an $\alpha$-stable law,
  which means that there exists an increasing sequence $(B_{n})$ of positive numbers such that
  \begin{equation}
    B_{n}^{-1} (\xi_{1}+ \cdots+\xi_{n}-n)
    \xrightarrow{\mathrm{d}}
    X^{(\alpha)},
  \end{equation}
  where $(\xi_{n})$ is a sequence of i.i.d.\ random variables with distribution $p$
  and $X^{(\alpha)}$ is a random variable
  whose law is given by the Laplace transform $\displaystyle E(e^{-\lambda X^{(\alpha)}})=\exp(\lambda^{\alpha})$.
\end{assum}

\begin{rem}
  It is elementary to check that,
  for any $\gamma \in [0, 1-\alpha^{-1})$,
  it holds that
  \begin{equation} \label{eq: divergence of coefficient}
    \lim_{n \to \infty}
    \frac{n}{B_{n}\, n^{\gamma}}
    =
    \infty
  \end{equation}
  (cf.\ \cite[Theorem 8.3.1]{Bingham_Goldie_Teugels_87_Regular} and the comment below the theorem).
  This is used in the proof of Proposition \ref{prop: volume estimates for trees}.
\end{rem}

Let $Y=(Y_{t})_{t \geq 0}$ be a L\'{e}vy process with $Y_{1}\overset{\mathrm{d}}{=} X^{(\alpha)}$
and $H^{\mathrm{exc}}=(H^{\mathrm{exc}};0\leq t \leq 1)$ be the normalized excursion of the height process of $Y$
built on a probability measure $\mathbf{P}$
(see \cite[Section 3]{Duquesne_03_A_limit} for their definitions).
We consider a Galton-Watson tree $\tau$ with offspring distribution $p$
satisfying Assumption \ref{assum: assumption for offspring distribution}.
We denote by $\mathbf{P}_{n}$ the underlying probability measure conditioning $\tau$ to have $n+1$ nodes,
which is well-defined for all sufficiently large $n$ by the aperiodicity of $p$.

\begin{thm} [{\cite[Theorem 3.1]{Duquesne_03_A_limit}}] \label{thm: convergence of contour function}
  It holds that
  \begin{equation}
    \mathbf{P}_{n} \left( \left(\frac{B_{n}}{n}C^{\tau}_{2nt}\,  ; 0\leq t \leq1 \right) \in \cdot \right)
    \to
    \mathbf{P} (H^{\mathrm{exc}} \in \cdot)
  \end{equation}
  weakly as probability measures on $C([0,1],\mbR)$ equipped with the uniform convergence topology.
\end{thm}

Set
\begin{align}
  \mathcal{T}_{n}
   & \coloneqq
  \mathcal{T}_{\frac{B_{n}}{n},
  \frac{1}{n}}^{\tau}
  =(\tau, \frac{B_{n}}{n}\, d^{\tau},
  \rho^{\tau}, \frac{1}{n}\, m^{\tau}),        \\
  \mathcal{T}
   & \coloneqq  \mathcal{T}^{H^{\mathrm{exc}}}
  =(T^{H^{\mathrm{exc}}}, d^{H^{\mathrm{exc}}},
  \rho^{H^{\mathrm{exc}}}, m^{H^{\mathrm{exc}}}).
\end{align}

\begin{cor} \label{cor: convergence of trees}
  In the above setting,
  $\mathbf{P}_{n}(\mathcal{T}_{n} \in \cdot) \xrightarrow{\mathrm{d}} \mathbf{P}(\mathcal{T} \in \cdot)$
  weakly as probability measures on $\mathbb{G}_{c}$.
\end{cor}

\begin{proof}
  We may assume that $C^{\tau}_{2n \cdot}$ and $H^{\mathrm{exc}}$ are coupled
  so that the convergence in Theorem \ref{thm: convergence of contour function} is the almost-sure convergence.
  By Proposition \ref{prop: ghp distance between real trees},
  we deduce that $\mathcal{T}^{\frac{B_{n}}{n}\, C^{\tau}_{2n \cdot}}$ converges to $\mathcal{T}$ almost surely.
  By Lemma \ref{lem: invariance wrt scaling of real trees} and Proposition \ref{prop: distance between real and plane trees},
  we obtain the desired convergence.
\end{proof}

By \cite[Proposition 5.1]{Kigami_95_Harmonic},
plane trees and real trees are resistance metric spaces,
and it is easy to check that $\mathcal{T}_{n}$ belongs to $\check{\mathbb{F}}_{c}$.
To apply Theorem \ref{1. thm: main result for random spaces},
we need volume estimates of balls in $\mathcal{T}_{n}$,
and these can be obtained from the H\"{o}lder continuity of the height function.

\begin{lem} [{\cite[Lemma 1]{Marzouk_20_Scaling}}]  \label{lem: holder continuity of height function}
  For every $\gamma \in (0, 1-\alpha^{-1})$ and every $\varepsilon >0$,
  there exists $C_{\gamma, \varepsilon} >0$ such that
  \begin{equation}  \label{eq: tightness of holder continuity of height functions}
    \liminf_{n \to \infty}
    \mathbf{P}_{n}
    \left(
    | H^{\tau}(t)-H^{\tau}(s)|
    \leq
    \frac{n}{B_{n}n^{\gamma}}\, C_{\gamma ,\varepsilon}|t-s|^{\gamma},\quad
    \forall t,s \in [0,n]
    \right)
    \geq
    1-\varepsilon.
  \end{equation}
\end{lem}

The following technical lemma is used
to estimate the distance between two vertices of a plane tree by the height function.

\begin{lem} [{\cite[Lemma 17]{Berry_Broutin_Goldschmidt_12_The_continuum}}] \label{lem: distance estimate by the height function}
  Suppose that $\tau$ is a plane tree with vertices $v_{0}, v_{1}, \ldots, v_{n}$ labeled in lexicographical order.
  Write $u \wedge v$ for the common ancestor of vertices $u$ and $v$ furthest from $v_{0}$.
  Then, for $0 \leq i \leq j \leq n$,
  \begin{equation}
    \left|
    d^{\tau}(v_{0}, v_{i} \wedge v_{j})
    -
    \min_{i \leq k \leq j} H^{\tau}(k)
    \right|
    \leq
    1.
  \end{equation}
\end{lem}

\begin{prop} \label{prop: volume estimates for trees}
  For every $\gamma \in (0,1-\alpha^{-1})$ and every $\varepsilon >0$,
  there exists $c_{\gamma,\varepsilon}>0$ such that
  \begin{equation}
    \liminf_{n \to \infty}
    \mathbf{P}_{n}
    \left(
    \inf_{x \in \tau} \frac{1}{n} m^{\tau}
    \left(D_{d^{\tau}}\left(x, \frac{n}{B_{n}}r\right)\right)
    \geq
    (c_{\gamma, \varepsilon} r^{\gamma^{-1}}) \wedge 1,\quad \forall r>0
    \right)
    \geq
    1-\varepsilon.
  \end{equation}
\end{prop}

\begin{proof}
  Let $C_{\gamma, \varepsilon} >0$ be the constant of Lemma \ref{lem: holder continuity of height function}.
  By \eqref{eq: divergence of coefficient},
  we have that, for all sufficiently large $n$,
  \begin{equation} \label{eq: coefficient is larger than 2}
    \frac{C_{\gamma, \varepsilon} n}{B_{n} n^{\gamma}} >2.
  \end{equation}
  Assume that the Galton-Watson tree $\tau$ with vertices $v_{0}, v_{1}, \ldots, v_{n}$ in lexicographical order satisfies
  \begin{equation} \label{eq: holder continuity of height function in proof}
    | H^{\tau}(s)-H^{\tau}(t)|
    \leq
    \frac{n}{B_{n}n^{\gamma}}\, C_{\gamma ,\varepsilon}|s-t|^{\gamma},\quad
    \forall s,\, t \in [0,n].
  \end{equation}
  Fix $v_{i} \in \tau$ and $k \in \{1, 2, \ldots, n\}$.
  If $v_{j}$ satisfies $| i-j | \leq k$,
  then, by Lemma \ref{lem: distance estimate by the height function}, \eqref{eq: coefficient is larger than 2}
  and \eqref{eq: holder continuity of height function in proof},
  we deduce that
  \begin{align}
    d^{\tau}(v_{i}, v_{j})
     & =
    d^{\tau}(v_{i}, v_{0}) + d^{\tau}(v_{j}, v_{0}) - 2d^{\tau}(v_{0}, v_{i} \wedge v_{j}) \\
     &
    \leq
    H^{\tau}(i) + H^{\tau}(j) - 2 \min_{i \wedge j \leq k \leq i \vee j} H^{\tau}(k) + 2   \\
     &
    \leq
    \frac{2 C_{\gamma, \varepsilon} n } {B_{n}}
    \left( \frac{k}{n} \right)^{\gamma}
    +2
    \leq
    \frac{3 C_{\gamma, \varepsilon} n } {B_{n}}
    \left( \frac{k}{n} \right)^{\gamma}.
  \end{align}
  There are, at least, $k+1$ many $v_{j}$ satisfying $|i -j| \leq k$,
  and hence we deduce that
  \begin{equation} \label{eq: volume of trees with non-scaling}
    m^{\tau}
    \left(
    D_{d^{\tau}}
    \left(
    v_{i},
    \frac{3 C_{\gamma, \varepsilon} n } {B_{n}}
    \left( \frac{k}{n} \right)^{\gamma}
    \right)
    \right)
    \geq
    (k+1) \wedge n
  \end{equation}
  for $k \in \{1,2, \ldots, n\}$.
  The above inequality holds for $k=0$.
  If the integer $k$ is larger than $n$,
  then, by \eqref{eq: volume of trees with non-scaling},
  it follows that
  \begin{equation}
    m^{\tau}
    \left(
    D_{d^{\tau}}
    \left(
    v_{i},
    \frac{3 C_{\gamma, \varepsilon} n } {B_{n}}
    \left( \frac{k}{n} \right)^{\gamma}
    \right)
    \right)
    \geq
    m^{\tau}
    \left(
    D_{d^{\tau}}
    \left(
    v_{i},
    \frac{3 C_{\gamma, \varepsilon} n } {B_{n}}
    \left( \frac{n}{n} \right)^{\gamma}
    \right)
    \right)
    \geq
    (k+1) \wedge n.
  \end{equation}
  Thus, the inequality \eqref{eq: volume of trees with non-scaling} holds for any $k \in \mathbb{Z}_{+}$.
  For $r \in \mbRp$,
  choose $l \in \mathbb{Z}_{+}$ with $k \leq r < k+1$.
  Then it holds that
  \begin{equation}
    m^{\tau}
    \left(
    D_{d^{\tau}}
    \left(
    v_{i},
    \frac{3 C_{\gamma, \varepsilon} n } {B_{n}}
    \left( \frac{r}{n} \right)^{\gamma}
    \right)
    \right)
    \geq
    m^{\tau}
    \left(
    D_{d^{\tau}}
    \left(
    v_{i},
    \frac{3 C_{\gamma, \varepsilon} n } {B_{n}}
    \left( \frac{k}{n} \right)^{\gamma}
    \right)
    \right)
    \geq
    (k+1) \wedge n
    \geq
    r \wedge n
  \end{equation}
  From the above inequality and Lemma \ref{lem: holder continuity of height function},
  the desired result follows.
\end{proof}

By Proposition \ref{prop: volume estimates for trees} and Corollary \ref{eq: discrete graph version of volume condition},
we obtain the convergence of stochastic processes and local times on Galton-Watson trees
as essentially established in \cite{Andriopoulos_23_Convergence} .
\begin{cor}
  The limiting space $\mathcal{T}$ belongs to $\check{\mathbb{F}}_{c}$ with probability $1$,
  and $\cX_{\mathcal{T}_{n}} \xrightarrow{\mathrm{d}} \cX_{\mathcal{T}}$ 
  as random elements of $\mbM_{L}$.
\end{cor}

\subsection{Uniform spanning trees in high-dimensional tori} \label{subsec: UST in high dimensions}
Now we provide new results about convergence of local times of random walks on uniform spanning trees.
A spanning tree of a connected finite graph $G$ is a connected subgraph whose edges touch every vertex of $G$ and contain no cycles.
The uniform spanning tree (UST) on $G$ is a uniformly drawn sample from the finite set of such objects.
In \cite{Archer_Nachmias_Shalev_pre_The_GHP},
it is shown that uniform spanning trees on graphs in high-dimensional tori converge to the CRT,
and in this section,
we apply our main results to this.

For a graph $G$,
two vertices $x,y$ and a non-negative integer $t$,
we write $p_{t}(x,y)$ for the probability that the lazy random walk starting at $x$ will be at $y$ at time $t$.
When $G$ is a finite connected regular graph on $n$ vertices,
we define the uniform mixing time of $G$ to be
\begin{equation}
  t_{\mathrm{mix}}(G)
  \coloneqq
  \min
  \left\{
  t \geq 0 :
  \max_{x,y \in G} |np_{t}(x,y)-1|\leq \frac{1}{2}
  \right\}.
\end{equation}

\begin{assum} \label{assum: ust assumption in high}
  Let $(G_{n})$ be a sequence of finite connected vertex-transitive graphs with $n$ vertices.
  \begin{enumerate}
    \item
          There exists $\theta < \infty$ such that $\displaystyle \sup_{n} \sup_{x \in G_{n}} \sum_{t=0}^{\sqrt{n}}(t+1) p_{t}(x,x) \leq \theta$.
    \item
          There exists $\alpha > 0$ such that $t_{\mathrm{mix}}(G_{n})=o(n^{\frac{1}{2}-\alpha})$ as $n \to \infty$.
  \end{enumerate}
\end{assum}

Let $(G_{n})_{n \geq 1}$ be a deterministic sequence of graphs satisfying Assumption \ref{assum: ust assumption in high} and,
for each $n$, fix an arbitrary vertex $\rho_{n}$ in $G_{n}$.
We consider the uniform spanning tree $T_{n}$ on $G_{n}$
built on a probability space equipped with a complete probability measure $\mathbf{P}_{n}$.
Let $d_{n}$ and $\mu_{n}$ be the graph metric and counting measure on $T_{n}$.

\begin{thm} [{\cite[Theorem 1.7]{Archer_Nachmias_Shalev_pre_The_GHP}}] \label{thm: ust theorem in high dimension}
  In the above setting, there exists a sequence $(\beta_{n})_{n \geq 1}$ 
  satisfying $0<\inf_{n} \beta_{n} \leq \sup_{n} \beta_{n} < \infty$ such that
  \begin{equation}
    \mathcal{T}_{n}
    \coloneqq
    \left(
    T_{n}, \frac{1}{\beta_{n} \sqrt{n}}d_{n}, \rho_{n}, \frac{1}{n} \mu_{n}
    \right) \xrightarrow{\mathrm{d}} \mathcal{T}^{2W}
  \end{equation}
  in the Gromov-Hausdorff-Prohorov topology,
  where $\mathcal{T}^{2W}$ is the rooted CRT with its canonical measure (see Section \ref{subsubsec: introduction of trees}).
\end{thm}

\begin{rem}
  As stated in \cite{Archer_Nachmias_Shalev_pre_The_GHP},
  the scaling limit of uniform spanning trees of the $d$-dimensional torus $\mathbb{Z}_{n}^{d}$ with $d >4$ is included
  in Theorem \ref{thm: ust theorem in high dimension}.
\end{rem}

Set $\beta\coloneqq \inf_{n}\beta_{n}$ and $\gamma\coloneqq \alpha/20$,
where $(\beta_{n})_{n \geq 1}$ is the sequence of Theorem \ref{thm: ust theorem in high dimension}
and $\alpha$ is the constant of Assumption \ref{assum: ust assumption in high}.

\begin{prop} \label{prop: volume estimate for ust in high}
  In the above setting, for every $\delta>0$,
  there exist positive constants $c,\, \varepsilon$ such that
  \begin{equation}
    \liminf_{n \to \infty}
    \mathbf{P}_{n}
    \left(
    \inf_{x \in T_{n}}
    \frac{1}{n} \mu_{n}
    \left( D_{d_{n}}(x, \beta_{n} \sqrt{n}r) \right)
    \geq
    \frac{\varepsilon \beta^{4}}{16c^{2}}r^{4},\quad
    \forall r
    \in
    \left[\frac{c}{\beta n^{\gamma}},\frac{2c}{\beta} \right]
    \right)
    \geq
    1-\delta.
  \end{equation}
\end{prop}

\begin{proof}
  For $c,\, \varepsilon >0,\, l \in \mathbb{Z}_{+}$ and $n$, we set
  \begin{gather}
    r_{l}\coloneqq \frac{c\sqrt{n}}{2^{l}},\quad
    \varepsilon_{l}\coloneqq \frac{\varepsilon}{4^{l}},\quad
    N_{n}\coloneqq 2\gamma \log_{2}n=\frac{\alpha}{10} \log_{2}n
  \end{gather}
  and define the events
  \begin{align}
    A_{n,l} & \coloneqq
    \left \{
    \text{there exists an}\
    x \in \mathcal{T}_{n}\
    \text{such that}\
    \mu_{n}(D_{d_{n}} (x, r_{l})) \leq \varepsilon_{l} r_{l}^{2}\
    \text{and}\
    \mu_{n}(D_{d_{n}}(x, r_{l+1})) \geq \varepsilon_{l+1} r_{l+1}^{2}
    \right \},          \\
    B_{n,l} & \coloneqq
    \left \{
    \text{there exists an}\
    x \in \mathcal{T}_{n}\
    \text{such that}\
    \mu_{n}(D_{d_{n}} (x, r_{l})) \leq \varepsilon_{l} r_{l}^{2}
    \right \},          \\
    V_{n}   & \coloneqq
    \left \{
    \text{there exists an}\
    x \in \mathcal{T}_{n}\
    \text{such that}\
    \mu_{n}(D_{d_{n}} (x, r_{l})) \leq \varepsilon_{l} r_{l}^{2}\
    \text{for some integer}\
    0 \leq l \leq \lfloor N_{n} \rfloor
    \right \}.
  \end{align}
  Then we have
  \begin{equation}
    V_{n}
    \subseteq
    \left( \bigcup_{l=0}^{\lfloor N_{n} \rfloor -1} A_{n,l} \right)
    \cup
    B_{n, \lfloor N_{n} \rfloor}.
  \end{equation}
  By the argument of the proof of \cite[Theorem 3.2]{Archer_Nachmias_Shalev_pre_The_GHP},
  given $\delta >0$,
  we can find $c,\, \varepsilon >0$ so that
  \begin{equation}
    \mathbf{P}_{n}
    \left(
    \left( \bigcup_{l=0}^{\lfloor N_{n} \rfloor -1} A_{n,l} \right)
    \cup
    B_{n, \lfloor N_{n} \rfloor}
    \right)
    < \delta
  \end{equation}
  for all sufficiently large $n$.
  (To check this, observe that the above probability is bounded above
  by the right-hand side of \cite[the inequality (17)]{Archer_Nachmias_Shalev_pre_The_GHP},
  which can be made as small as we want by the choice of $c$ and $\varepsilon$.)
  Therefore we deduce that
  \begin{equation}
    \liminf_{n \to \infty}
    \mathbf{P}_{n}
    \left(
    \inf_{x \in T_{n}}
    \mu_{n}
    \left(
    D_{d_{n}}\left(x, \frac{c\sqrt{n}}{2^{l}}
    \right)
    \right)
    >
    \frac{\varepsilon}{4^{l}}
    \left( \frac{c\sqrt{n}}{2^{l}} \right) ^{2},\quad
    \forall l=0,\ldots, \lfloor 2\gamma \log_{2}n \rfloor
    \right)
    \geq
    1-\delta.
  \end{equation}
  Suppose that the above event occurs.
  Then for every
  \begin{equation}
    r
    \in
    \left[
      \frac{c}{\beta n^{\gamma}}, \frac{2c}{\beta}
      \right],
  \end{equation}
  we can find an integer $l \in \{0,\ldots, \lceil \gamma \log_{2}n \rceil \}$ satisfying
  \begin{equation}
    \frac{c}{\beta 2^{l}}
    \leq
    r \leq \frac{c}{\beta 2^{l-1}},
  \end{equation}
  and we have that
  \begin{align}
    \mu_{n}
    \left(
    D_{d_{n}}(x, \beta_{n} \sqrt{n} r)
    \right)
     & \geq
    \mu_{n}
    \left(
    D_{d_{n}}
    \left(
    x, \frac{c\sqrt{n} }{2^{l}}
    \right)
    \right)     \\
     & \geq
    \frac{\varepsilon}{4^{l}}
    \left(
    \frac{c\sqrt{n}}{2^{l}}
    \right)^{2} \\
     & \geq
    \frac{\delta \beta^{4}}{16c^{2}}r^{4}n.
  \end{align}
  Therefore the desired result follows.
\end{proof}

By Proposition \ref{prop: volume estimate for ust in high} and Corollary \ref{eq: discrete graph version of volume condition},
we obtain the convergence of stochastic processes and local times on the uniform spanning trees in high-dimensional tori.

\begin{cor}
  The CRT $\mathcal{T}^{2W}$ belongs to $\check{\mathbb{F}}_{c}$ with probability $1$,
  and 
  $\cX_{\mathcal{T}_{n}} \xrightarrow{\mathrm{d}} \cX_{\mathcal{T}^{2W}}$
  as random elements of $\mbM_{L}$.
\end{cor}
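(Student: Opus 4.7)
The plan is to verify each clause of Assumption \ref{assumption:random (compact)} for $\mathcal{T}_n$ and apply Theorem \ref{main result in random case (compact)} to obtain both conclusions at once. Since the underlying set of each $\mathcal{T}_n$ is finite, $\mathcal{T}_n$ belongs to $\check{\mathbb{F}}_c$ deterministically, so the hypothesis that $G_n$ is $\check{\mathbb{F}}$-valued is free. Assumption \ref{assumption:random (compact)} (i) is exactly Theorem \ref{ust theorem in high dimension}. For Assumption \ref{assumption:random (compact)} (ii), I would argue that since $\mathcal{T}^{2W}$ is almost surely compact, the continuous mapping theorem applied to the diameter functional on $\mathbb{G}_c$ shows that $\mathrm{diam}(\mathcal{T}_n)$ is tight. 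For $r$ exceeding the diameter of $\mathcal{T}_n$, $B_{R_n}(\rho_n,r)^c$ is empty and the resistance $R_n(\rho_n,\emptyset) = \infty$ by convention, so for any $\lambda > 0$ the probability in (ii) tends to $1$ as $r \to \infty$.

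The substantive step is Assumption \ref{assumption:random (compact)} (iii), which I would verify using Corollary \ref{finite graph version of volume condition} with scaling factors $a_n = \beta_n \sqrt{n}$ and $b_n = n$. Take $r_k := k$, and for a prescribed failure probability $\eta > 0$ in Corollary \ref{finite graph version of volume condition}, let $c_\eta, \varepsilon_\eta > 0$ denote the constants produced by Proposition \ref{volume estimate for ust in high} with $\delta = \eta$. Set
\begin{equation*}
   c'_{\eta,k} := \frac{2c_\eta}{\beta},
   \qquad
   c_{n,k} := \frac{\beta\, n^{\gamma}}{c_\eta},
   \qquad
   v^\eta_k(u) := \frac{\varepsilon_\eta\, \beta^4}{16\, c_\eta^2}\, u^4,\ \ u > 0,
\end{equation*}
where $\beta = \inf_n \beta_n$ and $\gamma = \alpha/20$. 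Since the infimum in Proposition \ref{volume estimate for ust in high} runs over all of $T_n$, it a fortiori runs over $B_{R_n}(\rho_n, a_n r_k) \subseteq T_n$, and inequality \eqref{volume estimates for finite graph cases} is immediate. It remains to verify \eqref{condition for scaling factors and volume function} for some $\alpha_k \in (0,1/2)$. Because $v^\eta_k(2^{-l})^{-2}$ grows like $2^{8l}$, the series $\sum_l v_k^\eta(2^{-l})^{-2} \exp(-2^{\alpha_k l})$ converges for any $\alpha_k > 0$, and $b_n^2 \exp(-c_{n,k}^{\alpha_k}) = n^2 \exp\!\bigl(-(\beta n^\gamma/c_\eta)^{\alpha_k}\bigr) \to 0$ since the stretched exponential dominates any polynomial. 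Any $\alpha_k \in (0, 1/2)$ thus works.

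All three clauses of Assumption \ref{assumption:random (compact)} having been checked, Theorem \ref{main result in random case (compact)} simultaneously gives $\mathcal{T}^{2W} \in \check{\mathbb{F}}_c$ almost surely and the annealed weak convergence $\mathbb{P}_{\mathcal{T}_n} \to \mathbb{P}_{\mathcal{T}^{2W}}$ on $\mathbb{D}$. There is no genuine obstacle in this deduction; the only delicate point is bookkeeping, namely distinguishing the failure probability parameter $\eta$ of Corollary \ref{finite graph version of volume condition} from the unrelated small constant $\varepsilon_\eta$ produced by Proposition \ref{volume estimate for ust in high}. The real work has already been done: upstream in proving the volume lower bound of Proposition \ref{volume estimate for ust in high} for uniform spanning trees in high dimensions, and in the general machinery of Theorem \ref{main result in random case (compact)}.
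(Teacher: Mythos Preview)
Your proof is correct and follows the same route as the paper, which simply records that the corollary follows from Proposition \ref{volume estimate for ust in high} and Corollary \ref{finite graph version of volume condition}; you have merely supplied the explicit choice of parameters $a_n,b_n,c_{n,k},c'_{\eta,k},v^\eta_k,\alpha_k$ and spelled out the (trivial) verification of clauses (i) and (ii) of Assumption \ref{assumption:random (compact)} via GHP convergence to a compact limit. The only cosmetic point is that Theorem \ref{main result in random case (compact)} literally yields $\mathcal{T}^{2W}\in\check{\mathbb{F}}$ almost surely, and one then uses the a.s.\ compactness of $\mathcal{T}^{2W}$ to upgrade this to $\check{\mathbb{F}}_c$; this is implicit in your write-up.
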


\subsection{Uniform spanning trees in two and three dimensions} \label{subsec: UST in 2 and 3 dimensions}
Let $\mathcal{U}$ be the UST on $\mathbb{Z}^{3}$,
which is obtained as a local limit of the USTs on the finite boxes $[-n,n]^{3}\cap \mathbb{Z}^{3}$
(see \cite[Theorem 2.3]{Pemantle_91_Choosing} for the precise definition of this limit).
We denote the underlying probability measure by $\mathbf{P}$ and define $d_{\mathcal{U}},\, \mu_{\mathcal{U}}$ and $\rho_{\mathcal{U}}$
to be the graph metric on $\mathcal{U}$, the counting measure on $\mathcal{U}$ and the origin $0$ of $\mathbb{Z}^{3}$.
Let $\beta$ be the growth exponent defined as follows.
Let $M_{n}$ be the number of steps of the loop-erased random walk on $\mathbb{Z}^{3}$ until its first exit from a ball of radius $n$.
Then set
\begin{equation} \label{eq: growth exponent of Z2}
  \beta
  \coloneqq
  \lim_{n \to \infty}
  \frac{\log \mathbf{E}(M_{n})}{\log n}.
\end{equation}
The existence of this limit was proved in \cite{Shiraishi_18_Growth},
and this growth exponent determines the scaling of $d_{\mathcal{U}}$.
We set
\begin{equation}
  \mathcal{U}_{\delta}
  \coloneqq
  \left(
  \mathcal{U}, \delta^{\beta}d_{\mathcal{U}},
  \rho_{\mathcal{U}}, \delta^{3}\mu_{\mathcal{U}}
  \right).
\end{equation}
By \cite[Theorem 1.6]{Angel_Croydon_Hernandez-Torres_Shiraishi_21_Scaling},
we have  $\mathcal{U}_{\delta} \in \check{\mathbb{F}}$, $\mathbf{P}$-a.s.

\begin{thm}[{\cite[Theorem 1.1 and Proof of Theorem 1.9]{Angel_Croydon_Hernandez-Torres_Shiraishi_21_Scaling}}] \label{thm: scaling limit of ust in 3d}
  There exists a random element $\mathcal{T}$ of $\mathbb{F}$
  such that the random elements $\mathcal{U}_{2^{-n}}$ converge to $\mathcal{T}$ in distribution in the local Gromov-Hausdorff-vague topology.
  Moreover the sequence $(\mathcal{U}_{2^{-n}})_{n}$ satisfies
  Assumption \ref{1. assum: random spaces} \ref{1. assum item: random spaces, non-explosion}.
\end{thm}

We use the following ingredients to obtain a volume estimate.

\begin{lem} [{\cite[Proposition 4.1]{Angel_Croydon_Hernandez-Torres_Shiraishi_21_Scaling}}] \label{lem: comparison of balls}
  There exist positive constants $c_{1}$ and $C_{1}$ such that, for every $\tilde{\lambda} \geq 1$ and $\tilde{\delta} \in (0,1)$,
  \begin{equation}
    \mathbf{P}
    \left(
    B(\rho_{\mathcal{U}},\tilde{\lambda}^{-1}\tilde{\delta}^{-1})
    \subseteq
    B_{d_{\mathcal{U}}}(\rho_{\mathcal{U}}, \tilde{\delta}^{-\beta})
    \subseteq
    B(\rho_{\mathcal{U}},\tilde{\lambda} \tilde{\delta}^{-1})
    \right)
    \geq
    1-C_{1}\tilde{\lambda}^{-c_{1}},
  \end{equation}
  where $B(\rho_{\mathcal{U}},r)$ is the open ball in $\mathbb{Z}^{3}$ centered at $\rho_{\mathcal{U}}$ with radius $r$ with respect to the Euclidean metric.
\end{lem}

\begin{lem} [{\cite[Theorem 5.2]{Angel_Croydon_Hernandez-Torres_Shiraishi_21_Scaling}}] \label{lem: pointwise volume estimate of ust in 3d}
  There exist positive constants $c_{2},\, C_{2}$ and $b$
  such that, for every $R \geq 1$ and $\lambda \geq 1$,
  \begin{equation}
    \mathbf{P}
    \left(
    \inf_{x \in B(\rho_{\mathcal{U}},R^{1/\beta})}
    \mu_{\mathcal{U}}
    \left(
    B_{d_{\mathcal{U}}}(x, \lambda^{-b}R)
    \right)
    \leq
    \lambda^{-1}R^{3/\beta}
    \right)
    \leq
    C_{2} \exp(-c_{2}\lambda^{b}).
  \end{equation}
\end{lem}

\begin{prop} \label{prop: volume estimate for ust in 3d}
  For every $\varepsilon>0$ and $L >0$,
  there exist positive constants $c$ and $c'$ satisfying
  \begin{equation}
    \liminf_{\delta \to 0}
    \mathbf{P}
    \left(
    \inf_{x \in B_{d_{\mathcal{U}}}(\rho_{\mathcal{U}}, \delta^{-\beta}L)}
    \delta^{3}
    \mu_{\mathcal{U}}
    \left(
    B_{d_{\mathcal{U}}} (x, \delta^{-\beta} r )
    \right)
    >
    cr^{1/b}, \quad
    \forall r \in (0, c')
    \right)
    \geq
    1-\varepsilon,
  \end{equation}
  where $b$ is the constant in Lemma \ref{lem: pointwise volume estimate of ust in 3d}.
\end{prop}

\begin{proof}
  Let $C_{1}$ and $c_{1}$ be the constants of Lemma \ref{lem: comparison of balls}
  and $C_{2},\, c_{2}$ and $b$ be the constants of Lemma \ref{lem: pointwise volume estimate of ust in 3d}.
  We choose $a >1$ satisfying $C_{1}a^{-c_{1}}<\varepsilon/2$ and choose $\delta_{0} \in (0,1)$ satisfying
  \begin{equation}
    \delta^{-\beta}La^{\beta}>1,\
    \delta L^{-1/\beta}<1,\ \
    \forall \delta < \delta_{0}.
  \end{equation}
  Then,
  by setting $R=\delta^{-\beta}La^{\beta}$ in the inequality in Lemma \ref{lem: pointwise volume estimate of ust in 3d},
  we obtain that
  \begin{equation} \label{eq: ust in 3d estimate 1}
    \mathbf{P}
    \left(
    \inf_{x \in B(\rho_{\mathcal{U}},\delta^{-1}L^{1/\beta}a)}
    \mu_{\mathcal{U}}
    \left(
    B_{d_{\mathcal{U}}}
    (x, \lambda^{-b}\delta^{-\beta}La^{\beta})
    \right)
    \leq
    \lambda^{-1} \delta^{-3} L^{3/\beta} a^{3}
    \right)
    \leq
    C_{2} \exp(-c_{2}\lambda^{b})
  \end{equation}
  for every $\delta \in (0,\delta_{0})$ and $\lambda \geq 1$.
  By setting $\tilde{\delta}=\delta L^{-1/\beta}$ and $\tilde{\lambda}=a$ in the inequality in Lemma \ref{lem: comparison of balls},
  we also obtain that
  \begin{equation} \label{eq: ust in 3d estimate 2}
    \mathbf{P}
    \left(
    B_{d_{\mathcal{U}}}(\rho_{\mathcal{U}},\delta^{-\beta}L)
    \subseteq
    B(\rho_{\mathcal{U}}, a\delta^{-1}L^{1/\beta})
    \right)
    \geq
    1-\varepsilon / 2,
  \end{equation}
  for every $\delta \in (0,\delta_{0})$.
  Let $(\lambda_{k})$ be a sequence of positive numbers with $\lambda_{k} \geq 1$.
  From \eqref{eq: ust in 3d estimate 1} and \eqref{eq: ust in 3d estimate 2},
  it follows that
  \begin{align} \label{eq: ust in 3d estimate 3}
    \mathbf{P}
    \left(
    \inf_{x \in B_{d_{\mathcal{U}}}(\rho_{\mathcal{U}}, \delta^{-\beta}L)}
    \delta^{3}
    \mu_{\mathcal{U}}
    \left(
    B_{d_{\mathcal{U}}} (x, \delta^{-\beta} \lambda_{k}^{-b} L a^{\beta} )
    \right)
    \leq
    \lambda_{k}^{-1} L^{3/\beta} a^{3}\
    \mathrm{for\ some}\
    k
    \right)
    \leq
    \frac{\varepsilon}{2}
    +
    \sum_{k}C_{2}\exp(-c_{2}\lambda_{k}^{b})
  \end{align}
  for every $\delta \in (0,\delta_{0})$.
  We fix $A>1$ satisfying $\sum_{k=0}^{\infty}C_{2}\exp(-c_{2}2^{k}A^{b}) < \varepsilon /2$.
  Then by setting $\lambda_{k}=2^{k/b}A$ in \eqref{eq: ust in 3d estimate 3},
  we deduce that
  \begin{equation}
    \liminf_{\delta \to 0}
    \mathbf{P}
    \left(
    \inf_{x \in B_{d_{\mathcal{U}}}(\rho_{\mathcal{U}}, \delta^{-\beta}L)}
    \delta^{3}
    \mu_{\mathcal{U}}
    \left(
    B_{d_{\mathcal{U}}}
    (x, \delta^{-\beta} 2^{-k}A^{-b} L a^{\beta} )
    \right)
    >
    2^{-k/b}A^{-1} L^{3/\beta} a^{3}, \ \
    \forall k \in \mathbb{Z}_{+}
    \right)
    \geq
    1-\varepsilon.
  \end{equation}
  By a similar argument to that of the proof of Proposition \ref{prop: volume estimate for ust in high},
  we obtain the desired result.
\end{proof}

By Proposition \ref{prop: volume estimate for ust in 3d} and Corollary \ref{eq: discrete graph version of volume condition},
we obtain the convergence of stochastic processes and local times on the three-dimensional uniform spanning trees.
\begin{cor}
  The limiting space $\mathcal{T}$ belongs to $\check{\mathbb{F}}$ with probability $1$,
  and $\cX_{\mathcal{U}_{2^{-n}}} \xrightarrow{\mathrm{d}} \cX_{\mathcal{T}}$
  as random elements of $\mbM_{L}$.
\end{cor}

For the two-dimensional UST,
we use the same notation.
We now let $\mathcal{U}$ be the UST on $\mathbb{Z}^{2}$ built on a probability space equipped with a probability measure $\mathbf{P}$.
Then we define $d_{\mathcal{U}},\, \mu_{\mathcal{U}}$ and $\rho_{\mathcal{U}}$ to be the graph metric on $\mathcal{U}$,
the counting measure on $\mathcal{U}$ and the origin $0$ of $\mathbb{Z}^{2}$.
Let $\kappa$ be the growth exponent of the loop-erased random walk on $\mathbb{Z}^{2}$ defined in the same way as \eqref{eq: growth exponent of Z2}.
We set
\begin{equation}
  \mathcal{U}_{\delta}
  \coloneqq
  \left(
  \mathcal{U}, \delta^{\kappa}d_{\mathcal{U}},
  \rho_{\mathcal{U}}, \delta^{2}\mu_{\mathcal{U}}
  \right).
\end{equation}
Note that $\mathcal{U}_{\delta} \in \check{\mathbb{F}},\, \mathbf{P}$-a.s.
(The recurrence of the Dirichlet form on the two-dimensional UST follows from the recurrence of the simple random walk of $\mathbb{Z}^{2}$.)
We write $\mathbf{P}_{\delta}\coloneqq \mathbf{P}(\mathcal{U}_{\delta} \in \cdot)$.
The tightness of $(\mathbf{P}_{\delta})_{\delta \in (0,1)}$ as probability measures on $\mathbb{G}$ was established
in \cite{Barlow_Croydon_Kumagai_17_Subsequential},
where we recall from Section \ref{sec: the space M_L}
that $\mathbb{G}$ is the collection of rooted, measured boundedly-compact metric spaces 
equipped with the local Gromov-Hausdorff-vague topology.
This implies the existence of a convergent subsequence $(\mathbf{P}_{\delta_{n}})_{n}$ with $\delta_{n} \to 0$,
to which we apply our main results.

\begin{thm} [{\cite[Theorem 1.3 and 1.4]{Barlow_Croydon_Kumagai_17_Subsequential}}]
  If the random elements $\mathcal{U}_{\delta_{n}}$ converge to a random element $\mathcal{T}$ in distribution in $\mathbb{G}$,
  then $\mathcal{T} \in \mathbb{F},\, \tilde{\mathbf{P}}$-a.s.,
  where $\tilde{\mathbf{P}}$ denotes the underlying probability measure of $\mathcal{T}$.
\end{thm}

\begin{rem}
  In \cite[Remark 1.2]{Holden_Sun_18_SLE},
  the full convergence $\mathcal{U}_{\delta} \to \mathcal{T}$ is suggested.
\end{rem}

\begin{lem}
  The sequence $(\mathcal{U}_{\delta_{n}})_{n}$ satisfies
  Assumption \ref{1. assum: random spaces} \ref{1. assum item: random spaces, non-explosion}.
\end{lem}

\begin{proof}
  We write $R_{n} \coloneqq  \delta_{n}^{\kappa} d_{\mathcal{U}}$ and $R \coloneqq  d_{\mathcal{U}}$.
  By \cite[Proposition 3.6]{Barlow_Masson_11_Spectral},
  we have that
  \begin{equation}
    \mathbf{P}
    \left(
    R (\rho_{\mathcal{U}}, B_{R}(\rho_{\mathcal{U}}, r)^{c})
    <
    \lambda^{-1} r
    \right)
    \leq
    C \exp (-c \lambda^{2/11}),
    \quad
    \forall \lambda,\, r \geq 1
  \end{equation}
  for some positive constants $C,\, c >0$.
  Setting $r \coloneqq \delta^{-\kappa} L$ in the above inequality yields that
  \begin{equation}
    \mathbf{P}
    \left(
    R_{n} (\rho_{\mathcal{U}}, B_{R_{n}}(\rho_{\mathcal{U}}, L)^{c})
    <
    \lambda^{-1} L
    \right)
    \leq
    C \exp (-c \lambda^{2/11}).
  \end{equation}
  Now the result is immediate.
\end{proof}

We use the following ingredients to obtain a volume estimate.

\begin{lem} [{\cite[Theorem 2.1]{Barlow_Croydon_Kumagai_17_Subsequential}}] \label{lem: comparison of balls for 2d}
  There exist positive constants $c_{1}$ and $c_{2}$ such that,
  for every $r \geq 1$ and $\lambda \geq 1$,
  \begin{equation}
    \mathbf{P}
    \left(
    D_{d_{\mathcal{U}}}(\rho_{\mathcal{U}}, \lambda^{-1} r^{\kappa})
    \nsubseteq
    D(\rho_{\mathcal{U}},r)
    \right)
    \leq
    c_{1} \exp(-c_{2} \lambda^{2/3}),
  \end{equation}
  where $D(\rho_{\mathcal{U}},r)$ is the closed ball in $\mathbb{Z}^{2}$ centered at $\rho_{\mathcal{U}}$ with radius $r$
  with respect to the Euclidean metric.
\end{lem}

\begin{lem} [{\cite[Proposition 2.10]{Barlow_Croydon_Kumagai_17_Subsequential}}]  \label{lem: pointwise volume estimate of ust in 2d}
  There exist positive constants $c_{3}$ and $c_{4}$ such that,
  \begin{equation}
    \mathbf{P}
    \left(
    \inf_{x \in D(\rho_{\mathcal{U}}, n)}
    \mu_{\mathcal{U}} (D_{d_{\mathcal{U}}}(x, r))
    <
    \lambda^{-1} r^{2/\kappa}\
    \text{for some}\
    r \in [e^{-\lambda^{1/40}}n^{\kappa},  n^{\kappa}]
    \right)
    \leq
    c_{3} \exp(-c_{4} \lambda^{1/80})
  \end{equation}
  for all $n \geq e^{\lambda^{1/16}}$.
\end{lem}

\begin{prop} \label{prop: volume estimate for ust in 2d}
  For every $\varepsilon>0$ and $L >0$,
  there exist positive constants $C_{1},\, C_{2}$ and $C_{3}$ satisfying
  \begin{equation}
    \liminf_{\delta \to 0}
    \mathbf{P}
    \left(
    \inf_{x \in B_{d_{\mathcal{U}}}(\rho_{\mathcal{U}}, \delta^{-\kappa}L)}
    \delta^{2}
    \mu_{\mathcal{U}}
    \left(
    B_{d_{\mathcal{U}}} (x, \delta^{-\kappa} r )
    \right)
    >
    C_{1}r^{1+(2/\kappa)}, \quad
    \forall r \in [C_{2} (\log \delta^{-1})^{-16},C_{3}]
    \right)
    \geq
    1-\varepsilon.
  \end{equation}
\end{prop}

\begin{proof}
  Choose $a >1$ satisfying $c_{1} \exp(-c_{2}a^{2/3}) <\varepsilon/2$,
  where $c_{1}$ and $c_{2}$ are the constants of Lemma \ref{lem: comparison of balls for 2d},
  and then choose $\delta_{0} \in (0,1)$ so that $(aL)^{1/\kappa}\delta^{-1} >1$ for all $\delta < \delta_{0}$.
  If we set $\tilde{c}\coloneqq (aL)^{1/\kappa}$, then,
  by Lemma \ref{lem: comparison of balls for 2d}, we have that
  \begin{equation}
    \mathbf{P}
    \left(
    D_{d_{\mathcal{U}}}(\rho_{\mathcal{U}}, \delta^{-\kappa}L)
    \nsubseteq
    D(\rho_{\mathcal{U}}, \tilde{c} \delta^{-1})
    \right)
    <\varepsilon/2,
    \quad
    \forall \delta < \delta_{0}.
  \end{equation}
  Noting that $e^{-\lambda^{1/40}} \leq \lambda^{-1} \leq 1$ for all $\lambda \geq 1$,
  by Lemma \ref{lem: pointwise volume estimate of ust in 2d},
  there exist positive constants $c_{3}$ and $c_{4}$ such that
  \begin{equation} \label{eq: inequality for volume estimate of 2d ust}
    \mathbf{P}
    \left(
    \inf_{x \in D(\rho_{\mathcal{U}}, n)}
    \mu_{\mathcal{U}} (D_{d_{\mathcal{U}}}(x, \lambda^{-1} n^{\kappa}))
    <
    \lambda^{-1+(2/\kappa)} n^{2}
    \right)
    \leq
    c_{3} \exp(-c_{4} \lambda^{1/80}),
    \quad
    \forall n \geq e^{\lambda^{1/16}}.
  \end{equation}
  We choose $A >1$ so that $\sum_{l=1}^{\infty} c_{3} \exp(-c_{4}(Al)^{1/80}) < \varepsilon/2$
  and set $\lambda_{l}\coloneqq Al$ for $l \in \mathbb{N}$.
  Since $\lceil \delta^{-1} \tilde{c} \rceil \geq e^{\lambda_{l}^{1/16}}$ for $1 \leq l \leq \lfloor A^{-1}(\log \delta^{-1})^{16} \rfloor$,
  setting $n\coloneqq  \lceil \delta^{-1} \tilde{c} \rceil$ and $\lambda \coloneqq  \lambda_{l}$ in \eqref{eq: inequality for volume estimate of 2d ust}
  yields that
  \begin{equation}
    \mathbf{P}
    \left(
    \inf_{x \in D(\rho_{\mathcal{U}},\lceil \delta^{-1} \tilde{c} \rceil )}
    \mu_{\mathcal{U}} (D_{d_{\mathcal{U}}}(x, A^{-1} l^{-1} \lceil \delta^{-1} \tilde{c} \rceil^{\kappa}))
    <
    (Al)^{-1-(\kappa/2)} \lceil \delta^{-1} \tilde{c} \rceil^{2/\kappa}
    \right)
    \leq
    c_{3}\exp(-c_{4}(Al)^{1/80})
  \end{equation}
  for all $1 \leq l \leq \lfloor A^{-1}(\log \delta^{-1})^{16} \rfloor$.
  Now a similar argument to the proof of Proposition \ref{prop: volume estimate for ust in 3d} works and we deduce the desired result.
\end{proof}

By Proposition \ref{prop: volume estimate for ust in 2d} and Corollary \ref{eq: discrete graph version of volume condition},
we obtain the convergence of stochastic processes and local times on the two-dimensional uniform spanning trees.

\begin{cor}
  The limiting space $\mathcal{T}$ belongs to $\check{\mathbb{F}}$ with probability $1$,
  and $\cX_{\mathcal{U}_{\delta_{n}}} \xrightarrow{\mathrm{d}} \cX_{\mathcal{T}}$
  as random elements of $\mbM_{L}$.
\end{cor}

\subsection{A random recursive Sierpi\'{n}ski gasket} \label{subsec: a random recursive SG}

A random recursive Sierpi\'{n}ski gasket $G$ is a random fractal introduced in \cite{Hambly_97_Brownian},
and it is obtained as a limit of graphs $G_{n}$ which are generated randomly based on the Sierpi\'{n}ski gasket graph.
In this model, the resistance metric $R_{n}$ on $G_{n}$ is compatible with the resistance metric $R$ on $G$, i.e.\ $R|_{G_{n} \times G_{n}}=R_{n}$,
and this yields the convergence of local times without volume estimates of balls in $G_{n}$.

We begin with some operations on electrical networks.
Given an finite electrical network $G=(V, E, (c(e))_{e \in E})$,
we write $x \sim y$ if $\{ x, y\} \in E$, and $c_{G}(x,y)=c(\{x, y\})$.
We set $c_{G}(x)\coloneqq \sum_{y : x \sim y} c_{G}(x,y)$,
and then the probability measure $\mu_{G}$ on $V$ is defined
by setting $\mu_{G}(\{ x \}) \coloneqq  c_{G}(x) / \sum_{y} c_{G}(y)$.
We denote the effective resistance metric on $V$ by $R_{G}$ and the electrical energy by $\mathcal{E}_{G}$, i.e.\
\begin{equation}
  \mathcal{E}_{G}(f,\, g)
  =
  \sum_{ \{x, y \} \in E}
  c_{G}(x,y)(f(x)-f(y))(g(x)-g(y))
\end{equation}
for $f,\, g: V \to \mbR$.
By abuse of notation,
we often use the symbol $G$ of the electrical network as the vertex set $V$.
For example,
we write $x \in G$ instead of $x \in V$.
Assume that $G$ is a subset of $\mbR^{2}$.
Let $\psi : \mbR^{2} \to \mbR^{2}$ be an injective map and we associate a positive constanct $\gamma$ with $\psi$.
Then we define the electrical network $\psi(G)\coloneqq (\psi(V), \psi(E), (\gamma \, c(\psi^{-1}(e)))_{e \in \psi(E)})$.
We call $\gamma$ the conductance scaling factor of $\psi$.
The following lemma is elementary and we omit the proof.

\begin{lem} \label{lem: simple scaling property of psi}
  For any $x, y \in G$,
  it holds that $R_{\psi(G)}(x, y)=\gamma^{-1} R_{G}(\psi^{-1}(x),\psi^{-1}(y))$.
\end{lem}

Given another map $\psi'$ with a conductance scaling factor $\gamma'$,
we associate the conductance scaling factor $\gamma\, \gamma'$ with $\psi \circ \psi'$.
For two electrical networks $G_{i}=(V_{i}, E_{i}, (c_{i}(e))_{e \in E_{i}}),\, i=1,2$,
if $E_{1} \cap E_{2} = \emptyset$,
then we define the electrical network $G_{1} \cup G_{2}=(V, E, (c(e))_{e \in E})$
by setting $V \coloneqq  V_{1} \cup V_{2}, \, E \coloneqq  E_{1} \cup E_{2}$ and
\begin{equation}
  c(e)\coloneqq
  \begin{cases}
    c_{1}(e), & e \in E_{1}, \\
    c_{2}(e), & e \in E_{2}.
  \end{cases}
\end{equation}

We define contraction maps and conductances for a random recursive Sierpi\'{n}ski gasket.
Let $K_{0}$ be the unit equilateral triangle in $\mbR^{2}$,
and let $(V_{0}, E_{0})$ denote the complete graph with the vertices of $K_{0}$,
where $V_{0}=\{ x_{1}, x_{2}, x_{3} \} \subseteq \mbR^{2}$ is the vertex set and $E_{0}$ is the edge set.
We define $G_{0}$ to be the electrical network with the vertex set $V_{0}$ and conductance $1$ on each edge of $E_{0}$.
Fix a natural number $L \geq 2$.
For $\nu \in \{2,3,\ldots, L\}$,
let $(a_{i}, b_{i}, c_{i})_{i=1}^{\nu (\nu+1)/2}$ be the solutions of $a+b+c=\nu-1, a,b,c \in \mathbb{Z}_{+}$.
The family $(\psi_{i}^{\nu})_{i=1}^{\nu (\nu+1)/2}$ of contraction maps of type $\nu$ is defined
by setting $\psi_{i}^{\nu}(x)\coloneqq (x+a_{i}x_{1}+b_{i}x_{2}+c_{i}x_{3})/\nu$ for $x \in \mbR^{2}$.
We associate a positive constant $\gamma_{\nu}$,
which is specified later,
with each $\psi_{i}^{\nu}$.
We consider the electrical network $G^{\nu}\coloneqq \bigcup_{i=1}^{\nu(\nu+1)/2} \psi_{i}^{\nu}(G_{0})$
and define the conductance $\gamma_{\nu}$
so that
\begin{equation} \label{eq: def of conductance of type nu}
  \mathcal{E}_{G_{0}}(f,f)
  =
  \inf \left\{
  \mathcal{E}_{G^{\nu}}(g,g) :
  g : G^{\nu} \to \mbR\
  \text{such that}\
  g|_{G_{0}}=f
  \right\}.
\end{equation}
This is equivalent to defining $\gamma_{\nu}$
so that $G^{\nu}$ is compatible with $G_{0}$,
i.e.\ $R_{G^{\nu}}|_{G_{0} \times G_{0}} = R_{G_{0}}$ (see \cite[Corollary 2.1.13]{Kigami_01_Analysis}).
Note that $\gamma_{\nu} >1$.
We write $\gamma_{\mathrm{min}} \coloneqq  \min_{\nu} \gamma_{\nu}$ 
and $\gamma_{\mathrm{max}} \coloneqq  \max_{\nu} \gamma_{\nu}$.

A random recursive Sierpi\'{n}ski gasket is constructed
by associating a sequence of electrical networks with a randomly generated sequence of finite plane trees
(recall the definition of plane trees from Section \ref{subsubsec: introduction of trees}).
We explain how to generate a Sierpi\'{n}ski-gasket-type graph from a sequence of finite plane trees, putting aside randomness for a while.
Let $\mathbf{T}^{*}$ be the set of all plane trees $T$
such that $k_{u} \in \{\nu(\nu+1)/2 : \nu=2,3,\ldots,L \} \cup \{ 0 \}$ for each individual $u \in T$,
where we recall that $k_{u}\coloneqq k_{u}(T)$ is the number of the children of $u$,
and we write $\nu=\nu^{T}(u) \geq 0$ to be the number
such that $k_{u}=\nu(u) (\nu(u)+1) /2$.
Let $T$ be a plane tree of $\mathbf{T}^{*}$.
For an individual $u = (u_{0}, u_{1},\ldots, u_{n})$ of $T$ with $u_{0} =0$,
we write $[u]_{k}\coloneqq (u_{0},\ldots,u_{k}), |u|=n$ and
\begin{align}
  \psi_{u}^{T}
   & \coloneqq
  \psi_{u_{1}}^{\nu([u]_{0})} \circ \psi_{u_{2}}^{\nu([u]_{1})}
  \circ \dots \circ \psi_{u_{n}}^{\nu([u]_{n-1})}, \\
  \gamma_{u}^{T}
   & \coloneqq
  \gamma_{\nu([u]_{0})} \gamma_{\nu([u]_{1})} \cdots \gamma_{\nu([u]_{n-1})}
\end{align}
(if $u$ is the root $0$,
then we set $\psi_{0}^{T}$ to be the identity map on $\mbR^{2}$ and $\gamma_{0}^{T}\coloneqq 1$).
Note that $\gamma_{u}^{T}$ is the conductance scaling factor of $\psi_{u}^{T}$.
Now we assume that $T$ is finite.
Let $T^{p}$ be the totality of individuals of $T$ who have no children.
Then the electrical network $G(T)=(V(T), E(T))$ associated with the finite plane tree $T$ is defined by setting
\begin{equation}
  G(T) \coloneqq  \bigcup_{u \in T^{p}} \psi_{u}^{T}(G_{0}).
\end{equation}
Note that $G_{0}=G(\{ 0 \})$ and the electrical energy $\mathcal{E}_{G(T)}$ is given by
\begin{equation} \label{eq: the energy form on a finite graph of SG}
  \mathcal{E}_{G(T)}(f,\, g)
  =
  \sum_{u \in T^{p}}
  \gamma_{u}^{T}
  \sum_{ \{x, y\} \in \psi_{u}^{T}(E_{0})}
  (f(x)-f(y))(g(x)-g(y)).
\end{equation}

\begin{prop} \label{prop: energy forms are compatible}
  Let $T_{1},\, T_{2}$ be finite plane trees of $\mathbf{T}^{*}$.
  If $T_{1} \subseteq T_{2}$,
  then
  \begin{equation}
    \mathcal{E}_{G(T_{1})}(f,f)
    =
    \inf \left\{
    \mathcal{E}_{G(T_{2})}(g,g):
    g:G(T_{2}) \to \mbR\
    \text{ such that }\
    g|_{G(T_{1})}=f
    \right\},
  \end{equation}
  which is equivalent to that $G(T_{2})$ is compatible with $G(T_{1})$.
\end{prop}

\begin{proof}
  If $T_{1}=T_{2}$,
  then the result is trivial.
  Otherwise choose an individual $u=(u_{0},u_{1},\ldots,u_{n}) \in T_{2}^{p} \setminus T_{1}^{p}$.
  Set $u'=(u_{0}, \ldots, u_{n-1})$, $U\coloneqq  \{ u'i \in T_{2} : i=1, \ldots, k_{u'} \}$ and $T\coloneqq  T \setminus U$.
  Using that $\psi_{u'i}^{T_{2}}=\psi_{u'}^{T} \circ \psi_{i}^{\nu(u')}$ and $\gamma_{u'i}^{T_{2}}=\gamma_{u'}^{T}\, \gamma_{\nu(u')}$,
  we deduce that,
  for any $g : G(T_{2}) \to \mbR$,
  \begin{align}
    \sum_{v \in U}
    \sum_{ \{x, y\} \in \psi_{v}^{T_{2}}(E_{0})}
    \gamma_{v}^{T_{2}}
    (g(x)-g(y))^{2}
     & =
    \gamma_{u'}^{T} \,
    \sum_{i=1}^{k_{u'}}
    \sum_{\{x', y'\} \in \psi_{i}^{\nu(u')}(E_{0})}
    \gamma_{\nu(u')}
    (g(\psi_{u'}^{T}(x'))-g(\psi_{u'}^{T}(y')))^{2} \notag \\
     & =
    \gamma_{u'}^{T}
    \,
    \mathcal{E}_{G^{\nu(u')}}
    (g \circ \psi_{u'}^{T}, g \circ \psi_{u'}^{T}).
  \end{align}
  This yields that
  \begin{equation} \label{eq: one generation energy}
    \mathcal{E}_{G(T_{2})}(g,g)
    =
    \sum_{v \in T^{p}}
    \gamma_{v}^{T}
    \sum_{ \{ x, y \} \in \psi_{v}^{T}(E_{0})}
    (g(x)-g(y))^{2}
    +
    \gamma_{u'}^{T}
    \,
    \mathcal{E}_{G^{\nu(u')}}
    (g \circ \psi_{u'}^{T}, g \circ \psi_{u'}^{T}).
  \end{equation}
  By \eqref{eq: def of conductance of type nu}, \eqref{eq: the energy form on a finite graph of SG} and \eqref{eq: one generation energy},
  we deduce that for any $f : G(T) \to \mbR$,
  \begin{equation}
    \mathcal{E}_{G(T)}(f,f)
    =
    \inf\{
    \mathcal{E}_{G(T_{2})}(g,g):
    g : G(T_{2}) \to \mbR\
    \text{ such that}\
    g|_{G(T)}=f
    \},
  \end{equation}
  which is equivalent to that $R_{G(T_{2})}|_{G(T) \times G(T)} = R_{G(T)}$ (see \cite[Corollary 2.1.13]{Kigami_01_Analysis}).
  Inductively, the desired result is proved.
\end{proof}

For a plane tree $T$,
we define the shift of $T$ at $u \in T$
by setting $\theta_{u}T\coloneqq \{ v \in \mathbf{I} : uv \in T \}$.

\begin{lem} \label{lem: decomposition of energy forms}
  Let $T_{1},\, T_{2}$ be finite plane trees of $\mathbf{T}^{*}$.
  If $T_{1} \subseteq T_{2}$,
  then it holds that
  \begin{equation}
    \mathcal{E}_{G(T_{2})}(f,\, g)
    =
    \sum_{u \in T_{1}^{p}}
    \gamma_{u}^{T_{1}} \,
    \mathcal{E}_{G(\theta_{u} T_{2})} (f \circ \psi_{u}^{T_{1}}, g \circ \psi_{u}^{T_{1}}),
  \end{equation}
  for any $f,\, g : G(T_{2}) \to \mbR$.
\end{lem}

\begin{proof}
  Observe that every individual $u \in T_{2}^{p}$ is uniquely written as $u=vv'$ for some $v \in T_{1}^{p}$ and $v' \in (\theta_{u}T_{2})^{p}$,
  and it holds that $\gamma_{u}^{T_{2}}=\gamma_{v}^{T_{1}}\,  \gamma_{v'}^{\theta_{u}T_{2}}$
  and $\psi_{u}^{T_{2}}=\psi_{v}^{T_{1}} \circ \psi_{v'}^{\theta_{u}T_{2}}$.
  Using these and \eqref{eq: the energy form on a finite graph of SG},
  one can verify the desired identity.
\end{proof}

\begin{lem} \label{lem: uniform upper bound of resistance}
  There exists a constant $c_{1} >0$ such that,
  for any finite tree $T \in \mathbf{T}^{*}$,
  \begin{equation}
    R_{G(T)}(x,y) \leq c_{1},
    \quad
    \forall x,y \in G(T).
  \end{equation}
\end{lem}

\begin{proof}
  Set $c \coloneqq  \max \{ R_{G^{\nu}}(x,y) : x, y \in G^{\nu}, \nu = 2, \ldots , L \}$.
  Fix $x \in G(T)$.
  By definition,
  there exist $u \in T^{p}$ with $n\coloneqq |u|$ and $x_{*} \in G_{0}$ such that $x = \psi_{u}^{T}(x_{*})$.
  Define a sequence $(x_{l})_{l=0}^{n}$ of vertices in $G(T)$
  by setting $x_{l} \coloneqq  \psi_{[u]_{l}}^{T}(x_{*})$.
  Noting that $x_{0}=x_{*}$ and $x_{n}=x$,
  we have that
  \begin{equation} \label{eq: chaining inequality for rough resistance metric estimate}
    R_{G(T)}(x,x_{*})
    \leq
    \sum_{l=0}^{n-1} R_{G(T)}(x_{l}, x_{l+1}).
  \end{equation}
  We define a subtree $T_{l}$ of $T$ by setting $T_{l} \coloneqq  \{ v \in T : |v| \leq l \}$.
  By Proposition \ref{prop: energy forms are compatible}, we have that
  \begin{equation} \label{eq: changing compatible network}
    R_{G(T)}(x_{l}, x_{l+1})
    =
    R_{G(T_{l+1})}(x_{l}, x_{l+1}).
  \end{equation}
  Since $\psi_{[u]_{l}}^{T}(G^{\nu([u]_{l})})$ is a subgraph of $G(T_{l+1})$ containing $x_{l}$ and $x_{l+1}$,
  by Rayleigh's monotonicity law (see \cite[Theorem 9.12]{Levin_Peres_17_Markov}, for example) and Lemma \ref{lem: simple scaling property of psi},
  we obtain that
  \begin{equation} \label{eq: one level resistance metric estimate}
    R_{G(T_{l+1})}(x_{l},x_{l+1})
    \leq
    R_{\psi_{[u]_{l}}^{T}(G^{\nu([u]_{l})})}(x_{l}, x_{l+1})
    \leq
    (\gamma_{[u]_{l}}^{T})^{-1} c.
  \end{equation}
  Using \eqref{eq: chaining inequality for rough resistance metric estimate}, \eqref{eq: changing compatible network},
  \eqref{eq: one level resistance metric estimate} and that $\gamma_{[u]_{l}}^{T} \geq \gamma_{\mathrm{min}}^{l}$,
  we deduce that
  \begin{equation}
    R_{G(T)}(x,x_{*})
    \leq
    \sum_{l=0}^{n-1}
    \gamma_{\mathrm{min}}^{-l}c
    \leq
    c/(1-\gamma_{\mathrm{min}}^{-1}).
  \end{equation}
  For $y \in G(T)$,
  we define $y_{*} \in G_{0}$ in the same way.
  Using the triangle inequality and that $R_{G(T)}(x_{*}, y_{*})=R_{G_{0}}(x_{*},y_{*}) \leq 2/3$,
  we obtain the desired result.
\end{proof}

A random recursive gasket is defined via a sequence of finite trees generated by a general branching process.
Let $(\lambda_{u}, \xi_{u})_{u \in \mathbf{I}}$ be the general branching process,
built on the probability space $(\Omega, \mathcal{A}, \mathbf{P})$,
such that the joint distribution of the lifetime $\lambda_{u}$
and the reproduction process $\xi_{u}=(\xi_{u}(t))_{t \geq 0}$ of each $u \in \mathbf{I}$
are given by
\begin{equation}
  (\lambda_{u}, \xi_{u}( \cdot ))
  =
  \left(
  \log \gamma_{\nu},
  \frac{\nu(\nu+1)}{2} 1_{ \{ \log \gamma_{\nu} \}}(\cdot)
  \right) \quad
  \text{
    with probability
  }
  p_{\nu},\quad
  \nu \in \{2,3,\ldots,L\}.
\end{equation}
Let $T_{t}$ be the plane tree generated by the branching process up to time $t$,
i.e.\ $T_{t}\coloneqq  \{ u \in \mathbf{I}: \sigma_{u} \leq t \}$,
where $\sigma_{u}$ denotes the birth time of $u$ and $\sigma_{0}\coloneqq 0$.
We set $T \coloneqq  \bigcup_{t \geq 0} T_{t}$.
Noting that $\gamma_{u}^{T}=e^{\sigma_{u}}$ and $T_{t}^{p}$ is the set of individuals alive at $t$,
one can check that
\begin{equation} \label{eq: estimate of conductance on an edge}
  \gamma_{\mathrm{max}}^{-1}\, e^{t}
  \leq
  \gamma_{u}^{T}
  \leq
  e^{t},\quad
  \forall u \in T_{t}^{p}.
\end{equation}
We set $(G_{t}, R_{t}, \mu_{t})\coloneqq (G(T_{t}), R_{G(T_{t})}, \mu_{G(T_{t})})$.
By Proposition \ref{prop: energy forms are compatible},
the metric $R^{*}$ is defined on $G^{*} \coloneqq  \bigcup_{t \geq 0} G_{t}$ such that $R^{*}|_{G_{t} \times G_{t}} = R_{t}$.
We let $(G, R)$ be the completion of $(G^{*}, R^{*})$.
Note that there exists a resistance form $(\mathcal{E}_{G}, \mathcal{F}_{G})$
such that $R$ is the associated resistance metric on $G$ (see \cite[Theorem 3.13]{Kigami_12_Resistance}).
The resistance metric space $(G, R)$ is called a random recursive Sierpi\'{n}ski gasket.
In what follows, we regard $(G_{t}, R_{t})$ and $(G^{*}, R^{*})$ as subspaces of $(G,R)$,
and $\mu_{t}$ as a probability measure on $(G, R)$.
We can capture the metric space $(G, R)$ as a shape in the Euclidean space $\mbR^{2}$ from the following result.

\begin{prop} \label{prop: metric-comparison of rrg}
  There exist deterministic constants $c_{2},\, c_{3}>0$
  such that $\mathbf{P}$-a.s.\ it holds that
  \begin{equation}
    c_{2} d_{E}(x,y)^{\log_{2} \gamma_{\mathrm{max}}}
    \leq
    R(x,y)
    \leq
    c_{3} d_{E}(x,y)^{\log_{L} \gamma_{\mathrm{min}}},\quad
    \forall x,y \in G^{*},
  \end{equation}
  where $d_{E}$ denotes the Euclidean metric on $\mbR^{2}$.
\end{prop}

\begin{proof}
  Fix $x,y \in G^{*}$.
  Define
  \begin{equation}
    n\coloneqq
    \max\{
    m \in \mathbb{Z}_{+} :
    \exists u,v \in T_{m}^{p}\
    \text{such that}\
    x \in \psi_{u}^{T}(K_{0}),\
    y \in \psi_{v}^{T}(K_{0}),\
    \psi_{u}^{T}(G_{0}) \cap \psi_{v}^{T}(G_{0}) \neq \emptyset
    \}.
  \end{equation}
  Let  $u,v \in T_{n}^{p}$ be the corresponding individuals such that
  \begin{gather}
    x \in \psi_{u}^{T}(K_{0}),\quad
    y \in \psi_{v}^{T}(K_{0}),\quad
    \psi_{u}^{T}(G_{0}) \cap \psi_{v}^{T}(G_{0}) \neq \emptyset.
  \end{gather}
  Choose $u' \in T_{n+1}^{p}$ such that $x \in \psi_{u'}(K_{0})$.
  Let $\Lambda$ be the set of individuals $w$ of $T_{n+1}^{p}$
  such that the corresponding triangle $\psi_{w}^{T}(K_{0})$ is adjacent to the triangle $\psi_{u'}^{T}(K_{0})$, i.e.\
  \begin{equation}
    \Lambda\coloneqq
    \{ w \in T_{n+1}^{p} \setminus \{u'\} :
    \psi_{w}^{T}(K_{0}) \cap \psi_{u'}^{T}(K_{0}) \neq \emptyset
    \}
  \end{equation}
  Note that,
  by the definition of $n$ and $u'$,
  we have that $y \notin \psi_{w}^{T}(K_{0})$ for all $w \in \Lambda \cup \{ u' \}$.
  We choose $l > n+1$ so that $x,y \in G(T_{l})$.
  Noting that
  \begin{equation}
    G(T_{l})
    =
    \bigcup_{w \in T_{n+1}^{p}}
    \psi_{w}^{T}(G(\theta_{w}T_{l})),
  \end{equation}
  we define a function $f : G(T_{l}) \to \mbR$ with $f(x)=1,f(y)=0$
  by putting the values of $f$ on each graph $\psi_{w}^{T}(G(\theta_{w}T_{l}))$
  so that it is well-defined at the boundary $\psi_{w}^{T}(G_{0})$.
  We set $f|_{\psi_{u'}^{T}(G(\theta_{u'}T_{l}))} \equiv 1$
  and $f|_{\psi_{w}^{T}(G(\theta_{w}T_{l}))} \equiv 0$ for $w \in T_{n+1}^{p} \setminus (\Lambda \cup \{ u' \})$.
  For $w \in \Lambda$,
  we define $f$ on $\psi_{w}^{T}(G(\theta_{w}T_{l}))$
  so that $f$ is harmonic
  on the electrical network $\psi_{w}^{T}(G(\theta_{w}T_{l}))$ with $f \equiv 1$ on $\psi_{w}^{T}(G_{0}) \cap \psi_{u'}^{T}(G_{0})$
  and $f \equiv 0$ on $\psi_{w}^{T}(G_{0}) \setminus \psi_{u'}^{T}(G_{0})$,
  which implies that $\mathcal{E}_{G(\theta_{w}T_{l})}(f \circ \psi_{w}, f \circ \psi_{w})=2$.
  From Lemma \ref{lem: decomposition of energy forms}, \eqref{eq: estimate of conductance on an edge}, and that $|\Lambda| \leq 6$,
  it follows that
  \begin{equation} \label{eq: lower bound of resistance distance of x y}
    R(x,y)
    =
    R_{G(T_{l})}(x,y)
    \geq
    \left(
    \sum_{w \in \Lambda }
    2 \gamma_{w}(T)
    \right)^{-1}
    \geq
    \frac{1}{12}\, e^{-(n+1)}.
  \end{equation}
  Choose $z \in \psi_{u}^{T}(G_{0}) \cap \psi_{v}^{T}(G_{0})$.
  Since $x,z \in \psi_{u}^{T}(G(\theta_{u}T_{l}))$ and $\psi_{u}^{T}(G(\theta_{u}T_{l}))$ is a subgraph of $G(T_{l})$,
  by Rayleigh's monotonicity law,
  Lemma \ref{lem: simple scaling property of psi}, Lemma \ref{lem: uniform upper bound of resistance} and \eqref{eq: estimate of conductance on an edge},
  we obtain that
  \begin{equation} \label{eq: lower bound of resistance metric of a triangle}
    R(x,z)
    =
    R_{G(T_{l})}(x,z)
    \leq
    R_{\psi_{u}^{T}(G(\theta_{u}T_{l}))}(x,z)
    \leq
    (\gamma_{u}^{T})^{-1}
    c_{1}
    \leq
    c_{1}
    \gamma_{\mathrm{max}}
    e^{-n},
  \end{equation}
  where $c_{1}$ is the constant of Lemma \ref{lem: uniform upper bound of resistance}.
  This yields that
  \begin{equation} \label{eq: upper bound of resistance distance of x y}
    R(x,y) \leq 2c_{1}\gamma_{\mathrm{max}}e^{-n}.
  \end{equation}
  Next, we estimate the Euclidean distance $d_{E}(x,y)$.
  It is elementary to check that,
  for any subset $A \subseteq \mbR^{2}$,
  \begin{equation} \label{eq: euclidean metric scaling}
    \mathrm{diam}_{d_{E}} (\psi_{i}^{\nu}(A))
    =
    \nu^{-1}
    \mathrm{diam}_{d_{E}} (A),
  \end{equation}
  where $\mathrm{diam}_{d_{E}}( \cdot )$ denotes the diameter of a subset with respect to $d_{E}$.
  Fix $w \in \Lambda$ with $m\coloneqq |w|$. From \eqref{eq: euclidean metric scaling}, it follows that
  \begin{equation} \label{eq: diam of triangle in euclidean}
    \mathrm{diam}_{d_{E}} (\psi_{w}^{T}(K_{0}))
    =(\nu([w]_{0}) \cdots \nu([w]_{m-1}))^{-1}
    \geq
    L^{-m}.
  \end{equation}
  By \eqref{eq: estimate of conductance on an edge},
  we have that
  \begin{equation}
    e^{-(n+1)}
    \geq
    \gamma_{w}^{T}
    =
    \gamma_{\nu([w]_{0})} \cdots \gamma_{\nu([w]_{m-1})}
    \geq
    \gamma_{\mathrm{min}}^{m},
  \end{equation}
  This yields that $m \leq (n+1)/\log \gamma_{\mathrm{min}}$ and, by \eqref{eq: diam of triangle in euclidean},
  we obtain that
  \begin{equation}
    \mathrm{diam}_{d_{E}} (\psi_{w}^{T}(K_{0}))
    \geq
    \exp (-(n+1) \log_{\gamma_{\mathrm{min}}} L) .
  \end{equation}
  Therefore we deduce that
  \begin{equation} \label{eq: lower bound of Euclidean distance of x y}
    d_{E}(x,y) \geq \exp (-(n+1) \log_{\gamma_{\mathrm{min}}} L).
  \end{equation}
  Since $x, z \in \psi_{u}^{T}(K_{0})$,
  a similar argument shows that
  \begin{equation}
    d_{E}(x,z)
    \leq
    2 \exp(-n \log_{\gamma_{\mathrm{max}}}2).
  \end{equation}
  The same inequality holds for $d_{E}(y, z)$,
  and thus we obtain that
  \begin{equation} \label{eq: upper bound of Euclidean distance of x y}
    d_{E}(x,y) \leq 4 \exp(-n \log_{\gamma_{\mathrm{max}}}2).
  \end{equation}
  By \eqref{eq: lower bound of resistance distance of x y}, \eqref{eq: upper bound of resistance distance of x y},
  \eqref{eq: lower bound of Euclidean distance of x y} and \eqref{eq: upper bound of Euclidean distance of x y},
  we deduce the desired result.
\end{proof}

Define the random subset $K \subseteq \mbR^{2}$ by setting
\begin{equation}
  K= \bigcap_{n \geq 0} \bigcup_{u \in T_{n}^{p}} \psi_{u}^{T}(K_{0}).
\end{equation}
This random set $K$ is also called a random recursive gasket as we have the following result.

\begin{cor}
  The inclusion map $\iota : (G^{*}, R^{*}) \to (K, d_{E})$ is naturally extended to the homeomorphism $\iota : (G,R) \to (K, d_{E})$.
  As a consequence, the resistance form $(\mathcal{E}_{G}, \mathcal{F}_{G})$ on $G$ is regular.
\end{cor}

\begin{proof}
  It is elementary to show that the inclusion map is extended to the homeomorphism
  by using Proposition \ref{prop: metric-comparison of rrg}.
  Since $(K, d_{E})$ is obviously compact,
  the second assertion follows from \cite[Corollary 6.4]{Kigami_12_Resistance}.
\end{proof}

Henceforth,
we identify $G$ with $K$.
We consider an application of our results to a sequence $G_{n}$ and its limit $G$.
In \cite{Hambly_97_Brownian},
it was shown that the probability measures $\mu_{n}$ on $G_{n}$ converge to a probability measure $\mu$ on $G$,
and volume estimates of triangles in $G$ were also obtained.

\begin{thm} [{\cite[Theorem 5.4 and Theorem 5.5]{Hambly_97_Brownian}}]  \label{thm: convergence of measures and volume estimates of triangles}
  With probability $1$, the probability measures $\mu_{n}$ converge weakly to a probability measure $\mu$ on $G$,
  and there exist a random constant $c(\omega)>0$ and deterministic constants $\alpha,\, \beta >0$
  such that
  \begin{equation} \label{eq: hambly's volume estimates for triangles}
    \inf_{u \in T_{n}^{p}}
    \mu(\psi_{u}^{T}(K_{0}) \cap G)
    \geq
    c(\omega) n^{-\beta} e^{-\alpha n},
    \quad
    \forall n.
  \end{equation}
\end{thm}

Given a realization of $G$,
we let $\rho_{n} \in G_{n}$ be a sequence convergent to an element $\rho \in G$, and we set
\begin{equation}
  \tilde{G}_{n} \coloneqq  (G_{n}, R_{n}, \rho_{n}, \mu_{n}), \quad
  \tilde{G} \coloneqq  (G, R, \rho, \mu).
\end{equation}
It is obvious that $\tilde{G}_{n} \in \check{\mathbb{F}}_{c}$, $\mathbf{P}$-a.s.

\begin{cor} \label{cor: convergence of rrg graphs to rrg in ghp}
  The spaces $\tilde{G}_{n}$ converge to $\tilde{G}$ in the Gromov-Hausdorff-Prohorov topology, $\mathbf{P}$-a.s.
\end{cor}

\begin{proof}
  Recall that every $(G_{n}, R_{n})$ is embedded into $(G, R)$.
  By Proposition \ref{prop: metric-comparison of rrg},
  we deduce that the spaces $G_{n}$ converge to $G$ with respect to the Hausdorff metric on $(G,R)$.
  Combining this, with Theorem \ref{thm: convergence of measures and volume estimates of triangles},
  we obtain the desired result.
\end{proof}

\begin{cor} \label{cor: volume estimates of balls of rrg}
  There exist a deterministic constant $c_{4} \in (0,1)$ and a random constant $c^{*}(\omega) >0$
  such that
  \begin{equation} \label{eq: volume estimates of balls of rrg 2}
    \inf_{x \in G}
    \mu
    \left(
    D_{R}(x,r)
    \right)
    \geq
    c^{*}(\omega)
    (\log r^{-1})^{-\beta}
    r^{\alpha},
    \quad
    \forall r \in(0, c_{4}),
    \quad
    \mathbf{P}
    \text{-a.s.,}
  \end{equation}
  where $\alpha$ and $\beta$ are the deterministic constants of Theorem \ref{thm: convergence of measures and volume estimates of triangles}.
  As a consequence,
  \begin{equation} \label{eq: entropy condition for rrg}
    \sum_{k} N_{R}(G, 2^{-k-1})^{2} \exp(-2^{q k}) < \infty
  \end{equation}
  for any $q \in (0, 1/2), \, \mathbf{P}$-a.s.
  In particular, $\tilde{G} \in \check{\mathbb{F}}_{c}, \, \mathbf{P}$-a.s.
\end{cor}

\begin{proof}
  A similar argument to \eqref{eq: lower bound of resistance metric of a triangle} yields
  that there exists a deterministic constant $c >0$ such that
  \begin{equation} \label{eq: upper bound of diam of a triangle of rrg}
    \mathrm{diam}_{R}(\psi_{u}^{T}(G(\theta_{u}T_{m})))
    \leq
    c e^{-n}
  \end{equation}
  for all $u \in T_{n}^{p},\, m \geq n$,
  where $\mathrm{diam}_{R}(\cdot)$ denotes the diameter of a subset of $G$ with respect to $R$.
  Fix $u \in T_{n}^{p}$ for a while.
  For any $m \geq n$,
  we have that
  \begin{equation}
    G(T_{m})
    =
    \bigcup_{v \in T_{n}^{p}}
    \psi_{v}^{T}(G(\theta_{v}T_{m})).
  \end{equation}
  This yields that
  \begin{equation} \label{eq: triangle cap G* decomposition}
    \psi_{u}^{T}(K_{0}) \cap G^{*}
    =
    \bigcup_{m \geq n}
    \left(
    \psi_{u}^{T}(K_{0}) \cap G(T_{m})
    \right)
    =
    \bigcup_{m \geq n}
    \psi_{u}^{T}(G(\theta_{u}T_{m})).
  \end{equation}
  Noting that $(\psi_{u}^{T}(G(\theta_{u}T_{m})))_{m \geq n}$ is an increasing sequence,
  by \eqref{eq: upper bound of diam of a triangle of rrg} and \eqref{eq: triangle cap G* decomposition},
  we obtain that
  \begin{equation} \label{eq: diam estimate of triangle with G*}
    \mathrm{diam}_{R}(\psi_{u}^{T}(K_{0}) \cap G^{*})
    \leq
    c e^{-n}.
  \end{equation}
  For any $x \in \psi_{u}^{T}(K_{0}) \cap G$,
  there exist elements $x_{k} \in \psi_{u}^{T}(K_{0}) \cap G^{*}$
  such that $x_{k} \to x$ in $(G, R)$.
  (To check this, recall that $G$ is identified with $K$ and use the Euclidean metric $d_{E}$ instead of $R$ to show the convergence.)
  This, combined with \eqref{eq: diam estimate of triangle with G*},
  yields that
  \begin{equation} \label{eq: diam estimate of triangle with G}
    \mathrm{diam}_{R}(\psi_{u}^{T}(K_{0}) \cap G)
    \leq
    c e^{-n}.
  \end{equation}
  Fix $x \in G$ and $r \in (0, c)$. Choose $n \geq 1$ such that
  \begin{equation} \label{eq: choice of n for rrg volume estimate}
    c e^{-n}
    \leq
    r
    \leq
    c e^{-n+1},
  \end{equation}
  which is equivalent to that
  \begin{equation}	\label{eq: choice of n for rrg volume estimate 2}
    \log cr^{-1}
    \leq
    n
    \leq
    \log cer^{-1}.
  \end{equation}
  Choose $u \in T_{n}^{p}$ such that $x \in \psi_{u}^{T}(K_{0})$.
  By \eqref{eq: diam estimate of triangle with G} and \eqref{eq: choice of n for rrg volume estimate},
  we have that $\psi_{u}^{T}(K_{0}) \cap G \subseteq D_{R}(x,r)$.
  This, combined with \eqref{eq: hambly's volume estimates for triangles} and \eqref{eq: choice of n for rrg volume estimate 2},
  yields that
  \begin{align}
    \mu(D_{R}(x,r))
    \geq
    \mu \left( \psi_{u}^{T}(K_{0}) \cap G \right)
    \geq
    c(\omega) e^{-\alpha \log ce} (\log cer^{-1})^{-\beta} r^{\alpha}.
  \end{align}
  Therefore we obtain \eqref{eq: volume estimates of balls of rrg 2},
  and, by using Lemma \ref{lem: coverings and volume lemma} and \eqref{eq: volume estimates of balls of rrg 2},
  we deduce \eqref{eq: entropy condition for rrg}.
\end{proof}

So far,
we have checked that $\tilde{G}_{n}$ and $\tilde{G}$ satisfy
Assumption \ref{1. assum: deterministic spaces}\ref{1. assum item: deterministic, convergence of spaces}
and \ref{1. assum item: deterministic, metric-entropy condition}.
Assumption \ref{1. assum: deterministic spaces}\ref{1. assum item: deterministic, non-explosion condition} is satisfied
since $R|_{G_{n} \times G_{n}}=R_{n}$,
and so we obtain the convergence of random walks and local times on $G_{n}$.

\begin{thm} \label{thm: convergence result for rrg}
  It holds that $\cX_{\tilde{G}_{n}} \to \cX_{\tilde{G}}$ in $\mbM_{L}$,$\mathbf{P}$-a.s.
\end{thm}

\begin{proof}
  Corollary \ref{cor: convergence of rrg graphs to rrg in ghp} shows
  that $\tilde{G}_{n}$ and $\tilde{G}$ satisfy 
  Assumption \ref{1. assum: deterministic spaces}\ref{1. assum item: deterministic, convergence of spaces}, 
  and since $(G_{n}, R_{n})$ and $(G,R)$ are compact, 
  Assumption \ref{1. assum: deterministic spaces}\ref{1. assum item: deterministic, non-explosion condition} is satisfied.
  Since it holds that $R|_{G_{n} \times G_{n}}=R_{n}$,
  a similar argument to the proof of Lemma  \ref{lem: coverings and volume lemma} yields
  that $N_{R_{n}}(G_{n}, r) \leq N_{R}(G, r/2)$ for any $r >0$.
  Therefore, we obtain that,
  for any $q \in (0,1/2)$,
  \begin{equation}
    \sum_{l \geq k}
    N_{R_{n}}(G_{n}, 2^{-l}) \exp(-2^{q l})
    \leq
    \sum_{l \geq k}
    N_{R}(G, 2^{-l-1}) \exp(-2^{q l}).
  \end{equation}
  The right-hand side in the above inequality converges to $0$ as $k \to \infty$ by \eqref{eq: entropy condition for rrg},
  and thus Assumption \ref{1. assum: deterministic spaces}\ref{1. assum item: deterministic, non-explosion condition} is satisfied.
  From Theorem \ref{1. thm: main result for random spaces}, the desired result follows.
\end{proof}

\begin{rem} \label{rem: other proofs of convergence for rrg} \leavevmode
  There are other two ways to verify Theorem \ref{thm: convergence result for rrg}.
  \begin{enumerate}
    \item \label{rem item: proofs of convergence on rrg via local times estimate}
          To show the convergence of local times,
          we need to show the tightness of local times,
          and once Lemma \ref{5. lem: for precompactness of local times}
          is established,
          then the tightmess is obtained following the proof of Lemma \ref{5. lem: precompactness of cX_n for deterministic spaces}.
          In the random recursive Sierpi\'{n}ski gasket,
          the resistance metrics $R_{n}$ on $G_{n}$ are compatible with the resistance metric $R$ on $G$,
          and this enables us to estimate the equicontinuity of the local times $L_{\tilde{G}_{n}}$ of the random walk $X_{\tilde{G}_{n}}$ on $G_{n}$
          by the joint continuity of the limiting local time $L_{\tilde{G}}$ of the stochastic process $X_{\tilde{G}}$ on $G$.
          To do this, we assume that the starting points are fixed, i.e.\ $\rho_{n} = \rho$.
          We define a PCAF $A_{\tilde{G}_{n}}$ of $X_{\tilde{G}}$
          by setting $A_{\tilde{G}_{n}}(t)\coloneqq \int_{G} L_{\tilde{G}}(x,t) \mu_{n}(dx)$
          and its right-continuous inverse $\gamma_{\tilde{G}_{n}}$
          by setting $\gamma_{\tilde{G}_{n}}(t) \coloneqq  \inf \{ s >0 : A_{\tilde{G}_{n}} (s) > t \}$.
          Then, one can check that
          \begin{equation} \label{eq: time-change description of local times}
            P_{\rho}^{\tilde{G}_{n}}(L_{\tilde{G_{n}}}(x, t) \in \cdot )
            =
            P_{\rho}^{\tilde{G}}(L_{\tilde{G}} (x, \gamma_{\tilde{G}_{n}}(t)) \in \cdot)
          \end{equation}
          as probability measures on $C(G_{n} \times \mbRp, \mbRp)$ (cf.\ \cite[Lemma 3.4]{Croydon_08_Convergence}),
          where we recall the notations $ P_{\rho}^{\tilde{G}_{n}}$ and $P_{\rho}^{\tilde{G}}$ from Section \ref{sec: introduction}.
          From \eqref{eq: time-change description of local times},
          it follows that
          \begin{align}
             & P_{\rho}^{\tilde{G}_{n}}
            \left(
            \sup_{0\leq t \leq T}
            \sup_{\substack{
            x,y \in G_{n}               \\
                R_{n}(x,y) < \delta}}
            |L_{\tilde{G}_{n}}(x,t) - L_{\tilde{G}_{n}}(y,t)|
            >\varepsilon
            \right) \notag              \\
            \leq
             & P_{\rho}^{\tilde{G}}
            \left(
            \sup_{0\leq t \leq T+1}
            \sup_{\substack{
            x,y \in G                   \\
                R(x,y) < \delta}}
            |L_{\tilde{G}}(x,t) - L_{\tilde{G}}(y,t)|
            >\varepsilon
            \right)
            +
            P_{\rho}^{\tilde{G}}
            \left(\gamma_{\tilde{G}_{n}}(T) > T+1 \right).
          \end{align}
          Using the joint continuity of $L_{\tilde{G}}$ and that $A_{\tilde{G}_{n}}(T) \to T$,
          one can check that the right-hand side of the above inequality converges to $0$ as $n \to \infty$ and then $\delta \to 0$,
          which verifies the condition of 
          Lemma \ref{5. lem: for precompactness of local times}.
    \item
          Following the proof of \cite[Theorem 5.5]{Hambly_97_Brownian},
          it is possible to show that,
          for every $\varepsilon >0$, there exist constants $c,\, c',\, c'' >0$
          such that, for all $n$,
          \begin{equation} \label{eq: volume estimates of balls of Gn of rrg}
            \mathbf{P}
            \left(
            \inf_{x \in G_{n}}
            \mu_{n} (D_{R_{n}}(x,r))
            \geq
            c (\log r^{-1})^{-\beta} r^{\alpha},
            \quad
            \forall r \in [c' e^{-n},c'']
            \right)
            \geq
            1-\varepsilon.
          \end{equation}
          Thus, by Theorem \ref{1. thm: main result for random spaces},
          we obtain the convergence of the annealed laws, i.e.\ $\mathbb{P}_{\tilde{G}_{n}} \to \mathbb{P}_{\tilde{G}}$.
          Although this result is weaker than Theorem \ref{thm: convergence result for rrg},
          the approach via volume estimates is stable under fluctuations on the spaces $G_{n}$.
          For example,
          multiplying the conductances on the electrical networks $G_{n}$ by $a_{n} >0$ with $a_{n} \to 1$
          easily destroys that $R|_{G_{n} \times G_{n}}=R_{n}$.
          Thus the argument of 
          Remark \ref{rem: other proofs of convergence for rrg}\eqref{rem item: proofs of convergence on rrg via local times estimate}
          does not work.
          However the proof of Theorem \ref{thm: convergence result for rrg} is still valid.
          We believe
          that the volume estimates \eqref{eq: volume estimates of balls of Gn of rrg} of balls in $G_{n}$ will be useful
          when $a_{n}$ is random and the fluctuations are not simple like this.
  \end{enumerate}
\end{rem}

\subsection{The critical Erd\H{o}s-R\'{e}nyi random graph} \label{subsec: critical ER random grpah}

\newcommand{\Msigma}{M^{(\sigma)}}
\newcommand{\ERlc}{\cC_{1}^{n}}
\newcommand{\excs}{e^{(\sigma)}}
\newcommand{\tildexcs}{\tilde{e}^{(\sigma)}}
\newcommand{\tildTs}{\tilde{T}^{(\sigma)}}
\newcommand{\Mz}{M^{(Z_{1})}}
\newcommand{\Gmnp}{G_{m_{n}}^{p_{n}}}
\newcommand{\Gmp}{G_{m}^{p}}
\newcommand{\DFS}{\mathbf{DFS}}
\newcommand{\tildT}{\tilde{T}}
\newcommand{\tildG}{\tilde{G}}
\newcommand{\tildX}{\tilde{X}}
\newcommand{\tildH}{\tilde{H}}

In this section,
we consider the Erd\H{o}s-R\'{e}nyi random graph $G(n,p)$,
which is a graph on $n$ labeled vertices $[n] \coloneqq  \{1,2, \ldots, n\}$
chosen randomly by joining any two distinct vertices by an edge with probability $p$,
independently for different pairs of vertices.
This model exhibits a phase transition in its structure for large $n$.
Let $p=c/n$ for some $c >0$.
When $c < 1$, the largest connected component has size $O(\log n)$.
On the other hand, when $c > 1$, we see the emergence of a giant component that contains
a positive proportion of the vertices.
In the critical case $c=1$, the largest connected components have sizes of order $n^{2/3}$.
We will focus here on the critical case $c = 1$, and more specifically, on the critical window $p = n^{-1} + \lambda n^{-4/3}$, $\lambda \in \mbR$.
We fix $\lambda \in \mbR$ and write $p_{n}=n^{-1}+\lambda n^{-4/3}$.
Let $\cC_{i}^{n}$ be the $i$-th largest connected component of $G(n,p_{n})$.
In \cite{Berry_Broutin_Goldschmidt_12_The_continuum},
it was proven
that $\cC_{1}^{n}$ equipped with the graph metric converges to a random metric space with respect to the Gromov-Hausdorff topology,
and in \cite{Croydon_12_Scaling},
the convergence of a random walk on the component was established.
Using Theorem \ref{1. thm: main result for random spaces},
we will show the convergence of a random walk on $\cC_{1}^{n}$ and its local time.

One of the most significant results about random graphs in the above-mentioned critical regime was proved by Aldous \cite{Aldous_97_Brownian}.
Write $Z_{i}^{n}$ and $S_{i}^{n}$ for the size (that is, the number of vertices) and surplus
(that is, the number of edges that would need to be removed in order to obtain a tree)
of $\cC_{i}^{n}$.
Set $\bm{Z}^{n} \coloneqq  (Z_{1}^{n}, Z_{2}^{n}, \ldots)$ and $\bm{S}^{n}\coloneqq (S_{1}^{n}, S_{2}^{n}, \ldots)$.

\begin{thm} [{\cite[Folk Theorem 1, Corollary 2]{Aldous_97_Brownian}}]  \label{thm: convergence of sizes and surpluses of ER}
  As $n \to \infty$, it holds that
  \begin{equation}
    (n^{-2/3} \bm{Z}^{n}, \bm{S}^{n}) \to (\bm{Z}, \bm{S})
  \end{equation}
  in distribution, where the convergence of the first coordinate
  takes place in $l^{2}_{\searrow}$,
  the set of infinite sequences $(x_{1}, x_{2}, \ldots)$
  with $x_{1} \geq x_{2} \geq \cdots \geq 0$ and $\sum_{i} x_{i}^{2} < \infty$,
  equipped with the usual $l^{2}$-norm.
\end{thm}

The limit $\bm{Z}=(Z_{1}, Z_{2}, \ldots)$ and $\bm{S}=(S_{1}, S_{2}, \ldots)$ is constructed as follows.
Consider a Brownian motion with parabolic drift, $(W_{t}^{\lambda})_{t \geq 0}$, where
\begin{equation}
  W_{t}^{\lambda} \coloneqq  W_{t} + \lambda t - \frac{t^{2}}{2}
\end{equation}
and $(W_{t})_{t \geq 0}$ is a standard Brownian motion.
Then, the limit $\bm{Z}$ has the distribution of the ordered sequence of lengths of excursions
of the reflected process $W^{\lambda}_{t} - \min_{0 \leq s \leq t} W^{\lambda}_{s}$ above $0$,
while $\bm{S}$ is the sequence of numbers of points of a Poisson point process with rate one in $\mbRp^{2}$
lying under the corresponding excursions,
where the Poisson point process is assumed to be independent of $(W^{\lambda}_{t})_{t \geq 0}$.

Recall the space $\mathscr{E}$ of excursions from Section \ref{subsubsec: introduction of trees},
where we equip $\mathscr{E}$ with the metric induced by the supremum norm $\| \cdot \|$.
Let $\excs=(\excs(t), 0 \leq t \leq \sigma)$ be a Brownian excursion of length $\sigma$.
The \textit{tilted excursion} of length $\sigma$,
$\tildexcs=(\tildexcs(t), 0 \leq t \leq \sigma) \in \mathscr{E}$,
is defined to be an excursion whose distribution is characterized by
\begin{equation}
  \mathbf{P} \left( \tildexcs \in \mathcal{B} \right)
  =
  \frac
  {\mathbf{E}
    \left(
    1_{\{\excs \in  \mathcal{B}\}}
    \exp( \int_{0}^{\sigma} \excs(t) dt)
    \right)}
  { \mathbf{E}
    \left(
    \exp( \int_{0}^{\sigma} \excs(t) dt)
    \right)}
\end{equation}
for $\mathcal{B} \subseteq \mathscr{E}$ a Borel set.
For $f \in \mathscr{E}$ and $S \subseteq \mbRp^{2}$,
define
\begin{equation}
  S \cap f
  \coloneqq
  \{ (x, y) \in S : 0 \leq y < f(x) \}.
\end{equation}
For $u=(u_{x},u_{y}) \in S \cap f$,
we define $u'=(u_{x}, u'_{x})$
by setting $u'_{x} \coloneqq  \inf \{x \geq u_{x} : f(x)=u_{x} \}$.
We write
\begin{equation}  \label{eq: def of time markers from excursion and pointset}
  \mathscr{T}(S, f) \coloneqq  \{ u' \in \mbRp^{2} : u \in S \cap f \}.
\end{equation}
Let $\mathcal{P} \subseteq \mbRp^{2}$ be a Poisson point process with rate one,
independent of $\tildexcs$.
Assume that $\mathcal{P} \cap \tildexcs$ is non-empty
and write $\mathscr{T}(\mathcal{P}, \tildexcs)=\{ (\xi_{l}^{x}, \xi_{l}^{y}) : 1 \leq l \leq s\}$.
Define $a_{l} \coloneqq  \tau(\xi_{l}^{x})$ and $b_{l} \coloneqq  \tau(\xi_{l}^{y})$,
where $\tau$ denotes the canonical projection from $[0, \sigma]$ onto the real tree $\tildTs$ coded by $2 \tildexcs$.
Let $d_{\tildTs}$ be the metric on $\tildTs$,
$\rho_{\tildTs}$ be the root of $\tildTs$
and $\mu_{\tildTs}$ be the canonical measure on $\tildTs$
(recall these from Section \ref{subsubsec: introduction of trees}).
Define $(\Msigma, R_{\Msigma}, \rho_{\Msigma}, \mu_{\Msigma})$
to be the resistance metric space
obtained by fusing $(\tildTs, d_{\tildTs}, \rho_{\tildTs}, \mu_{\tildTs})$ over $(\{ a_{l}, b_{l} \})_{l=1}^{s}$
(see \cite[Section 8.3]{Croydon_18_Scaling} for the construction of fused resistance metric spaces).
If $\mathcal{P} \cap \tildexcs$ is empty,
then we define  $(\Msigma, R_{\Msigma}, \rho_{\Msigma}, \mu_{\Msigma})$
to be equal to $(\tildTs, d_{\tildTs}, \rho_{\tildTs}, \mu_{\tildTs})$.
We assume that the family $((\Msigma, R_{\Msigma}, \rho_{\Msigma}, \mu_{\Msigma}), \sigma >0)$ is independent of $Z_{1}$.

Given a finite connected graph $G$ with labeled vertices,
we write $V_{G}$ for the vertex set of $G$,
$R_{G}$ for the resistance metric on $V_{G}$,
$\rho_{G}$ for the smallest-labeled vertex of $G$
and $\mu_{G}$ for the counting measure on $V_{G}$.
The following concerns
Assumption \ref{1. assum: random spaces}\ref{1. assum item: random spaces, space convergence}.

\begin{thm}  \label{thm: convergence of ER graphs wrt resistance metric}
  It holds that
  \begin{equation}
    (V(\ERlc), n^{-1/3} R_{\ERlc}, \rho_{\ERlc}, |V(\ERlc)|^{-1} \mu_{\ERlc})
    \xrightarrow{\mathrm{d}}
    (\Mz, R_{\Mz}, \rho_{\Mz}, Z_{1}^{-1}\mu_{\Mz})
  \end{equation}
  in the Gromov-Hausdorff-Prohorov topology.
\end{thm}

Let $G_{m}^{p}$ be a random graph with the distribution of $G(m,p)$ conditioned to be connected.
We assume that $Z_{1}^{n}$ and $(G_{m}^{p_{n}})_{m \geq 1}$ are all independent.
It is then an easy exercise to check
that the random graph $G_{Z_{1}^{n}}^{p_{n}}$ has the same distribution as the random graph $\cC_{1}^{n}$ with relabeled vertices.
Combining this with Theorem \ref{thm: convergence of sizes and surpluses of ER},
we obtain Theorem \ref{thm: convergence of ER graphs wrt resistance metric},
once the following lemma is established.

\begin{lem} \label{lem: convergence of conditioned ER graphs wrt resistance metric}
  If a sequence $(m_{n})_{n \geq 1}$ of natural numbers satisfies $n^{-2/3}m_{n} \to \sigma \in (0,\infty)$,
  then it holds that
  \begin{equation}
    (V(\Gmnp), n^{-1/3} R_{\Gmnp}, \rho_{\Gmnp}, m_{n}^{-1} \mu_{\Gmnp})
    \xrightarrow{\mathrm{d}}
    (\Msigma, R_{\Msigma}, \rho_{\Msigma}, \mu_{\Msigma}).
  \end{equation}
\end{lem}

To prove Lemma \ref{lem: convergence of conditioned ER graphs wrt resistance metric},
we describe how to generate $G_{m}^{p}$.
Let $G$ be a connected graph with the vertex set $[m]$.
We can associate a spanning tree $\DFS(G)$ on $G$, the depth-first tree,
by running a depth-first search on $G$,
using the rule
that whenever there is a choice of which vertex to explore,
the smallest-labeled vertex is always explored first.
More precisely,
we define $\DFS(G)$ by specifying edges on the vertex set $V(\DFS(G))\coloneqq [m]$
via the following algorithm.
At time $0$,
the depth-first search process is at vertex $1$ and declares it open.
At time $1 \leq k \leq m-1$,
the vertex $v$ that the process currently visits is declared as explored.
If there exist neighbors of $v$ on $G$ not opened yet,
then the neighbors are declared as open,
the process moves to the smallest-labeled vertex $v'$ among them and
we attach an edge between $v$ and $v'$ to $V(\DFS(G))$.
If there are no such neighbors of $v$ on $G$,
then the process moves to the smallest-labeled vertex among the most recently opened and non-explored vertices.
(Note that in this case no edge is attached.)

Let $T$ be a tree with the vertex set $[m]$.
If a connected graph $G$ with the vertex set $[m]$ have that $\DFS(G)=T$,
then $G$ can be obtained by adding some edges to $T$.
Such edges are called \textit{permitted edges}.
In other words,
a permitted edge of $T$ is an edge
for which $\DFS(G)=T$ holds
for the graph $G$ which is  obtained by adding that edge to $T$
(see \cite[Lemma 7]{Berry_Broutin_Goldschmidt_12_The_continuum}).
The cardinality of the permitted edges of $T$ is denoted by $a(T)$.
If we regard $T$ as a plane tree by using the depth-first search
and write $X^{T}=(X^{T}(i),\, 0\leq i \leq m)$ for the depth-first walk of $T$,
then the number $a(T)$ corresponds to the area of the depth-first walk $X^{T}$ of $T$.
Namely, we have that
\begin{equation}
  a(T) \coloneqq  \sum_{i=1}^{m-1} X^{T}(i)
\end{equation}
(see \cite[Lemma 6]{Berry_Broutin_Goldschmidt_12_The_continuum}).
We have $X^{T}(m)=-1$,
but we redefine $X^{T}(m) \coloneqq 0$
so that $X^{T} \in C([0,m], \mbRp)$.
For later use,
we define $H^{T}=(H^{T}(i),\, 0 \leq i \leq m-1)$
to be the height function of $T$.
Note that $H^{T}(i)$ is equal to the distance between vertex $1$ and the depth-first search process at time $i$.
We set $H^{T}(m)\coloneqq 0$ and regard $H^{T}$ as a function in $C([0,m], \mbRp)$ by linear interpolation.

\begin{lem} [{\cite[Proposition 8]{Berry_Broutin_Goldschmidt_12_The_continuum}}]  \label{lem: construction of Gmp from a tilted tree}
  Let $\mathbb{T}_{m}$ be the set of trees with the vertex set $[m]$.
  Fix $p \in (0,1)$.
  Pick a random tree $\tildT_{m}^{p}$
  that has a ``tilted'' distribution which is biased in favor of trees with large area.
  Namely,
  pick $\tildT_{m}^{p}$ in such a way that
  \begin{equation}
    \mathbf{P}(\tildT_{m}^{p}=T)
    \propto
    (1-p)^{-a(T)}, \quad
    T \in \mathbb{T}_{m}.
  \end{equation}
  Add to $\tildT_{m}^{p}$ each of the $a(\tildT_{m}^{p})$ permitted edges independently with probability $p$.
  Call the generated graph $\tildG_{m}^{p}$.
  Then, $\tildG_{m}^{p}$ has the same distribution as $G_{m}^{p}$ on $\mathsf{G}_{m}^{c}$,
  which denotes the set of connected graphs with the vertex set $[m]$.
\end{lem}

Lemma \ref{lem: construction of Gmp from a tilted tree} can be rephrased
as the correspondence of
the distribution of $G_{m}^{p}$
to that of the coding function of $\tildT_{m}^{p}$ and a \textit{Binomial pointset}
as precisely described in Lemma \ref{lem: construction of Gmp from coding functions and Binomial pointset} below,
and such an interpretation is useful for deriving scaling limits
(as seen in Theorem \ref{thm: convergence of contour function} and Corollary \ref{cor: convergence of trees}).

\begin{lem} [{\cite[Lemma 18]{Berry_Broutin_Goldschmidt_12_The_continuum}}] \label{lem: construction of Gmp from coding functions and Binomial pointset}
  Fix $p \in (0,1)$.
  Let $\mathcal{Q}^{p} \subseteq \mathbb{Z}_{+}^{2}$ be a Binomial pointset of intensity $p$,
  that is,
  a random subset of $\mathbb{Z}_{+}^{2}$
  in which each point is present independently with probability $p$.
  Let $\tildT_{m}^{p}$ be a tilted tree as in Lemma \ref{lem: construction of Gmp from a tilted tree}
  with vertices $v_{0}, v_{1}, \ldots, v_{m-1}$ in depth-first order,
  independent of $\mathcal{Q}^{p}$.
  Let $\tildX=(\tildX(t), 0 \leq t \leq m)$ be the depth-first walk of $\tildT_{m}^{p}$.
  Write $\mathscr{T}(\mathcal{Q}^{p}, \tildX) = \{(x_{i}, y_{i}) : 1 \leq i \leq s \}$
  (recall the definition from \eqref{eq: def of time markers from excursion and pointset}).
  We define a graph $G(\tildT_{m}^{p}, \mathcal{Q}^{p})$
  by attaching an edge between $v_{x_{i}}$ and $v_{y_{i}}$ on $\tildT_{m}^{p}$.
  (If $\mathscr{T}(\mathcal{Q}^{p}, \tildX)$ is empty, we define $G(\tildT_{m}^{p}, \mathcal{Q}^{p}) \coloneqq  \tildT_{m}^{p}$.)
  Then, $G(\tildT_{m}^{p}, \mathcal{Q}^{p})$ has the same distribution as $G_{m}^{p}$ on $\mathsf{G}_{m}^{c}$.
\end{lem}

\begin{lem} [{\cite[Lemma 19]{Berry_Broutin_Goldschmidt_12_The_continuum}}] \label{lem: convergence of coding functions and pointset for ER}
  Assume that a sequence $(m_{n})_{n \geq 1}$ satisfies $n^{-2/3} m_{n} \to \sigma \in (0,\infty)$.
  Write $\tildX^{n}=(\tildX^{n}(t), 0 \leq t \leq m_{n})$ and $\tildH^{n}=(\tildH^{n}(t), 0 \leq t \leq m_{n})$
  for the depth-first walk and the height function of a tilted tree $\tildT_{m_{n}}^{p_{n}}$, respectively.
  Let $\mathcal{Q}^{p_{n}}$ be a Binomial pointset of intensity $p_{n}$, independent of $\tildT_{m_{n}}^{p_{n}}$,
  and define $\mathcal{P}_{n} \coloneqq  \{ ( (\sigma/m_{n}) i, (\sigma/m_{n})^{1/2} j ) : (i,j) \in \mathcal{Q}^{p_{n}} \}$.
  Let $\tildexcs=(\tildexcs(t), 0 \leq t \leq \sigma)$ be a tilted excursion with length $\sigma$
  and $\mathcal{P}$ be a Poisson point process on $\mbRp^{2}$ with rate one, independent of $\tildexcs$.
  Then, it holds that
  \begin{align}
     & \left(
    \sqrt{\frac{\sigma}{m_{n}}} \tildH^{n} \left( \left\lfloor \frac{m_{n}}{\sigma} \cdot \right\rfloor \right),
    \sqrt{\frac{\sigma}{m_{n}}} \tildX^{n} \left( \left\lfloor \frac{m_{n}}{\sigma} \cdot \right\rfloor \right),
    \mathcal{P}_{n} \cap \left( \sqrt{\frac{\sigma}{m_{n}}} \tildX^{n} \left( \left\lfloor \frac{m_{n}}{\sigma} \cdot \right\rfloor \right) \right)
    \right) \notag                            \\
     & \xrightarrow[n \to \infty]{\mathrm{d}}
    (2 \tildexcs, \tildexcs, \mathcal{P} \cap \tildexcs),
    \label{eq: convergence of coding functions and pointset for ER}
  \end{align}
  where the convergence of the first and second coordinate takes place in $D([0,\sigma], \mbRp)$
  equipped with the usual $J_{1}$-Skorohod topology
  and the convergence of the third coordinate takes place with respect to the Hausdorff metric.
\end{lem}

\begin{rem}
  In \cite[Lemma 19]{Berry_Broutin_Goldschmidt_12_The_continuum},
  the convergence of $\tildH^{n}$ is not mentioned,
  but it is an immediate consequence of \cite[Lemma 16]{Berry_Broutin_Goldschmidt_12_The_continuum}.
\end{rem}

\begin{proof} [Proof of Lemma \ref{lem: convergence of conditioned ER graphs wrt resistance metric}]
  We proceed with the proof in the setting of Lemma \ref{lem: convergence of coding functions and pointset for ER}.
  By the Skorohod representation theorem,
  we may assume that the convergence \eqref{eq: convergence of coding functions and pointset for ER} takes place almost-surely
  on some probability space.
  Assume that $\mathcal{P} \cap \tildexcs$ is non-empty
  and write $\mathscr{T}(\mathcal{P}, \tildexcs) = \{(\xi_{l}^{x}, \xi_{l}^{y}) : 1 \leq \l \leq s \}$.
  For all sufficiently large $n$,
  we can write
  $\mathscr{T}(\mathcal{P}_{n}, \sqrt{\sigma/ m_{n}} \tildX^{n}  (\lfloor (m_{n}/ \sigma) \cdot \rfloor))
    = \{(i_{l}^{n}, j_{l}^{n}) : 1 \leq l \leq s\}$
  in such a way that
  \begin{equation}  \label{eq: convergence of time markers of coding functions}
    \varepsilon_{n}
    \coloneqq
    \max_{1 \leq l \leq s}
    ( |i_{l}^{n} - \xi_{l}^{x}| \vee |j_{l}^{n} - \xi_{l}^{y}| )
    \to
    0.
  \end{equation}
  This follows from \eqref{eq: convergence of coding functions and pointset for ER} and the fact that
  almost-surely, for every $\delta >0$ small enough,
  we have
  \begin{equation}
    \inf_{0 \leq t \leq \delta} \tildexcs(\xi_{l}^{y}+t)
    <
    \tildexcs(\xi_{l}^{y})
    <
    \sup_{0 \leq t \leq \delta} \tildexcs(\xi_{l}^{y}-t),  \quad
    \forall l=1,2,\ldots,s
  \end{equation}
  (see \cite[Proof of Theorem 22]{Berry_Broutin_Goldschmidt_12_The_continuum}).
  Let $v_{1}^{n}, v_{2}^{n}, \ldots, v_{m_{n}}^{n}$ be the vertices of $\tildT_{m_{n}}^{p_{n}}$ in depth-first order,
  and define $a_{l}^{n}\coloneqq v_{m_{n} i_{l}^{n} / \sigma}^{n},\, b_{l}^{n} \coloneqq  v_{m_{n} j_{l}^{n} / \sigma}^{n}$.
  Here, we note that we have
  \begin{equation}
    \{(m_{n} i_{l}^{n} / \sigma, m_{n} j_{l}^{n} / \sigma) : 1 \leq l \leq s\}
    =
    \mathscr{T}(\mathcal{Q}^{p_{n}}, \tildX^{n})
  \end{equation}
  and in particular the indices $m_{n} i_{l}^{n} / \sigma$ and $m_{n} j_{l}^{n} / \sigma$ are integers.
  Define $a_{l}, b_{l} \in \tildTs$ by setting
  $a_{l}\coloneqq  \tau(\xi_{l}^{x})$ and $b_{l} \coloneqq  \tau(\xi_{l}^{y})$,
  where we recall that
  $\tau$ is the canonical projection from $[0,\sigma]$ onto the real tree $\tildTs$ coded by $2 \tildexcs$.
  Define a correspondence $\mathcal{R}_{n}$ between $V(\tildT_{m_{n}}^{p_{n}})$ and $\tildTs$
  by setting
  \begin{equation}
    \mathcal{R}_{n}
    \coloneqq
    \{
    (v_{i}^{n}, v) \in V(\tildT_{m_{n}}^{p_{n}}) \times \tildTs
    :
    \exists t \in [0,\sigma]\
    \text{such that}\
    |(\sigma / m_{n})\cdot i - t| \leq \varepsilon_{n} \vee (\sigma/m_{n})
    \}.
  \end{equation}
  Embed $(V(\tildT_{m_{n}}^{p_{n}}), n^{-1/3} d_{\tildT_{m_{n}}^{p_{n}}})$ and $(\tildTs, d_{\tildTs})$
  into the disjoint union $V(\tildT_{m_{n}}^{p_{n}}) \sqcup \tildTs$
  equipped with the metric $d_{n}$ satisfying
  \begin{equation}
    d_{n}(v_{i}, v)
    =
    \inf
    \{
    n^{-1/3} d_{\tildT_{m_{n}}^{p_{n}}} (v_{i}, v_{j})
    + 2^{-1} \mathrm{dis} \mathcal{R}_{n}
    + n^{-1}
    +
    d_{\tildTs} (v, v')
    :
    (v_{j}, v) \in \mathcal{R}_{n}
    \}
  \end{equation}
  for $(v_{i}, v) \in V(\tildT_{m_{n}}^{p_{n}}) \times \tildTs$.
  It is then routine to show
  \begin{align}
     & (V(\tildT_{m_{n}}^{p_{n}}),
    n^{-1/3} d_{\tildT_{m_{n}}^{p_{n}}},
    \rho_{\tildT_{m_{n}}^{p_{n}}},
    m_{n}^{-1} \mu_{\tildT_{m_{n}}^{p_{n}}},
    a_{1}^{n},
    b_{1}^{n},
    \ldots,
    a_{s}^{n},
    b_{s}^{n}
    )                              \\
     & \to
    (\tildTs, d_{\tildTs}, \rho_{\tildTs}, \sigma^{-1} \mu_{\tildTs}, a_{1}, b_{1}, \ldots, a_{s}, b_{s})
  \end{align}
  in the Gromov-Hausdorff-Prohorov topology with additional points
  (this topology is introduced in \cite[Section 8.3]{Croydon_18_Scaling}
  and see also Remark \ref{8. rem: GH-type metric for cadlag curves with sup norm} below).
  Moreover,
  almost-surely,
  we have that $a_{1}, b_{1}, \ldots, a_{s}, b_{s}$ are all distinct.
  This follows from the fact that,
  given distinct times $(t_{i})_{i=1}^{J} \in [0, \sigma]$,
  almost-surely,
  $(e^{(\sigma)}(t_{i}))_{i=1}^{J}$ are distinct
  and from the absolute continuity of the law of $\tildexcs$ with respect to $e^{(\sigma)}$.
  Therefore,
  by \cite[Proposition 8.4]{Croydon_18_Scaling},
  we obtain that
  \begin{equation}  \label{eq: convergence of fused spaces for ER}
    (G'_{n}, n^{-1/3} R_{G'_{n}}, \rho_{G'_{n}}, m_{n}^{-1} \mu_{G'_{n}})
    \to
    (\Msigma, R_{\Msigma}, \rho_{\Msigma}, Z_{1}^{-1} \mu_{\Msigma})
  \end{equation}
  in the Gromov-Hausdorff-Prohorov topology,
  where $(G'_{n}, R_{G'_{n}}, \rho_{G'_{n}}, \mu_{G'_{n}})$ is a rooted-and-measured resistance metric space
  obtained by fusing $(V(\tildT_{m_{n}}^{p_{n}}), d_{\tildT_{m_{n}}^{p_{n}}}, \rho_{\tildT_{m_{n}}^{p_{n}}}, \mu_{\tildT_{m_{n}}^{p_{n}}})$
  over $(\{a_{l}^{n}, b_{l}^{n}\})_{l=1}^{s}$.
  Define $G_{n} \coloneqq  G(\tildT_{m_{n}}^{p_{n}}, \mathcal{Q}^{p_{n}})$
  (recall the notation from Lemma \ref{lem: construction of Gmp from coding functions and Binomial pointset}).
  It is then not difficult to check that
  the Gromov-Hausdorff-Prohorov distance
  between $(G'_{n}, n^{-1/3} R_{G'_{n}}, \rho_{G'_{n}}, m_{n}^{-1} \mu_{G'_{n}})$ and
  $(G_{n}, n^{-1/3} R_{G_{n}}, \rho_{G_{n}}, m_{n}^{-1})$
  converges to $0$ in probability
  (note that $G_{n}$ is constructed by adding an edge between $a_{l}^{n}$ and $b_{l}^{n}$ on $\tildT_{m_{n}}^{p_{n}}$,
  while $G_{n}'$ is constructed by fusing $a_{l}^{n}$ and $b_{l}^{n}$).
  Combining this with \eqref{eq: convergence of fused spaces for ER},
  we deduce that
  \begin{equation} \label{eq: convergence of fused spaces for ER 2}
    (G_{n}, n^{-1/3} R_{G_{n}}, \rho_{G_{n}}, m_{n}^{-1})
    \to
    (\Msigma, R_{\Msigma}, \rho_{\Msigma}, Z_{1}^{-1} \mu_{\Msigma}).
  \end{equation}
  Assume that $\mathcal{P} \cap \tildexcs$ is empty.
  Then, we have that $\mathcal{Q}^{p_{n}} \cap \tildX^{n}$ is empty for all sufficiently large $n$.
  Thus we have $G_{n}=\tildT_{m_{n}}^{p_{n}}$ by definition of $G(\tildT_{m_{n}}^{p_{n}}, \mathcal{Q}^{p_{n}})$,
  and the convergence \eqref{eq: convergence of fused spaces for ER 2} follows immediately from
  the convergence \eqref{eq: convergence of coding functions and pointset for ER}.
  Hence, the convergence \eqref{eq: convergence of fused spaces for ER 2} holds almost-surely,
  and,
  by Lemma \ref{lem: construction of Gmp from coding functions and Binomial pointset},
  the desired result is established.
\end{proof}

Now we set
\begin{gather}
  \mathcal{G}_{m}^{p}
  \coloneqq
  (V(\Gmp), n^{-1/3} R_{\Gmp}, \rho_{\Gmp}, m_{n}^{-1} \mu_{\Gmp})\\
  \mathcal{G}_{n}
  \coloneqq
  (V(\ERlc), n^{-1/3} R_{\ERlc}, \rho_{\ERlc}, |V(\ERlc)|^{-1} \mu_{\ERlc})\\
  \mathcal{M}_{\sigma}
  \coloneqq
  (\Msigma, R_{\Msigma}, \rho_{\Msigma}, \mu_{\Msigma})\\
  \mathcal{M}
  \coloneqq
  (\Mz, R_{\Mz}, \rho_{\Mz}, Z_{1}^{-1}\mu_{\Mz}).
\end{gather}
As we deduced Theorem \ref{thm: convergence of ER graphs wrt resistance metric}
from Lemma \ref{lem: convergence of conditioned ER graphs wrt resistance metric},
the convergence $\cX_{\mathcal{G}_{n}} \xrightarrow{\mathrm{d}} \cX_{\mathcal{M}}$ is obtained,
once we show that $\cX_{\mathcal{G}_{m_{n}}^{p_{n}}} \xrightarrow{\mathrm{d}} \cX_{\mathcal{M}_{\sigma}}$
when $n^{-2/3} m_{n} \to \sigma \in (0,\infty)$.
Concerning the volume estimate for $\mathcal{G}_{m_{n}}^{p_{n}}$,
since the mass of a ball in a graph equipped with a resistance metric increases
when new edges are attached,
the volume estimate is reduced to the volume estimate for the tilted trees $\tildT_{m}^{p}$.
Following \cite[Proof of Lemma 4.9]{Andriopoulos_23_Convergence}
and using Proposition \ref{prop: volume estimates for trees},
we obtain the following result.

\begin{lem} \label{lem: volume estimate for conditioned ER graphs}
  If a sequence $(m_{n})_{n \geq 1}$ of natural numbers satisfies $n^{-2/3}m_{n} \to \sigma \in (0,\infty)$,
  then, for every $\gamma \in (0,1/2)$ and $\varepsilon >0$,
  there exists $c_{\gamma, \varepsilon} >0$ such that
  \begin{equation}
    \liminf_{n \to \infty}
    \mathbf{P}
    \left(
    \inf_{x \in V(\Gmnp)}
    m_{n}^{-1}
    \mu_{\Gmnp}
    \left(D_{R_{\Gmnp}}(x,r)\right)
    \geq
    (c_{\gamma,\varepsilon} r^{\gamma^{-1}}) \wedge 1,
    \quad
    \forall r > 0
    \right)
    \geq
    1-\varepsilon.
  \end{equation}
\end{lem}

By Proposition \ref{lem: volume estimate for conditioned ER graphs}
and Corollary \ref{eq: discrete graph version of volume condition},
we obtain the convergence of stochastic processes and local times
on the critical \erdosrenyi\ random graphs.
\begin{cor}
  The limiting space $\mathcal{M}$ belongs to $\check{\mathbb{F}}_{c}$ with probability $1$,
  and $\cX_{\mathcal{G}_{n}} \xrightarrow{\mathrm{d}} \cX_{\mathcal{M}}$
  as random elements of $\mbM_{L}$.
\end{cor}

\subsection{The configuration model} \label{subsec: the critical configuration model}
\newcommand{\bd}{\bm{d}}
\newcommand{\bt}{\bm{t}}
\newcommand{\bdn}{\bm{d}^{(n)}}
\newcommand{\btildd}{\tilde{\bm{d}}}
\newcommand{\btilddm}{\tilde{\bm{d}}^{(\tilde{m})}}
\newcommand{\tildd}{\tilde{d}}
\newcommand{\mbGdcon}{\mathsf{G}_{\tilde{\bm{d}}}^{\mathrm{con}}}
\newcommand{\scrGdcon}{\mathscr{G}_{\tilde{\bm{d}}}^{\mathrm{con}}}
\newcommand{\mbG}{\mathbb{G}}
\newcommand{\mbGnd}{\mathsf{G}_{n, \bm{d}}}
\newcommand{\mbGndbar}{\bar{\mathsf{G}}_{n, \bm{d}}}
\newcommand{\scrGnd}{\mathscr{G}_{n, \bm{d}}}
\newcommand{\CMnd}{\mathrm{CM}_{n}(\bm{d})}
\newcommand{\bfP}{\mathbf{P}}
\newcommand{\bfE}{\mathbf{E}}
\newcommand{\tildm}{\tilde{m}}
\newcommand{\tildp}{\tilde{p}}
\newcommand{\tildk}{\tilde{k}}
\newcommand{\scrL}{\mathscr{L}}
\newcommand{\bA}{\bm{A}}
\newcommand{\bxi}{\bm{\xi}}
\newcommand{\bs}{\bm{s}}
\newcommand{\bstildm}{\bm{s}^{(\tilde{m})}}
\newcommand{\mbTs}{\mathbb{T}_{\bm{s}}}
\newcommand{\mbTsk}{\mathbb{T}_{\bm{s}}^{(k)}}
\newcommand{\bx}{\bm{x}}
\newcommand{\tildscrTs}{\tilde{\mathscr{T}}_{\bm{s}}}
\newcommand{\tildbX}{\tilde{\bm{X}}}
\newcommand{\scrGtildd}{\mathscr{G}_{\tilde{\bm{d}}}}
\newcommand{\scrTs}{\mathscr{T}_{\bm{s}}}
\newcommand{\scrT}{\mathscr{T}}
\newcommand{\tildmu}{\tilde{\mu}}
\newcommand{\Mk}{M^{(k)}}
\newcommand{\Ttildek}{\tilde{T}^{(k)}}

Fix a collection of $n$ vertices labeled by $[n] \coloneqq \{1,2,\ldots,n\}$
and an associated degree sequence $\bd=\bdn= (d_{v}^{(n)}, v \in [n])$
where $\ell_{n} \coloneqq \sum_{v \in [n]} d_{v}^{(n)}$ is assumed even.
We write $\mbGnd$ for the space of all simple graphs labeled by $[n]$
such that the $i$-th vertex has degree $d_{i}$.
Let $\scrGnd$ be a random graph uniformly distributed on $\mbGnd$.
Write $\mbGndbar$ for the space of all multigraphs on vertex set $[n]$ with degree sequence $\bd$
(recall that in a multigraph,
we allow self-loops as well as the occurrence of multiple edges between the same pair of vertices).
The \textit{configuration model} $\CMnd$ is a random multigraph on vertex set $[n]$
with degree sequence $\bd$ constructed sequentially as follows:
Equip each vertex $v \in [n]$ with $d_{v}^{(n)}$ half-edges.
Pick two half-edges uniformly from the set of half-edges that have not yet been paired,
and pair them to form a full edge.
Repeat until all half-edges have been paired.
The resulting graph is denoted by $\CMnd$.
For simplicity,
we will omit the superscript and write $d_{v},\, v \in [n]$.
We will work with degree sequences that satisfy the following assumption.

\begin{assum} [{\cite[Assumption 2.1]{Bhamidi_Sen_20_Geometry}}]
  \label{assum: degree sequence of configuration models}
  Let $D_{n}$ be a random variable with distribution given by
  \begin{equation}
    P (D_{n} = i)
    = \frac{1}{n} |\{ j : d_{j}^{(n)} =i \}|,
  \end{equation}
  i.e.\ $D_{n}$ has the law of the degree of a vertex selected uniformly at random from $[n]$.
  Assume that there exists a random variable $D$ with $P(D = 1) > 0$
  such that the following hold as $n \to \infty$.
  \begin{enumerate}
    \item  \label{assum item: the existence of a rv D}
          It holds that $D_{n} \xrightarrow{\mathrm{d}} D$.
    \item
          Convergence of third moments (and hence all lower moments):
          \begin{equation}
            E(D_{n}^{3})
            =\frac{1}{n} \sum_{v \in [n]} d_{v}^{3}
            \to
            E(D^{3}) < \infty.
          \end{equation}
    \item  \label{assum item: we are in the critical window}
          We are in the critical scaling window, i.e.,
          there exists $\lambda \in \mbR$ such that
          \begin{equation}
            \frac{\sum_{v \in [n]} d_{v} (d_{v}-1)}{\sum_{v \in [n]} d_{v}}
            =
            1 + \frac{\lambda}{n^{1/3}} + o(n^{-1/3}).
          \end{equation}
          In particular, it holds that $E(D^{2}) = 2  E(D)$.
  \end{enumerate}
\end{assum}

Suppose that $\mathscr{C}_{1}^{n}, \mathscr{C}_{2}^{n}, \ldots$ are the connected components of $\CMnd$
in decreasing order of size.
The starting point for establishing the convergence of connected components $\mathscr{C}_{i}^{n}$ of $\CMnd$
is understanding the behavior of the component sizes,
and this was done in \cite{Dhara_Hofstad_Leeuwaarden_Sen_17_Critical}.
We first set up some notation.
Let $\bd=\bd^{(n)}$ be a degree sequence satisfying Assumption \ref{assum: degree sequence of configuration models}
and $D$ be the random variable in
Assumption \ref{assum: degree sequence of configuration models}.
Define $\sigma_{r} \coloneqq E(D^{r})$ for $r=1,2,3$
and $\bm{\mu} = (\alpha, \eta, \beta)$ by setting
\begin{equation}  \label{eq: parameters for CR models}
  \alpha \coloneqq \sigma_{1}, \quad
  \eta \coloneqq \sigma_{3} \sigma_{1} - \sigma_{2}^{2}, \quad
  \beta \coloneqq \sigma_{1}^{-1}.
\end{equation}
Let $\lambda$ be the parameter
in Assumption \ref{assum: degree sequence of configuration models}\eqref{assum item: we are in the critical window}.
Write
\begin{equation}
  W^{\bm{\mu}, \lambda}(s)
  \coloneqq
  \frac{\sqrt{\eta}}{\alpha} W(s) + \lambda s - \frac{\eta s^{2}}{2 \alpha^{3}},
  \quad
  s \geq 0,
\end{equation}
where  $(W(s), s \geq 0)$ denotes a standard Brownian motion.
We then define $\bm{Z}=(Z_{1}, Z_{2}, \ldots)$
to be the ordered sequence of lengths of excursions
of the reflected process $W^{\bm{\mu}, \lambda}(s) - \min_{0 \leq u \leq s} W^{\bm{\mu}, \lambda}(u)$.
We further define $\bm{S}=(S_{1}, S_{2}, \ldots)$
by setting $S_{i}$ to be the number of points of a Poisson point process on $\mbRp^{2}$ with rate $\beta$
lying under the corresponding excursions,
where the Poisson point process is assumed to be independent of $(W^{\bm{\mu}, \lambda}(s))_{s \geq 0}$.

\begin{thm} [{\cite[Theorem 2]{Dhara_Hofstad_Leeuwaarden_Sen_17_Critical}}]
  Let $S_{i}^{n}$ be the number of surplus edges in $\mathscr{C}_{i}^{n}$, i.e.,
  \begin{equation}
    S_{i}^{n}
    \coloneqq
    |E(\mathscr{C}_{i}^{n})| - |V(\mathscr{C}_{i}^{n})| +1,
  \end{equation}
  where $V(\mathscr{C}_{i}^{n})$ and $E(\mathscr{C}_{i}^{n})$ denote
  the vertex set and the edge set of $\mathscr{C}_{i}^{n}$, respectively.
  If degree sequences $\bd=\bd^{(n)}$ satisfy Assumption \ref{assum: degree sequence of configuration models},
  then it holds that
  \begin{equation}
    \left(
    (n^{-2/3} |V(\mathscr{C}_{i}^{n})|)_{i \geq 1}, (S_{i}^{n})_{i \geq 1}
    \right)
    \xrightarrow{\mathrm{d}}
    (\bm{Z}, \bm{S}),
  \end{equation}
  where the convergence of the first sequence takes place in $l^{2}_{\searrow}$
  (recall this space from Theorem \ref{thm: convergence of sizes and surpluses of ER}).
\end{thm}

Conditional on being simple,
the configuration model $\CMnd$ has the same distribution as $\scrGnd$
(cf.\ \cite[Proposition 7.7]{Hofstad_17_Random}).
Under Assumption \ref{assum: degree sequence of configuration models},
it is verified that the conditioning is weak enough
to conclude that the asymptotic behavior of $\scrGnd$ and $\CMnd$ is almost the same,
in the sense that each connected component of $\CMnd$ is simple with probability tending to $1$
(see Lemma \ref{lem: behavior of degree sequences of CR models}).
Based on this fact,
in \cite{Bhamidi_Sen_20_Geometry},
the convergence of connected components of $\scrGnd$ and $\CMnd$
equipped with the graph distance and the uniform probability measure was established.
In this section,
we will show that the convergence still holds if the metric is replaced by the resistance metric,
and, moreover the convergence of stochastic processes and their local times on the configuration models
also holds.
Write
\begin{equation}
  \mathcal{G}_{n}
  \coloneqq
  (V(\mathscr{C}_{1}^{n}), n^{-1/3} R_{\mathscr{C}_{1}^{n}},
  \rho_{\mathscr{C}_{1}^{n}}, \mu_{\mathscr{C}_{1}^{n}}),
\end{equation}
where $R_{\mathscr{C}_{1}^{n}}$ is the resistance metric on $\mathscr{C}_{1}^{n}$,
$\rho_{\mathscr{C}_{1}^{n}}$ is the smallest-labeled vertex among the vertices with the smallest degree,
and $\mu_{\mathscr{C}_{1}^{n}}$ is the uniform probability measure on $V(\mathscr{C}_{1}^{n})$.

\begin{thm} \label{thm: main result for CR models}
  Assume that degree sequences $\bd=\bd^{(n)}$ satisfy Assumption \ref{assum: degree sequence of configuration models}.
  Then, there exists a random element $\mathcal{M}$ of $\check{\mathbb{F}}_{c}$
  such that $\mathcal{G}_{n} \xrightarrow{\mathrm{d}}\mathcal{M}$ in the Gromov-Hausdorff-Prohorov topology.
  Furthermore, it holds that $\cX_{\mathcal{G}_{n}} \xrightarrow{\mathrm{d}} \cX_{\mathcal{M}}$
  as random elements of $\mbM_{L}$.
  (See Construction \ref{const: the limiting space of CR models} for an explicit description of $\mathcal{M}$.)
\end{thm}

For each fixed $\tildm \geq 1$,
let $\btilddm =(\tildd_{1}^{(\tildm)}, \ldots, \tildd_{\tildm}^{(\tildm)})$
be a given degree sequence.
We will often suppress the superscript and write $\btildd,\, \tildd_{i}$ etc.
Let $\mbGdcon$ be the set of all connected, simple, labeled (by $[m]$) graphs
with degree sequence $\btildd$ where the vertex labeled $j$ has degree $\tildd_{j}$.
Write $\scrGdcon$ for a random graph sampled uniformly from $\mbGdcon$.
Since the distribution of a connected component of $\scrGnd$ conditional on the vertex set $V$
is the uniform distribution
on the space of all connected, simple random graph
on $V$ with degree sequence $\{d_{i}, i \in V\}$,
the main ingredient to obtain the desired convergence results
is the convergence result for $\scrGdcon$.
Consider the following assumption on the sequence $(\btilddm)_{\tildm \geq 1}$.

\begin{assum} [{\cite[Assumption 2.3]{Bhamidi_Sen_20_Geometry}}]
  \label{assum: degree sequence tildd} \leavevmode
  \begin{enumerate}
    \item
          It holds that $\tildd_{j} \geq 1$ for $1 \leq j \leq \tildm$ and $\tildd_{1}=1$.
    \item
          There exists a probability mass function (p.m.f.) $(\tildp_{1}, \tildp_{2}, \ldots)$ with
          \begin{equation}
            \tildp_{1} >0,
            \quad
            \sum_{i \geq 1} i \tildp_{i} =2,
            \quad \text{and} \quad
            \sum_{i \geq 1} i^{2} \tildp_{i} < \infty
          \end{equation}
          such that
          \begin{equation}
            \frac{1}{\tildm} | \{ j : \tildd_{j} =i \}| \to \tildp_{i}
            \ \text{for}\
            i \geq 1,
            \ \text{and} \quad
            \frac{1}{\tildm} \sum_{i \geq 1} \tildd_{i}^{2} \to \sum_{i \geq 1} i^{2} \tildp_{i}.
          \end{equation}
          (Note that these yield that $\max_{1 \leq j \leq \tildm} \tildd_{j} = o (\sqrt{\tildm})$.)
    \item \label{assum item: surplus of the given degree sequence, tilted}
          There exists a non-negative integer $k$
          such that $\sum_{i \geq 1} \tildd_{i} =2 (\tildm -1) + 2 k$
          for all sufficiently large $\tildm$.
  \end{enumerate}
  We will write $\sigma^{2} \coloneqq \sum_{i} i^{2} \tildp_{i} -4$
  for the variance associated with the p.m.f.\ $(p_{0}, p_{1}, \ldots)$.
\end{assum}

To give a construction of $\scrGdcon$,
we introduce some notions on plane trees.
Let $\bt$ be a plane tree with a root vertex $\rho$.
Write $\scrL(\bt)$ for the set of leaves of $\bt$, i.e.,
the vertices that have no children.
For each non-root vertex $u \in \bt$,
let $\parent(u)$ be the parent of $u$.
If $\parent(u)$ is not the root,
then we set $\gparent(u) := \parent(\parent(u))$,
i.e., $\gparent(u)$ is the grandparent of $u$.
Let $[\rho, u]$ (resp.\ $[\rho, u)$) denote the ancestral line of $u$
including (resp.\ excluding) $u$.
Write $\DFprec$ for the order on vertices of a plane tree
induced by a depth-first search,
i.e., $x \DFprec y$ if $x$ is explored strictly before $y$
in a depth-first search of the plane tree.

\begin{dfn} {(\cite[Definition 3.1]{Bhamidi_Sen_20_Geometry})}
  For leaves $u, v \in \scrL(\bt)$,
  we say that the ordered pair $(u,v)$ is \textit{admissible},
  if $\parent(v) \neq \rho$, $\gparent(v) \in [\rho, \parent(u))$,
  and $\parent(u) \DFprec \parent(v)$.
  Let $\bA(\bt)$ denote the set of admissible pairs of $\bt$.
\end{dfn}

We define an order $\ll$ on $\bA(\bt)$ as follows:
For $(u_{1}, v_{1}), (u_{2}, v_{2}) \in \bA(\bt)$,
we write $(u_{1}, v_{1}) \ll (u_{2}, v_{2})$
if either $\parent(u_{1}) \DFprec \parent(u_{2})$ or
if $\parent(u_{1}) = \parent(u_{2})$ and $\parent(v_{1}) \DFprec \parent(v_{2})$.
Then, for each integer $k \geq 1$,
we write $\bA_{k}(\bt)$
for the set of ordered pairs $( (u_{1},v_{1}), \ldots, (u_{k}, v_{k}) )$
such that
\begin{equation}
  (u_{j}, v_{j}) \in \bA(\bt),\
  (u_{1}, v_{1}) \ll \cdots \ll (u_{k}, v_{k}), \
  \text{and}\
  u_{1}, v_{1}, \ldots, u_{k}, v_{k}\
  \text{are}\ 2k\ \text{distinct leaves}.
\end{equation}
We let $\bA_{k}^{\mathrm{ord}}(\bt)$ be the collection of all ordered pairs
obtained by shuffling admissible pairs of an element in $\bA_{k}(\bt)$.
(Clearly, we have $|\bA_{k}^{\mathrm{ord}}(\bt)| = k! |\bA_{k}(\bt)|$.)
For a leaf $u \in \scrL(\bt)$,
we set
\begin{equation}
  \bA(\bt, u)
  \coloneqq
  \{v \in \scrL(\bt) : (u,v) \in \bA(\bt)\}.
\end{equation}

\begin{rem} \label{rem: difference of the definitions of admissible pairs set}
  The set $\bA_{k}(\bt)$ is used in the algorithm generating $\scrGdcon$
  (see Theorem \ref{thm: algorithm generating the uniform random tree with a prescribed degree}).
  In \cite{Bhamidi_Sen_20_Geometry},
  $\bA_{k}(\bt)$ is defined
  to be the totality of sets of pairs $\{(u_{1}, v_{1}), \ldots, (u_{k},v_{k})\}$
  such that $(u_{j}, v_{j}) \in \bA(\bt),\, j=1, \ldots, k$ and
  $u_{1}, v_{1}, \ldots, u_{k}, v_{k}$ are $2k$ distinct leaves.
  However, the proof that the algorithm generates $\scrGdcon$
  (\cite[Section 6]{Bhamidi_Sen_20_Geometry})
  shows that our definition incorporating the order is the appropriate one.
\end{rem}

Given a plane tree $\bt$,
write $\bxi(\bt)=(\xi_{v}(\bt), v \in \bt)$,
where $\xi_{v}(\bt)$ is the number of children of $v$ in $\bt$.
We set $s_{i}(\bt)$ to be the number of vertices that have $i$ children, i.e.,
\begin{equation}
  s_{i}(\bt)
  \coloneqq
  | \{ v \in \bt : \xi_{v}(\bt) = i \} |,
\end{equation}
and write $\bs(\bt) \coloneqq (s_{i}(\bt), i \geq 0)$
for the \textit{child frequency distribution (CFD)}.
Note that $s_{0}(\bt)$ is the number of leaves of $\bt$.

\begin{dfn}
  A sequence of integers $\bs=(s_{i}, i \geq 0)$ is called
  a \textit{tenable} CFD,
  if there exists a finite plane tree $\bt$ with $\bs(\bt) = \bs$.
  Given a tenable CFD $\bs$,
  let $\mbTs$ denote the set of all plane trees having CFD $\bs$.
\end{dfn}

The following result is elementary and we omit the proof.

\begin{lem} \label{lem: necessary and sufficient condition for being tenable}
  A sequence of integers $\bs=(s_{i}, i \geq 0)$ is a tenable CFD
  for a tree with $N$ vertices
  if and only if $s_{i} \geq 0$ for all $i$ with $s_{0} \geq 1$,
  and
  \begin{equation}
    \sum_{i \geq 0} s_{i}
    =
    1 + \sum_{i \geq 0} i s_{i}
    =
    N
    <\infty.
  \end{equation}
\end{lem}

Given a tenable CFD $\bs$ and an integer $k \geq 1$,
let $\mbTsk$ denote the set of all pairs $(\bt, \bx)$,
where $\bt \in \mbTs$ and $\bx \in \bA_{k}(\bt)$.
Further, we write $\mbTs^{(0)} \coloneqq \mbTs$.

Now we give an algorithm for constructing $\scrGdcon$
(see also \cite[Section 3.2]{Bhamidi_Sen_20_Geometry}).
Let $\btildd=\btilddm$ be as in Assumption \ref{assum: degree sequence tildd}.
Recall that $\tildd_{1}=1$ and
$\sum_{i \geq 1} \tildd_{i} = 2(\tildm-1) + 2k$
for some fixed non-negative integer $k \geq 0$.
We first define a CFD $\bs$ via $\bd$ as follows.
Write $\bxi=(\xi_{j}, 2 \leq j \leq \tildm+2k)$
where we set
\begin{equation}
  \xi_{j}
  \coloneqq
  \begin{cases}
    \tildd_{j}-1, & 2 \leq j \leq \tildm,       \\
    0,            & \tildm< j \leq \tildm + 2k.
  \end{cases}
\end{equation}
Then,
we let $\bs= \bs(\btilddm) = (s_{i}, i \geq 0)$ be the frequency distribution of $\bxi$, i.e.,
$s_{i} \coloneqq | \{ j : \xi_{j} = i\} |$.
Since we have $1 + \sum_{i \geq 0} is_{i}  = \tildm-1+2k = \sum_{i \geq 0} s_{i}$,
by Lemma \ref{lem: necessary and sufficient condition for being tenable},
$\bs$ is a tenable CFD for a tree with $\tildm-1+2k$ vertices.
Consider the following algorithm:
\begin{const} \label{const: uniformly random graph on Gdcon}
  Fix $k \geq 0$.
  \begin{enumerate} [label=\textbf{\arabic*.}, itemsep=0.05cm, parsep=0.06cm]
    \item
          Sample $(\tildscrTs, \tildbX)$ from $\mbTsk$ uniformly,
          where we write
          $\tildbX = ( (\tilde{U}_{1}^{\tildm}, \tilde{V}_{1}^{\tildm}),
            \ldots, (\tilde{U}_{k}^{\tildm}, \tilde{V}_{k}^{\tildm}) )$.
          (If $k=0$, then
          we only sample $\tildscrTs$ from $\mbTs^{(0)} = \mbTs$ uniformly.)\
    \item
          Lebel $\tildm-1$ vertices of
          $\tildscrTs \setminus
            \{\tilde{U}_{1}^{\tildm}, \tilde{V}_{1}^{\tildm}, \ldots,
            \tilde{U}_{k}^{\tildm}, \tilde{V}_{k}^{\tildm} \}$
          uniformly using labels $2, \ldots, \tildm$
          so that, in the resulting labeled plane tree,
          $j$ has $\tildd_{j}-1$ many children.
    \item
          Delete $\tilde{U}_{j}^{\tildm}, \tilde{V}_{j}^{\tildm}$ and two edges incident to them
          and add an edge between $\parent(\tilde{U}_{j}^{\tildm})$ and $\parent(\tilde{V}_{j}^{\tildm})$
          for $j=1, \ldots, k$.
    \item
          Attach a vertex labeled $1$ to the root of $\tildscrTs$,
          and then forget about the planer order and the root.
          Let $\scrGtildd$ be the resulting graph on the vertex set $[\tildm]$.
  \end{enumerate}
\end{const}

\begin{thm} [{\cite[Theorem 3.2]{Bhamidi_Sen_20_Geometry}}]
  \label{thm: algorithm generating the uniform random tree with a prescribed degree}
  The random graph $\scrGtildd$ resulting from the above construction
  is uniformly distributed on $\mbGdcon$, i.e.,
  $\scrGtildd \overset{\mathrm{d}}{=} \scrGdcon$.
\end{thm}

\begin{rem}
  As mentioned in \cite[Remark 8]{Bhamidi_Sen_20_Geometry},
  if degree sequences $\bd = \bd^{(\tildm)}$ satisfy Assumption \ref{assum: degree sequence tildd},
  then $\mbTsk$ is non-empty for all sufficiently large $\tildm$,
  and it is possible to execute the above-mentioned algorithm.
\end{rem}
Using the above construction,
it is possible to obtain the convergence of $\scrGdcon$
from that of tilted trees $\tildscrTs$.
We consider a sequence of CFDs $\bs=\bs^{(\tildm)}=(s_{i}^{(\tildm)}, i \geq 0)$ satisfying the following assumption.
(We often omit the superscript and write $\bs$ and $s_{i}$.)

\begin{assum} [{\cite[Assumption 7.1]{Bhamidi_Sen_20_Geometry}}]
  \label{assum: CFD sequence}
  There exists a p.m.f.\ $(p_{0}, p_{1}, \ldots)$ with
  \begin{equation}
    p_{0} >0, \quad \sum_{i \geq 1} ip_{i}=1, \quad \sum_{i \geq 0} i^{2} p_{i} < \infty
  \end{equation}
  such that
  \begin{equation}
    \frac{s_{i}}{\tildm} \to p_{i}\ \text{for}\ i \geq 0,
    \quad \text{and}\
    \frac{1}{\tildm} \sum_{i \geq 0} i^{2} s_{i} \to \sum_{i \geq 0} i^{2} p_{i}.
  \end{equation}
  We will write $\sigma^{2} \coloneqq \sum_{i} i^{2} p_{i} -1$
  for the variance associated with the p.m.f.\ $(p_{0}, p_{1}, \ldots)$.
\end{assum}

Given a function $f \in \mathscr{E}$
(recall the excursion space $\mathscr{E}$ from Section \ref{subsubsec: introduction of trees}),
we write $(T_{f}, d_{T_{f}})$ for the real tree coded by $f$.
Let $\rho_{T_{f}}$ be the root of $T_{f}$ and
$\mu_{T_{f}}$ be the canonical measure on $T_{f}$.
Given a tenable CFD $\bs$,
let $\mathscr{T}_{\bs}$ denote a random tree sampled from $\mbTs$ uniformly.
Write $\rho_{\scrTs}$ for the root of $\scrTs$,
$d_{\scrTs}$ for the graph metric on $\scrTs$,
$\mu_{\scrTs}$ for the uniform probability measure on $\scrTs$,
and $\mu_{\scrTs}^{\scrL}$ for the uniform probability measure on the set of leaves of $\scrTs$.

\begin{thm} [{\cite[Theorem 7.2 and Lemma 7.3]{Bhamidi_Sen_20_Geometry}}]
  \label{8. thm: convergence of uniform trees with given degrees}
  Under Assumption \ref{assum: CFD sequence},
  it holds that
  \begin{equation}   \label{8. thm eq: convergence of uniform trees with given degrees}
    (\scrTs, \frac{\sigma}{\sqrt{\tildm}} d_{\scrTs}, \rho_{\scrTs}, \mu_{\scrTs}, \mu_{\scrTs}^{\scrL})
    \xrightarrow{\mathrm{d}}
    (T_{2e}, d_{T_{2e}}, \rho_{T_{2e}}, \mu_{T_{2e}}, \mu_{T_{2e}})
  \end{equation}
  in the extended Gromov-Hausdorff-type topology with two measures
  (we use the Prohorov metric for measures),
  where $e=(e(t), 0 \leq t \leq 1)$ is the normalized Brownian excursion.
\end{thm}

Let $(T, d_{T}, \rho_{T})$ be a rooted real tree.
Fix $x \in T \setminus \{ \rho_{T} \}$.
Write $\hght_{T}(x)$ for the height of $x$, i.e., $\hght_{T}(x) \coloneqq d_{T}(\rho_{T}, x)$,
and $p_{T}^{x} : [0, \hght_{T}(x)] \to T$ for the unique distance-preserving map
with $p_{T}^{x}(0) = \rho_{T}$ and $p_{T}^{x}( \hght_{T}(x)) =x$.
Define a path measure $\mu_{T}^{x}$
by setting
\begin{equation}
  \mu_{T}^{x}
  \coloneqq
  \frac{1}{\hght_{T}(x)}
  \Leb \circ (p_{T}^{x})^{-1},
\end{equation}
where $\Leb$ stands for the Lebesgue measure on $\mbR$.

We introduce similar notions for plane trees.
Let $\tau$ be a finite plane tree with the graph distance $d_{\tau}$ and the root $\rho_{\tau}$.
Write $\hght_{\tau}(x)$ for the height of $x$, i.e., $\hght_{\tau}(x) \coloneqq d_{\tau}(\rho_{\tau}, x)$.
Fix $x \in \tau$.
Let $\rho_{\tau}=x_{0}, x_{1}, \ldots, x_{j} = x$ be the shortest path from $\rho_{\tau}$ to $x$.
Then, we define a map $p_{\tau}^{x} : [0, \hght_{\tau}(x)] \to \tau$ by setting
\begin{equation}
  p_{\tau}^{x} (t)
  \coloneqq
  \begin{cases}
    x_{i}, & i \leq t < i+1,\ i=0,1,\ldots, j-1, \\
    x_{j}, & j-1 \leq t \leq j.
  \end{cases}
\end{equation}
Note that, for every $t \in [0, \hght_{\tau}(x)]$,
\begin{equation} \label{eq: quasi-distance-preserving of path map of plane trees, CR models}
  t-1 \leq d_{\tau}(\rho_{\tau}, p_{\tau}^{x}(t)) \leq t.
\end{equation}

\begin{prop} \label{prop: convergence of path measures}
  Let $\tau_{n}$ be a finite plane tree
  with the graph metric $d_{n}$ and the root $\rho_{n}$.
  Let $(T, d_{T}, \rho_{T})$ be a rooted real tree
  such that $(T, d_{T})$ is assumed to be compact.
  Fix an integer $k \geq 1$.
  Let $(U_{i}^{n})_{i=1}^{k}$ be vertices of $\tau_{n}$
  and $(U_{i})_{i=1}^{k}$ be elements of $T$.
  If there exist scaling factors $a_{n}>0$ such that
  $a_{n} \to 0$ and
  \begin{equation}
    (\tau_{n}, a_{n}d_{n}, \rho_{n}, (U_{i}^{n})_{i=1}^{k})
    \to
    (T, d_{T}, \rho_{T}, (U_{i})_{i=1}^{k})
  \end{equation}
  in the extended Gromov-Hausdorff-type topology
  with additional points,
  then it holds that
  \begin{equation}
    \left(\tau_{n}, a_{n} d_{n}, \rho_{n}, (U_{i}^{n})_{i=1}^{k},
    ( p_{\tau_{n}}^{U_{i}^{n}}(\cdot \hght_{\tau_{n}}(U_{i}^{n})) )_{i=1}^{k}
    \right)
    \to
    \left(T, d_{T}, \rho_{T}, (U_{i})_{i=1}^{k},
    ( p_{T}^{U_{i}}(\cdot \hght_{T}(U_{i})) )_{i=1}^{k}
    \right)
  \end{equation}
  in the extended Gromov-Hausdorff-type topology 
  with additional points and cadlag functions defined on $[0,1]$
  (we use the supremum metric for the functions, 
  see Remark \ref{8. rem: GH-type metric for cadlag curves with sup norm}).
\end{prop}

\begin{proof}
  We may assume that
  $(\tau_{n}, a_{n} d_{n})$ and $(T,d_{T})$ are embedded into a common compact metric space $(M, d^{M})$
  in such a way that
  $\tau_{n} \to T$ in the Hausdorff topology in $M$,
  $\rho_{n} \to \rho_{T}$ in $M$ and 
  $U_{i}^{n} \to U_{i}$ in $M$ for each $i$.
  Set
  \begin{equation}
    \varepsilon_{n}
    \coloneqq
    d_{H}^{M} (\tau_{n}, T) \vee
    d^{M}(\rho_{n}, \rho_{T}) \vee
    d_{P}^{M}(\nu_{n}, \nu_{T})
    \vee
    \left(
    \max_{1 \leq i \leq k} d^{M}(U_{i}^{n}, U_{i})
    \right),
  \end{equation}
  which converges to $0$.
  By the triangle inequality,
  we have that
  \begin{equation} \label{eq: height evaluation for two trees, CR models}
    | a_{n} \hght_{\tau_{n}} (U_{i}^{n}) - \hght_{T}(U) |
    =
    | d^{M}(\rho_{n}, U_{i}^{n}) - d^{M} (\rho_{T}, U_{i}) |
    \leq
    d^{M}(\rho_{n}, \rho_{T}) + d^{M}(U_{i}^{n}, U_{i})
    \leq 2 \varepsilon_{n}.
  \end{equation}
  Fix $t \in [0,1]$ and set
  $y_{n} \coloneqq p_{\tau_{n}}^{U_{i}^{n}} ( t \hght_{\tau_{n}}(U_{i}^{n}) ),\,
    y \coloneqq p_{T}^{U_{i}} ( t \hght_{T}(U_{i}) )$.
  Choose $z \in T$ such that $d^{M}(y_{n}, z) \leq \varepsilon_{n}$.
  We then have that
  \begin{equation}
    d^{M}(y_{n}, y)
    \leq
    d^{M}(y_{n}, z) + d^{M}(z, y)
    \leq
    \varepsilon_{n} + d_{T}(z,y).
  \end{equation}
  From the four-point condition (cf.\ \cite{Evans_08_Probability}),
  it follows that
  \begin{equation} \label{eq: four-point inequality, CR models}
    d_{T}(z,y) + d_{T} (\rho_{T}, U_{i})
    \leq
    \left( d_{T}(z, \rho_{T}) + d_{T}(y, U_{i})\right)
    \vee
    \left( d_{T}(z, U_{i}) + d_{T}(y, \rho_{T})\right).
  \end{equation}
  By \eqref{eq: quasi-distance-preserving of path map of plane trees, CR models}
  and \eqref{eq: height evaluation for two trees, CR models},
  we deduce that
  \begin{align}
    d_{T}(z, \rho_{T}) + d_{T}(y, U_{i})
     & \leq
    d^{M}(z, y_{n}) + a_{n} d_{\tau_{n}}(y_{n}, \rho_{n}) + d^{M}(\rho_{n}, \rho) + d_{T}(y, U_{i}) \notag       \\
     & \leq
    2 \varepsilon_{n} + t\cdot a_{n} \hght_{\tau_{n}}(U_{i}^{n}) + (1-t) \hght_{T}(U_{i}) \notag                 \\
     & \leq
    4 \varepsilon_{n} + d_{T}(\rho_{T}, U_{i}),                                                                  \\
    d_{T}(z,U_{i}) + d_{T}(z, \rho_{T})
     & \leq
    d^{M}(z,y_{n}) + a_{n} d_{\tau_{n}} (y_{n}, U_{i}^{n}) + d^{M}(U_{i}^{n}, U_{i}) + t \hght_{T}(U_{i}) \notag \\
     & \leq
    2 \varepsilon_{n} + (1-t) a_{n} \hght_{\tau_{n}}(U_{i}^{n}) + a_{n} + t \hght_{T}(U_{i}) \notag              \\
     & \leq
    4 \varepsilon_{n} + a_{n} + d_{T}(\rho_{T}, U_{i}).
  \end{align}
  Combining this with \eqref{eq: four-point inequality, CR models} yields that
  $d^{M}(y_{n}, y) \leq 5 \varepsilon_{n} + a_{n}$.
  Now the desired result is immediate.
\end{proof}

\begin{rem} \label{8. rem: GH-type metric for cadlag curves with sup norm}
  To define the topology used in Proposition \ref{prop: convergence of path measures},
  we use the theory of \cite{Noda_pre_Metrization}.
  In particular,
  for the notion of functors discussed below,
  see \cite[Section 3.2]{Noda_pre_Metrization}.
  For a compact metric space $(K, d^{K})$,
  set $\tau(K)$ to be the set of cadlag functions from $[0,1]$ to $K$ 
  equipped with the supremum norm.
  For compact metric spaces $K_{1}, K_{2}$
  and a distance-preserving map $f:K_{1} \to K_{2}$,
  set $\tau_{f}(\xi) \coloneqq \xi \circ f$.
  Then, 
  it is easy to check that the functor $\tau$ is a continuous functor.
  Hence,
  by considering the product of $k$-copies of $\tau$ 
  and the functor for points (see \cite[Section 4.2]{Noda_pre_Metrization})
  and applying \cite[Theorem 3.23]{Noda_pre_Metrization},
  we obtain a metric inducing the desired topology as follows:
  for $\mathcal{X}_{i} = (K_{i}, d^{K_{i}}, \rho_{i}, (a_{i,j})_{j=1}^{k}, (\xi_{i,j})_{j = 1}^{k}),\, i=1,2$
  where $(K_{i}, d^{K_{i}}, \rho_{i})$ is a rooted compact metric space,
  $(a_{i, j})_{j=1}^{k}$ are elements of $K_{i}$,
  and $(\xi_{i,j})_{j=1}^{k}$ are cadlag functions from $[0,1]$ to $K_{i}$,
  \begin{equation}  \label{8. rem eq: dfn of metric for the modified topology}
  \begin{split}
    d(\mathcal{X}_{1}, \mathcal{X}_{2})
    =
    &\inf_{M,f_{1}, g_{1}}
    \Bigl\{
      d_{H}^{M}(K_{1}, K_{2}) \vee d^{M}(f_{1}(\rho_{1}), f_{2}(\rho_{2})) 
      \vee 
      \max_{j}
        d^{M}(f_{1}(a_{1, j}), f_{2}(a_{2,j})) 
      \Bigr.\\
    &\qquad
    \Bigl.
      \vee
      \max_{j}
      \sup_{0 \leq t \leq 1} 
      d^{M}\left( f_{1} \circ \xi_{1,j}(t), f_{2} \circ \xi_{2,j}(t) \right)
    \Bigr\},
  \end{split}
  \end{equation}
  where the infimum is taken over all compact metric spaces $(M, d^{M})$
  and distance-preserving maps $f_{i}:K_{i} \to M,\, i=1,2$.
  We mention that although in the theory of \cite{Noda_pre_Metrization} 
  the spaces are assumed to be boundedly compact,
  it is easy to obtain corresponding results for compact spaces.
\end{rem}

Let $\bt$ be a finite plane tree with the root $\rho$.
For a leaf $x \in \scrL(\bt)$,
define a probability measure $\tildmu_{\bt}^{x}$ on $[\rho, x)$ by setting
\begin{equation}
  \tildmu_{\bt}^{x}(\{y\})
  \coloneqq
  \frac{1}{| \bA (\bt, x)|} | \{z \in \bA(\bt, x) : \gparent(z) = y\} |.
\end{equation}
If $|\bA(\bt, x)|=0$,
then we define $\tildmu_{\bt}^{x}$ to be the Dirac measure at $\rho$.
It is possible to describe $\tilde{\mu}_{\bt}^{x}$ as a pushforward measure.
We define a probability measure $\tilde{L}_{\bt}^{x}$ on $[0,1]$
with atoms $j/ \hght_{\bt}(x),\, j=0,1,\ldots, \hght_{\bt}(x)-1$ by setting
\begin{equation}
  \tilde{L}_{\bt}^{x} \left( \left\{ \frac{j}{\hght_{\bt}(x)} \right\} \right)
  \coloneqq
  \frac{1}{| \bA(\bt, x) |}
  | \{ z \in \bA(\bt, x) : \hght_{\bt}(\gparent(z)) = j\} |.
\end{equation}
When $|\bA(\bt, x)|=0$,
we define $\tilde{L}_{\bt}^{x}$ to be the Dirac measure at $\rho$.
Then, it holds that
\begin{equation}
  \tilde{\mu}_{\bt}^{x}
  =
  \tilde{L}_{\bt}^{x} \circ \left( p_{\bt}^{x} ( \cdot \hght_{\bt}(x) ) \right)^{-1}.
\end{equation}
The probability measure $\tildmu_{\bt}^{x}$ is tilted
in a sense that
it gives more mass to vertices that have more grandchildren
such that they form an admissible pair with vertex $x$.
An important observation is that,
in our setting,
the tilted probability measure $\tildmu_{\scrTs}^{x}$
converges to a path measure on the continuous random tree $T_{2e}$.

\begin{lem} {(\cite[Lemma 7.8]{Bhamidi_Sen_20_Geometry})}
  \label{lem: path function convergence for the configuration model}
  Under Assumption \ref{assum: CFD sequence},
  it holds that
  \begin{equation}
    \frac{1}{\sqrt{\tildm}}
    \max_{0 \leq k \leq \hght_{\scrTs}(U^{\tildm})}
    \left|
    \left| \bA(\scrTs, U^{\tildm}) \right|
    \tilde{L}_{\scrTs}^{U^{\tildm}} \left( [0, k/ \hght_{\scrTs}(U^{\tildm}) ] \right)
    -
    \frac{p_{0} \sigma^{2} k}{2}
    \right|
    \xrightarrow{\mathrm{p}}
    0,
  \end{equation}
  where $U^{\tildm}$ is a random leaf of $\scrTs$
  sampled according to $\mu_{\scrTs}^{\scrL}$.
\end{lem}

\begin{lem} \label{lem: convergence of Ts with path measures CR models}
  Suppose that Assumption \ref{assum: CFD sequence} is satisfied.
  Sample random leaves $(U_{i}^{\tildm})_{i=1}^{k}$
  in an i.i.d.\ fashion according to $\mu_{\scrTs}^{\scrL}$
  and $(U_{i})_{i=1}^{k}$ according to $\mu_{T_{2e}}$.
  It then holds that
  \begin{equation}
    \begin{split}
      &\left(\scrTs, \frac{\sigma}{\sqrt{\tildm}} d_{\scrTs}, \rho_{\scrTs}, \mu_{\scrTs},
      (U_{i}^{\tildm})_{i=1}^{k}, ( \tildmu_{\scrTs}^{U_{i}^{\tildm}} )_{i=1}^{k},
      \left(\frac{1}{\sqrt{\tildm}} | \bA(\scrTs, U_{i}^{\tildm}) | \right)_{i=1}^{k}
      \right)\\
      \xrightarrow{\mathrm{d}}
      &\left(T_{2e}, d_{T_{2e}}, \rho_{T_{2e}}, \mu_{T_{2e}},
      (U_{i})_{i=1}^{k}, ( \mu_{T_{2e}}^{U_{i}} )_{i=1}^{k},
      \left( \frac{p_{0}\sigma}{2} \hght_{T_{2e}} (U_{i}) \right)_{i=1}^{k}
      \right),
    \end{split}
  \end{equation}
  where the convergence of the first six coordinates
  takes place in the Gromov-Hausdorff-Prohorov topology with additional points and measures,
  and that of the last coordinate takes place in $\mbR^{k}$.
\end{lem}

\begin{proof}
  Theorem \ref{8. thm: convergence of uniform trees with given degrees} implies that 
  \begin{equation} \label{eq: convergence of path measures of Ts, CR models}
    \left(
      \scrTs, \frac{\sigma}{\sqrt{\tildm}} d_{\scrTs}, \rho_{\scrTs}, \mu_{\scrTs}, (U_{i}^{\tildm})_{i=1}^{k}
    \right)
    \xrightarrow{\mathrm{d}}
    \left(
      T_{2e}, d_{T_{2e}}, \rho_{T_{2e}}, \mu_{T_{2e}}, (U_{i})_{i=1}^{k}
    \right)
  \end{equation}
  in the extended Gromov-Hausdorff-type topology with measures and additional points.
  We may assume that, on some probability space $(\Omega, \mathcal{F}, P)$,
  the above convergence takes place almost-surely.
  Then, by Proposition \ref{prop: convergence of path measures},
  we have that, with probability $1$,
  \begin{equation} \label{eq: convergence of path measures of Ts, CR models}
    \begin{split}
      &\left(\scrTs, \frac{\sigma}{\sqrt{\tildm}} d_{\scrTs}, \rho_{\scrTs}, \mu_{\scrTs},
      (U_{i}^{\tildm})_{i=1}^{k}, ( p_{\scrTs}^{U_{i}^{\tildm}}(\cdot \hght_{\scrTs}(U_{i}^{n})) )_{i=1}^{k}
      \right)\\
      \to
      &\left(T_{2e}, d_{T_{2e}}, \rho_{T_{2e}}, \mu_{T_{2e}},
      (U_{i})_{i=1}^{k}, ( p_{T_{2e}}^{U_{i}}(\cdot \hght_{T_{2e}}(U_{i})) )_{i=1}^{k}
      \right)
    \end{split}
  \end{equation}
  in the extended Gromov-Hausdorff-type topology with measures, additional points
  and cadlag curves defined on $[0,1]$ 
  (we use the supremum metric for the functions).
  This immediately yields that
  \begin{equation}  \label{eq: convergence of height of trees Ts, CR models}
    \frac{1}{\sqrt{\tildm}}
    \frac{p_{0} \sigma^{2} \hght_{\scrTs}(U_{i}^{\tildm})}{2}
    \xrightarrow{\mathrm{a.s.}}
    \frac{p_{0} \sigma}{2} \hght_{T_{2e}}(U_{i}),
    \quad
    \forall i = 1,2,\ldots,k.
  \end{equation}
  Fix a subsequence $(\tildm_{l})_{l}$.
  By Lemma \ref{lem: path function convergence for the configuration model},
  we can find a further subsequence $(\tildm_{l_{k}})_{k}$ such that,
  for each $i =1, 2, \ldots, k$,
  \begin{equation}  \label{eq: almost-surely convergence of measures for path measures, CR models}
    \frac{1}{\sqrt{\tildm}}
    \max_{0 \leq k \leq \hght_{\scrTs}(U_{i}^{\tildm})}
    \left|
    \left| \bA(\scrTs, U_{i}^{\tildm}) \right|
    \tilde{L}_{\scrTs}^{U_{i}^{\tildm}} \left( [0, k/ \hght_{\scrTs}(U_{i}^{\tildm}) ] \right)
    -
    \frac{p_{0} \sigma^{2} k}{2}
    \right|
    \xrightarrow{\mathrm{a.s.}}
    0.
  \end{equation}
  Combining this with \eqref{eq: convergence of height of trees Ts, CR models} immediately yields that
  \begin{equation} \label{eq: convergence of cardinality of admissible pairs of Ts, CR models}
    \frac{1}{\sqrt{\tildm_{l_{k}}}}
    | \bA (\scrTs, U_{i}^{\tildm_{l_{k}}}) |
    \xrightarrow{\mathrm{a.s.}}
    \frac{p_{0} \sigma^{2}}{2} \hght_{T_{2e}}(U_{i}^{\tildm_{l_k}}),
    \quad
    \forall i=1,2,\ldots, k.
  \end{equation}
  By \eqref{eq: convergence of height of trees Ts, CR models},
  \eqref{eq: almost-surely convergence of measures for path measures, CR models}
  and \eqref{eq: convergence of cardinality of admissible pairs of Ts, CR models},
  we deduce that
  \begin{equation}
    \left|
    \tilde{L}_{\scrTs}^{U_{i}^{\tildm_{l_{k}}}}
    \left( [ 0, j / \hght_{\scrTs}(U_{i}^{\tildm_{l_{k}}}) ]  \right)
    -
    \frac{j}{\hght_{\scrTs}(U_{i}^{\tildm_{l_{k}}})}
    \right|
    \xrightarrow{\mathrm{a.s.}} 0,
    \quad
    \forall j=0,1,\ldots, \hght_{\scrTs}(U_{i}^{\tildm_{l_{k}}}),
  \end{equation}
  which implies that
  $\tilde{L}_{\scrTs}^{U_{i}^{\tildm_{l_{k}}}}$
  converges to the Lebesgue measure on $[0,1]$,
  almost-surely.
  Combining this with
  $p_{\scrTs}^{U_{i}^{\tildm}} \left(\cdot \hght_{\scrTs}(U_{i}^{\tildm}) \right)
    \to
    p_{T_{2e}}^{U_{i}} \left(\cdot \hght_{T_{2e}}(U_{i}) \right)$,
  which follows from \eqref{eq: convergence of path measures of Ts, CR models},
  we deduce that
  $\tilde{\mu}_{\scrTs}^{U_{i}^{\tildm_{l_{k}}}} \to \tilde{\mu}_{T_{2e}}^{U_{i}}$.
  Since the subsequence $(\tildm_{l})_{l}$ was chosen arbitrarily,
  we obtain the desired result.
\end{proof}

\begin{lem} {(\cite[Lemma 7.3]{Bhamidi_Sen_20_Geometry})}
  \label{lem: technical lemma for configuration models}
  Under Assumption \ref{assum: CFD sequence},
  the following assertions hold.
  \begin{enumerate}
    \item \label{lem item: sup of A is bounded CR models}
          Let $U_{\tildm}$ be uniformly distributed over $\scrL(\scrTs)$ conditional on $\scrTs$.
          Then, for every $k \geq 1$, it holds that
          \begin{equation}
            \sup_{\tildm}
            \bfE
            \left(
            \left(
            \frac{|\bA(\scrTs)|} {s_{0} \sqrt{\tildm}}
            \right)^{k}
            \right)
            \leq
            \sup_{\tildm}
            \bfE
            \left(
            \left(
            \frac{|\bA(\scrTs, U_{\tildm})|} {\sqrt{\tildm}}
            \right)^{k}
            \right)
            < \infty.
          \end{equation}
    \item \label{lem item: closeness of versions of admissible pairs sets}
          For every $k \geq 1$, it holds that
          \begin{equation}
            \frac{1}{\tildm^{3k/2}}
            (
            |\bA(\scrTs)|^{k} - |\bA_{k}^{\mathrm{ord}}(\scrTs)|
            )
            \xrightarrow{\mathrm{p}}
            0
          \end{equation}
  \end{enumerate}
\end{lem}

\begin{rem}
  As mentioned in Remark \ref{rem: difference of the definitions of admissible pairs set},
  our definition of $\bA_{k}(\bt)$ is slightly different from that of \cite{Bhamidi_Sen_20_Geometry},
  say $\tilde{\bA}_{k}(\bt)$.
  However, their asymptotic behaviors coincide.
  Indeed, in a similar way to \cite[Proof of Lemma 7.3 (iii)]{Bhamidi_Sen_20_Geometry},
  one can check that, for every $k \geq 1$,
  \begin{equation}
    \tildm^{-3k/2}
    \left(
    |\bA_{k}(\scrTs)| - |\tilde{\bA}_{k}(\scrTs)|
    \right)
    \xrightarrow{L^{1}}
    0,
  \end{equation}
  which is enough to obtain
  Lemma \ref{lem: technical lemma for configuration models}
  \eqref{lem item: closeness of versions of admissible pairs sets}
  from the corresponding result of $\tilde{\bA}_{k}(\scrTs)$
  (i.e. \cite[Lemma 7.3 (iii) ]{Bhamidi_Sen_20_Geometry}).
\end{rem}

Recall that $e=(e(t), 0 \leq t \leq 1)$ denotes the normalized Brownian excursion.
Write $\upsilon$ for the law of the normalized Brownian excursion on $\mathscr{E}$,
the space of excursions (recall it from Section \ref{subsubsec: introduction of trees}).
For $k \geq 0$,
let $\tilde{e}_{(k)}$ be a random excursion with distribution $\tilde{\upsilon}_{k}$
given via the following Radon-Nikodym density with respect to $\upsilon$:
\begin{equation}
  \frac{d \tilde{\upsilon}_{k}} {d \upsilon} (h)
  \coloneqq
  \frac{ (\int_{0}^{1} h(u) du)^{k}} {\bfE \left( (\int_{0}^{1} e(u) du) ^{k} \right)},
  \quad
  h \in \mathscr{E}.
\end{equation}
Fix $k \geq 0$.
We define a random rooted-and-measured resistance metric space $(\Mk, R_{\Mk}, \rho_{\Mk}, \mu_{\Mk})$
as follows.
\begin{const} [{cf.\ \cite[Construction 5.5]{Bhamidi_Sen_20_Geometry}}]
\label{const: the limiting space of CR models}
  Fix $k \geq 1$.
  \begin{enumerate} [label=\textbf{\arabic*.}, itemsep=0.03cm, parsep=0.06cm]
    \item
          Let $(\Ttildek, d_{\Ttildek}, \rho_{\Ttildek})$ be the rooted real tree coded by $2\tilde{e}_{(k)}$.
          Write $\mu_{\Ttildek}$ for the canonical measure on $\Ttildek$.
    \item
          Given $\Ttildek$, sample $k$ leaves $(\tilde{U}_{i})_{i=1}^{k}$
          in an i.i.d.\ fashion from $\Ttildek$
          with density proportional to $\hght_{\Ttildek}(x) \mu_{\Ttildek}(dx)$.
    \item
          Conditional on the two steps above,
          for each of the sampled leaves $\tilde{U}_{i}$,
          sample a vertex $\tilde{V}_{i}$
          according to the path measure $\mu_{\Ttildek}^{\tilde{U}_{i}}$.
    \item
          Define $(\Mk, R_{\Mk}, \rho_{\Mk}, \mu_{\Mk})$
          by fusing $(\Ttildek, d_{\Ttildek}, \rho_{\Ttildek}, \mu_{\Ttildek})$
          over $(\{\tilde{U}_{i}, \tilde{V}_{i}\})_{i=1}^{k}$.
  \end{enumerate}
  When $k=0$, we define
  $(M^{(0)}, R_{M^{(0)}}, \rho_{M^{(0)}}, \mu_{M^{(0)}}) \coloneqq (T_{2e}, d_{T_{2e}}, \rho_{T_{2e}}, \mu_{T_{2e}})$.

\end{const}

Given a finite connected graph $G$ with labeled vertices,
we write $V_{G}$ for the vertex set of $G$,
$R_{G}$ for the resistance metric on $V_{G}$,
$\rho_{G}$ for the smallest-labeled vertex among the vertices with the smallest degree,
and $\mu_{G}$ for the uniform probability measure on $V_{G}$.
If degree sequences $\bd^{(\tildm)}$ satisfies Assumption \ref{assum: degree sequence tildd},
then the associated CFD sequence $\bs=\bs(\bd^{(\tildm)})$ satisfies Assumption \ref{assum: CFD sequence}
with a p.m.f.\ $p_{i} \coloneqq \tildp_{i+1},\, i \geq 0$.

\begin{figure}[htbp]
  \centering 
  \begin{minipage}[htbp]{0.3\linewidth}
    \centering
    \begin{tikzpicture}
      \coordinate(v1)at(0,0);
      \coordinate(v2)at(1,1);
      \coordinate[label=below right:$\parent(\tilde{V}_{j}^{\tilde{m}})$](v3)at(2,2);
      \coordinate[label=above: $\tilde{V}_{j}^{\tilde{m}}$](v4)at(3,3);
      \coordinate[label=below left:$\parent(\tilde{U}_{j}^{\tilde{m}})$](v5)at(0,2);
      \coordinate[label=above: $\tilde{U}_{j}^{\tilde{m}}$](v6)at(0,3);
      \coordinate(v7)at(1,3);

      \draw(v1)--(v2)--(v3)--(v4);
      \draw(v2)--(v5)--(v6);
      \draw(v3)--(v7);

      \foreach\P in{v2,v3,v4,v5,v6,v7} \fill[black](\P)circle(0.1);
    \end{tikzpicture}
  \end{minipage}
  \begin{minipage}[htbp]{0.28\linewidth}
    \centering
    \begin{tikzpicture}
      \coordinate(v1)at(0,0);
      \coordinate(v2)at(1,1);
      \coordinate(v3)at(2,2);
      \coordinate(v4)at(3,3);
      \coordinate(v4')at(2.93,2.93);
      \coordinate(v5)at(0,2);
      \coordinate(v6)at(0,3);
      \coordinate(v6')at(0,2.93);
      \coordinate(v7)at(1,3);

      \draw(v1)--(v2)--(v3);
      \draw[dotted](v3)--(v4');
      \draw(v2)--(v5);
      \draw[dotted](v5)--(v6');
      \draw(v3)--(v7);
      \draw(v5)--(v3);

      \foreach\P in{v2,v3,v5,v7} \fill[black](\P)circle(0.1);
      \draw[dotted](3,3)circle[radius=0.1];
      \draw[dotted](0,3)circle[radius=0.1];
    \end{tikzpicture}
  \end{minipage}
  \begin{minipage}[htbp]{0.23\linewidth}
    \centering
    \begin{tikzpicture}
      \coordinate(v1)at(0,0);
      \coordinate(v2)at(1,1);
      \coordinate(v3)at(1,2);
      \coordinate(v4)at(0,3);
      \coordinate(v4')at(0,2.9);
      \coordinate(v5)at(1,3);
      \coordinate(v6)at(2,3);
      \coordinate(v6')at(2,2.9);

      \draw(v1)--(v2);
      \draw(v3)--(v5);
      \draw(v2)to[out=120, in=240](v3);
      \draw(v2)to[out=60, in=300](v3);
      \draw[dotted](v3)--(v4');
      \draw[dotted](v3)--(v6');

      \foreach\P in{v2,v3,v5} \fill[black](\P)circle(0.1);
      \draw[dotted](0,3)circle[radius=0.1];
      \draw[dotted](2,3)circle[radius=0.1];
    \end{tikzpicture}
  \end{minipage}
  \caption{From left to right, a sampled tree $\tildscrTs$, $\scrGdcon$ and 
  the space obtained by fusing over $( \{ \parent(\tilde{U}_{j}^{\tildm}), \parent(\tilde{V}_{j}^{\tildm}) \} )_{j=1}^{k}$.}
  \label{fig: graphs for configuration model}
\end{figure}

\begin{prop}  \label{prop: convergence of Gdcon}
  Under Assumption \ref{assum: degree sequence tildd}, it holds that
  \begin{equation}
    \left(
    V(\scrGdcon), \frac{\sigma}{\sqrt{\tildm}} R_{\scrGdcon}, \rho_{\scrGdcon}, \mu_{\scrGdcon}
    \right)
    \xrightarrow{\mathrm{d}}
    (\Mk, R_{\Mk}, \rho_{\Mk}, \mu_{\Mk})
  \end{equation}
  in the Gromov-Hausdorff-Prohorov topology,
  where $k$ is the non-negative integer
  in Assumption \ref{assum: degree sequence tildd}\eqref{assum item: surplus of the given degree sequence, tilted}.
\end{prop}

\begin{proof}
  When $k = 0$,
  the result is obvious by Theorem \ref{8. thm: convergence of uniform trees with given degrees}.
  Assume that $k \geq 1$.
  Sample $(\tildscrTs, \tildbX)$ from $\mbTsk$ uniformly,
  where we write
  $\tildbX = ( (\tilde{U}_{1}^{\tildm}, \tilde{V}_{1}^{\tildm}),
    \ldots, (\tilde{U}_{k}^{\tildm}, \tilde{V}_{k}^{\tildm}) )$.
  We then sample a bijection $\tau$ from $\{1,2,\ldots,k\}$ to itself
  from the symmetric group $\mathfrak{S}_{k}$ uniformly.
  Set $\hat{U}_{j}^{\tildm} \coloneqq \tilde{U}_{\tau(j)}^{\tildm},
    \hat{V}_{j}^{\tildm} \coloneqq \tilde{V}_{\tau(j)}^{\tildm}$ and
  $\hat{\bm{X}} \coloneqq
    (\hat{U}_{1}^{\tildm}, \hat{V}_{1}^{\tildm}, \ldots,
    \hat{U}_{k}^{\tildm}, \hat{V}_{k}^{\tildm})$.
  Let $T_{\tildm}$ be the plane tree
  obtained by deleting $\hat{U}_{j}^{\tildm}, \hat{V}_{j}^{\tildm}$ and two edges incident to them
  from $\tildscrTs$,
  attaching a vertex $\rho_{T_{\tildm}}$ to the root of $\tildscrTs$
  and regarding $\rho_{T_{\tildm}}$ as a new root.
  Write $d_{T_{\tildm}}$ for the graph metric on $T_{\tildm}$
  and $\mu_{T_{\tildm}}$ for the uniform probability measure on $T_{\tildm}$.
  Note that,
  by Theorem \ref{thm: algorithm generating the uniform random tree with a prescribed degree},
  we obtain $\scrGdcon$ by adding an edge between $\parent(\hat{U}_{j}^{\tildm})$ and $\parent(\hat{V}_{j}^{\tildm})$
  for $j=1,2, \ldots, k$
  (see Figure \ref{fig: graphs for configuration model}).
  Also, observe that the Gromov-Hausdorff-Prohorov distance, which is modified to deal with additional points,
  between
  \begin{equation}
    \left(
    \tildscrTs, \frac{\sigma}{\sqrt{\tildm}} d_{\tildscrTs}, \rho_{\tildTs}, \mu_{\tildTs},
    \hat{U}_{1}^{\tildm}, \gparent(\hat{V}_{1}^{\tildm}), \ldots,
    \hat{U}_{k}^{\tildm}, \gparent(\hat{V}_{k}^{\tildm})
    \right)
  \end{equation}
  and
  \begin{equation}
    \left(
    T_{\tildm}, \frac{\sigma}{\sqrt{\tildm}} d_{T_{\tildm}}, \rho_{T_{\tildm}}, \mu_{T_{\tildm}},
    \parent(\hat{U}_{1}^{\tildm}), \parent(\hat{V}_{1}^{\tildm}), \ldots,
    \parent(\hat{U}_{k}^{\tildm}), \parent(\hat{V}_{k}^{\tildm})
    \right)
  \end{equation}
  converges to $0$.
  Let $f$ be a bounded continuous function
  on the space of rooted-and-measured compact metric spaces with additional points.
  For $\bt\in \mbTs$ and $\bx = (u_{1},v_{1}, \ldots, u_{k}, v_{k}) \in \bt^{2k}$,
  we set
  \begin{equation}
    G(\bt, \bx)
    =
    G\left(\bt, (u_{i}, v_{i})_{i=1}^{k}\right)
    \coloneqq
    \left(
    \bt, \frac{\sigma}{\sqrt{\tildm}} d_{\bt}, \rho_{\bt}, \mu_{\bt}, \bx
    \right).
  \end{equation}
  Write $\hat{\bm{Y}} \coloneqq (\hat{U}_{1}^{\tildm}, \gparent(\hat{V}_{1}^{\tildm}), \ldots,
    \hat{U}_{k}^{\tildm}, \gparent(\hat{V}_{k}^{\tildm}))$.
  We then deduce that
  \begin{equation} \label{eq: expectation expression by using the Ts CR models}
    \bfE \left( f (G(\scrTs, \tilde{\bm{Y}})) \right)
    =
    \frac{\sum_{\bt \in \mbTs} \sum_{\bx \in \bA_{k}^{\mathrm{ord}}(\bt)} f(G(\bt, \bx))}
    {\sum_{\bt \in \mbTs} | \bA_{k}^{\mathrm{ord}} (\bt) |}
    =
    \frac{\bfE \left( \sum_{\bx \in \bA_{k}^{\mathrm{ord}}(\scrTs)} f(G(\scrTs, \bx))\right)}
    {\bfE ( |\bA_{k}^{\mathrm{ord}} (\scrTs) |)}.
  \end{equation}
  By \cite[Equation (8.8)]{Bhamidi_Sen_20_Geometry},
  we have that
  \begin{equation}  \label{eq: convergence of bAord from Bhamidi and Sen}
    \frac{ \bfE ( | \bA_{k}^{\mathrm{ord}} (\scrTs) | ) }
    {s_{0}^{k} \tildm^{k/2}}
    \to
    \left( \frac{p_{0} \sigma}{2} \right)^{k}
    \bfE
    \left(
    \left( \int_{T_{2e}} \hght_{T_{2e}}(x) \mu_{T_{2e}} (dx) \right)^{k}
    \right).
  \end{equation}
  Lemma \ref{lem: technical lemma for configuration models}
  \eqref{lem item: closeness of versions of admissible pairs sets}
  yields that
  \begin{equation}  \label{eq: passing ord to product sets CR models}
    \lim_{ \tildm \to \infty}
    \frac{1}{s_{0}^{k} \tildm^{k/2}}
    \bfE
    \left(
    \sum_{\bx \in \bA_{k}^{\mathrm{ord}}(\scrTs)} f \left(G(\scrTs, \bx)\right)
    \right)
    =
    \lim_{ \tildm \to \infty}
    \frac{1}{s_{0}^{k} \tildm^{k/2}}
    \bfE
    \left(
    \sum_{\bx \in \bA(\scrTs)^{k}} f \left(G(\scrTs, \bx)\right)
    \right).
  \end{equation}
  By the same calculation as \cite[Equation (8.4)]{Bhamidi_Sen_20_Geometry},
  we deduce that
  \begin{align}
     & \frac{1}{s_{0}^{k} \tildm^{k/2}}
    \bfE
    \left(
    \sum_{\bx \in \bA(\scrTs)^{k}} f \left(G(\scrTs, \bx)\right)
    \right) \notag                      \\
    =
     & \bfE
    \left(
    \prod_{j=1}^{k} \frac{| \bA(\scrTs, U_{j}^{\tildm}) |}{\sqrt{\tildm}}
    \int_{\scrTs}\! \cdots \int_{\scrTs}
    f \left( G \left( \scrTs, (U_{i}^{\tildm}, y_{i})_{i=1}^{k} \right)\right)
    \tilde{\mu}_{\scrTs}^{U_{1}^{\tildm}}(dy_{1}) \cdots \tilde{\mu}_{\scrTs}^{U_{k}^{\tildm}}(dy_{k})
    \right),
    \label{eq: expression via int wrt path measures}
  \end{align}
  where $(U_{i}^{\tildm})_{i=1}^{k}$ is defined as in Lemma \ref{lem: convergence of Ts with path measures CR models}.
  By Lemma \ref{lem: technical lemma for configuration models}\eqref{lem item: sup of A is bounded CR models},
  we have that
  \begin{align}
    \sup_{\tildm}
    \bfE
    \left(
    \prod_{j=1}^{k} \frac{|\bA(\scrTs, U_{j}^{\tildm})|}{\sqrt{\tildm}}
    \right)
     & =
    \sup_{\tildm}
    \bfE
    \left(
    \bfE
    \left(
      \left.
      \frac{|\bA(\scrTs, U_{1}^{\tildm})|}{\sqrt{\tildm}}
      \right| \scrTs
      \right)^{k}
    \right) \notag \\
     & \leq
    \sup_{\tildm}
    \bfE
    \left( \frac{|\bA(\scrTs, U_{1}^{\tildm})|}{\sqrt{\tildm}} \right)^{k}
    <\infty.
    \label{eq: sup of prod of ATs is bounded}
  \end{align}
  From \eqref{eq: expression via int wrt path measures},
  \eqref{eq: sup of prod of ATs is bounded} and Lemma \ref{lem: convergence of Ts with path measures CR models},
  it follows that
  \begin{align} \label{eq: convergence of the numerator CR models}
      &
    \lim_{\tildm \to \infty}
    \frac{1}{s_{0}^{k} \tildm^{k/2}}
    \bfE
    \left(
    \sum_{\bx \in \bA(\scrTs)^{k}} f \left(G(\scrTs, \bx)\right)
    \right) \\
    = &
    \left( \frac{p_{0} \sigma}{2} \right)^{k}
    \bfE
    \left(
    \prod_{j=1}^{k} \hght_{T_{2e}} (U_{j})
    \int_{T_{2e}}\! \cdots \int_{T_{2e}}
    f \left( G \left( T_{2e}, (U_{i}, y_{i})_{i=1}^{k} \right)\right)
    \mu_{T_{2e}}^{U_{1}}(dy_{1}) \cdots \mu_{T_{2e}}^{U_{k}}(dy_{k})
    \right),
  \end{align}
  where $(U_{i})_{i=1}^{k}$ is defined as in Lemma \ref{lem: convergence of Ts with path measures CR models}.
  By \eqref{eq: expectation expression by using the Ts CR models},
  \eqref{eq: convergence of bAord from Bhamidi and Sen},
  \eqref{eq: passing ord to product sets CR models} and
  \eqref{eq: convergence of the numerator CR models},
  we deduce that
  \begin{equation}
    \lim_{ \tildm \to \infty}
    \bfE \left( f (G(\scrTs, \tilde{\bm{Y}})) \right)
    =
    \bfE
    \left(
    f
    \left(
      \left( \tilde{T}^{(k)}, d_{ \tilde{T}^{(k)}}, \rho_{ \tilde{T}^{(k)}}, \mu_{ \tilde{T}^{(k)}},
        \tilde{U}_{1}, \tilde{V}_{1}, \ldots, \tilde{U}_{k}, \tilde{V}_{k} \right)
      \right)
    \right),
  \end{equation}
  where we recall the space
  $\left( \tilde{T}^{(k)}, d_{ \tilde{T}^{(k)}}, \rho_{ \tilde{T}^{(k)}}, \mu_{ \tilde{T}^{(k)}},
    \tilde{U}_{1}, \tilde{V}_{1}, \ldots, \tilde{U}_{k}, \tilde{V}_{k} \right)$
  from Construction \ref{const: the limiting space of CR models}.
  This yields that
  \begin{equation}
    \begin{split}
      &\left(
      \tildscrTs, \frac{\sigma}{\sqrt{\tildm}} d_{\tildscrTs}, \rho_{\tildTs}, \mu_{\tildTs},
      \hat{U}_{1}^{\tildm}, \gparent(\hat{V}_{1}^{\tildm}), \ldots,
      \hat{U}_{k}^{\tildm}, \gparent(\hat{V}_{k}^{\tildm})
      \right)\\
      \xrightarrow{\mathrm{d}}
      &\left( \tilde{T}^{(k)}, d_{ \tilde{T}^{(k)}}, \rho_{ \tilde{T}^{(k)}}, \mu_{ \tilde{T}^{(k)}},
      \tilde{U}_{1}, \tilde{V}_{1}, \ldots, \tilde{U}_{k}, \tilde{V}_{k} \right).
    \end{split}
  \end{equation}
  It then follows that
  \begin{equation}
    \begin{split}
      &\left(
      T_{\tildm}, \frac{\sigma}{\sqrt{\tildm}} d_{T_{\tildm}}, \rho_{T_{\tildm}}, \mu_{T_{\tildm}},
      \parent(\hat{U}_{1}^{\tildm}), \parent(\hat{V}_{1}^{\tildm}), \ldots,
      \parent(\hat{U}_{k}^{\tildm}), \parent(\hat{V}_{k}^{\tildm})
      \right)\\
      \xrightarrow{\mathrm{d}}
      &\left( \tilde{T}^{(k)}, d_{ \tilde{T}^{(k)}}, \rho_{ \tilde{T}^{(k)}}, \mu_{ \tilde{T}^{(k)}},
      \tilde{U}_{1}, \tilde{V}_{1}, \ldots, \tilde{U}_{k}, \tilde{V}_{k} \right).
    \end{split}
  \end{equation}
  Therefore, by \cite[Proposition 8.4]{Croydon_18_Scaling},
  we deduce that the space obtained by fusing
  $(T_{\tildm}, \sigma \tildm^{-1/2} d_{T_{\tildm}}, \rho_{T_{\tildm}}, \mu_{T_{\tildm}})$
  over $( \{ \parent(\hat{U}_{j}^{\tildm}), \parent(\hat{V}_{j}^{\tildm}) \} )_{j=1}^{k}$
  converges to $(M^{(k)}, d_{M^{(k)}}, \rho_{M^{(k)}}, \mu_{M^{(k)}})$.
  Since the Gromov-Hausdorff-Prohorov distance between the fused space and
  the space obtained by adding an edge between $\hat{U}_{j}^{\tildm}$ and $\hat{V}_{j}^{\tildm}$
  for each $j$ converges to $0$ in probability,
  we obtain the desired convergence.
\end{proof}

As in the case of the critical \erdosrenyi\ random graphs (see Section \ref{subsec: critical ER random grpah}),
the volume estimate of $\scrGdcon$ is deduced from that of tilted trees $\tildscrTs$.
In \cite[Proposition 3]{Marzouk_22_On_scaling},
the tightness of the sequence of the depth-first walk of $\scrTs$,
the uniform random tree of $\mbTs$,
in the H\"{o}lder norms was established,
and in \cite[Proposition 5]{Broutin_Marckert_14_Asymptotics},
it was proven
that the rescaled depth-first walk and the rescaled height function of $\scrTs$ are close.
Combining these results with Lemma  \ref{lem: technical lemma for configuration models},
we obtain the tightness of the sequence of the height functions of $\tildscrTs$,
which is enough for the volume estimate as we saw in Proposition \ref{prop: volume estimates for trees}.
The following assertions are versions of \cite[Proposition 3]{Marzouk_22_On_scaling}
and \cite[Proposition 5]{Broutin_Marckert_14_Asymptotics} modified for our setting.

\begin{lem} [{cf.\ \cite[Proposition 3]{Marzouk_22_On_scaling}}]
  \label{lem: Holder tightness of DFW of trees with prescribed degrees}
  Let $\bd=\bd^{(\tildm)}$ be degree sequences satisfying Assumption \ref{assum: degree sequence tildd}.
  Set $V_{\tildm} \coloneqq \tildm -1 +2k$,
  which is the number of vertices of a plane tree with the CFD $\bs=\bs(\bd^{(\tildm)})$.
  Then, for every $\varepsilon \in (0,1)$ and $\delta \in (0, 1/2)$,
  there exists a constant $C > 0$ such that,
  for all $\tildm$,
  \begin{equation}
    | W_{\tildm} ( \lfloor V_{\tildm} s \rfloor) - W_{\tildm} ( \lfloor V_{\tildm} t \rfloor) |
    \leq
    C \sqrt{V_{\tildm}} |t - s|^{\delta}
  \end{equation}
  holds uniformly for $0 \leq s \leq t \leq 1$
  with probability at least $1-\varepsilon$,
  where $(W_{\tildm}(i))_{i=0}^{V_{\tildm}}$ is the depth-first walk of $\scrTs$.
\end{lem}

\begin{lem} [{cf. \cite[Proposition 5]{Broutin_Marckert_14_Asymptotics}}]
  \label{lem: DFW and HF are close, CR models}
  Assume that we are
  in the same setting as Lemma \ref{lem: Holder tightness of DFW of trees with prescribed degrees}.
  Write $(H_{\tildm}(i))_{i=0}^{V_{\tildm -1}}$ for the height function of $\scrTs$.
  We extend the domain of $W_{\tildm}$ and $H_{\tildm}$
  to $[0,V_{\tildm}]$ and $[0,V_{\tildm}-1]$ by linear interpolation, respectively.
  Then, there exists $c_{\tildm} = o (\sqrt{\tildm})$ such that
  \begin{equation}
    \bfP
    \left(
    \sup_{0 \leq t \leq 1}
    \left|
    W_{\tildm}(t V_{\tildm}) - \frac{\sigma_{\tildm}^{2}}{2} H_{\tildm} (t (V_{\tildm} -1))
    \right|
    \geq c_{\tildm}
    \right)
    \to 0,
  \end{equation}
  where we set $\sigma_{\tildm}^{2} \coloneqq (V_{\tildm}-1)^{-1} \sum_{i} i^{2} s_{i} -1$.
  (Note that, in our setting, we have $\sigma_{\tildm}^{2} \to \sigma^{2}$.)
\end{lem}

\begin{prop}  \label{prop: volume estimate of Gdcon}
  Let $\bd=\bd^{(\tildm)}$ be degree sequences satisfying Assumption \ref{assum: degree sequence tildd}.
  Then, for every $\varepsilon>0$ and $\delta \in (0,1/2)$,
  there exists a constant $C_{\varepsilon,\delta}>0$ such that
  \begin{equation}
    \liminf_{\tildm \to \infty}
    \bfP
    \left(
    \inf_{x \in V(\scrGdcon)}
    \mu_{\scrGdcon} \left(D_{R_{\scrGdcon}} (x, \sqrt{\tildm} r) \right)
    \geq
    (C_{\varepsilon, \delta} r^{1/\delta} ) \wedge 1, \quad
    \forall r \geq 0
    \right)
    \geq 1-\varepsilon.
  \end{equation}
\end{prop}

\begin{proof}
  Let $\bs=\bs ( \bd^{\tildm} )$ be the CFD assciated with $\bd^{\tildm}$.
  Set $V_{\tildm} \coloneqq \tildm - 1 + 2k$,
  where $k$ is the non-negative number
  in Assumption \ref{assum: degree sequence tildd}\eqref{assum item: surplus of the given degree sequence, tilted}.
  Let $c_{\tildm}=o (\sqrt{\tildm})$ be the constant in Lemma \ref{lem: DFW and HF are close, CR models}.
  From Lemma \ref{lem: Holder tightness of DFW of trees with prescribed degrees}
  and Lemma \ref{lem: DFW and HF are close, CR models},
  we deduce the tightness of the sequence of the height functions of $\scrTs$
  in the H\"{o}lder norms.
  Following \cite[Proof of Lemma 4.15]{Andriopoulos_23_Convergence},
  we obtain the tightness of the sequence of the height functions of $\tildscrTs$
  in the H\"{o}lder norms
  (recall the tilted random tree $\tildscrTs$ from Construction \ref{const: uniformly random graph on Gdcon}).
  Namely,
  for every $\varepsilon \in (0,1)$ and $\delta \in (0,1/2)$,
  one can find a constant $C >0$ such that
  \begin{equation}  \label{eq: holder continuity of height function of tilted trees, CR models}
    \liminf_{\tildm \to \infty}
    \bfP
    \left(
    \left| \tilde{H}_{\tildm}( (V_{\tildm}-1) t ) - \tilde{H}_{\tildm}( (V_{\tildm} -1) s ) \right|
    \leq
    C \sqrt{\tildm} |t - s|^{\delta} + c_{\tildm},
    \quad
    \forall t, s \in [0,1]
    \right)
    \geq 1-\varepsilon,
  \end{equation}
  where $\tilde{H}_{\tildm}$ is the height function of $\tildscrTs$
  regarded as a function of $C([0,V_{\tildm}-1], \mbR)$ by linear interpolation.
  Let $\tilde{U}_{j}^{\tildm}, \tilde{V}_{j}^{\tildm},\, j=1,2,\ldots, \tildm$
  be the vertices of $\scrTs$ as in Construction \ref{const: uniformly random graph on Gdcon}.
  Define $\hat{\mathscr{T}}_{\bs}$ to be the plane tree
  obtained by deleting $\tilde{U}_{j}^{\tildm}, \tilde{V}_{j}^{\tildm},\, j=1,2, \ldots, \tildm$ from $\tildscrTs$
  and adding a vertex $\rho_{\hat{\mathscr{T}}_{\bs}}$ to the root of $\tildscrTs$
  as a new root.
  Let $\hat{H}_{\tildm} = (\hat{H}_{\tildm}(t), 0 \leq t \leq \tildm)$
  be the height function of $\tilde{\mathscr{T}}_{\bs}$.
  Then, it is observed
  that a similar result to \eqref{eq: holder continuity of height function of tilted trees, CR models}
  holds.
  Since adding an edge between $\parent(\tilde{U}_{j}^{\tildm})$ and $\parent(\tilde{V}_{j}^{\tildm})$
  for each $j$ yields $\scrGdcon$,
  the desired result is deduced
  by a similar argument to the proof of Proposition \ref{prop: volume estimates for trees}.
\end{proof}

\begin{rem}
  In \cite[Lemma 4.15]{Andriopoulos_23_Convergence},
  the tightness of the contour functions of $\tildscrTs$ in a H\"{o}lder norm
  is shown.
  However,
  it is not easy to remove the conditioning given at the end of the proof,
  and the proof appears to need revision.
\end{rem}

Finally, we give a proof of Theorem \ref{thm: main result for CR models}.

\begin{const} \label{const: limiting space of CR models}
  Write
  \begin{equation}
    \mathcal{M}^{(k)}
    \coloneqq
    (\Mk, R_{\Mk}, \rho_{\Mk}, \mu_{\Mk})
  \end{equation}
  (recall this space from Construction \ref{const: the limiting space of CR models}).
  Sample $(\bm{Z}, \bm{S})$,
  the sequence of excursion lengths and the number of points that fell in the excursions,
  and assume that $(\bm{Z}, \bm{S})$ and  $(\mathcal{M}^{(k)}, k \geq 0)$ are independent.
  We then define
  \begin{equation}  \label{eq: definition of the limit of configuration models}
    \mathcal{M}
    \coloneqq
    \left(
    M^{(S_{1})}, \frac{\alpha \sqrt{Z_{1}}}{\sqrt{\eta}} R_{M^{(S_{1})}}, \rho_{M^{(S_{1})}}, \mu_{M^{(S_{1})}}
    \right),
  \end{equation}
  where we recall the parameters from \eqref{eq: parameters for CR models}.
\end{const}

\begin{lem}  [{\cite[Proposition 9.2]{Bhamidi_Sen_20_Geometry}}]
  \label{lem: behavior of degree sequences of CR models}
  Assume that degree sequences $\bd=\bd^{(n)}$ satisfy Assumption \ref{assum: degree sequence of configuration models}.
  Let $D^{\circ}$ be the size-biased random variable corresponding to $D$, i.e.,
  \begin{equation}
    p_{i}^{\circ}
    \coloneqq
    P(D^{\circ}=i)
    =
    \frac{i P(D =i)}{E(D)}, \quad
    i=1,2, \ldots.
  \end{equation}
  Then, for each $i \geq 1$,
  the following assertions hold:
  \begin{gather}
    \frac{1}{| V(\mathscr{C}_{i}^{n}) |}
    \sum_{j \in V(\mathscr{C}_{i}^{n})} d_{j}^{2}
    \xrightarrow{\mathrm{p}}
    \sum_{j \geq 1} j^{2} p_{j}^{\circ} < \infty,\\
    \bfP ( \mathscr{C}_{i}^{n}\ \text{is simple}) \to 1,\\
    \frac{1}{| V(\mathscr{C}_{i}^{n}) |} | \{j \in V(\mathscr{C}_{i}^{n}) : d_{j} = l\} |
    \xrightarrow{\mathrm{p}}
    p_{l}^{\circ}\
    \text{for}\  l \geq 1.
  \end{gather}
\end{lem}

\begin{proof} [Proof of Theorem \ref{thm: main result for CR models}]
  Fix a subset $V \subseteq [n]$.
  We rearrange $(d_{j}^{(n)}, j \in V)$
  and write $\btildd(V)=\btildd^{(n)}(V) = (\tildd_{j}^{(n)} , 1\leq j \leq |V|)$
  so that $\tildd_{1}^{(n)} \leq \tildd_{2}^{(n)} \leq \cdots \leq \tildd_{|V|}^{(n)}$.
  Let $\Pi$ be the set of degree sequences $\bd'$.
  Sample $\mathscr{G}_{\bd'}^{\mathrm{con}},\, \bd' \in \Pi$ independently,
  and assume that $(\mathscr{G}_{\bd'}, \bd' \in \Pi)$ is independent of $\mathscr{C}_{1}^{n}$.
  Then, conditional on being simple,
  $\mathscr{C}_{1}^{n}$ has the same distribution as $\mathscr{G}_{\btildd(V(\mathscr{C}_{1}^{n}))}^{\mathrm{con}}$
  up to unimportant relabelling
  (cf.\ \cite[Proposition 7.7]{Hofstad_17_Random}).
  By Lemma \ref{lem: behavior of degree sequences of CR models},
  it is observed
  that the degree sequences $\bd(V(\mathscr{C}_{1}^{n}))$ satisfy Assumption \ref{assum: degree sequence tildd}.
  Thus the desired convergence is deduced
  from the corresponding convergence of  $\mathscr{G}_{\btildd(V(\mathscr{C}_{1}^{n}))}^{\mathrm{con}}$,
  which follows
  from Proposition \ref{prop: convergence of Gdcon} and Proposition \ref{prop: volume estimate of Gdcon}.
\end{proof}

\section*{Appendix}
\addcontentsline{toc}{section}{Appendix}  

\setcounter{section}{0}
\setcounter{subsection}{0}

\renewcommand \thesubsection {\Alph{subsection}}

\setcounter{equation}{0}
\renewcommand{\theequation}{\Alph{subsection}.\arabic{equation}}

\setcounter{exm}{0}
\renewcommand \theexm {\Alph{subsection}.\arabic{exm}}

\subsection{Convergence of Gaussian processes}
\label{application to Gaussian processes}

\newcommand{\Hcov}{\mathbb{H}_{\mathrm{cov}}}
\newcommand{\Hpr}{\mathbb{H}_{\mathrm{pr}}}
\newcommand{\dcov}{d_{\mathrm{cov}}}
\newcommand{\taucov}{\tau^{\mathrm{cov}}}
\newcommand{\taupr}{\tau^{\mathrm{pr}}}

In this appendix, we deduce convergence of Gaussian processes from the metric-entropy condition.
First, we introduce spaces $\Hcov$ and $\Hpr$ to state the main result
(see Theorem \ref{A. thm: convergence of Gaussian processes} below).

\begin{dfn} [{The sets $\hatC_{c}(M, \mbR)$}]
  Let $(M, d)$ be a compact metric space.
  We define 
  \begin{equation}
    \hatC_{c} (M, \mbR) 
    \coloneqq 
    \bigcup_{X \in \cC(F)} 
    C(X, \mbR).
  \end{equation} 
  Note that $\hatC(M, \mbR)$ contains 
  the empty map $\emptyset_{\mbR}: \emptyset \to \mbR$.
  For each $f \in \hatC(M, \mbR)$,
  if $f \in C(X, \mbR)$,
  then we write $\dom(f) \coloneqq X$.
\end{dfn}

\begin{dfn} [{The metric $d_{\hatC_{c}, M}$}]
  For $f_{1}, f_{2} \in \hatC_{c}(M, \mbR)$ and $\varepsilon>0$, 
  consider the following condition.
  \begin{enumerate} [label = (H), leftmargin = *]
    \item \label{2. dfn item: epsilon condition for metric on hatC_c for gaussian}
      For any $x \in \dom(f_{1})$, 
      there exists an element $y \in \dom(f_{2})$ such that 
      \begin{equation}
        d(x, y)
        \vee 
        |f_{1}(x) - f_{2}(y)| 
        \leq 
        \varepsilon.
      \end{equation}
      Similarly,
      for any $y \in \dom(f_{2})$, 
      there exists an element $x \in \dom(f_{1})$ such that 
      the above inequality holds.
  \end{enumerate}
  We then define
  \begin{equation}
    d_{\hatC_{c}, M}(f_{1}, f_{2}) 
    \coloneqq
    \left(
      \inf 
      \{
        \varepsilon > 0 \mid \varepsilon\ \text{satisfies \ref{2. dfn item: epsilon condition for metric on hatC_c for gaussian}}
      \}
    \right)
    \wedge 
    1,
  \end{equation}
  where the infimum over the empty set is defined to be $\infty$.
\end{dfn}

The function $d_{\hatC_{c}, M}$ is a separable metric on $\hatC_{c}(M, \mbR)$
and the induced topology is Polish.
(NB. the metric $d_{\hatC_{c}, M}$ is not necessarily complete.)
For details, see \cite[Section 2.2.3]{Noda_pre_Metrization}.

Let $\Hcov^{\circ}$ be the collection of $(F, d, \Sigma)$
such that $(F,d)$ is a compact metric space and $\Sigma$ is an element of $\hat{C}(F \times F, \mbR)$.
Given a distance-preserving map $f:F_{1} \to F_{2}$ where 
$F_{1}$ and $F_{2}$ are compact metric spaces,
we define $\taucov_{f}: \hatC(F_{1} \times F_{1}, \mbR) \to \hatC(F_{2} \times F_{2}, \mbR)$
by setting 
\begin{equation}
  \taucov_{f}(\Sigma) 
  \coloneqq 
  \Sigma \circ (f \times f)^{-1},
  \quad 
  \Sigma \in \hatC(F_{1} \times F_{1}, \mbR).
\end{equation}
For $S_{i}=(F_{i}, d_{i}, \Sigma_{i}) \in \Hcov^{\circ},\, i=1,2$, 
we say that $S_{1}$ is $\taucov$-equivalent to $S_{2}$
if and only if 
there exists an isometry $f: F_{1} \to F_{2}$ 
satisfying $\taucov_{f}(\Sigma_{1}) = \Sigma_{2}$.

\begin{dfn} [{The set $\Hcov$}]
  We define $\Hcov$ 
  to be the collection of $\taucov$-equivalence classes of elements $\Hcov^{\circ}$.
\end{dfn}

\begin{dfn} [{The metric $d_{\Hcov}$}]
  For $S_{i}=(F_{i}, d_{i}, \Sigma_{i}) \in \Hcov,\, i=1,2$,
  we define 
  \begin{equation}
      d_{\Hcov}(S_{1}, S_{2})
      \coloneqq 
      \inf_{f_{1}, f_{2}, M}
      \left(
        d_{H}(f_{1}(F_{1}), f_{2}(F_{2}))
        \vee 
        d_{\hatC_{c}, M \times M}( \taucov_{f_{1}}(\Sigma_{1}), \taucov_{f_{2}}(\Sigma_{1}) )
      \right)
  \end{equation}
  where the infimum is taken over all compact metric spaces $(M,d)$
  and all distance-preserving maps $f_{i}:F_{i} \to M,\, i=1,2$.
\end{dfn}

We define the set $\Hpr^{\circ}$ to be the collection of $(F, d, \Sigma, P)$ 
such that $(F,d, \Sigma)$ is an element of $\Hcov$ and $P$ is an element of $\mathcal{P}(\hatC(F, \mbR))$,
i.e., a probability measure on $\hatC(F, \mbR)$. 
Given a distance-preserving map $f:F_{1} \to F_{2}$ where 
$F_{1}$ and $F_{2}$ are compact metric spaces,
we define $\taupr_{f}: \hatC(F_{1}, \mbR) \to \hatC(F_{2}, \mbR)$
by setting 
\begin{equation}
  \taupr_{f}(G) 
  \coloneqq 
  G \circ f^{-1},
  \quad 
  G \in \hatC(F_{1}, \mbR).
\end{equation}
For $\cX_{i}=(F_{i}, d_{i}, \Sigma_{i}, P_{i}) \in \Hpr,\, i=1,2$, 
we say that $\cX_{1}$ is $\taucov \times (\taupr)^{-1}$-equivalent to $\cX_{2}$
if and only if 
there exists an isometry $f: F_{1} \to F_{2}$
such that $\taucov_{f}(\Sigma_{1}) = \Sigma_{2}$ and $P_{2} = P_{1} \circ (\taupr_{f})^{-1}$.

\begin{dfn} [{The set $\Hpr$}]
  We define $\Hpr$ 
  to be the collection of $\taucov \times (\taupr)^{-1}$-equivalence classes of elements $\Hpr^{\circ}$.
\end{dfn}

\begin{dfn}
  For $\cX_{i}=(F_{i}, d_{i}, \Sigma_{i}, P_{i}) \in \Hpr,\, i=1,2$,
  we define 
  \begin{equation}
    \begin{split}
      d_{\Hpr}(\cX_{1}, \cX_{2})
      \coloneqq 
      \inf_{f_{1}, f_{2}, M}
      \bigl(
        &d_{H}(f_{1}(F_{1}), f_{2}(F_{2}))
        \vee 
        d_{\hatC_{c}, M \times M}( \taucov_{f_{1}}(\Sigma_{1}), \taucov_{f_{2}}(\Sigma_{1}) )
      \bigr.\\
      &\quad 
      \bigl.
        \vee
        \tilde{d}_{P}( P_{1} \circ (\taupr_{f_{1}})^{-1}, P_{2} \circ (\taupr_{f_{2}})^{-1} )
      \bigr),
    \end{split}
  \end{equation}
  where the infimum is taken over all compact metric spaces $(M,d)$
  and all distance-preserving maps $f_{i}:F_{i} \to M,\, i=1,2$,
  and $\tilde{d}_{P}$ denotes the Prohorov metric on $\mathcal{P}(\hatC(M, \mbR))$.
\end{dfn}
 
The set $\Hcov$ will be the space for covariance functions and $\Hpr$ will be for Gaussian processes. 
Like the metric space $(\mbM_{L}, d_{\mbM_{L}})$, 
we obtain the following results
by the corresponding results of \cite{Noda_pre_Metrization}
(the space $(\mbM_{L}, d_{\mbM_{L}})$ is defined in Section \ref{sec: the space M_L}).
We mention here that 
although in \cite{Noda_pre_Metrization} spaces are assumed to be boundedly compact,
it is easy to deduce the corresponding results for compact spaces.

\begin{thm} [{cf.\ \cite[Theorem 3.23]{Noda_pre_Metrization}}]
  The functions $d_{\Hcov}$ and $d_{\Hpr}$ are metrics on $\Hcov$ and $\Hpr$ respectively. 
  Moreover, both spaces are Polish
  (but not necessarily with $d_{\Hcov}$ or $d_{\Hpr}$).
\end{thm}

\begin{thm} [{cf.\ \cite[Theorem 3.24]{Noda_pre_Metrization}}]\label{A. thm: convergence in H_pr}
  For each $n \in \mathbb{N} \cup \{\infty\}$,
  let $\cX_{n} = (F_{n}, d_{n}, \Sigma_{n}, P_{n})$ be an element of $\Hpr$.
  The elements $\cX_{n}$ converges to $\cX_{\infty}$ in $\Hpr$
  if and only if 
  there exist a compact metric space $(M, d)$ 
  and distance-preserving maps $f_{n}: F_{n} \to M$
  and $f_{\infty}: F_{\infty} \to M$
  such that 
  $f_{n}(F_{n}) \to f_{\infty}(F_{\infty})$ in the Hausdorff topology in $M$,
  $\taucov_{f_{n}}(\Sigma_{n}) \to \taucov_{f_{\infty}}(\Sigma_{\infty})$ in $\hatC(M, \mbR)$
  and $P_{n} \circ (\taupr_{f_{n}})^{-1} \to P_{\infty} \circ (\taupr_{f_{\infty}})^{-1}$
  weakly as probability measures on $\hatC(M, \mbR)$.
  (A similar result holds for the space $\Hcov$.)
\end{thm}

\begin{thm} [{cf.\ Theorem \ref{2. thm: precompactness in M_L}}] \label{A. thm: precompactness in H_pr}
  For each $n \in \mathbb{N}$,
  let $\cX_{n} = (F_{n}, d_{n}, \Sigma_{n}, P_{n})$ be an element of $\Hpr$.
  Choose a random element $G_{n}$ of $\hatC(F, \mbR)$ 
  whose law coincides with $P_{n}$.
  We denote the underlying probability measure by $P$.
  The sequence $(\cX_{n})_{n \geq 1}$ is precompact in $\Hpr$
  if and only if the following conditions are satisfied.
  \begin{enumerate} [label = (\roman*)]
    \item 
      The sequence $(F_{n}, d_{n}, \Sigma_{n})$ is precompact in $\Hcov$.
    \item 
      It holds that 
      $\displaystyle \lim_{M \to \infty} 
      \limsup_{n \to \infty}
      P
      \left(
        \sup_{x \in \dom(G_{n})} |G_{n}(x)| > M
      \right)
      =0$.
    \item 
      For every $\varepsilon >0$,
      it holds that 
      $\displaystyle 
      \lim_{\delta \to 0} 
      \limsup_{n \to \infty}
      P
      \left(
        \sup_{\substack{x, y \in \dom(G_{n}),\\ d_{n}(x,y) < \delta}} 
        |G_{n}(x) - G_{n}(y)| > \varepsilon
      \right)
      =0$.
  \end{enumerate}
\end{thm}

\begin{rem}
  One can obtain a precompactness criterion for the space $\Hcov$ 
  similarly to \cite[Theorem 4.30]{Noda_pre_Metrization}.
\end{rem}

For a positive definite function $\Sigma \in C(F \times F, \mbR)$,
we write $G_{\Sigma} = (G_{\Sigma}(x))_{x \in F}$ for a mean-zero Gaussian process
with covariance function $\Sigma$ 
built on a probability space with probability measure $P_{\Sigma}$
and we define 
\begin{equation}
  d_{\Sigma}(x, y)
  \coloneqq 
  \sqrt{E_{\Sigma}( (G_{\Sigma}(x)-G_{\Sigma}(y))^{2} )}
  =
  \sqrt{\Sigma(x,x) + \Sigma(y,y) - 2 \Sigma(x, y)},
  \quad 
  x,y \in F.
\end{equation}
Note that $d_{\Sigma}$ is a pseudometric on $F$,
that is,
$d_{\Sigma}(x,y) = 0$ does not necessarily imply $x = y$.
Let $\Hcov^{\dagger}$ be a subset of $\Hcov$ 
consisting of $(F, d, \Sigma)$ such that 
$\Sigma$ is a positive definite function defined on $F \times F$
and $d= d_{\Sigma}$.
We then define $\check{\mathbb{H}}_{\mathrm{cov}}^{\dagger}$
to be the set of $(F, d_{\Sigma}, \Sigma) \in \Hcov^{\dagger}$
such that  
\begin{equation} \label{A. eq:int-type condition for continuity}
  \int_{0}^{\infty}
  \sqrt{\log N_{d_{\Sigma}}(F, u)}\,
  du
  < \infty.
\end{equation}
By \cite[Theorem 6.1.2]{Marcus_Rosen_06_Markov},
we may assume that the Gaussian process $G_{\Sigma}$ is continuous on $(F,d_{\Sigma})$ almost-surely. 
For $S = (F, d_{\Sigma}, \Sigma)$,
we write $\cX_{S} = (F, d_{\Sigma}, \Sigma, P_{\Sigma}(G_{\Sigma} \in \cdot ))$,
which is an element of $\Hpr$.

\begin{lem} [{\cite[Theorem 6.1.2]{Marcus_Rosen_06_Markov}}]
  \label{A. lem: for precompactness of Gaussian processes}
  Let $(F, d_{\Sigma}, \Sigma)$ be an element of $\Hcov^{\dagger}$.
  There exists a universal constant $c>0$ such that 
  \begin{align}
    E_{\Sigma}
    \left(
      \sup_{x \in F} |G(x)| 
    \right)
    \leq 
    c 
    \int_{0}^{\infty}
    \sqrt{\log N_{d_{\Sigma}}(F, u)}\,
    du,\\
    E_{\Sigma}\left(
      \sup_{\substack{x, y \in F,\\ d_{\Sigma}(x, y) < \delta}} 
      |G_{\Sigma}(x) - G_{\Sigma}(y)|
    \right)
    \leq 
    c 
    \int_{0}^{\delta}
    \sqrt{\log N_{d_{\Sigma}}(F, u)}\,
    du.
  \end{align}
\end{lem}

\begin{thm} \label{A. thm: convergence of Gaussian processes}
  For each $n \in \mathbb{N}$,
  let $S_{n}=(F_{n}, d_{\Sigma_{n}}, \Sigma_{n})$ be an element of $\check{\mathbb{H}}_{\mathrm{cov}}^{\dagger}$.
  We assume the following conditions.
  \begin{enumerate} [label = (G\arabic*), leftmargin = *]
    \item \label{A. thm item: convergence of GP, covariance convergence}
      There exists $S = (F, d_{\Sigma}, \Sigma) \in \Hcov^{\dagger}$
      such that $d_{\Hcov}(S_{n}, S) \to 0$.
    \item \label{A. thm item: convergence of GP, metric-entropy condition}
      It holds that 
      $\displaystyle
        \lim_{\delta \downarrow 0}
        \limsup_{n \to \infty}
        \int_{0}^{\delta}
        \sqrt{\log N_{d_{\Sigma_{n}}} (F_{n}, u)}\,
        du
        =0.
      $
  \end{enumerate}
  Then, $S$ belongs to $\check{\mathbb{H}}_{\mathrm{cov}}^{\dagger}$
  and $\cX_{S_{n}}$ converges to $\cX_{S}$ in $\Hpr$.
 \end{thm}

\begin{proof}
  By \cite[Theorem 3.12]{Noda_pre_Metrization} and Fatou's lemma,
  we deduce that 
  \begin{equation}
    \int_{0}^{\infty}
    \sqrt{N_{d_{\Sigma}}(F, u)}\,
    du
    \leq 
    \liminf_{n \to \infty}
    \int_{0}^{\infty}
    \sqrt{N_{d_{\Sigma_{n}}}(F_{n}, u)}\,
    du.
  \end{equation}
  This, combined with \ref{A. thm item: convergence of GP, metric-entropy condition},
  implies that $S$ belongs to $\check{\mathbb{H}}_{\mathrm{cov}}^{\dagger}$.

  It remains to show the convergence result. 
  By Lemma \ref{A. lem: for precompactness of Gaussian processes},
  Markov's inequality and Theorem \ref{A. thm: precompactness in H_pr},
  we deduce that the sequence $(\cX_{S_{n}})_{n \geq 1}$ is precompact.
  Assume that a subsequence $(\cX_{S_{n_{k}}})_{k \geq 1}$ converges to some $\cX \in \Hpr$.
  Note that,
  by \ref{A. thm item: convergence of GP, covariance convergence},
  we can write $\cX = (F, d_{\Sigma}, \Sigma, Q)$.
  Using Theorem \ref{A. thm: convergence in H_pr},
  we may assume that 
  $(F_{n_{k}}, d_{\Sigma_{n_{k}}})$ and $(F, d_{\Sigma})$ are embedded into a common compact metric space $(M, d)$
  in such a way that $F_{n} \to F$ in the Hausdorff topology in $M$,
  $\Sigma_{n} \to \Sigma$ in $\hatC(M \times M, \mbR)$
  and $P_{\Sigma_{n_{k}}}(G_{\Sigma_{n_{k}}} \in \cdot) \to Q$
  as probability measures on $\hatC(M, \mbR)$.
  Let $G$ be a random element of $\hatC(M, \mbR)$ whose law coincides with $Q$.
  By the Skorohod representation,
  we may assume that $G_{\Sigma_{n_{k}}} \to G$ almost-surely
  on some probability space with probability measure $P$.
  Since $\dom(G_{\Sigma_{n_{k}}}) = F_{n_{k}} \to F$,
  we have that $\dom(G) = F$ with probability $1$.
  Hence, we can regard $G$ as a random element of $C(F, \mbRp)$.
  Fix a finite subset $\{x_{i}\}_{i=1}^{N}$ of $F$.
  For each $n$,
  choose a finite subset $\{x_{i}^{(n_{k})}\}_{i = 1}^{N}$ of $F_{n}$
  such that $x_{i}^{(n_{k})} \to x_{i}$ in $M$ for each $i$.
  The convergence $\Sigma_{n_{k}} \to \Sigma$ implies that 
  $\Sigma_{n}(x_{i}^{(n_{k})}, x_{j}^{(n_{k})}) \to \Sigma(x_{i}, x_{j})$
  for each $i,j$.
  This yields that 
  $(G_{\Sigma_{n_{k}}}(x_{i}^{(n_{k})}))_{i=1}^{N} \xrightarrow{\mathrm{d}} (G_{\Sigma}(x_{i}))_{i=1}^{N}$
  as random elements of $\mbR^{N}$.
  On the other hand,
  the almost-sure convergence $G_{\Sigma_{n_{k}}} \to G$ implies that 
  $(G_{\Sigma_{n_{k}}}(x_{i}^{(n_{k})}))_{i=1}^{N} \to (G(x_{i}))_{i=1}^{N}$
  in $\mbR^{N}$ almost-surely. 
  Hence,
  we deduce that $G_{\Sigma} \stackrel{\mathrm{d}}{=} G$
  as random elements of $C(F, \mbR)$,
  which completes the proof.
\end{proof}

\section*{Acknowledgement}
I would like to thank my supervisor Dr David Croydon for leading me to the problem, 
his support and fruitful discussions. 
This work was supported by 
JSPS KAKENHI Grant Number JP 24KJ1447
and 
the Research Institute for Mathematical Sciences, 
an International Joint Usage/Research Center located in Kyoto University.
\appendix

\bibliographystyle{amsplain}
\bibliography{convergence_of_local_times}
\end{document}